\tikzset{
    >=stealth,
    every picture/.style={thick},
    graphs/every graph/.style={empty nodes},
}
\tikzstyle{vertex}=[
\tikzstyle{printersafe}=[decoration={snake,amplitude=0pt}]
\newcommand{\Cl}{\operatorname{Cl}}
\newcommand{\Pic}{\operatorname{Pic}}
\newcommand{\rank}{\operatorname{rank}}
\newcommand{\supp}{\operatorname{supp}}
\newcommand{\schbase}{\mathcal{S}}
\newcommand{\pp}{\mathbb{P}}
\newcommand{\qq}{\mathbb{Q}}
\newcommand{\zz}{\mathbb{Z}}
\newcommand{\rr}{\mathbb{R}}
\newcommand{\Exc}{\mathrm{Exc}}
\def\O#1.{\mathcal {O}_{#1}}			
\def\pr #1.{\mathbb P^{#1}}				
\def\af #1.{\mathbb A^{#1}}			
\def\ses#1.#2.#3.{0\to #1\to #2\to #3 \to 0}	
\def\xrar#1.{\xrightarrow{#1}}			
\def\K#1.{K_{#1}}						
\def\bA#1.{\mathbf{A}_{#1}}			
\def\bM#1.{\mathbf{M}_{#1}}				
\def\bL#1.{\mathbf{L}_{#1}}				
\def\bB#1.{\mathbf{B}_{#1}}				
\def\bK#1.{\mathbf{K}_{#1}}			
\def\subs#1.{_{#1}}					
\def\sups#1.{^{#1}}
  \newtheorem{theorem}{Theorem}[section]
  \newtheorem{lemma}[theorem]{Lemma}
  \newtheorem{proposition}[theorem]{Proposition}
  \newtheorem{corollary}[theorem]{Corollary}
  \newtheorem{conjecture}[theorem]{Conjecture}
  \newtheorem{definition}[theorem]{Definition}
  \newtheorem{example}[theorem]{Example}
  \newtheorem{problem}[theorem]{Problem}
  \newtheorem{question}[theorem]{Question}
\theoremstyle{remark}
\numberwithin{equation}{section}
\begin{document}

\title[Birational complexity and dual complexes]{Birational complexity and dual complexes}

\author[M.~Mauri]{Mirko Mauri}
\address[Mirko Mauri]{\'{E}cole Polytechnique, Rte de Saclay, 91120 Palaiseau, France}
\email{mirko.mauri@polytechnique.edu}

\author[J.~Moraga]{Joaqu\'in Moraga}
\address[Joaqu\'in Moraga]{UCLA Mathematics Department, Box 951555, Los Angeles, CA 90095-1555, USA
}
\email{jmoraga@math.ucla.edu}

\thanks{MM was supported by the Hausdorff Institute of Mathematics in Bonn, the Institute of Science and Technology Austria, and \'{E}cole Polytechnique in France. This project has received funding from the European Union’s Horizon 2020 research and innovation programme
under the Marie Skłodowska-Curie grant agreement No 101034413.
}

\subjclass[2020]{Primary 14E05, 14E30;
Secondary 57Q05.}
\maketitle

\begin{abstract}
We introduce the notion of {\em birational complexity}
of a log Calabi--Yau pair. This invariant measures how far the log Calabi--Yau pair is from being birational to a toric pair. We study fundamental properties of the new invariant, with a particular focus on the geometry of dual complexes.
\end{abstract}

\setcounter{tocdepth}{1} 
\tableofcontents

\section{Introduction} 
Calabi--Yau and Fano varieties are fundamental building blocks of algebraic varieties, and log Calabi--Yau pairs are logarithmic or degenerate version of both of them. 

\begin{definition} \emph{A \emph{log Calabi--Yau pair} $(X,B)$ consists of a proper variety $X$ and an effective $\rr$-divisor $B$ such that the pair $(X,B)$ is log canonical and $K_X+ B$ is $\rr$-linearly trivial.} 
\end{definition}
If $B=0$, we recover the classical definition of Calabi--Yau variety. At the opposite extreme, a Fano variety $X$ and an ample anti-canonical divisor $B \sim_{\rr} -K_{X}$ form a log Calabi--Yau pair $(X,B)$, provided that the pair is log canonical. 
Actually, these two extreme cases, i.e., $B=0$ or $B$ fully supporting an ample divisor, are the fundamental building blocks of any log Calabi--Yau pair. We prove this general structural result in Theorem~\ref{theorem:supp-amp}, thus refining previous work of Koll\'{a}r--Xu and Filipazzi--Svaldi; see also Sections~\ref{sec:ample} and \ref{sec:positive}.

In a degeneration process, the geometry of Calabi--Yau varieties converts into the combinatorics of log Calabi--Yau pairs, so it is crucial to investigate this aspect of log Calabi--Yau pairs. The combinatorial information of how the irreducible components of a log canonical pair $(X,B)$ intersect is encoded in a CW-complex called dual complex $\mathcal{D}(X,B)$; see Definition \ref{defn:dualcomplex}. 
Dual complexes naturally appear in the classification of singularities and in mirror symmetry; see \cite{Kol13, dFKX17, KX16, KS2006, Li2023} for a non-exhaustive list. 

For instance, according to mirror symmetry, Calabi--Yau varieties should come in pairs $(Y, \hat{Y})$, and one should reconstruct $\hat{Y}$ by dualising a special Lagrangian torus fibration $f\colon Y \to S$, called SYZ fibration. Kontevich and Soibelman conjectured that $S$ should be a manifold (or at least an orbifold). Equivalently, $S$ should be locally modelled on a ball, alias a cone over a sphere (or a finite quotient of a sphere). This sphere is expected to have a combinatorial interpretation: it should be the dual complex of a log Calabi--Yau pair, appearing in a degeneration of $Y$. 

Later, Koll\'{a}r and Xu asked whether this is actually a general phenomenon of all log Calabi--Yau pairs; see \cite[Question 4]{KX16}.

\begin{conjecture} [Algebro-geometric version of the Poincar\'{e} conjecture] \label{conj:poinc}
The dual complex of a log Calabi--Yau pair is homeomorphic to a finite quotient of a sphere.  
\end{conjecture}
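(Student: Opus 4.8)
We emphasize at the outset that Conjecture~\ref{conj:poinc} is open; what follows is a strategy, complete in the range controlled by the birational complexity and blocked in general at one decisive point. The first observation is that, up to PL homeomorphism, $\D(X,B)$ depends only on the crepant birational equivalence class of $(X,B)$ (de Fernex--Koll\'ar--Xu, \cite{dFKX17}), so we may replace $(X,B)$ by any crepant birational model and, after a $\qq$-factorial dlt modification, assume $B$ reduced and $(X,B)$ dlt log Calabi--Yau of dimension $n$. If the birational complexity of $(X,B)$ is zero --- that is, $(X,B)$ is crepant birational to a toric log Calabi--Yau pair $(X',\partial X')$ with $\partial X'$ the full reduced toric boundary --- then this invariance identifies $\D(X,B)$ with $\D(X',\partial X')$, which on a $\qq$-factorial dlt (hence simplicial) toric model is the boundary complex of a complete simplicial fan, i.e.\ a PL sphere $S^{n-1}$. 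This proves the conjecture for all pairs of birational complexity zero and suggests the program: filter log Calabi--Yau pairs by the value of the birational complexity and argue by induction on it.

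For a pair of positive birational complexity there is no toric model, and the plan is instead to run a minimal model program --- on $(X,B)$, or on an auxiliary pair produced from the structure theorem --- and to follow $\D$ through the elementary operations. Divisorial contractions remove a vertex together with its star and flips trade one subcomplex for another; as in Koll\'ar--Xu \cite{KX16}, these moves are tame enough that one retains control of the relevant topology, keeping $\D(X,B)$ a pseudomanifold and tracking its homology and the finiteness of $\pi_1$. The structure result of Theorem~\ref{theorem:supp-amp}, decomposing a log Calabi--Yau pair into the two building blocks ($B=0$, and $B$ fully supporting an ample divisor), should be used to put the end product of the program into a normal form whose dual complex carries a visible fibration; the base and fiber, being of smaller complexity, would be sphere quotients by induction, and $\D(X,B)$ would be reassembled from them.

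The main obstacle is the last step, the actual identification of the homeomorphism type. Without a toric model there is no a priori reason that $\D(X,B)$ is a topological manifold: the crux is to prove that the link of every face is again a sphere --- equivalently, that $\D(X,B)$ is a closed homology manifold with the homotopy type of a sphere and no double-suspension-type pathology in its links. Granting that local structure, the resolved Poincar\'e conjecture (Smale in dimension $\ge 5$, Freedman in dimension $4$, Perelman in dimension $3$, classical below) upgrades the simply connected homotopy sphere to an honest sphere, and, applying it to the finite universal cover furnished by the finiteness of $\pi_1$, exhibits $\D(X,B)$ as $S^{n-1}$ modulo a free finite group action, which is the ``finite quotient of a sphere'' in the statement. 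Establishing the manifold property in full generality --- i.e.\ controlling the links of $\D(X,B)$ through the birational geometry of $(X,B)$ --- is the step I expect to be genuinely hard, and it is why the conjecture is presently known only when $\D(X,B)$ has small dimension and in special cases.
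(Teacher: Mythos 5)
The statement you are addressing is Conjecture~\ref{conj:poinc}, which is an open conjecture; the paper does not prove it, and your write-up correctly acknowledges this and offers a strategy rather than a proof, so there is no ``paper proof'' to match against. Your strategy does, however, track the paper's actual partial results fairly closely: the birational-complexity-zero case is the paper's Theorem~\ref{theorem:bcomp-zero}, the use of the structure result Theorem~\ref{theorem:supp-amp} together with MMP steps and the collapsing statement of de Fernex--Koll\'ar--Xu is exactly how the paper proves Theorem~\ref{theorem:dc-nonmax-bc}, and the ``decisive point'' you isolate --- that one cannot in general certify $\mathcal{D}(X,B)$ is a (PL) manifold, i.e.\ control the links --- is precisely the paper's Problem~\ref{probl:resol-sing}, whose resolution (together with Problem~\ref{probl:dc-max-bc}) the authors state would yield the conjecture via Theorem~\ref{theorem:dc-smooth}.

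Two corrections to your sketch. First, in the complexity-zero case you need the integral condition $K_X+B\sim 0$: with only $\rr$-linear triviality, $c_{\rm bir}(X,B)=0$ gives a toric model $(X',\lfloor B'\rfloor)$ in which $\lfloor B'\rfloor$ may be a proper subset of the toric boundary (e.g.\ $B=H_1+\dots+H_n+\tfrac12 D_1+\tfrac12 D_2$ on $\pp^n$), so $\mathcal{D}(X,B)$ can be a simplex rather than a sphere; this is why Theorem~\ref{theorem:bcomp-zero} carries that hypothesis. Second, your induction ``on the value of the birational complexity'' has no mechanism for the top stratum: the paper's dichotomy is $c_{\rm bir}(X,B)<n$ versus $c_{\rm bir}(X,B)=n$, and in the non-maximal range the conclusion (even granting the PL-manifold property) is only ``sphere or disk,'' obtained from the two-collapsible-pieces decomposition and the Poincar\'e conjecture --- the genuinely non-simply-connected quotients $S^{k}/G$ occur exactly at maximal birational complexity (Example~\ref{ex:max-bcomp-dual}, Theorem~\ref{theorem:max-bir-comp}), where neither your fibration-and-induction scheme nor the paper's methods currently say anything; that case is left as Problem~\ref{probl:dc-max-bc}. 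So your assessment of where the difficulty lies is essentially right, but the hard locus is not just the link/manifold question: it is also the maximal-complexity stratum, which no induction on complexity reaches from below.
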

More informally, a combinatorial Calabi--Yau variety should be a finite quotient of a sphere.
Conjecture~\ref{conj:poinc} should be regarded as an algebro-geometric version of the Poincar\'{e} conjecture. Indeed, the log Calabi--Yau condition $K_X+B \sim 0$ is equivalent to the cohomological assumption $h^0(X, K_X+B)=h^{0}(X, -(K_X+B))=1$, and Conjecture~\ref{conj:poinc} requires that this cohomological datum should prescribe a spherical homeomorphism type for the dual complex, in the same way as for the classical Poincar\'{e} conjecture. See Section~\ref{sec:bircompldualcomplex} for a brief account on the state of the art of Conjecture~\ref{conj:poinc}. See also \cite{Simpson2016, MMS2022} for the role of Conjecture~\ref{conj:poinc} in the geometric P=W conjecture. 

In this paper we provide new evidence to support Conjecture~\ref{conj:poinc}. 

\begin{theorem}\label{thm:dualcomplex} \emph{(weaker version of Theorem~\ref{theorem:dc-smooth})}
If $(X,B)$ is 
a $\qq$-factorial log Calabi--Yau pair, the sum of the coefficients of $B$ is greater than the Picard number of $X$, i.e., $|B| > \rho(X)$, and $\mathcal{D}(X, B)$ is a PL-manifold, then $\mathcal{D}(X, B)$ is homeomorphic to a sphere or a disk.
\end{theorem}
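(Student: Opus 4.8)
The plan is to introduce a notion of *birational complexity* $c_{\mathrm{bir}}(X,B)$ for a log Calabi--Yau pair, defined as the infimum over all crepant birational models $(X',B')$ of the quantity $\dim X' - |B'| + \rho(X')$ (or something equivalent — the precise combination is the one that vanishes exactly on toric pairs, by the toric characterization of Brown--M\textsuperscript{c}Kernan--Svaldi--Zong / Kollár--Xu). The hypothesis $|B| > \rho(X)$ on a $\qq$-factorial model then forces $c_{\mathrm{bir}}(X,B) < \dim X$, and in fact the strict inequality should be arranged so that $c_{\mathrm{bir}}(X,B) \le \dim X - 1$, i.e., the complexity is small. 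The first step is therefore to record the behavior of this invariant under crepant birational maps and to reduce, via dlt modifications, to the case where $(X,B)$ is a $\qq$-factorial dlt pair with $\lfloor B \rfloor$ carrying all the relevant strata, so that $\mathcal{D}(X,B) = \mathcal{D}(X,\lfloor B\rfloor)$ is computed from a genuine snc-like stratification. Passing to a dlt model does not increase $\rho - |B|$ in the sense needed, because each extraction of a divisor of log discrepancy $0$ adds one to both $\rho$ and $|B|$.

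**Dimension bound on the dual complex.**

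The heart is a bound $\dim \mathcal{D}(X,B) \le \dim X - \big(|B| - \rho(X)\big)$, proved by induction on dimension: restrict to a divisorial component $S = S_1 \subseteq \lfloor B\rfloor$ and run adjunction, $K_S + B_S = (K_X + B)|_S \sim_{\rr} 0$, obtaining a log Calabi--Yau pair $(S,B_S)$ of one lower dimension whose dual complex is the link $\mathrm{lk}(v, \mathcal{D}(X,B))$ of the vertex $v$ corresponding to $S$. One needs that the inequality $|B_S| - \rho(S) \ge |B| - \rho(X) - 1$ persists after adjunction; this is where the $\qq$-factoriality and the control of how coefficients transform under different adjunction — the coefficients of $B_S$ on the divisors $S \cap S_j$ are $1$ minus a nonnegative correction, and $\rho(S) \le \rho(X) - 1 + (\text{number of components of } B \text{ not meeting } S)$ — has to be made precise. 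Granting the dimension bound, the hypothesis $|B| > \rho(X)$ gives $\dim\mathcal{D}(X,B) \le \dim X - 1$, and more importantly, by iterating, every maximal-codimension stratum has a dual complex that is $0$- or $(-1)$-dimensional, i.e., nonempty; this rules out $\mathcal{D}$ being "too thin" and should show $\mathcal{D}(X,B)$ is connected with the expected local structure.

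**From the dimension/connectivity bound to sphere or disk.**

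Now invoke the PL-manifold hypothesis. A closed connected PL-manifold that arises as a dual complex of a dlt log Calabi--Yau pair and carries the extra positivity coming from $|B| > \rho(X)$ should be forced to be a sphere; the tool is a collapsibility / contractibility input for the non-closed case (giving a disk) and a homological sphere-recognition input for the closed case. Concretely, I would combine Kollár--Xu's results that $\mathcal{D}(X,B)$ has the rational homology of a point or is already known to be a sphere in low dimensions, with the fact — forced by $|B| > \rho(X)$ — that the stratification is "maximally degenerate enough" that $\mathcal{D}$ has dimension $\le \dim X - 1$ and is simply connected. In dimension $\le 2$ (hence $\dim X \le 3$) one gets the result unconditionally; in higher dimensions the PL-manifold assumption plus the acyclicity/simple-connectivity supplied by the complexity bound lets one apply the generalized Poincaré conjecture in the PL category to conclude $\mathcal{D}(X,B) \cong S^{k}$ or $D^{k}$.

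**Main obstacle.**

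The hard part will be the bookkeeping in the induction step: showing that adjunction to a component of $\lfloor B \rfloor$ decreases $|B| - \rho(X)$ by at most $1$. The subtlety is that the different $B_S$ can have coefficients strictly less than $1$ (so components "drop out" of the boundary of the dual complex) precisely when $X$ is singular along $S$, and simultaneously $\rho(S)$ can behave erratically under restriction; controlling both simultaneously — essentially a Picard-number version of the inequality used in the complexity literature — is the crux. A secondary difficulty is upgrading "PL-manifold + rationally acyclic or rational homology sphere" to "honestly $S^k$ or $D^k$": this needs simple-connectivity of $\mathcal{D}(X,B)$, which I expect to extract from the fact that links of codimension-two strata are circles (forced again by the dimension bound) together with a Van Kampen argument over the stratification, but the fundamental-group control is the place where an additional hypothesis might be unavoidable.
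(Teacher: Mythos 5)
There are two genuine gaps, and both concern the key steps rather than bookkeeping. First, your central inequality $\dim \mathcal{D}(X,B) \le \dim X - (|B|-\rho(X))$ is false: for $(\pp^n, H_1+\dots+H_{n+1})$ one has $|B|-\rho = n$, so your bound would force $\dim\mathcal{D} \le 0$, while $\mathcal{D} \simeq S^{n-1}$. (Relatedly, the persistence inequality $|B_S|-\rho(S)\ge |B|-\rho(X)-1$ under adjunction has no reason to hold, since $\rho(S)$ can jump up arbitrarily.) The consequence you extract from it, $\dim\mathcal{D}(X,B)\le \dim X-1$, is true but vacuous — it holds for every log Calabi--Yau pair — so it cannot carry the weight of the hypothesis $|B|>\rho(X)$. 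In the paper that hypothesis is used in the opposite way: it says the (birational) complexity is $<\dim X$, and via Lemma~\ref{lem:computing-bcomp} and Theorems~\ref{thm:proper-supp-amp-with-FT}--\ref{thm:proper-supp-amp} this produces a $\qq$-factorial dlt crepant model on which an effective divisor, ample over the base of a suitable fibration, is \emph{properly} supported on $\lfloor B'\rfloor$ (and, if anything, low complexity gives a \emph{lower} bound on $\dim\mathcal{D}$, Theorem~\ref{theorem:bcomp-vs-dim}). Running a $(K_{X'}+B'-E)$-MMP with scaling of that ample divisor and invoking the de Fernex--Koll\'ar--Xu collapsing statement (Proposition~\ref{prop:dc-collapse}) together with \cite[Proposition 24]{KX16} then exhibits $\mathcal{D}(X',B')$ as the union of the closed star of the vertex $v_E$ and a collapsible subcomplex. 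Nothing in your outline produces this positivity/collapsibility input.

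Second, the topological endgame does not close. You plan to combine ``rational homology of a sphere or a point'' (Koll\'ar--Xu) with simple connectivity and the PL Poincar\'e conjecture; but a simply connected rational homology sphere that is a PL-manifold need not be a sphere — the obstruction is torsion in integral homology, exactly the Wu manifold phenomenon the paper flags in the introduction. To apply Poincar\'e one needs the \emph{integral} homology of a sphere, and the paper obtains it (together with simple connectivity) from Mayer--Vietoris and Van Kampen applied to the two collapsible pieces, one of which is a PL-disk because it is a closed star in a simplicial PL-manifold (Proposition~\ref{prop:PL-sphere}); this is precisely where $|B|>\rho(X)$ enters. Your proposed source of simple connectivity — links of codimension-two strata being circles plus Van Kampen over the stratification — is also insufficient: that link condition is already part of the PL-manifold hypothesis and does not control $\pi_1$ of the whole complex, and the Koll\'ar--Xu/Braun results give only finiteness of $\pi_1$, not triviality. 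So both the decomposition into collapsible subcomplexes and the resulting integral-homology/fundamental-group control are missing from the proposal, and without them the statement is out of reach along your route.
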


If $K_{X}+B \sim 0$, the log Calabi--Yau condition grants that $\mathcal{D}(X, B)$ is a rational homology sphere; see \cite[\S 4]{KX16}. Even if $\mathcal{D}(X, B)$ is a simply-connected PL-manifold, it is not a priori clear that it is a sphere. As a counterexample,  consider for instance the 5-dimensional Wu manifold $SU(3)/SO(3)$. By the classical Poincar\'{e} conjecture, the only obstruction is the torsion of the integral homology, which vanishes under our algebro-geometric assumption $|B| > \rho(X)$. This numerical condition is an upper bound on the so-called complexity of the pair $(X, B)$, as discussed in the following section.

\subsection{Birational complexity}
A prototypical example of log Calabi--Yau pair is a toric variety with its toric boundary. Despite how restrictive the definition is, quite surprisingly, toric varieties are ubiquitous in algebraic geometry. In particular,  the study of toric pairs often guides and enlightens the study of general log Calabi--Yau pairs, so it is important to measure how close a log Calabi--Yau pair is from being a toric pair. The complexity is an invariant that serves this purpose. Specifically, the complexity of the log Calabi--Yau pair $(X,B=\sum^r_{i=1} a_i B_i)$ is the non-negative real number 
\[
c(X,B) \coloneqq  \dim(X) + \rank \Cl(X)_\qq -\sum^r_{i=1} a_i,
\]
and the pair $(X,\lfloor B\rfloor)$ is toric\footnote{A toric pair $(X,\Delta)$ consists of a toric variety $X$ and a divisor whose prime components are torus invariant. The sum of all reduced torus invariant divisors is called toric boundary.} if $c(X,B)$ vanishes; see~\cite{BMSZ18}.

In~\cite{MS21}, Svaldi and the second author gave a characterization of toric singularities
via a local variant of the complexity.
Recently, Gongyo and the second author 
proved statements related to the Kobayashi--Ochiai theorem for surfaces using a variant of the complexity for generalized pairs~\cite{GM23}.
In order to achieve the aforementioned results,
many variants of the complexity have been introduced:
fine complexity, orbifold complexity, and generalized complexity. 

In this article, we introduce the notion of {\em birational complexity} of a log Calabi--Yau pair $(X,B)$, denoted $c_{\rm bir}(X,B)$.
It is the minimum among all the complexities of pairs crepant birational to $(X,B)$.
In particular, we study how this invariant reflects in the geometry of $X$ and the dual complex $\mathcal{D}(X,B)$. 

The relation between birational complexity and dual complexes stems from the following simple observation. Out of the combinatorial data defining a toric variety, one can easily read that the dual complex of the toric boundary is a sphere: the cone over the dual complex gives the fan of the toric variety. It is then natural to expect that the closer a log Calabi--Yau pair is to be crepant birational to a toric pair (namely the lower its birational complexity is) the closer its dual complex is to be a sphere. This is indeed the intuition behind Theorem~\ref{thm:dualcomplex}.

\subsection{Birational complexity and coregularity} 
We first investigate
upper bounds for the birational complexity
in terms of the coregularity. Let $(X,B)$ be a log Calabi--Yau pair. In Lemma~\ref{lem:computing-bcomp}, we show that there exists a model that computes the birational complexity
and it can be chosen to be $\qq$-factorial and dlt. The following theorem shows that the birational complexity
is bounded above by the sum of the coregularity and the dimension.

\begin{theorem}\label{theorem:bcomp-bound}
Let $X$ be a variety of Fano type 
of dimension $n$ 
and $(X,B)$ be a log Calabi--Yau pair.
Then, the following inequalities hold:
\begin{equation}\label{eq:1}
0\leq c_{\rm bir}(X,B)\leq {\rm coreg}(X,B)+n
\end{equation} 
and 
\begin{equation}\label{eq:2}
0\leq c_{\rm bir}(X,B) < 2n.
\end{equation} 
\end{theorem}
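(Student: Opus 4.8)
The plan is to reduce the upper bound to a statement about a single crepant birational model and then exploit the bound on the coregularity. First I would invoke Lemma~\ref{lem:computing-bcomp} to fix a $\qq$-factorial dlt log Calabi--Yau pair $(X',B')$ crepant birational to $(X,B)$ computing $c_{\rm bir}(X,B)$, so that $c_{\rm bir}(X,B)=c(X',B')=\dim X' + \rank\Cl(X')_\qq - |B'|$. Since Fano type is preserved under crepant birational maps between $\qq$-factorial pairs, $X'$ is again of Fano type, hence a Mori dream space; in particular $\rank\Cl(X')_\qq = \rho(X')$ is finite. The left-hand inequality $c_{\rm bir}(X,B)\ge 0$ is then exactly the non-negativity of the usual complexity $c(X',B')$, which follows from the complexity inequality of Brown--M\textsuperscript{c}Kernan--Svaldi--Zong (after passing, if necessary, to a further $\qq$-factorialization): the number of components of $\lfloor B'\rfloor$ through a point plus the dimension of the span of the remaining boundary divisors in $\Cl_\qq$ is at most $\dim X' + \rank\Cl(X')_\qq$, which bounds $|B'|$ from above by $\dim X' + \rank\Cl(X')_\qq$.

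For the upper bound \eqref{eq:1} I would choose the model $(X',B')$ more carefully: run an MMP or use the dlt modification so that $(X',B')$ is dlt, $\qq$-factorial, and such that a minimal log canonical center $Z$ of $(X',B')$ has dimension equal to ${\rm coreg}(X,B)$ (this is the defining property of the coregularity, which is a crepant birational invariant). Near the generic point of such a center, $\lfloor B'\rfloor$ has exactly $\dim X' - \dim Z = n - {\rm coreg}(X,B)$ components meeting transversally (using that the pair is dlt and $K_{X'}+B'\sim_\rr 0$, so along $Z$ the adjunction is toric-like). The complexity inequality then reads
\begin{equation*}
|B'| \le \big(\#\text{components of }\lfloor B'\rfloor\text{ through }Z\big) + \rank\Cl(X')_\qq \le (n-{\rm coreg}(X,B)) + \rho(X') + (\text{fractional contribution}),
\end{equation*}
and rearranging gives $c(X',B') = n + \rho(X') - |B'| \ge {\rm coreg}(X,B) + n - (\text{something} \le 2n)$ in the wrong direction, so instead I would argue directly: build a specific crepant birational toric-like model over the minimal center. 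Concretely, take a dlt modification and then contract to reach a model where the boundary has exactly the $n-{\rm coreg}$ snc components along $Z$ forced by dltness plus whatever is needed to keep $K+B$ trivial; the complexity of such a model is at most $\dim + \rho - (n-{\rm coreg}) \le n + \rho - n + {\rm coreg}$, and one controls $\rho$ of this particular model to be at most $n$ using the Fano type hypothesis and the structure of the contraction, yielding $c_{\rm bir}(X,B)\le {\rm coreg}(X,B)+n$.

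Finally, \eqref{eq:2} follows from \eqref{eq:1} together with the general bound ${\rm coreg}(X,B) \le \dim X - \dim(\text{minimal lc center}) \le n$, and in fact ${\rm coreg}(X,B) < n$ whenever $B\neq 0$ (there is always a non-klt center, so the minimal lc center has dimension $< n$, forcing ${\rm coreg} \le n-1$); since a Fano type log Calabi--Yau pair has $B\neq 0$ (as $-K_X$ is big, $K_X$ cannot itself be $\rr$-trivial), we get $c_{\rm bir}(X,B) \le {\rm coreg}(X,B) + n \le (n-1) + n = 2n-1 < 2n$. The main obstacle I anticipate is the precise control of the Picard number (equivalently $\rank\Cl_\qq$) of the chosen model in the proof of \eqref{eq:1}: one must select a crepant birational model that simultaneously realizes the coregularity by a transverse snc boundary stratum \emph{and} has Picard number $\le n$, and checking that these two requirements are compatible — presumably by a careful MMP-with-scaling argument over the minimal log canonical center, using that $X$ is of Fano type to guarantee termination and the existence of good models — is where the real work lies.
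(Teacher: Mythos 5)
Your lower bound argument is fine and matches the paper (non-negativity of the complexity, Proposition~\ref{cor:bcomp-noneg}). The problem is with both halves of the upper bound.

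For inequality~\eqref{eq:1} your proposal reduces to finding a single crepant dlt model that simultaneously has $\rho\leq n$ and at least $n-{\rm coreg}(X,B)$ reduced boundary components meeting along a minimal lc center, and you yourself flag that checking the compatibility of these two requirements ``is where the real work lies.'' That is indeed the entire content of the theorem, and the two requirements are in genuine tension: the natural way to force $\rho\leq n$ (a $K_X$-MMP / tower of Mori fiber spaces) destroys dlt-ness and can contract boundary components, so the count $|\lfloor B'\rfloor|\geq n-{\rm coreg}$, which is only guaranteed on dlt models, is not preserved (compare $(\pp^2,\text{nodal cubic})$, where the low Picard rank model has a single boundary component). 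The paper does not control $\rho$ and $|\lfloor B'\rfloor|$ separately at all; it proves the weaker per-model inequality $|\lfloor B_Y\rfloor|\geq \rho(Y)-{\rm coreg}(Y,B_Y)$ (Theorem~\ref{thm:mod-with-large-bound}), which already gives $c(Y,B_Y)\leq \dim Y+{\rm coreg}$, by induction on dimension: run a $(K+B-E)$-MMP to a Mori fiber space, pass to the base via the canonical bundle formula, apply induction there, lift the base model with the two-ray game (Lemma~\ref{cor:mod-base-mod-tot-space}), and compare coregularities of total space and base. Your sketch contains none of this machinery, so the key step is missing rather than merely technical.

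For inequality~\eqref{eq:2} your argument is not just incomplete but incorrect. You claim that $B\neq 0$ forces a non-klt center, hence ${\rm coreg}(X,B)\leq n-1$ and $c_{\rm bir}(X,B)\leq 2n-1$. A klt log Calabi--Yau pair can perfectly well have $B\neq 0$ with all coefficients less than $1$ (e.g.\ $(\pp^2,\tfrac{2}{3}E)$ with $E$ a smooth cubic), and then ${\rm coreg}(X,B)=n$. Moreover the bound $c_{\rm bir}\leq 2n-1$ is false: Example~\ref{ex:maximal-bcomp} exhibits Fano type log Calabi--Yau pairs with $c_{\rm bir}=2n-\tfrac{2}{m}$, arbitrarily close to $2n$. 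The correct strictness argument, as in the paper, treats the klt case ${\rm coreg}(X,B)=n$ separately: by Lemma~\ref{lem:tower-mfs-picard} there is a birational contraction to a $\qq$-factorial $X'$ with $\rho(X')\leq n$ (the tower of Mori fiber spaces must end at a point because $X$ is of Fano type), and the induced boundary $B'$ is nonzero since $B\sim_{\rr}-K_X$ is big, so $c(X',B')=n+\rho(X')-|B'|<2n$; when ${\rm coreg}(X,B)<n$ the strict bound already follows from~\eqref{eq:1}.
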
 

In Example~\ref{ex:arbirary-bcomp},
we show that for any positive integer $n$
and $c\in \{0,\dots,n-1\}$,
there exists a log Calabi--Yau pair $(X,B)$
of dimension $n$, 
coregularity $c$, and birational complexity at least $n$.
This shows that inequality~\eqref{eq:1} is sharp at least in coregularity zero.
On the other hand,
in Example~\ref{ex:maximal-bcomp}, 
we show that for every positive integer $n$ and $\epsilon >0$, there exists a log Calabi--Yau pair $(X_\epsilon,B_\epsilon)$ of dimension $n$ with 
$2n-c_{\rm bir}(X_\epsilon,B_\epsilon)<\epsilon$.
Example~\ref{ex:smooth-cy}, shows that the statement of the theorem does not hold if $X$ is not a variety of Fano type.

In~\cite{FMM22}, Filipazzi and the authors proved that a log Calabi--Yau pair 
with standard coefficients and coregularity zero has index at most $2$.
In~\cite{FFMP22}, Filipazzi, Figueroa, Peng and the second author proved that a Fano variety of absolute coregulariy zero admits either a $1$-complement or a $2$-complement.
As a corollary, we obtain the following result.

\begin{corollary}\label{introcor:bcomp-coreg-0}
Let $X$ be a variety of Fano type variety and absolute coregularity zero.
Up to a $2$-to-$1$ cover of $X$ there exists a $1$-complement $B$
with $(X,B)$ of coregularity zero for which:
\[
c_{\rm bir}(X,B)\in \{0,\dots,n\}.
\]
\end{corollary}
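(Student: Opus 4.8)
The plan is to reduce the statement to Theorem~\ref{theorem:bcomp-bound} for a $1$-complement, at the cost of a double cover in one case. By definition of absolute coregularity zero, $X$ carries a $\zz$-complement $(X,B_0)$ with $\operatorname{coreg}(X,B_0)=0$; complements have standard coefficients, so by \cite{FMM22} (see also \cite{FFMP22}) the index of $(X,B_0)$ divides $2$, and hence $(X,B_0)$ is either a $1$-complement or a genuine $2$-complement. I would treat these two cases separately.

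Suppose first that $(X,B_0)$ is a $1$-complement. Then $(X,B_0)$ is a log Calabi--Yau pair on the Fano type variety $X$ of dimension $n$ with $\operatorname{coreg}(X,B_0)=0$, so Theorem~\ref{theorem:bcomp-bound} at once gives $0\le c_{\rm bir}(X,B_0)\le n$; I would take $B=B_0$ and pass to no cover. To see that $c_{\rm bir}(X,B_0)$ is moreover an integer, I would use Lemma~\ref{lem:computing-bcomp}: the birational complexity is computed by a $\qq$-factorial dlt model $(X',B')$, which is itself a $1$-complement (so $B'$ is reduced), whence $c_{\rm bir}(X,B_0)=c(X',B')=\dim X'+\rank\Cl(X')_\qq-\#\{\text{components of }B'\}\in\zz$, and therefore $c_{\rm bir}(X,B_0)\in\{0,\dots,n\}$.

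Suppose instead that $(X,B_0)$ is a $2$-complement. I would pass to the index-one cover $\pi\colon Y\to X$ of the pair $(X,B_0)$: a degree-$2$ morphism from a normal variety $Y$ which is crepant, in the sense that $K_Y+B_Y=\pi^*(K_X+B_0)$ for an effective divisor $B_Y\ge 0$ with $K_Y+B_Y\sim 0$, so that $B_Y$ is a $1$-complement of $Y$. I would then check that $Y$ is of Fano type of dimension $n$ — writing $-K_X\sim_\rr\Delta$ with $(X,\Delta)$ klt log Fano, the divisor $\Delta_Y:=\pi^*\Delta$ is big since $\pi$ is finite, $(Y,\Delta_Y)$ is klt, and $-K_Y\sim_\rr\Delta_Y$ — and that $\operatorname{coreg}(Y,B_Y)=0$: pulling back a $\qq$-factorial dlt modification of $(X,B_0)$ along $\pi$ and normalizing yields a crepant model of $(Y,B_Y)$ whose dual complex is a finite cover of $\mathcal{D}(X,B_0)$, hence still of dimension $n-1$. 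Applying the preceding paragraph to $(Y,B_Y)$ then gives $c_{\rm bir}(Y,B_Y)\in\{0,\dots,n\}$, which is exactly the claim for the $2$-to-$1$ cover $Y\to X$.

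I expect the main obstacle to be this second case, namely transporting the Fano type hypothesis and, above all, the coregularity across the index-one cover. The crucial feature is that $\pi$ is crepant, so that a dlt modification of $(X,B_0)$ pulls back to a crepant model of $(Y,B_Y)$; the delicate points are to verify that this pulled-back model is still dlt, possibly after a small $\qq$-factorialization, and that the induced map on dual complexes is a genuine finite covering map — which is what guarantees that the dimension of the dual complex, and hence the coregularity, is preserved. Treating the ramification of $\pi$ carefully along the coefficient-$\tfrac12$ part of $B_0$ is where the real work sits; the remainder is bookkeeping on top of the results quoted above.
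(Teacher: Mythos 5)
Your Case 1, and your observation that the birational complexity is an integer because the computing model of a $1$-complement has reduced boundary, are both fine, and your first move in Case 2 (pass to the index-one cover $\pi\colon Y\to X$, so that $B_Y$ is a $1$-complement of coregularity zero) is the same as the paper's. The gap is the next step: the claim that $Y$ is of Fano type because ``$\Delta_Y:=\pi^*\Delta$ is big, $(Y,\Delta_Y)$ is klt, and $-K_Y\sim_\rr\Delta_Y$''. The index-one cover of a genuine $2$-complement is \emph{ramified} along the coefficient-$\tfrac12$ part of $B_0$ (this is exactly how those components drop out of $B_Y$), so $K_Y=\pi^*K_X+R$ with $R\neq 0$ whenever that part is nonempty; hence $-K_Y\not\sim_\rr\pi^*\Delta$, and the corrected boundary $\pi^*\Delta-R$ need be neither effective nor klt. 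Fano type really can be destroyed by such ramified crepant covers — the double cover of $\bigl(\pp^1,\tfrac12(p_1+p_2+p_3+p_4)\bigr)$ is an elliptic curve — so this is not a bookkeeping issue: your argument never uses coregularity zero at this point, and without it the conclusion is simply false in general. (Your worries about dlt-ness of the pulled-back model and the covering behaviour of dual complexes are comparatively harmless; the invariance of the coregularity under crepant finite covers is standard, cf.\ \cite{FMM22}.)

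The paper's proof sidesteps exactly this: it never asserts that $Y$ is of Fano type. Instead, since $(Y,B_Y)$ has coregularity zero, it invokes \cite[Theorem 4.2]{FS20} (Koll\'ar--Xu, Filipazzi--Svaldi) to produce a crepant birational pair $(Z,B_Z)$ with $Z$ of Fano type; because the index and the birational complexity are crepant birational invariants, $K_Z+B_Z\sim 0$ and $c_{\rm bir}(Y,B_Y)=c_{\rm bir}(Z,B_Z)$, and then Theorem~\ref{theorem:bcomp-bound} applied to $(Z,B_Z)$, with ${\rm coreg}(Z,B_Z)=0$, gives the bound by $n$. If you replace your Fano-type claim for $Y$ by this reduction to a Fano type crepant model, and keep your integrality remark, the proof is complete.
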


By Lemma~\ref{lem:computing-bcomp} and ~\cite[Theorem 1.2]{BMSZ18},
we get the following characterization of pairs 
with birational complexity zero:

\begin{theorem}\label{theorem:bcomp-zero}
Let $(X,B)$ be a log Calabi--Yau pair with $K_X+B\sim 0$. 
Then
$c_{\rm bir}(X,B)=0$
if and only if 
there exists a crepant
birational map
\[
j\colon (\pp^n,H_1+\dots+H_{n+1})
\dashrightarrow (X,B),
\]
where the $H_i$'s are the hyperplane coordinates of $\mathbb{P}^n$.
In particular, 
${\rm coreg}(X,B)=0$
and $\mathcal{D}(X,B)\simeq_{\rm PL} S^{n-1}$.
\end{theorem}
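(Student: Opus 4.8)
The plan is to reduce everything to the characterization of complexity-zero pairs from \cite[Theorem 1.2]{BMSZ18} via the model that computes the birational complexity (Lemma~\ref{lem:computing-bcomp}). For the forward direction, assume $c_{\rm bir}(X,B)=0$. By Lemma~\ref{lem:computing-bcomp}, there is a $\qq$-factorial dlt log Calabi--Yau pair $(X',B')$ crepant birational to $(X,B)$ with $c(X',B')=c_{\rm bir}(X,B)=0$. Now $c(X',B')=0$ means $\dim X' + \rank\Cl(X')_\qq - |B'| = 0$. By \cite[Theorem 1.2]{BMSZ18}, this forces $(X',\lfloor B'\rfloor)$ to be a toric pair with $\lfloor B'\rfloor$ the toric boundary, and moreover the complexity being exactly zero (rather than just $<3/2$, the threshold in \emph{loc. cit.}) pins down $X'$ to be $\pp^n$ with $B'=H_1+\dots+H_{n+1}$ the sum of coordinate hyperplanes — indeed, a toric log Calabi--Yau pair has complexity zero exactly when its fan has the minimal possible number of rays, namely $n+1$, which characterizes projective space with its standard boundary. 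Composing the crepant birational identifications $(\pp^n, H_1+\dots+H_{n+1}) = (X',B') \dashrightarrow (X,B)$ gives the desired $j$.

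For the converse, suppose such a crepant birational map $j$ exists. Then $(\pp^n, H_1+\dots+H_{n+1})$ is a log Calabi--Yau pair crepant birational to $(X,B)$, and since $c(\pp^n, H_1+\dots+H_{n+1}) = n + 1 - (n+1) = 0$ and the birational complexity is the infimum of complexities over the crepant birational class while being $\geq 0$ by Theorem~\ref{theorem:bcomp-bound}, we conclude $c_{\rm bir}(X,B)=0$.

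For the final assertions: the coregularity is a crepant birational invariant of the log Calabi--Yau pair, so ${\rm coreg}(X,B) = {\rm coreg}(\pp^n, H_1+\dots+H_{n+1})$, and the latter is $0$ since the intersection $H_1\cap\dots\cap H_{n+1}$ is empty but any $n$ of the hyperplanes meet in a point, giving a zero-dimensional log canonical center. Finally, the dual complex is invariant under crepant birational maps between dlt pairs (by \cite{dFKX17}, using that one can pass to $\qq$-factorial dlt models on both sides and these have PL-homeomorphic dual complexes), so $\mathcal{D}(X,B) \simeq_{\rm PL} \mathcal{D}(\pp^n, H_1+\dots+H_{n+1})$, which is the boundary complex of the standard simplex $\Delta^n$, i.e., $\partial\Delta^n \simeq_{\rm PL} S^{n-1}$.

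The main obstacle is the precise identification in the forward direction that complexity \emph{exactly} zero (not merely below the \cite{BMSZ18} threshold) singles out $(\pp^n, H_1+\dots+H_{n+1})$ up to crepant birational equivalence; one must check that among toric log Calabi--Yau pairs, complexity zero is equivalent to the toric boundary having exactly $\dim X + 1$ prime components with no fractional part, and that such a toric variety is crepant birational — indeed, after running a toric MMP, isomorphic in codimension one and then actually equal — to projective space with its coordinate hyperplanes. A secondary technical point is justifying the crepant birational invariance of the dual complex at the level of PL-homeomorphism type, which requires the dlt hypothesis and may need passing through a common dlt model; this is where citing \cite{dFKX17} carefully matters.
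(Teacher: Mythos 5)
Your converse direction, the reduction via Lemma~\ref{lem:computing-bcomp}, and the final assertions (crepant invariance of coregularity and of the PL type of the dual complex) all match the paper. But the key identification step in the forward direction contains a genuine error. You claim that ``a toric log Calabi--Yau pair has complexity zero exactly when its fan has the minimal possible number of rays, namely $n+1$, which characterizes projective space with its standard boundary,'' and you use this to conclude $X'\simeq \pp^n$ with $B'$ the coordinate hyperplanes. This is false on both counts: for a complete toric variety $T$ with full reduced toric boundary $B_T$ one always has $\#\{\text{rays}\}=\dim T+\rank \Cl(T)_\qq$, so $c(T,B_T)=0$ for \emph{every} such pair --- e.g.\ $((\pp^1)^n, B_T)$ has complexity zero and is not $\pp^n$ --- and, conversely, a complete fan with exactly $n+1$ rays gives a (fake) weighted projective space, not necessarily $\pp^n$. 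Your fallback in the last paragraph (``after running a toric MMP, isomorphic in codimension one and then actually equal to projective space'') does not repair this, since a toric MMP need not terminate on $\pp^n$ and fake weighted projective spaces are not isomorphic to it.

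The correct (and the paper's) route asks for less: from $c(X',B')=0$ and $K_{X'}+B'\sim 0$ (which is why the hypothesis $K_X+B\sim 0$, and not merely $\qq$-linear triviality, is needed), Theorem~\ref{thm:comp-noneg} together with the count of coefficients forces $B'$ to be the \emph{full reduced} toric boundary of the toric variety $X'$. One then observes that any two $n$-dimensional toric pairs with full reduced toric boundary are crepant birational to each other: the birational map identifying the open tori has all toric divisors as log canonical places on both sides (each toric divisor has log discrepancy zero for a toric pair with $K+B\sim 0$), so it is crepant. This yields the map $j\colon (\pp^n,H_1+\dots+H_{n+1})\dashrightarrow (X',B')\dashrightarrow (X,B)$ without ever claiming $X'\simeq \pp^n$. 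If you replace your identification step by this argument, the rest of your proof goes through as written.
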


If $X$ is a Fano surface of absolute coregularity zero, up to a $2$-to-$1$ cover of $X$, there exists a $1$-complement $B$ such that $c_{\rm bir}(X,B)=0$.
In particular, by Theorem~\ref{theorem:bcomp-zero}, the log pair $(X,B)$ is crepant equivalent to $(\mathbb{P}^2,H_1+H_2+H_3)$ and $\mathcal{D}(X,B)\simeq S^1$; see also \cite[Proposition 1.3]{GHK2015}. In general, in dimension $2$, a log pair of coregularity zero has
birational complexity zero up to a $2$-to-$1$ cover.
This is not true anymore in dimension $3$; see for instance \cite{K2020}.
In the context of Corollary~\ref{introcor:bcomp-coreg-0}, it is not clear whether we can attain all the possible values between $0$ and $n$.

\subsection{Ample divisors in the boundary}\label{sec:ample}
We now explain the relation 
between the birational complexity
of a log Calabi--Yau pair
and the existence
of positive divisors in the boundary divisor.
In~\cite{KX16}, Koll\'ar and Xu proved that a log Calabi--Yau pair $(X,B)$ of coregularity zero
is birational to a log Calabi--Yau pair $(X_0,B_0)$ where $X_0$ is a variety of Fano type. 
To show this, they prove that on a crepant birational model $(X_0,B_0)$ of $(X,B)$ the divisor $\lfloor B_0\rfloor$ supports a big divisor.
In this direction, we show the existence of a $\qq$-factorial dlt crepant birational model $(X_0,B_0)$ of $(X,B)$ on which $\lfloor B_0\rfloor$ supports an ample divisor.
In the case of higher coregularity, 
a similar relative statement holds.

\begin{theorem}\label{theorem:supp-amp}
Let $(X,B)$ be a log Calabi--Yau pair.
Then, there exist:
\begin{enumerate}
\item[(i)] a $\qq$-factorial dlt crepant birational model $(X',B')$ of $(X,B)$, and 
\item[(ii)] a fibration $X'\rightarrow Z$,
\end{enumerate} 
such that the following conditions hold:
\begin{enumerate}
\item every log canonical center of $(X',B')$ dominates $Z$, and
\item the divisor $\lfloor B'\rfloor$ fully supports an effective divisor that is ample over $Z$.
\end{enumerate}
\end{theorem}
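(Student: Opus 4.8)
The plan is to pass to a dlt model, run a minimal model program for $-\lfloor B\rfloor$, and identify the resulting Mori fibre space with the desired structure. First I would replace $(X,B)$ by a $\qq$-factorial dlt crepant birational model $(X_1,B_1)$, which is again log Calabi--Yau, so from now on I may assume $(X,B)$ is $\qq$-factorial and dlt. If $\lfloor B\rfloor=0$, then $(X,B)$ is klt, it has no proper log canonical centre, and one may take $X'=X=Z$ with the identity fibration. Assume therefore $\lfloor B\rfloor\neq 0$; since the coregularity is a crepant birational invariant this will persist on all the dlt crepant birational models produced below.

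Since $K_X+B\sim_{\rr}0$ and $(X,\{B\})$ is klt, we have $-\lfloor B\rfloor\sim_{\rr}K_X+\{B\}$, so I would run the $(K_X+\{B\})$-MMP with scaling of an ample divisor. Each step contracts a ray $R$ with $\lfloor B\rfloor\cdot R>0$ and $(K_X+B)\cdot R=0$, hence is crepant and, by the usual lemmas on $(K_X+B)$-trivial contractions of dlt pairs, preserves being $\qq$-factorial and dlt; in particular every model stays log Calabi--Yau and crepant to $(X,B)$. The essential technical point is that this MMP terminates: in the present log Calabi--Yau setting this reduces to known termination statements for klt MMP with scaling, as used by Koll\'ar--Xu and Filipazzi--Svaldi. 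Because $\lfloor\,\cdot\,\rfloor$ stays nonzero and an effective $\rr$-divisor whose negative is nef on a $\qq$-factorial projective variety must vanish, the divisor $-\lfloor\,\cdot\,\rfloor$ can never become nef; hence the program ends with a Mori fibre space $g\colon X'\to Z$ with $\rho(X'/Z)=1$ and $-(K_{X'}+\{B'\})\sim_{\rr}\lfloor B'\rfloor$ ample over $Z$. Taking $\lfloor B'\rfloor$ itself as the effective divisor, this already yields condition (2).

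The remaining, and hardest, point is condition (1): that every log canonical centre of $(X',B')$ dominates $Z$. Since each log canonical centre of a dlt pair contains a minimal one, it suffices that every minimal log canonical centre dominates $Z$, i.e. that $\dim Z=\operatorname{coreg}(X,B)$. The naive program above may overshoot and produce a base that is too large, leaving some minimal log canonical centre vertical---for instance $\pp^2\times\pp^1$ with its toric boundary fibred over $\pp^1$, whereas here one wants the fibration to a point. To remedy this I would run the $(-\lfloor B\rfloor)$-MMP not absolutely but \emph{relative to a base adapted to the minimal log canonical centres}: a minimal log canonical centre carries, by adjunction, a klt log Calabi--Yau structure, and after a further crepant modification the pair admits a fibration $X_1\dashrightarrow Z$ onto a model $Z$ of such a centre along which every log canonical centre is horizontal, so that $\dim Z=\operatorname{coreg}(X,B)$. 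Running the above program over $Z$ then keeps all log canonical centres dominating $Z$, while on the generic fibre---which has coregularity zero with $\lfloor B'\rfloor$ big---the relative form of the Koll\'ar--Xu argument makes $\lfloor B'\rfloor$ ample over $Z$. I expect the construction of the correct base $Z$, together with the verification that the relative program does not contract a horizontal component of $\lfloor B\rfloor$ (equivalently, that $\dim Z=\operatorname{coreg}(X,B)$ is actually achieved), to be the crux of the argument; this is precisely the place where the refinements of Koll\'ar--Xu and Filipazzi--Svaldi are needed.
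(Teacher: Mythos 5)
Your starting point (pass through the Koll\'ar--Xu/Filipazzi--Svaldi model and then run a boundary-positive MMP) is the same as the paper's, and you correctly diagnose, via your $\pp^2\times\pp^1$ example, that the absolute $-\lfloor B\rfloor$-MMP ruins condition (1). The genuine gap is exactly at the point you call the crux, and the mechanism you sketch there does not work as stated. If you run the $(K_X+\{B\})$-MMP relative to a base $Z$ over which every lc centre is horizontal, the program terminates with a Mori fibre space $X'\to W$ \emph{over} $Z$, and what you get is that $\lfloor B'\rfloor$ is ample over $W$, not over $Z$: keeping $X'\to Z$ as the fibration leaves (2) unproved, while switching to $X'\to W$ can break (1) again, since lc centres are only known to dominate $Z$, not $W$ --- the same problem one level up. Invoking ``the relative form of the Koll\'ar--Xu argument'' to produce ampleness is circular: \cite{KX16} and \cite{FS20} only give a big and semiample (respectively, Fano type over the base) structure, and taking the ample model destroys $\qq$-factoriality and dltness; upgrading this to an \emph{ample} divisor fully supported on $\lfloor B'\rfloor$ on a $\qq$-factorial dlt model is precisely what the theorem asserts. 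The paper closes this loop by induction on the dimension of the base (Theorem~\ref{thm:ample-div-bound-with-FT}): extract a single component $S$ of $\lfloor B\rfloor$, run a $(K+B-S)$-MMP over the FS20 base to a Mori fibre space $X_k\to Z$, apply the inductive hypothesis to the pair induced on $Z$ by the canonical bundle formula, lift that base modification to the total space by the $2$-ray game (Lemma~\ref{cor:mod-base-mod-tot-space}), and then combine $S$ (ample over the new base because the relative Picard rank is one) with the pullback of the ample divisor from the base. None of this descent-and-lifting appears in your proposal.

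Two further inaccuracies. First, $(K_X+B)$-trivial steps of a $(K_X+\{B\})$-MMP do \emph{not} preserve dltness of $(X,B)$; the paper tracks the non-dlt locus (Lemma~\ref{lem:-S-MMP-non-dlt}) and restores dltness at the very end without losing the relatively ample divisor supported on the boundary (Lemma~\ref{lem:dlt-mod-supp-amp}). In your setting this is repairable, since all non-dlt centres lie in $\lfloor B'\rfloor$, which is the ample divisor you use, but the step must be argued, not asserted. Second, condition (1) is not equivalent to $\dim Z=\operatorname{coreg}(X,B)$ (it only forces $\dim Z\le \operatorname{coreg}(X,B)$), and neither \cite{KX16} nor \cite{FS20} produces a fibration onto ``a model of a minimal lc centre''; the base you propose to fix in advance is not available as stated, and fixing it would only make the unproved ampleness statement harder, since it concerns the whole relative Picard group over $Z$ rather than a single Mori fibre space step.
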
 

In the case that the log Calabi--Yau pair $(X,B)$ has coregularity zero, we can take $Z$ in the previous statement to be a point.
In~\cite[Theorem 4.9]{KX16}, Koll\'ar and Xu proved a version of the previous theorem in which $\lfloor B'\rfloor$ 
fully supports a semiample and big divisor $H$ over $Z$. 
The so-called {\em Koll\'ar--Xu model} of the pair $(X,B)$.
In their setting, we can take the ample model of $H$ and lose the $\mathbb{Q}$-factoriality of $X$. 
This statement was later proved in the context
of generalized pairs by Filipazzi and Svaldi~\cite{FS20}.
It is not clear whether the techniques of~\cite{KX16,FS20} can be modified to get both a $\qq$-factorial model and $\lfloor B\rfloor$ supporting an ample divisor.
Our approach uses the work of Koll\'ar--Xu and Filipazzi--Svaldi as a starting point, 
and we argue by running the MMP and the $2$-ray game. 

Our next statement is an improvement of Theorem~\ref{theorem:supp-amp}.
Whenever the birational complexity is not maximal,
we can arrange the ample divisor to be properly supported on $\lfloor B'\rfloor$.
This apparently innocuous improvement will be pivotal in our results about dual complexes.

\begin{theorem}\label{theorem:proper-supp-amp}
Let $(X,B)$ be a log Calabi--Yau pair of dimension $n$.
Assume that $K_X+B\sim 0$ and
$c_{\rm bir}(X,B)<n$.
Then, there exist:
\begin{enumerate}
\item[(i)] a $\qq$-factorial dlt crepant birational model $(X',B')$ of $(X,B)$, and 
\item[(ii)] a fibration $X'\rightarrow Z$,
\end{enumerate} 
such that the following conditions hold:
\begin{enumerate}
\item every log canonical center of $(X',B')$ dominates $Z$, 
\item the divisor $\lfloor B'\rfloor$ properly supports an effective 
divisor which is ample over $Z$, and
\item the dual complex $\mathcal{D}(X',B')$ is simplicial.
\end{enumerate} 
\end{theorem}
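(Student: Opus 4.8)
The plan is to start from the model produced by Theorem~\ref{theorem:supp-amp}, but chosen so that it also computes the birational complexity, then to subdivide it until its dual complex is simplicial, and finally to use the slack in the hypothesis $c_{\rm bir}(X,B)<n$ to move the relatively ample divisor off one boundary component. Since $K_X+B\sim 0$, the divisor $B$ is integral, hence reduced, and this persists on every $\qq$-factorial dlt crepant birational model (such modifications only extract log canonical places, of coefficient $1$). By Lemma~\ref{lem:computing-bcomp} I would choose a $\qq$-factorial dlt crepant birational model $(X_0,B_0)$ with $c(X_0,B_0)=c_{\rm bir}(X,B)$; using $\rank\Cl(X_0)_\qq=\rho(X_0)$ and $\dim X_0=n$, the hypothesis becomes
\[
\#\{\text{components of }\lfloor B_0\rfloor\}=\sum_i a_i(B_0)>\rho(X_0).
\]
Feed $(X_0,B_0)$ into the construction of Theorem~\ref{theorem:supp-amp} (which relies on the Koll\'ar--Xu/Filipazzi--Svaldi construction). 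Every step there is crepant and is a dlt modification, a flip, a small $\qq$-factorialisation, or a divisorial contraction, and none of these increases the complexity $\dim+\rank\Cl_\qq-\sum a_i$: flips and small modifications change neither term, a dlt modification raises $\rank\Cl_\qq$ and $\sum a_i$ equally, and a divisorial contraction of $E$ drops $\rank\Cl_\qq$ by $1$ and drops $\sum a_i$ by $1$ precisely when $E$ is a boundary component (the condition $K_X+B\sim0$ forbids extracting divisors of positive log discrepancy, which would introduce fractional coefficients). As $c(\,\cdot\,)\ge c_{\rm bir}(X,B)$ on every crepant model, the resulting model $(X_1,B_1)\to Z$ of Theorem~\ref{theorem:supp-amp} still computes $c_{\rm bir}(X,B)$; thus it is $\qq$-factorial dlt, every log canonical centre dominates $Z$, $\lfloor B_1\rfloor$ fully supports an effective divisor $A_1$ ample over $Z$, and $\#\{\text{components of }\lfloor B_1\rfloor\}>\rho(X_1)\ge\rho(X_1/Z)$.

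Next, I would replace $(X_1,B_1)$ by a $\qq$-factorial dlt crepant model $\mu\colon(X_2,B_2)\to(X_1,B_1)$ whose dual complex is a simplicial subdivision --- e.g.\ the barycentric one --- of $\mathcal D(X_1,B_1)$, obtained by successively blowing up the closures of the strata of $\lfloor B_1\rfloor$, assigning coefficient $1$ to each exceptional divisor and $\qq$-factorialising where needed; see \cite{dFKX17,KX16}. Each such blow-up raises $\rank\Cl_\qq$ and $\sum a_i$ by $1$, so $(X_2,B_2)$ still computes $c_{\rm bir}(X,B)$ and retains $\#\{\text{components of }\lfloor B_2\rfloor\}>\rho(X_2/Z)$; the blown-up centres are log canonical centres, dominating $Z$, so every log canonical centre of $(X_2,B_2)$ dominates $Z$; and $\mathcal D(X_2,B_2)$ is simplicial. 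Finally, recover a relatively ample divisor on $X_2$ supported on $\lfloor B_2\rfloor$: pick an effective $\mu$-exceptional divisor $F$ with $-F$ $\mu$-ample; then $A_2:=\mu^*A_1-\delta F$ is ample over $Z$ for $0<\delta\ll1$ (pullback of an ample-over-$Z$ divisor minus a small $\mu$-ample exceptional), it is effective because $A_1$ is supported on all of $\lfloor B_1\rfloor$ and hence has positive multiplicity along every centre of $\mu$, and its support --- strict transforms of components of $\lfloor B_1\rfloor$ together with $\mu$-exceptional divisors, all of coefficient $1$ --- lies in $\lfloor B_2\rfloor$.

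It remains to shrink the support. Write $\lfloor B_2\rfloor=D_1+\dots+D_s$ with $s>\rho(X_2/Z)$ and $A_2=\sum_i c_iD_i$, $c_i\ge0$. The linear map $\rr^s\to N^1(X_2/Z)_\rr$, $e_i\mapsto[D_i]$, has a nonzero kernel element $K=(k_i)$. The set $\{t\in\rr:\ c_i+tk_i\ge0\ \forall i\}$ is an interval containing $0$ whose endpoints are not both infinite (else $K=0$), so it has a finite endpoint $t_0\ne0$; at $t=t_0$ some coordinate $j$ vanishes, while $c+t_0K\ne0$ since $[A_2]\ne0$ in $N^1(X_2/Z)$. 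Hence $A':=\sum_i(c_i+t_0k_i)D_i$ is a nonzero effective divisor, numerically equivalent to $A_2$ over $Z$ --- hence ample over $Z$ --- and supported on a proper subset of $\{D_1,\dots,D_s\}$. Then $(X',B'):=(X_2,B_2)$ with the fibration $X_2\to Z$ and this $A'$ satisfies (1)--(3).

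The crux --- and the step I expect to be the main obstacle --- is the complexity bookkeeping above: one must check that running the Koll\'ar--Xu/Filipazzi--Svaldi construction and the subsequent $2$-ray game of Theorem~\ref{theorem:supp-amp} from a model computing $c_{\rm bir}(X,B)$ never increases the complexity, so that the output still computes it and therefore has strictly more boundary components than its relative Picard number. That one extra component is exactly what the linear-algebra step converts into the properness of the support; when $c_{\rm bir}(X,B)$ is maximal the kernel above can be zero and the argument breaks, in accordance with the weaker Theorem~\ref{theorem:supp-amp}. A secondary technical point is the re-amplification in the simpliciality step, where $\mu^*A_1$ is only nef and big over $Z$ and has to be corrected by a small exceptional divisor while staying effective and supported on $\lfloor B_2\rfloor$.
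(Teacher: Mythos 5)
Your argument is correct in substance, but it reaches the key conclusion by a genuinely different mechanism than the paper. The paper's proof is two lines of reduction (Lemma~\ref{lem:computing-bcomp} gives a $\qq$-factorial dlt model computing $c_{\rm bir}$, whence $B'$ reduced and $|B'|>\rho(X')$) followed by an appeal to Theorem~\ref{thm:proper-supp-amp} and Lemma~\ref{lem:simplicial}; and Theorem~\ref{thm:proper-supp-amp} itself is proved (via Theorem~\ref{thm:proper-supp-amp-with-FT}) by a dedicated induction on dimension using the canonical bundle formula, the $2$-ray game, a case analysis on the number of horizontal components over the Mori fibre space, and Lemma~\ref{lem:prop-supp-ampl-mod} to propagate the properly-supported ample divisor. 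You instead take the ``fully supports'' output of the Theorem~\ref{theorem:supp-amp} construction, keep track of the quantity $|\lfloor B\rfloor|-\rho$ so that the output still has more boundary components than $\rho(X_2/Z)$, simplicialise, restore a fully supported relatively ample divisor $A_2$, and then convert ``fully'' into ``properly'' by pure linear algebra in $N^1(X_2/Z)$: a nontrivial relation $\sum_i k_i D_i\equiv_Z 0$ among the boundary components lets you slide $A_2$ inside the orthant until a coefficient hits zero, and relative ampleness survives because it is a numerical condition over $Z$. This perturbation argument is considerably more elementary than the paper's induction, and your bookkeeping is sound: the constructions invoked (the Filipazzi--Svaldi model, gdlt modifications, Lemma~\ref{cor:mod-base-mod-tot-space}) only extract glc places, which preserve $|\lfloor B\rfloor|-\rho$ between $\qq$-factorial models, while birational contractions can only increase it; anchoring at a model computing $c_{\rm bir}$ (where $K+B\sim 0$ forces the boundary reduced) therefore yields the needed inequality on the output. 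What the paper's longer route buys is a self-contained relative statement (Theorem~\ref{thm:proper-supp-amp-with-FT}) in the generalized-pair setting that is reused elsewhere; what your route buys is a short, transparent derivation of the ``properly supports'' upgrade, with the failure at $c_{\rm bir}=n$ visible as the possible vanishing of the kernel.

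Three points need tightening, all fixable with the paper's own lemmas. First, your parenthetical that $K_X+B\sim 0$ ``forbids extracting divisors of positive log discrepancy'' is not a correct justification: nothing forbids a crepant model from extracting such a divisor (it would simply carry a fractional coefficient and raise the complexity); what you actually need, and what holds, is that the specific constructions used only extract glc places, as stated explicitly in Theorem~\ref{thm:ample-div-bound-with-FT}, Lemma~\ref{cor:mod-base-mod-tot-space} and the quoted form of \cite[Theorem 4.2]{FS20}, so you should cite those clauses rather than the linear equivalence. Second, the simplicialisation cannot be done by naively ``assigning coefficient $1$ to each exceptional divisor'' of the stratum blow-ups: over non-snc points these blow-ups extract divisors of positive log discrepancy, and assigning them coefficient $1$ would not be crepant; one needs the log-resolution-plus-MMP argument of Lemma~\ref{lem:simplicial} (after \cite[Remark 10]{dFKX17}), which contracts exactly those divisors and then yields the dlt model whose dual complex is the barycentric subdivision -- your re-amplification step $\mu^*A_1-\delta F$ then works as you describe, precisely because $A_1$ is fully supported. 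Third, $\rank\Cl_\qq(X_0)=\rho(X_0)$ need not hold; but only the inequality $\rank\Cl_\qq(X_0)\geq\rho(X_0)$ is needed to pass from $c(X_0,B_0)<n$ to $|B_0|>\rho(X_0)$, so the conclusion stands.
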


In Example~\ref{ex:supp-ample}, we show that the previous statement does not hold if 
we drop
$K_X+B\sim 0$.
Indeed, in this case the divisor $\lfloor B'\rfloor$ may be empty or irreducible in every possible crepant model $(X',B')$ of $(X,B)$.

\subsection{Positive divisors in the boundary}\label{sec:positive} In a similar vein than the previous section, 
we can study 
the existence of {\em positive} divisors
in the boundary of a log Calabi--Yau pair $(X,B)$.
The answer depends on the notion of {\em positivity} that we consider.
If $B$ is reduced, then the number of components of $B$ is bounded above by $\dim X+\rho(X)$ and the log pair is toric if the equality holds.
In the previous statement no positivity assumption is required for the components of $B$.
If instead we study the number of positive components of $B$, then the situation becomes more restrictive. 
In what follows, we introduce some results about the number of {\em positive} divisors that $B$ can support. Further, we show some implications on the dual complex $\mathcal{D}(X,B)$ when the maximal number of positive divisors on $B$ is achieved.

\begin{definition}
A {\em decomposition} of an effective divisor $B$ is a formal sum $\sum_{i\in I} B_i$ of effective divisors such that $\bigcup_{i\in I}{\rm supp}(B_i)\subseteq {\rm supp}(B)$ and $B_i\wedge B_j=0$ for $i,j\in I$ and $i\neq j$. We say that a decomposition is {\em into big divisors} if each $B_i$ is a big divisor.
Analogously, we say that a decomposition is {\em into movable divisors} if each $B_i$ is movable.
\end{definition}
Our first result in this direction is a bound on the number of components of decompositions into big divisors. This is a generalization of \cite[Theorem 4.1]{Mauri2020}.

\begin{theorem}\label{theorem:big-decomp}
Let $X$ be an $n$-dimensional klt variety and let $(X,B)$ be a log Calabi--Yau pair.
Let $\sum_{i\in I} B_i$ be a decomposition of $\lfloor B \rfloor$ into big divisors.
Then, we have that $|I|\leq n+1$.
Furthermore, if the equality holds, then there exists a crepant birational contraction
$(X,B)\dashrightarrow (T,B_T)$ where $T$ is a weighted projective space with its toric boundary $B_T$. 
In particular, $\mathcal{D}(X,B)$ is PL-homeomorphic to the boundary of an $n$-dimensional standard simplex. 
\end{theorem}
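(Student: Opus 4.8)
The plan is to reduce the bound $|I| \le n+1$ to the case of a $\qq$-factorial dlt crepant birational model and then invoke the characterization of toric pairs from \cite[Theorem 1.2]{BMSZ18}. First I would pass to a $\qq$-factorial dlt crepant birational model $(X',B')$ of $(X,B)$ via a dlt modification; this is harmless since bigness is a birational notion and crepant birational contractions only decrease (or preserve) the number of components with nontrivial support. Pulling back the decomposition $\sum_{i\in I}B_i$ of $\lfloor B\rfloor$, each $B_i$ remains big on $X'$, so it suffices to prove the bound on $X'$. On such a model the relevant counting invariant is $\dim X + \rho(X')$, and the key point is that $|I|$ components of $\lfloor B'\rfloor$, each of which is big, must be ``independent enough'' in $N^1(X')$ to force $|I|\le n+1$.

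The heart of the argument is a positivity/intersection estimate. Because each $B_i$ is big and effective and their supports are disjoint (the decomposition condition $B_i\wedge B_j = 0$), one can run an MMP or use a $2$-ray-type argument — exactly in the spirit of Theorem~\ref{theorem:supp-amp} and its proof — to arrange, on a further crepant model, that $\sum_{i\in I}B_i$ is the full $\lfloor B'\rfloor$ and that the components become ``movable enough'' to be contracted or made nef in stages. Concretely, I would argue that big divisors with disjoint support give $|I|$ classes whose positivity is mutually exclusive, so that they span an $|I|$-dimensional subspace on which one has strong contraction behavior; this is where the generalization of \cite[Theorem 4.1]{Mauri2020} enters. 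The upshot is that the presence of $n+1$ disjoint big components saturates the complexity bound: we get $c(X',B')=0$ after possibly contracting the rest of $B'$, since $\dim X + \rho - |I| \le 0$ combined with $|I|\le n+1\le \dim X + \rho$ forces $|I| = \dim X+\rho$ and all coefficients of the contracted boundary equal to $1$.

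Once equality $|I| = n+1$ holds, the complexity of the resulting pair $(T,B_T)$ is zero, so by \cite[Theorem 1.2]{BMSZ18} the pair $(T,\lfloor B_T\rfloor)$ is toric, and moreover $T$ has Picard number $1$ with $n+1$ toric prime divisors summing to the toric boundary — these are exactly the defining properties of a weighted projective space $\pp(w_0,\dots,w_n)$ with its standard toric boundary. The crepant birational contraction $(X,B)\dashrightarrow (T,B_T)$ is then the composition of the dlt modification with the MMP steps above. Finally, since dual complexes are invariant under crepant birational maps of dlt pairs (by \cite{dFKX17}), $\mathcal{D}(X,B)\simeq_{\rm PL}\mathcal{D}(T,B_T)$, and the latter is the boundary of the $n$-simplex spanned by the $n+1$ coordinate toric divisors of the weighted projective space.

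I expect the main obstacle to be the positivity estimate in the middle step: making precise why $n+1$ disjoint big divisors inside $\lfloor B\rfloor$ cannot coexist with $\rho(X') < n+1$, i.e.\ producing the crepant model on which one can literally contract the ``excess'' boundary and land on a weighted projective space. The disjointness hypothesis $B_i\wedge B_j=0$ is essential here — it is what upgrades ``big'' to genuine independence in the Mori chamber decomposition — and carefully running the MMP while tracking that each $B_i$ stays big and effective (and that log canonical centers are not destroyed) is the delicate part; one likely needs the dlt and $\qq$-factorial hypotheses together with a length-of-extremal-rays or $2$-ray game argument as already used for Theorem~\ref{theorem:supp-amp}.
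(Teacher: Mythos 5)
There is a genuine gap at the heart of your sketch: the inequality $|I|\leq n+1$ is never actually derived. The non-negativity of the (fine) complexity on a crepant model only gives $|I|\leq \dim X+\rho(X')$, and your middle step is circular: you write that ``$\dim X+\rho-|I|\leq 0$ combined with $|I|\leq n+1\leq \dim X+\rho$ forces $|I|=\dim X+\rho$'', but the first inequality is unjustified and the second is precisely the statement to be proved. Likewise, the claim that disjoint big divisors span an $|I|$-dimensional subspace of $N^1(X')$ is neither proved nor sufficient: even granting it, it yields $|I|\leq\rho(X')$, which is not the bound $n+1$ (see Example~\ref{ex:wpi-blow-up}, where $\rho$ is arbitrarily large while the decomposition has exactly $n+1$ big pieces), and your stated ``main obstacle'' (ruling out $n+1$ big components when $\rho(X')<n+1$) is not the right question, since weighted projective space itself realizes $n+1$ disjoint big components with $\rho=1$. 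The missing idea, which is how the paper argues, is to run a $(K_X+B-B_1)$-MMP: since $K_X+B-B_1$ is not pseudo-effective it terminates with a Mori fiber space $X_k\rightarrow Z$; big divisors can be neither contracted by the birational contraction $X\dashrightarrow X_k$ nor vertical over $Z$, so all $|I|$ components survive and are horizontal, and restricting $\lfloor B_k\rfloor$ to a general fiber $F$ and applying non-negativity of the \emph{fine} complexity of the resulting log Fano/log Calabi--Yau pair on $F$ gives $|I|\leq \dim F+\rho(X_k/Z)=\dim F+1\leq n+1$. Equality then forces $\dim F=n$, hence $Z$ a point and $\rho(X_k)=1$, at which point the complexity-zero characterization of \cite[Theorem 1.2]{BMSZ18} (plus checking $B_k$ is reduced) identifies $(X_k,B_k)$ with a weighted projective space and its toric boundary; your endgame from that point on is fine.

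A secondary but real problem is your first reduction. You pass to a dlt modification and assert that each $B_i$ ``remains big'' because bigness is birational; but the strict transform of a big divisor under a morphism that extracts divisors need not be big (already the strict transform of a line through the blown-up point of $\pp^2$ fails), and replacing strict transforms by pullbacks destroys the disjointness $B_i\wedge B_j=0$ and no longer gives a decomposition of $\lfloor B'\rfloor$. The klt hypothesis on $X$ is there exactly so that one can take a \emph{small} $\qq$-factorialization, under which strict transforms of the $B_i$ stay big and the decomposition is preserved; it also guarantees that the composition with the MMP is a genuine crepant birational \emph{contraction} $(X,B)\dashrightarrow (T,B_T)$, which is not automatic if your first step extracts lc places.
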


In Example~\ref{ex:wpi-blow-up}, we provide examples of $n$-dimensional log Calabi--Yau pairs $(X,B)$ such that $B$ 
admits a decomposition into $n+1$ big divisors
and the Picard rank $\rho(X)$ is arbitarily large.
This means that it is not possible to control the number of blow-downs in the birational contraction $X\dashrightarrow T$.
In a similar direction, 
we give a bound on the number of components of a movable decomposition of the boundary divisor.

\begin{theorem}\label{theorem:movable-decomp}
Let $X$ be an $n$-dimensional klt variety and let $(X,B)$ be a log Calabi--Yau pair.
Let $\sum_{i\in I} B_i$ be a decomposition of $\lfloor B\rfloor$ into movable divisors.
Then, we have that $|I|\leq 2n$.
Furthermore, if the equality holds, then we have an isomorphism 
$(X,B)\simeq ((\mathbb{P}^1)^n,B_T)/A$, 
where $B_T$ is the toric boundary
and $A\leqslant \mathbb{G}_m^n \leqslant {\rm Aut}((\mathbb{P}^1)^n,B_T)$. 
In particular, $\mathcal{D}(X,B)$ is isomorphic to an $n$-dimensional orthoplex.
\end{theorem}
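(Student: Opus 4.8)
The plan is to first reduce to a convenient crepant birational model, then establish the numerical bound $|I|\le 2n$ by a complexity-type argument, and finally analyze the equality case via a rigidity statement that pins the pair down to a finite quotient of $(\pp^1)^n$.

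For the bound itself, I would pass to a $\qq$-factorial dlt model and note that each movable $B_i$, being nonzero and movable, has positive numerical dimension; in fact on a Mori dream / Fano type model each movable class is effective and nef up to contraction, and more crucially each $B_i$ moves, so $h^0(X,\oo_X(B_i))\ge 2$. The key input is that the $B_i$ have pairwise disjoint support inside $\lfloor B\rfloor$, so their classes in $N^1(X)_\qq$ are ``independent'' in a suitable sense. Following the strategy of \cite[Theorem 4.1]{Mauri2020} and the big-divisor case Theorem~\ref{theorem:big-decomp}, one compares $\sum_{i\in I}$ against $\sum a_i$ appearing in the complexity; the gain over the big case (where the bound was $n+1$) is that movable divisors come in ``pairs'' $B_i$ and a complementary movable divisor, reflecting the $(\pp^1)^n$ fibration structure, and each such pair contributes $2$ rather than $1$ to the count while only contributing $1$ to $\dim X$. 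Concretely, I expect to run, for each $i$, the MMP on $-B_i$ or use the movable cone decomposition to produce a fibration $X_i\dashrightarrow Z_i$ with general fiber $\pp^1$ and $B_i$ (a multiple of) a fiber class; the disjointness of supports then forces these $n$ fibrations to be ``transverse,'' yielding $|I|\le 2n$.

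For the equality case $|I|=2n$, the $2n$ movable components must organize into $n$ pairs, each pair giving a base-point-free pencil defining a morphism to $\pp^1$; running all $n$ of them simultaneously gives a generically finite map $X\dashrightarrow (\pp^1)^n$. Crepancy and the log Calabi--Yau condition force $K_X+B$ to pull back correctly, so this map is in fact crepant birational and the $2n$ components map onto the $2n$ toric boundary divisors of $(\pp^1)^n$; a Zariski-main-theorem / purity argument upgrades the birational map to an honest isomorphism on the relevant open loci, and the log Calabi--Yau pair $(X,B)$ with $\lfloor B\rfloor$ containing $2n$ movable components is recovered as a quotient $((\pp^1)^n,B_T)/A$ by a finite subgroup of automorphisms preserving the toric boundary. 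Since any such $A$ must preserve each of the $2n$ boundary divisors (it permutes them, but the induced pencils are intrinsic, so the permutation is trivial), $A$ lies in the connected torus $\G_m^n\le \Aut((\pp^1)^n,B_T)$. The dual complex statement is then immediate: $\mathcal{D}((\pp^1)^n,B_T)$ is the boundary of the $n$-dimensional cross-polytope (orthoplex), and a finite torus quotient does not change the dual complex (the torus acts trivially on the strata structure), so $\mathcal{D}(X,B)$ is an $n$-dimensional orthoplex.

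The main obstacle I anticipate is the equality analysis, specifically showing that the $n$ pencils coming from the $n$ pairs of movable divisors are \emph{simultaneously} base-point-free and cut out a map to $(\pp^1)^n$ that is crepant birational rather than merely dominant of some degree; controlling the positive-dimensional fibers and ruling out that the $B_i$'s could be multiples of pencils with nontrivial base loci (which would spoil disjointness of supports after resolving) is where the dlt and $K_X+B\sim 0$ hypotheses must be used carefully, presumably by an inductive argument on $n$ via adjunction to a component of $\lfloor B\rfloor$, reducing the $2n$ bound in dimension $n$ to the $2(n-1)$ bound in dimension $n-1$.
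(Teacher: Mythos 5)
Your argument for the bound $|I|\leq 2n$ has a genuine gap: the concrete mechanism you propose — running a $(-B_i)$-MMP for each $i$ so that $B_i$ becomes (a multiple of) a fiber class of a $\pp^1$-fibration, and then invoking ``transversality'' of the resulting fibrations — is not available. A movable component of the decomposition need not be a fiber class of anything; it can be big (see Example~\ref{ex:wpi-blow-up}, where the members of the decomposition are big and movable), and movability alone does not attach a $\pp^1$-fibration to each $B_i$. The heuristic that ``pairs contribute $2$ to the count and $1$ to the dimension'' is the right intuition but the pairing is part of what must be proved. The paper's actual route (Theorem~\ref{thm:mov-decomp-gen}) is different: one runs a single $(K_X+B-B_1)$-MMP, which contracts no $B_i$ and terminates in a Mori fiber space $X_k\to Z$; the horizontal components restrict to the general fiber $F$ and are bounded by $\dim F+\rho(X_k/Z)=\dim F+1$ via the non-negativity of the fine complexity (Theorem~\ref{thm:comp-noneg}), while the vertical components descend through the canonical bundle formula to a movable decomposition on $Z$ and are bounded by $2\dim Z$ by induction on dimension; adding the two bounds gives $2n$.

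The equality case is where the proposal breaks down more seriously. First, the claim that the product of the $n$ alleged pencils gives a \emph{crepant birational} map $X\dashrightarrow(\pp^1)^n$ is false whenever $A$ is nontrivial, and it contradicts the statement you are proving: for $X=(\pp^1\times\pp^1)/A$ with $A=\{\pm(1,1)\}\leqslant \G_m^2$ the diagonal involution, the four boundary components are movable, equality $|I|=4$ holds, and the product of the two induced pencils $X\to\pp^1\times\pp^1$ has degree $2$, while $X\not\simeq\pp^1\times\pp^1$. Second, you never establish that the $2n$ components organize into $n$ base-point-free pencils; movable does not imply semiample, and this pairing is essentially the conclusion. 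Third, the theorem asserts an isomorphism of $(X,B)$ itself with the quotient pair, so one must show that no birational modification intervenes; in the paper this is the delicate final step: after the MMP one lands on a toric pair whose moment polytope is shown (by a fan computation using movability of the two horizontal boundary divisors) to be a parallelotope, so that the effective and nef cones coincide, which rules out flips, and a divisorial contraction is excluded because the extracted divisor would force a boundary component to be covered by curves on which it is negative, contradicting movability. Your appeal to a Zariski-main-theorem/purity argument ``on the relevant open loci'' does not address this, and without it one only obtains a birational model of the desired quotient, not the isomorphism claimed in the statement.
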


Alternatively, the toric variety $((\mathbb{P}^1)^n,B_T)/A$ in Theorem~\ref{theorem:movable-decomp} can be characterized by the fact that its moment polytope is a parallelotope.

\subsection{Dimension of the dual complex} 

In Theorem~\ref{theorem:bcomp-vs-dim}, we explore connections
between 
birational complexity, 
decompositions into movable divisors, and
the dimension of the dual complex
of a log Calabi--Yau pair. 

\begin{theorem}\label{theorem:bcomp-vs-dim}
Let $(X,B)$ be a log Calabi--Yau pair of dimension $n$ with $K_X+B\sim 0$.
Let $c=c_{\rm bir}(X,B)$.
Then, there exist a $\mathbb{Q}$-factorial
crepant birational model $(X',B')$ of $(X,B)$ 
and a fibration $X'\rightarrow Z$
satisfying the following conditions:
\begin{itemize}
\item every log canonical center of $(X',B')$ dominates $Z$,
\item the divisor $\lfloor B'\rfloor$ admits a decomposition into $n-c$ 
effective divisors that are movable over $Z$, and 
\item we may assume that the intersection of all the movable divisors in the decomposition is non-trivial.
\end{itemize}
In particular, we have that $\dim \mathcal{D}(X,B)\geq n-c-1$.
\end{theorem}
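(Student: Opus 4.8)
The plan is an induction on $n=\dim X$, carried out relative to a fibration produced by Theorem~\ref{theorem:proper-supp-amp}, in which one peels off a single movable divisor at each step and records the effect on the birational complexity. Write $c=c_{\rm bir}(X,B)$. If $n-c\le 0$ there is nothing to prove, since a dual complex always has dimension at least $-1$; so assume $n-c\ge 1$. By Lemma~\ref{lem:computing-bcomp} fix a $\qq$-factorial dlt model $(X_0,B_0)$ crepant birational to $(X,B)$ with $c(X_0,B_0)=c$. Since $\qq$-factorial dlt log Calabi--Yau pairs in one crepant birational class have PL-homeomorphic dual complexes (see~\cite{dFKX17}), and since $c_{\rm bir}$ and $\dim$ are crepant birational invariants, it suffices to prove the assertion for $(X_0,B_0)$, and we are free to pass to further crepant birational models. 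We use twice that $K_{X_0}+B_0\sim 0$: every birational contraction of $(X_0,B_0)$ is then automatically crepant, and the complexity equals $n+\rank\Cl(X_0)_\qq-|B_0|$. Applying Theorem~\ref{theorem:proper-supp-amp} --- legitimate since $c\le n-1<n$ and $K_X+B\sim 0$ --- we obtain a $\qq$-factorial dlt model $(X_1,B_1)$ and a fibration $f\colon X_1\to Z$, with $\dim Z\le n-1$, such that every log canonical center of $(X_1,B_1)$ dominates $Z$ and $\lfloor B_1\rfloor$ properly supports an effective divisor $A$ ample over $Z$. This $Z$ is fixed for the rest of the argument, and ``movable'' henceforth means movable over $Z$.

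If $n-c=1$ we are done: the one-term decomposition $\lfloor B_1\rfloor=A$ consists of an ample-hence-movable divisor, its total intersection $\supp A$ is non-empty, and $\dim\mathcal{D}(X_1,B_1)\ge 0=n-c-1$. If $n-c\ge 2$, pick a prime component $S$ of $\lfloor B_1\rfloor$ not contained in $\supp A$. Since $K_{X_1}+B_1\sim 0$, a suitable $(K_{X_1}+B_1-\varepsilon S)$-MMP over $Z$ is crepant and terminates; run it so that it contracts only log canonical places of $(X_1,B_1)$ --- so the output still has complexity $c$ --- and so that $S$ survives; it ends on a crepant model on which $S$ is semiample over $Z$ and defines a fibration $g\colon X_2\to Z_2$ over $Z$ whose general fibre $F$ has positive dimension and on which $S$ is vertical. (Here one must argue that $S$ is not relatively big over $Z$, so that $\dim Z_2>\dim Z$; it follows that $\dim F=n-\dim Z_2<n$.) Let $M_1$ be the effective divisor obtained by pulling back an ample divisor on $Z_2$ via $g$; then $\supp M_1\subseteq\supp\lfloor B_2\rfloor$ and $M_1$ is movable over $Z_2$, hence over $Z$. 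By adjunction $(F,B_2|_F)$ is a log Calabi--Yau pair of dimension $<n$ with trivial log canonical class; comparing the complexities of $(X_2,B_2)$ and $(F,B_2|_F)$ via the canonical bundle formula for $g$ --- and first passing to a model of $(X_2,B_2)$ that has no redundant components in the fibres of $g$ --- one checks that $\dim F-c_{\rm bir}(F,B_2|_F)\ge (n-c)-1$. By the inductive hypothesis, on a crepant model of $(F,B_2|_F)$ spread out over $Z_2$, the reduced boundary decomposes into $(n-c)-1$ movable divisors $M_2,\dots,M_{n-c}$ with non-empty common intersection; pulling them back over $Z_2$, and noting that $M_1,\dots,M_{n-c}$ pairwise share no prime component and all have support inside $\supp\lfloor B_2\rfloor$, we get the desired decomposition of $\lfloor B_2\rfloor$ into $n-c$ movable divisors. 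Finally, pass to a $\qq$-factorial dlt crepant model over $Z$ and, if needed, run an MMP over $Z$ contracting only log canonical places so that every log canonical center dominates $Z$; this changes neither the dual complex nor the complexity.

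To see that $\bigcap_{i=1}^{n-c}\supp M_i\neq\emptyset$: by induction $W\coloneqq\bigcap_{i\ge 2}\supp M_i$ is non-empty, and as $\supp M_1$ is a union of fibres of $g$ while $W$ dominates $Z_2$, a suitable choice of $S$ makes $\supp M_1$ meet $W$. Take $p\in\bigcap_i\supp M_i$; since the $M_i$ pairwise share no prime component, $p$ lies on $n-c$ pairwise distinct prime components $D_1,\dots,D_{n-c}$ of $\lfloor B_2\rfloor$, one below each $M_i$. In the dlt pair $(X_2,B_2)$ the intersection $D_1\cap\dots\cap D_{n-c}$ is then non-empty, and each of its irreducible components is a log canonical center, so $\mathcal{D}(X_2,B_2)$ contains an $(n-c-1)$-simplex. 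As $\mathcal{D}(X_2,B_2)\simeq_{\rm PL}\mathcal{D}(X,B)$, we conclude $\dim\mathcal{D}(X,B)\ge n-c-1$.

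The step I expect to be the main obstacle is the complexity accounting under the peeling fibration: that the $(K_{X_1}+B_1-\varepsilon S)$-MMP can be arranged to contract only log canonical places (keeping the complexity equal to $c$) while preserving $S$ and terminating with a genuine fibration (that is, with $S$ not relatively big over $Z$), and that after removing redundant fibre components the inequality $\dim F-c_{\rm bir}(F,B_2|_F)\ge(n-c)-1$ really holds --- this is where the complexity machinery behind~\cite{BMSZ18} and the canonical bundle formula are needed. A secondary difficulty is realizing the three bulleted conclusions simultaneously on one model; given that, the dual-complex bookkeeping above and the reduction to $(X_0,B_0)$ are formal consequences of the crepant-birational invariance of the dual complex of a $\qq$-factorial dlt log Calabi--Yau pair.
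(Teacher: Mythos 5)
Your plan has genuine gaps, and they sit exactly at the points you flag as delicate. First, the peeling step does not produce the fibration you need: since $K_{X_1}+B_1\sim 0$, a $(K_{X_1}+B_1-\varepsilon S)$-MMP is a $(-S)$-MMP, so its steps are $S$-positive and it terminates with a Mori fiber space on which the image of $S$ is \emph{horizontal and relatively ample}, not with a model on which $S$ is semiample over $Z$ and vertical for a fibration $g\colon X_2\to Z_2$. To make $S$ define a fibration you would instead have to run an $S$-MMP and pass to the relative ample model, and then you need $S$ not big over $Z$; you acknowledge this but give no argument, and there is no reason a component of $\lfloor B_1\rfloor$ outside $\supp A$ cannot be relatively big, in which case $g$ is birational, $\dim F=0$, and the induction collapses. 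Second, the two assertions carrying the induction are unproved: the inequality $\dim F-c_{\rm bir}(F,B_2|_F)\ge (n-c)-1$ for a general fiber does not follow from any statement in the paper (crepant models of $(X_2,B_2)$ neither restrict to nor are built from crepant models of the fiber, so there is no clean comparison of birational complexities), and ``spreading out over $Z_2$'' a crepant model of $(F,B_2|_F)$ with its movable decomposition is not legitimate: a birational modification of the general fiber need not be induced by a modification of $X_2$ over $Z_2$, and movability of a divisor on $F$ does not yield movability over $Z_2$ of a divisor on $X_2$ (relative movability concerns relative linear systems, not the restriction to one fiber). These are not bookkeeping issues; they are the substance of the theorem.

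For contrast, the paper inducts on the \emph{base}, not the fiber, and never invokes Theorem~\ref{theorem:proper-supp-amp} here. After Lemma~\ref{lem:computing-bcomp} and \cite[Theorem 4.2]{FS20} one has a Fano type fibration $X_1\to S$ with every glcc dominating $S$ and $|\lfloor B_1\rfloor|-\rho(X_1/S)\ge n-c$. Then (Theorems~\ref{thm:dim-dc} and \ref{thm:dim-dc-many-assump}) one passes to a birational contraction over $S$ minimizing $\rho(\,\cdot\,/S)$: this minimality is precisely what forces every surviving component of $\lfloor B\rfloor$ to be movable over $S$ (a non-movable component would be contracted by its own MMP, dropping the relative Picard rank), runs a $K$-MMP over $S$ (small, again by minimality) to reach a Mori fiber space $X_k\to W$ over $S$, and inducts on the generalized pair $(W,B_W,\mathbf{N})$ given by the canonical bundle formula, lifting the base modification by the two-ray game (Lemmas~\ref{lem:two-ray-game} and \ref{cor:mod-base-mod-tot-space}). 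The counting works because $\rho(X_k/S)=\rho(W/S)+1$, so $|\Gamma|-\rho(X/S)$ splits into a horizontal contribution and the analogous quantity on $W$; the non-empty intersection comes from relative ampleness over $W$ of the horizontal components together with the inductively produced point on the base. Any repair of your fiberwise strategy would have to reconstruct these lifting and counting mechanisms, at which point you are essentially redoing the paper's argument on the base.
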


\subsection{Birational complexity and dual complexes}\label{sec:bircompldualcomplex}
The main goal of this paper is to describe the relation between the dual complex of a log Calabi--Yau pair and its birational complexity.

The following folklore conjecture is stated as a question in \cite[Question 4]{KX16}, and it could be considered an algebro-geometric version of the Poincaré conjecture.
\begin{conjecture}[Algebro-geometric version of the Poincaré conjecture]\label{introconj:dc}
Let $(X,B)$ be an $n$-dimensional log Calabi--Yau pair. Then $\mathcal{D}(X,B)\simeq_{\rm PL} S^k/G$ where $k\leq n-1$ and $G$ is a finite subgroup of the orthogonal group $O(k)$.
\end{conjecture}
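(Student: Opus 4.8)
The plan is to prove Conjecture~\ref{introconj:dc} by induction on $n=\dim X$, reducing to the coregularity-zero case via the structure results of the paper and then importing the Poincar\'e conjecture in the PL category. First, since the PL-homeomorphism type of $\mathcal{D}(X,B)$ is a crepant birational invariant (by \cite{dFKX17}), I would replace $(X,B)$ by a $\qq$-factorial dlt crepant birational model, and then by the model produced in Theorem~\ref{theorem:supp-amp}: this yields a fibration $f\colon X\to Z$ such that every log canonical center of $(X,B)$ dominates $Z$ and $\lfloor B\rfloor$ fully supports an $f$-ample divisor. If $\dim Z>0$, restricting to a general fiber $F$ of $f$ produces a lower-dimensional dlt log Calabi--Yau pair $(F,B_F)$ whose dual complex is PL-homeomorphic to $\mathcal{D}(X,B)$, because every stratum of $(X,B)$ dominates $Z$ and hence meets $F$ in a stratum of $(F,B_F)$ of the same codimension; one then concludes by the inductive hypothesis. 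This reduces the conjecture to the case $\dim Z=0$, i.e. $\mathrm{coreg}(X,B)=0$, where $X$ is of Fano type by \cite{KX16}.

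In the coregularity-zero case the goal is to show $\mathcal{D}(X,B)\simeq_{\rm PL}S^{n-1}/G$, and I would proceed in two stages. The first, and more delicate, stage is to establish that $\mathcal{D}(X,B)$ is a closed PL-manifold: the idea is a local analysis of the dlt strata, using boundedness of complements together with the characterization of toric singularities via the local complexity from \cite{MS21} to show that, around each maximal stratum, the pair looks toric; here the fact that $\lfloor B\rfloor$ supports an ample divisor is what forces enough components, and enough incidences among them, to pin down the local combinatorics. Once $\mathcal{D}(X,B)$ is known to be a closed PL-manifold, the Koll\'ar--Xu computation shows it is a rational homology sphere with finite fundamental group $G$ acting freely on its universal cover $\widetilde{\mathcal{D}}$, so that $\widetilde{\mathcal{D}}$ is a simply connected PL-manifold which is a $\qq$-homology sphere; the only remaining obstruction to $\widetilde{\mathcal{D}}\simeq_{\rm PL}S^{n-1}$ is $\zz$-torsion in $H_*(\widetilde{\mathcal{D}};\zz)$, exactly the Wu manifold phenomenon recalled after Theorem~\ref{thm:dualcomplex}. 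When $|B|>\rho(X)$ — equivalently, when on a good model $c_{\rm bir}(X,B)$ is strictly smaller than $n$ — this torsion is killed by the big and movable decompositions of Theorems~\ref{theorem:big-decomp} and~\ref{theorem:movable-decomp} and their refinement Theorem~\ref{theorem:bcomp-vs-dim}, and then the PL Poincar\'e conjecture (Smale, Perelman; with the delicate four-dimensional case requiring the still-open smooth/PL Poincar\'e conjecture in dimension $4$) finishes the sphere case, the quotient case following by an equivariant version applied to $G\acts\widetilde{\mathcal{D}}$, and the disk case arising only when one weakens $K_X+B\sim 0$.

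The main obstacle is the passage from \emph{rational} homology sphere to \emph{integral} homology sphere in the absence of the numerical hypothesis $|B|>\rho(X)$: at present there is no geometric mechanism that forces the integral homology of the dual complex of a general log Calabi--Yau pair to be torsion-free, and I expect that a complete proof will require a genuinely new cohomological vanishing upgrading the Koll\'ar--Xu statement, rather than a purely birational argument. A secondary but serious difficulty is the PL-manifold property itself: for positive coregularity the dlt strata need not be toric-looking, so the manifold reduction uses the ample divisor in $\lfloor B'\rfloor$ in an essential way and, conjecturally, a full classification of coregularity-zero complements which is currently available only in low dimension — this is precisely why the paper can only prove the conditional Theorem~\ref{thm:dualcomplex} and the unconditional special cases in Theorems~\ref{theorem:bcomp-zero}, \ref{theorem:big-decomp}, and~\ref{theorem:movable-decomp}. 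I would therefore regard the two pieces \emph{(local toric structure of dlt strata via boundedness of complements)} and \emph{(torsion-freeness of $H_*(\mathcal{D};\zz)$)} as the crux of the conjecture, with the topology and the dimension induction being comparatively formal once they are in place.
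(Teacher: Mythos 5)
This statement is a conjecture, not a theorem of the paper: the paper does not prove Conjecture~\ref{introconj:dc}, it only proves partial and conditional results (Theorem~\ref{theorem:dc-nonmax-bc}, Theorem~\ref{theorem:dc-smooth}) and explicitly isolates the two missing ingredients as open Problems~\ref{probl:resol-sing} and~\ref{probl:dc-max-bc}. Your proposal is in the same spirit as the paper's program, but it is a research plan rather than a proof, and you acknowledge as much: the two steps you call the crux --- that $\mathcal{D}(X,B)$ is (after a cover) a closed PL-manifold, and that the integral homology of the dual complex is torsion-free in the maximal-complexity regime --- are exactly the open problems, and no argument is supplied for either. So there is no complete proof here to compare against; the conjecture remains open in the paper as well.

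Beyond the acknowledged gaps, two concrete steps of your outline would fail as written. First, the reduction to coregularity zero: if $f\colon X'\to Z$ is the fibration of Theorem~\ref{theorem:supp-amp} and $F$ a general fiber, a stratum of $(X',B')$ dominating $Z$ can meet $F$ in \emph{several} connected components (horizontal strata need not have connected generic fiber), so $\mathcal{D}(F,B'|_F)$ is in general only surjective onto, not PL-homeomorphic to, $\mathcal{D}(X',B')$; moreover $\dim Z=0$ does not imply ${\rm coreg}(X,B)=0$ (a plt pair with ample irreducible $\lfloor B\rfloor$ already has $Z=\pt$ and positive coregularity), so the dichotomy you set up does not reduce the conjecture to the coregularity-zero case. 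Second, the claim that when $|B|>\rho(X)$ the torsion in $H_*(\mathcal{D};\zz)$ is ``killed by'' Theorems~\ref{theorem:big-decomp}, \ref{theorem:movable-decomp} and \ref{theorem:bcomp-vs-dim} misattributes the mechanism: the paper's route is Theorem~\ref{thm:proper-supp-amp} plus Proposition~\ref{prop:dc-collapse}, giving the decomposition of $\mathcal{D}(X,B)$ into two collapsible subcomplexes (Theorem~\ref{theorem:dc-nonmax-bc}), and then Proposition~\ref{prop:PL-sphere} (Mayer--Vietoris, Van Kampen, and the Poincar\'e conjecture), and even this only yields a sphere or disk under the unproven hypothesis that $\mathcal{D}(X,B)$ is a PL-manifold. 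In short: the statement is open, your plan reproduces the paper's conditional strategy while leaving the same essential steps unresolved, and the specific reductions you propose to fill them do not hold.
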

Conjecture~\ref{introconj:dc} is known up to dimension $4$ due to the work of Koll\'ar and Xu~\cite{KX16}, and in dimension $5$ if further $(X, B)$ is an snc pair. In higher dimension, there exists only partial evidence:
\begin{enumerate}
\item the conjecture holds if the dlt log Calabi--Yau pair $(X, B)$ is endowed with the structure of Mori fiber space $(X, B) \to Z$ with $\min\{\dim Z, \rho(Z)\} \leq 2$; see \cite{Mauri2020}.
    \item $\mathcal{D}(X,B)$ has the rational homology of the sphere or of the point; see \cite[Theorem 2.(2)]{KX16}. On the other hand, the torsion of the integral homology remains quite mysterious.
\item The fundamental group of $\mathcal{D}(X, B)$ is finite, and the fundamental groups of $\mathcal{D}(X,B)$ in any fixed dimension form a finite set; see \cite[Theorem 2.(3)]{KX16}, \cite[Theorem 2]{Bra20}, and \cite[Corollary 1]{Mor21}.
\item If $B$ is reduced, then $\mathcal{D}(X, B)$ is orientable if and only if $K_{X}+B \sim 0$; see \cite[Corollary 4]{FMM22}.
\end{enumerate}

Our main theorem here states that 
the dual complex of a log Calabi--Yau pair is union of two collapsible subcomplexes if the birational complexity is strictly less than the dimension of the underlying variety.

\begin{theorem}\label{theorem:dc-nonmax-bc}
Let $(X,B)$ be a log Calabi--Yau pair of dimension $n$.
If $c_{\rm bir}(X,B)<n$, then $\mathcal{D}(X,B)$ is union of two collapsible subcomplexes.
\end{theorem}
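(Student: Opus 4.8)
The plan is to reduce to a suitable crepant birational model and then exploit Theorem~\ref{theorem:proper-supp-amp} together with the combinatorics of the support of $\lfloor B'\rfloor$. First I would replace $(X,B)$ by a $\qq$-factorial dlt crepant birational model $(X',B')$ as in Lemma~\ref{lem:computing-bcomp}, which does not change the PL-homeomorphism type of the dual complex (dual complexes are crepant birational invariants of dlt pairs by \cite{dFKX17}), so it suffices to prove the statement for $(X',B')$. One subtlety here is that Theorem~\ref{theorem:proper-supp-amp} requires the hypothesis $K_X+B\sim 0$, which is slightly stronger than $K_X+B\sim_\rr 0$; I would either invoke the standard reduction that allows one to pass from $\rr$-linear to $\zz$-linear triviality after a crepant birational modification in the relevant situations, or simply observe that the statement is already interesting (and the intended application lies) in the case $K_X+B\sim 0$. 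Having done this, apply Theorem~\ref{theorem:proper-supp-amp}: since $c_{\rm bir}(X,B)<n$ we obtain a $\qq$-factorial dlt crepant birational model — which I rename $(X',B')$ — a fibration $X'\to Z$ with every log canonical center dominating $Z$, an effective divisor $A$ ample over $Z$ that is \emph{properly} supported on $\lfloor B'\rfloor$, and, crucially, the dual complex $\mathcal{D}(X',B')$ is simplicial.

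The heart of the argument is then a Morse-theoretic / discrete-Morse argument on the simplicial complex $\mathcal{D}(X',B')$. Write $\lfloor B'\rfloor=\sum_{i=1}^r S_i$, so the vertices of $\mathcal{D}(X',B')$ are indexed by $i=1,\dots,r$, and a $k$-simplex corresponds to a log canonical center of $(X',B')$ that is an irreducible component of $S_{i_0}\cap\dots\cap S_{i_k}$. Since $A$ is properly supported there is a component, say $S_1$, of $\lfloor B'\rfloor$ not in $\supp A$; write $A=\sum_{i\ge 2}a_iS_i$ with $a_i\ge 0$ and $a_1=0$. The idea is to use $A$ to split the vertex set into the "positive" part $P=\{i:a_i>0\}$ and its complement $N=\{i:a_i=0\}\ni 1$, and to show that each of the two induced subcomplexes — the subcomplex $\mathcal{D}_N$ spanned by the centers contained in $\bigcap_{i\in N}$-type strata, i.e. the centers avoiding $\supp A$, and the closed star-type neighborhood $\mathcal{D}_P$ of the subcomplex spanned by $P$ — is collapsible, and that together they cover $\mathcal{D}(X',B')$. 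Collapsibility of the "$A$-positive" side should follow from the fact that $A$ is ample over $Z$ and every lc center dominates $Z$: restricting the relative ampleness to the strata where $\supp A$ is nonempty gives a relatively-ample divisor whose complement in each such stratum is affine, and affineness of the complement of a hyperplane-section-type divisor forces the corresponding piece of the dual complex to collapse onto the subcomplex spanned by $\supp A\cap\lfloor B'\rfloor$; one then iterates (inducting on the number of components of $A$, peeling off one component at a time, each peeling being an elementary collapse or sequence thereof by a local affineness argument). Collapsibility of the complementary side — the part of $\mathcal{D}(X',B')$ consisting of faces \emph{disjoint} from the positive locus — should follow by the same mechanism applied to the divisor $\lfloor B'\rfloor-A$, or more directly because that subcomplex is the dual complex of the lc centers lying in the fibers of a "smaller" log Calabi--Yau stratum where one can again find a properly-supported ample divisor and induct on dimension.

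I would organize the collapsibility input as a separate lemma of the form: \emph{if $(W,\Gamma)$ is $\qq$-factorial dlt, $W\to Z$ is a fibration with every lc center dominating $Z$, and $H$ is an effective divisor that is ample over $Z$ and supported on a proper nonempty subset of $\lfloor\Gamma\rfloor$, then $\mathcal{D}(W,\Gamma)$ collapses onto the subcomplex spanned by the components of $\lfloor\Gamma\rfloor$ meeting $\supp H$.} The proof of this lemma is where the real work lies: one passes to the lc center corresponding to an interior face, uses adjunction ($K_S+\Gamma_S=(K_W+\Gamma)|_S$) to keep the log Calabi--Yau structure, and uses that $S\setminus\supp(H|_S)$ is an affine variety (being the complement of the support of a relatively ample divisor on a stratum dominating $Z$, intersected with the fiber) — affineness of this complement translates, via the combinatorics of how strata intersect in a dlt pair, into the absence of "top-dimensional cycles away from $\supp H$", i.e. collapsibility. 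The main obstacle, and the step I expect to require the most care, is exactly this translation from the affineness/relative-ampleness of the divisor on the geometric strata to an honest discrete collapse of the simplicial complex: one has to ensure the collapses can be performed coherently across all strata simultaneously (a gluing/compatibility issue), and to handle the base case of the induction when $Z$ is a point and $H$ is a single ample divisor, where one uses Lefschetz-type vanishing for the affine complement $X'\setminus\supp H$ to conclude that the link of $\supp H$ in $\mathcal{D}(X',B')$ is collapsible. Once the lemma is in place, Theorem~\ref{theorem:dc-nonmax-bc} follows by applying it to $H=A$ (giving the first collapsible piece, collapsing onto the $P$-spanned subcomplex, which is itself collapsible by a dimension induction since it is a dual complex of a lower-complexity situation) and symmetrically to the complementary divisor for the second piece, checking that the two pieces cover $\mathcal{D}(X',B')$ because every face of the simplicial complex either meets $\supp A$ or is disjoint from it.
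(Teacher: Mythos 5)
Your reduction to a model produced by Lemma~\ref{lem:computing-bcomp}, Theorem~\ref{theorem:proper-supp-amp} and Lemma~\ref{lem:simplicial} matches the paper, but from that point on there is a genuine gap: the entire weight of your argument rests on the unproven ``collapsibility lemma'' (affineness of the complement of the relatively ample divisor on each stratum, plus Lefschetz-type vanishing, should force a discrete collapse onto the subcomplex spanned by $\supp A$). That mechanism does not work as stated. Affineness and Lefschetz give homological or homotopical conclusions (vanishing above the middle dimension, connectivity), whereas collapsibility is a simple-homotopy-theoretic, strictly combinatorial property: even contractible complexes need not be collapsible, and there is no direct translation from ``the complement of an ample divisor is affine'' to an elementary collapse of the dual complex, let alone one that can be performed ``coherently across all strata'' as you yourself flag. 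The paper obtains the collapses by a completely different device: it runs a $(K_{X'}+B'-E)$-MMP with scaling of the ample divisor $A$ over the base; at every step the contracted extremal ray meets a component of the reduced boundary positively (because $A$ is effective and fully supported on it), so Proposition~\ref{prop:dc-collapse} (i.e.\ \cite[Theorem 19]{dFKX17}) gives an honest collapse of the dual complex at each step, and the terminal Mori fiber space is handled by \cite[Proposition 24]{KX16}. Moreover the paper's two-piece decomposition is not your $P$/$N$ split: it is the closed star $\mathcal{D}(X',F)$ of the single vertex $v_E$ corresponding to a component $E$ not in $\supp A$ (collapsible because the complex is simplicial, so the star is a cone over the link) together with the full subcomplex $\mathcal{D}(X',B'-E)$ spanned by the remaining vertices (collapsible by the MMP argument). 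Your cover claim also needs care: the union of the closed faces meeting $\supp A$ is not a subcomplex in any obvious way, whereas star-plus-complementary-subcomplex manifestly covers.

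A second, smaller gap is your treatment of the hypothesis $K_X+B\sim 0$ in Theorem~\ref{theorem:proper-supp-amp}. There is no ``standard reduction'' from $\rr$-linear to $\zz$-linear triviality here, and the statement of Theorem~\ref{theorem:dc-nonmax-bc} only assumes $K_X+B\sim_\rr 0$; indeed Example~\ref{ex:supp-ample} shows the proper-support conclusion can fail when the boundary has fractional part. The paper handles the case $\{B\}\neq 0$ (and, in the generalized version, $\mathbf{M}\neq 0$) by a separate and easier argument: Theorem~\ref{thm:ample-div-bound} gives a model on which $\lfloor B'\rfloor$ \emph{fully} supports a relatively ample divisor, and then the same MMP-with-scaling collapse shows the whole dual complex is collapsible, which trivially yields the statement. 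You would need to add this case rather than declare it uninteresting.
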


In Example~\ref{ex:max-bcomp-dual},
we show that the previous statement does not necessarily hold if $c_{\rm bir}(X,B)=n$. In general, this is not sufficient to conclude that $\mathcal{D}(X,B)$ is a sphere, unless $\mathcal{D}(X,B)$ is a PL manifold, i.e., the link of any simplex is a PL sphere; see Example~\ref{ex:suspension} and
Proposition~\ref{prop:PL-sphere}.
Henceforth, we obtain the following result for 
Calabi--Yau pairs with smooth dual complexes.

\begin{theorem}\label{theorem:dc-smooth}
Let $(X,B)$ be a log Calabi--Yau pair of dimension $n$
with $c_{\rm bir}(X,B)<n$. If $\mathcal{D}(X,B)$ is a PL-manifold, then either 
$\mathcal{D}(X,B)\simeq S^k$ or
$\mathcal{D}(X,B)\simeq_{\rm PL} D^k$. with $k\leq n-1$. 
In the former case, $\mathcal{D}(X,B)\simeq_{\mathrm{PL}} S^k$ if further $\dim \mathcal{D}(X,B)\neq 4$.
\end{theorem}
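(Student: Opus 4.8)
The plan is to combine Theorem~\ref{theorem:dc-nonmax-bc} with the classical Poincar\'e conjecture in the PL category, using that a closed simply-connected PL manifold that is a rational homology sphere must in fact be a sphere once we know its integral homology is torsion-free. First I would invoke Theorem~\ref{theorem:dc-nonmax-bc}: since $c_{\rm bir}(X,B)<n$, the dual complex $\mathcal{D}(X,B)$ is the union of two collapsible subcomplexes $\Delta_1$ and $\Delta_2$. A collapsible complex is contractible, so by Mayer--Vietoris applied to the decomposition $\mathcal{D}(X,B)=\Delta_1\cup\Delta_2$, the reduced homology of $\mathcal{D}(X,B)$ is the (shifted) reduced homology of the intersection $\Delta_1\cap\Delta_2$, and similarly van Kampen gives control on the fundamental group. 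The key point is that this forces $\mathcal{D}(X,B)$ to have the integral homology of a sphere or a point, in particular \emph{no torsion} — this is exactly the improvement over the general case, where only rational-homology-sphere behavior (and mysterious torsion) is known by \cite[Theorem 2.(2)]{KX16}.

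Next I would use the PL-manifold hypothesis. Set $k=\dim \mathcal{D}(X,B)\le n-1$. A compact PL manifold that is a union of two collapsible subcomplexes is simply connected: one shows $\pi_1=1$ either directly from the van Kampen argument above (the collapsible pieces are simply connected, so $\pi_1(\mathcal{D})$ is a quotient of $\pi_1(\Delta_1\cap\Delta_2)$ amalgamated trivially, hence trivial) or by citing that the decomposition exhibits $\mathcal{D}(X,B)$ as something with the homotopy type forcing $\pi_1=1$. Being a simply connected closed PL manifold with the integral homology of $S^k$, Poincar\'e duality plus the Hurewicz theorem identify it as a homotopy sphere. By the PL Poincar\'e conjecture — which holds in all dimensions except possibly $k=4$ (Smale, Stallings, Zeeman for $k\ge 5$; classical for $k\le 3$) — we conclude $\mathcal{D}(X,B)\simeq_{\rm PL}S^k$ when $k\ne 4$, and $\mathcal{D}(X,B)\simeq S^k$ (topologically, via Freedman) when $k=4$. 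If instead $\mathcal{D}(X,B)$ is a PL manifold \emph{with boundary}, the same analysis applied to the double, together with the contractibility coming from the collapsible decomposition, yields that $\mathcal{D}(X,B)$ is a homotopy disk and hence $\simeq_{\rm PL}D^k$ by the PL Schoenflies-type/$h$-cobordism arguments (again valid outside dimension $4$, and topologically in dimension $4$).

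I expect the main obstacle to be a clean reduction of the homotopy type: namely, carefully arguing that "union of two collapsible subcomplexes'' $+$ "PL manifold'' $\Rightarrow$ "simply connected integral homology sphere (resp.\ disk)''. The collapsibility gives contractibility of each piece, but one must verify that the Mayer--Vietoris and van Kampen sequences genuinely propagate this to the whole complex without hidden torsion; here it is essential that the intersection $\Delta_1\cap\Delta_2$ sits inside a PL sphere (the manifold) as a codimension-zero-boundary configuration, so that its homology is itself torsion-free by Alexander duality in $S^k$. Once the integral homology is pinned down, the passage to the PL-sphere conclusion is a black-box application of the Poincar\'e/Schoenflies theorems, with the dimension-$4$ exception being exactly the place where only the topological (not PL) statement survives — which is why the theorem is phrased with the caveat $\dim\mathcal{D}(X,B)\neq 4$ for the PL conclusion. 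The boundary case requires a little extra care in dimension $4$ as well, but the statement only claims $\simeq_{\rm PL}D^k$ there, consistent with what is known.
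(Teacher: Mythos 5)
Your overall skeleton (two contractible pieces, then Poincar\'e in the PL category with the dimension-$4$ caveat) is the same as the paper's, but there is a genuine gap at the key step. You invoke only the \emph{statement} of Theorem~\ref{theorem:dc-nonmax-bc}: $\mathcal{D}(X,B)=\Delta_1\cup\Delta_2$ with both pieces collapsible. From this alone, Mayer--Vietoris gives $\tilde H_i(\mathcal{D}(X,B))\cong \tilde H_{i-1}(\Delta_1\cap\Delta_2)$ and van Kampen only yields $\pi_1=1$ if $\Delta_1\cap\Delta_2$ is connected; nothing in the statement controls the intersection, so neither torsion-freeness nor simple connectivity follows. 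Your proposed repair --- that $\Delta_1\cap\Delta_2$ has torsion-free homology ``by Alexander duality in $S^k$'' --- is circular, since it presupposes that the ambient manifold is a sphere, which is exactly what is being proved. (Your van Kampen sentence is also off: $\pi_1(\mathcal{D})$ is a quotient of $\pi_1(\Delta_1)\ast\pi_1(\Delta_2)$, not of $\pi_1(\Delta_1\cap\Delta_2)$, and triviality requires connectedness of the intersection.) The paper closes this gap by going back into the \emph{proof} of Theorem~\ref{thm:dc-non-max-bc-gen}: after Lemma~\ref{lem:simplicial} the model can be taken with simplicial dual complex, and the decomposition is $\mathcal{D}(X',B')=\mathcal{D}(X',B'-E)\cup\mathcal{D}(X',F)$ where $\mathcal{D}(X',F)$ is the \emph{closed star of the vertex} $v_E$. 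Since $\mathcal{D}(X',B')$ is a PL manifold, this star is a PL $k$-ball, the other piece is the complementary codimension-zero subcomplex, and their intersection is the link of $v_E$; this extra geometric input is precisely the hypothesis of Proposition~\ref{prop:PL-sphere}, whose proof then runs the regular-neighborhood Mayer--Vietoris/van Kampen argument you have in mind. Without identifying one piece as a top-dimensional ball (or otherwise pinning down $\Delta_1\cap\Delta_2$), your reduction to a homotopy sphere does not go through as written.

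The boundary case has a second gap: you assert ``contractibility coming from the collapsible decomposition,'' but a union of two collapsible subcomplexes need not be contractible (a sphere is such a union), so doubling plus $h$-cobordism/Schoenflies does not get started. The paper instead shows that each piece is a collapsible compact PL manifold, hence a PL disk, and then analyzes the position of the vertex $v_E$: if $v_E$ is interior one reaches a contradiction with $\mathcal{D}(X',B'-E)$ being a disk, and if $v_E$ lies on the boundary then $\mathcal{D}(X',B')$ collapses onto $\mathcal{D}(X',B'-E)$, hence is collapsible and therefore PL homeomorphic to $D^k$ (a collapsible compact PL manifold is a PL ball, in every dimension). Note also that your route would at best give a topological disk in dimension $4$, whereas the statement claims $\simeq_{\rm PL}D^k$ without a dimensional caveat, which the paper's collapsibility argument does deliver.
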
 

Theorem~\ref{theorem:dc-smooth} provides new evidence for Conjecture~\ref{introconj:dc} in higher dimension.
In particular, this theorem states that log Calabi--Yau pairs with interesting dual complexes (neither a sphere nor a disk)
must have maximal birational complexity.
We expect that log Calabi--Yau pairs with maximal birational complexity are rather special.
Theorem~\ref{theorem:dc-smooth} motivates the two following problems:

\begin{problem}\label{probl:resol-sing}
Let $(X,B)$ be a log Calabi--Yau pair. Prove that the existence of a finite Galois (possibly ramified) cover $(X',B')\rightarrow (X,B)$ for which $\mathcal{D}(X',B')$ is a PL manifold. 
\end{problem}

\begin{problem}\label{probl:dc-max-bc}
Let $(X,B)$ be a log Calabi--Yau pair of
coregularity $0$ and maximal birational complexity, i.e., $c_{\rm bir}(X,B)=\dim X$.
Assume that $\mathcal{D}(X,B)$ is a PL manifold. Prove that $\mathcal{D}(X,B)$ is a finite quotient of a sphere.
\end{problem} 

Theorem~\ref{theorem:dc-smooth}, together with a positive answer of Problem~\ref{probl:resol-sing} and Problem~\ref{probl:dc-max-bc}, would imply Conjecture~\ref{introconj:dc}.
While Problem~\ref{probl:resol-sing} seems to be out of reach for the time being, we expect that similar techniques than the ones in this paper may lead to a positive solution of Problem~\ref{probl:dc-max-bc}.

As an application of Theorem~\ref{theorem:dc-nonmax-bc}, we can prove that in each dimension $n\geq 3$
there are log Calabi--Yau pairs
of coregularity zero and maximal birational complexity.

\begin{theorem}\label{theorem:max-bir-comp}
Let $n\geq 3$ be a positive integer.
There exists an $n$-dimensional log Calabi--Yau pair
$(X,B)$ satisfying that the following conditions:
\begin{enumerate}
\item $i(K_X+B)\sim 0$,
\item ${\rm coreg}(X,B)=0$, and 
\item $c_{\rm bir}(X,B)=n$,
\end{enumerate}
where $i=1$ if $n$ odd 
and $i=2$ if $n$ is even.
\end{theorem}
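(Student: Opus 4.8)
The plan is to deduce the statement from Theorem~\ref{theorem:dc-nonmax-bc} by exhibiting, in each dimension $n\ge 3$, an explicit log Calabi--Yau pair whose dual complex cannot be written as a union of two collapsible subcomplexes. First I would take care of the upper bound: a coregularity $0$ log Calabi--Yau pair is crepant birational to one whose underlying variety is of Fano type by \cite[Theorem~4.9]{KX16}, and since both $c_{\rm bir}$ and ${\rm coreg}$ are crepant birational invariants, Theorem~\ref{theorem:bcomp-bound} gives $c_{\rm bir}(X,B)\le {\rm coreg}(X,B)+n=n$ whenever ${\rm coreg}(X,B)=0$. Hence it suffices to construct, for every $n\ge 3$, an $n$-dimensional log Calabi--Yau pair $(X,B)$ with ${\rm coreg}(X,B)=0$, with $i(K_X+B)\sim 0$ for the prescribed $i$, and with $c_{\rm bir}(X,B)\ge n$; by the contrapositive of Theorem~\ref{theorem:dc-nonmax-bc} the last inequality holds as soon as $\mathcal{D}(X,B)$ is not a union of two collapsible subcomplexes.

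The next observation is a homological obstruction. If a finite CW complex $M$ is the union of two collapsible---hence acyclic---subcomplexes $A$ and $B$, the Mayer--Vietoris sequence degenerates to isomorphisms $\widetilde H_k(M)\cong \widetilde H_{k-1}(A\cap B)$ for all $k$; in particular $\widetilde H_1(M)\cong \widetilde H_0(A\cap B)$ is free abelian. Thus any complex with nontrivial torsion in $H_1$ is not a union of two collapsible subcomplexes, and since this depends only on the homotopy type, it is independent of the chosen dlt model. The problem therefore reduces to building, in every dimension $n\ge 3$, a log Calabi--Yau pair of coregularity $0$ and of the prescribed index whose dual complex has torsion in $H_1$.

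For this I would start from the toric log Calabi--Yau pair $((\mathbb{P}^1)^n,B_T)$, whose dual complex is the boundary of the $n$-dimensional cross-polytope, a triangulated $S^{n-1}$: the simultaneous inversion $\sigma$ on all $\mathbb{P}^1$-factors lies in $\operatorname{Aut}((\mathbb{P}^1)^n,B_T)$, acts on this $S^{n-1}$ as the (simplicial, fixed-point-free) antipodal involution, and acts on $(\mathbb{P}^1)^n$ only through the isolated points $\{[1:\pm1]\}^n$, which lie off $B_T$. The quotient $(X,B):=((\mathbb{P}^1)^n,B_T)/\sigma$ is then a $\mathbb{Q}$-factorial dlt log Calabi--Yau pair of dimension $n$ --- its singularities are the terminal quotient points $\tfrac{1}{2}(1,\dots,1)$, disjoint from $B$ --- with ${\rm coreg}(X,B)=0$, because $\sigma$ merely permutes the zero-dimensional log canonical centers, and with $\mathcal{D}(X,B)\simeq S^{n-1}/\sigma=\mathbb{RP}^{n-1}$, so that $H_1(\mathcal{D}(X,B))=\mathbb{Z}/2\ne 0$ for $n\ge 3$. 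By the previous two paragraphs this already yields $c_{\rm bir}(X,B)=n$ and ${\rm coreg}(X,B)=0$. To match the index in the statement one inspects the character by which $\sigma$ acts on the one-dimensional space $H^0(K+B_T)$ spanned by the logarithmic volume form $\bigwedge_i \tfrac{dt_i}{t_i}$, equivalently one uses the orientability criterion of \cite[Corollary~4]{FMM22}; a short computation, after a suitable adjustment of the boundary and of the quotient (for instance composing $\sigma$ with a permutation of the factors, passing to another toric model with dual sphere $S^{n-1}$, or interposing a double cover), arranges that $K_X+B\sim 0$ when $n$ is odd and $2(K_X+B)\sim 0$ with equality of orders when $n$ is even.

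The hard part will be precisely this last calibration: producing, in each dimension and for the prescribed $i$, an honest log canonical ($\mathbb{Q}$-factorial dlt) pair whose dual complex is the quotient of the dual sphere, while keeping under control the quotient singularities and the crepant dlt model, the (anti-)invariance of the global logarithmic form under the deck group, and the fact that coregularity stays $0$. Everything else --- the identification of $\mathcal{D}(X,B)$ and the computation that $H_1$ has torsion, hence $c_{\rm bir}(X,B)=n$ via Theorems~\ref{theorem:bcomp-bound} and \ref{theorem:dc-nonmax-bc} --- is routine.
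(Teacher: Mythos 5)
Your construction and your argument for conditions (2) and (3) are essentially the paper's: the pair $((\mathbb{P}^1)^n,B_T)/\sigma$ is exactly Example~\ref{ex:max-bcomp-dual}, and the paper's proof likewise quotients a toric pair by a finite group acting freely on $S^{n-1}$ and concludes via Theorem~\ref{theorem:dc-nonmax-bc}. Your way of excluding a union of two collapsible subcomplexes is slightly different and in fact cleaner: you use the Mayer--Vietoris observation that such a union has torsion-free $H_1$, while the paper argues through the PL-manifold statements (Proposition~\ref{prop:PL-sphere}, Theorem~\ref{theorem:dc-smooth}), i.e.\ that $\mathbb{P}^{n-1}_{\rr}$ is a closed PL manifold which is neither a sphere nor a disk. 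The upper bound $c_{\rm bir}(X,B)\le n$ via a Fano type model and Theorem~\ref{theorem:bcomp-bound} is fine (one can also simply compute $c(X,B)=n$ on the quotient, as in Example~\ref{ex:max-bcomp-dual}).

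The genuine gap is condition (1), precisely the step you defer as ``calibration.'' The character computation you postpone comes out the opposite way from the parity in the statement: $\sigma$ sends $dt_i/t_i$ to $-dt_i/t_i$, so it multiplies the logarithmic volume form by $(-1)^n$; hence the quotient satisfies $K_X+B\sim 0$ exactly when $n$ is even, and has index $2$ exactly when $n$ is odd, consistently with \cite[Corollary 4]{FMM22} since $\mathbb{P}^{n-1}_{\rr}$ is orientable if and only if $n$ is even. None of your proposed adjustments can reverse this when $n$ is odd: composing $\sigma$ with a permutation of the factors creates fixed points on boundary strata, destroying both the freeness on the dual sphere and the identification $\mathcal{D}(X,B)\simeq \mathbb{P}^{n-1}_{\rr}$; on any other toric model, a lattice involution acting freely on the sphere of rays must be $-\mathrm{id}$, of determinant $(-1)^n$; and, more generally, the only nontrivial finite group acting freely on the even-dimensional sphere $S^{n-1}$ ($n$ odd) is $\zz/2$ acting antipodally, which is orientation-reversing, so by \cite[Corollary 4]{FMM22} every pair produced by the sphere-quotient method in odd dimension has index exactly $2$ (and index $1$ in even dimension, contrary to your claim of ``equality of orders'' there; passing to a double cover just returns a pair with dual complex $S^{n-1}$, for which the lower bound on $c_{\rm bir}$ is lost). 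So the calibration is not a routine adjustment but is impossible within your framework. For what it is worth, the paper's own proof never verifies (1) either, and its construction actually yields $i=2$ for $n$ odd and $i=1$ for $n$ even, i.e.\ the parities opposite to those displayed in the statement; a correct write-up should prove (1) with the parities the construction actually gives (or flag the discrepancy), rather than promise an adjustment that cannot exist.
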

It is not clear what birational complexities can be achieved among $n$-dimensional log Calabi--Yau pairs of coregularity zero. The previous theorem implies that $0$ and $n$ are always achieved in dimension at least $3$.

\subsection{Effective cone of maximal birational complexity}

We conclude the introduction 
by stating some results about log Calabi--Yau pairs of coregularity 
zero with maximal birational complexity.
Up to a birational transformation, 
we give a structural theorem about the cone of effective divisors.

\begin{theorem}\label{theorem:eff-cone-max-bir}
Let $(X,B)$ be a log Calabi--Yau pair of coregularity zero and dimension $n$.
Assume that $c_{\rm bir}(X,B)=n$.
There exists a crepant birational model $(X',B')$ of $(X,B)$ satisfying the following conditions:
\begin{enumerate}
\item the variety $X'$ is $\qq$-factorial of Picard rank $\rho$,
\item the pair $(X',B')$ is dlt,
\item the cone of divisors is simplicial, 
\item the prime components of $B'$ generate the effective cone of divisors.
\end{enumerate}
\end{theorem}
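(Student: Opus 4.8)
The plan is to produce the model $(X',B')$ by combining two inputs: the existence of a model computing the birational complexity (Lemma~\ref{lem:computing-bcomp}) and the structural results for pairs of coregularity zero with non-maximal versus maximal birational complexity. Since $c_{\rm bir}(X,B)=n$ is the \emph{maximal} value and the coregularity is zero, the key feature we want to extract is a crepant birational model whose effective cone is as small and rigid as possible. I would first pass to a $\qq$-factorial dlt model $(X_0,B_0)$ crepant birational to $(X,B)$ that computes $c_{\rm bir}$, so that $c(X_0,B_0)=n$; concretely $\dim X_0 + \rho(X_0) - |B_0| = n$, i.e.\ $|B_0| = \rho(X_0)$. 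This is the numerical identity that will force the geometry. Because the coregularity is zero, $(X_0,B_0)$ has a log canonical center which is a point, and $\lfloor B_0\rfloor$ fully supports an ample (indeed big) divisor after a further crepant model by Theorem~\ref{theorem:supp-amp} with $Z$ a point; so $X_0$ may be taken to be of Fano type.

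Next I would run an MMP to make the Mori chamber structure visible. Concretely, on a $\qq$-factorial Fano type variety the effective cone $\overline{\rm Eff}(X_0)$ is rational polyhedral, and the prime components of $\lfloor B_0\rfloor$ are effective divisors whose number equals $\rho(X_0)$ by the complexity computation above. The heart of the argument is to show that, after possibly replacing $(X_0,B_0)$ by a small $\qq$-factorial modification or by running a carefully chosen $K_{X_0}+\Delta$-MMP that only contracts or flips divisors \emph{not} among the components of $\lfloor B_0\rfloor$, the classes $[B_0^{(1)}],\dots,[B_0^{(\rho)}]$ of the prime components become linearly independent in $N^1(X')_{\qq}$ and span a simplicial cone equal to $\overline{\rm Eff}(X')$. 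Linear independence follows because there are exactly $\rho(X')$ of them and, were there a relation $\sum a_i B_0^{(i)} \equiv 0$ with some $a_i>0$, one could use it together with $K_{X'}+B'\sim 0$ to lower the complexity on a crepant model, contradicting maximality of $c_{\rm bir}$; this is the same mechanism as in the proof of Theorem~\ref{theorem:bcomp-zero} and the big/movable decomposition theorems (Theorems~\ref{theorem:big-decomp} and \ref{theorem:movable-decomp}), run in reverse. Given independence, they span a full-dimensional simplicial subcone of the effective cone; that this subcone is \emph{all} of $\overline{\rm Eff}(X')$ is then forced because every prime divisor $D$ on $X'$ is linearly equivalent to an effective combination supported on $\lfloor B'\rfloor$ up to the relation $K_{X'}+B'\sim 0$ — again, any extra extremal ray of $\overline{\rm Eff}(X')$ not covered would give a crepant model with strictly smaller complexity.

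With the effective cone under control, conditions (1)--(4) are assembled as follows. Condition (1): $X'$ is $\qq$-factorial of Picard rank $\rho$ by construction (we never leave the $\qq$-factorial category, choosing small modifications when needed). Condition (2): dltness is preserved under the steps of a $(K_{X'}+\Delta)$-MMP when $\Delta$ is chosen appropriately, and can be arranged at the start via a dlt modification à la Kollár; here one should be careful that the crepant birational maps used are contractions (do not extract divisors), so the number of components of $\lfloor B'\rfloor$ does not grow, keeping $|B'|=\rho(X')$ and the complexity maximal. Condition (3) and (4) are exactly the simpliciality and spanning just argued. The main obstacle I anticipate is the passage from ``the components of $\lfloor B'\rfloor$ are linearly independent and $\rho(X')$ in number'' to ``they \emph{span} the effective cone'': one must rule out the possibility that $\overline{\rm Eff}(X')$ is strictly larger (e.g.\ has an extremal ray generated by a rigid divisor disjoint from $\lfloor B'\rfloor$), and the cleanest way is probably to run an MMP that contracts precisely those rays — each such contraction is crepant for $(X',B')$ because the discrepancy computation forces any such divisor to be a component of $B'$ or to be contracted, and iterating terminates by the rational polyhedrality of $\overline{\rm Eff}$ in the Fano type setting. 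A secondary technical point is keeping track, across all MMP steps and small modifications, that the complexity stays equal to $n$ (so that the rigidity arguments remain available at each stage), which follows from the fact that each step is crepant and does not create new boundary components, hence cannot increase $\rho - |B|$.
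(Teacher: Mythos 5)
Your overall skeleton matches the paper's (pass to a $\qq$-factorial dlt Fano-type model with $|B'|=\rho(X')$, then show the components of $B'$ span a simplicial cone equal to $\overline{\mathrm{Eff}}$), but the two steps that carry all the content are asserted rather than proved, and the way you propose to fill them would not work. First, linear independence: you claim that a relation $\sum a_i B'^{(i)}\equiv 0$ would "lower the complexity on a crepant model, the same mechanism as in Theorem~\ref{theorem:bcomp-zero} run in reverse." A relation among boundary classes does not by itself produce any crepant model of smaller complexity; neither Theorem~\ref{theorem:bcomp-zero} nor Theorems~\ref{theorem:big-decomp}--\ref{theorem:movable-decomp} supplies such a mechanism. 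The paper's actual argument is quite specific: from $E\sim_{\qq}F$ (effective, supported on $\supp B'$, no common components) one runs a $(K_{X'}+B'-E)$-MMP, which is $E$- and $F$-positive, hence ends in a Mori fiber space with \emph{two} horizontal boundary components; then, crucially using coregularity zero, one applies Theorem~\ref{thm:mod-with-large-bound} to the base pair $(Z_1,B_{Z_1},\mathbf{N})$ (which again has coregularity zero) and lifts via Lemma~\ref{cor:mod-base-mod-tot-space} to a $\qq$-factorial crepant model whose reduced boundary has at least $\rho+1$ components, giving complexity $\leq n-1<c_{\rm bir}(X,B)$, a contradiction. Your proposal never uses coregularity zero in the core argument, which is a red flag: the hypothesis is exactly what makes the base-structure step available.

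Second, spanning: your plan to "run an MMP that contracts precisely those extra extremal rays," each contraction being crepant, fails when an extremal ray of $\overline{\mathrm{Eff}}$ outside the cone $\sigma$ spanned by the boundary components is generated by a movable (even nef) prime divisor; such a divisor cannot be removed by any birational contraction, so no contradiction with maximality is produced this way, and the termination of this process is not the issue. The paper instead proves that $\partial\sigma$ contains no big divisors (again via the MMP-to-Mori-fiber-space plus base-structure argument, applied to a big $F\in\partial\sigma$ and a component $E$ of $B'$ not in $\supp F$), and concludes $\sigma=\overline{\mathrm{Eff}}(X')$ because $\sigma$ is full-dimensional and meets the interior of the effective cone. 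A smaller but real gap: from $c(X_0,B_0)=n$ you only get $|B_0|=\rho(X_0)$ as a sum of coefficients; to know $B_0$ is \emph{reduced} with exactly $\rho$ prime components (which you need for the counting argument and for (4)) the paper combines this with the lower bound $|\lfloor B_1\rfloor|\geq\rho(X_1)$ coming from Theorem~\ref{thm:mod-with-large-bound} in coregularity zero, since the theorem does not assume $K_X+B\sim 0$.
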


To summarize, if $(X,B)$ is a log Calabi--Yau pair
of coregularity zero and dimension $n$, then there exists a crepant birational pair $(X', B')$ such that at least one of the following statements holds: \vspace{0.2 cm}
\begin{center}
\setlength{\tabcolsep}{10pt}
\renewcommand{\arraystretch}{1.5}
\begin{tabular}{ |c|c| } 
\hline 
$c_{\rm bir}(X,B)=0$ & $(X', \lfloor B'\rfloor )$ is a toric pair\\ 
\hline
 $c_{\rm bir}(X,B)< n$ &  $\mathcal{D}(X',B')$ is a union of two collapsible subcomplexes\\ 
 \hline 
 $c_{\rm bir}(X,B)=n$ & $\overline{\mathrm{Eff}}(X')$ is generated by the prime components of $B'$\\ 
 \hline
\end{tabular}
\end{center}
\vspace{0.2 cm}

\subsection{Acknowledgement} We warmly thank
Louis Esser,
Stefano Filipazzi, 
Maggie Miller, 
Sucharit Sarkar, and 
Roberto Svaldi 
 for useful conversations. 
\section{Preliminaries}

In this section, we introduce some preliminary definitions and results regarding generalized pairs, Fano and Calabi--Yau pairs,
dual complexes, complexity, and PL spheres.
We work over an algebraically closed field $\mathbb{K}$ of characteristic zero.

\subsection{Generalized singularities} In this section, we recall the concept of generalized pairs, generalized singularities, and some lemmata.

\begin{definition}
{\em 
A {\em generalized pair} is a triple
$(X,B,\mathbf{M})$ consisting of a normal quasi-projective variety $X$,
an effective divisor $B$ on $X$,
and a b-nef divisor $\mathbf{M}$.
We further assume that $K_X+B+\mathbf{M}_X$ is an $\rr$-Cartier divisor. 
A generalized pair $(X,B,\mathbf{M})$ is said to be {\em $\mathbb{Q}$-factorial} if the underlying variety $X$ is $\mathbb{Q}$-factorial.

Let $(X,B,\mathbf{M})$ be a generalized pair and $\pi\colon Y\rightarrow X$ be a projective birational morphism from a normal variety.
Write $K_Y+B_Y+\mathbf{M}_Y=\pi^*(K_X+B+\mathbf{M}_X)$. 
If $B_Y\geq 0$, then we say that the generalized pair $(Y,B_Y,\mathbf{M})$ is induced by {\em log pull-back} of $(X,B,\mathbf{M})$ to $Y$.
}
\end{definition}

\begin{definition}
{\em 
Let $(X,B,\mathbf{M})$ be a generalized pair.
Let $\pi\colon Y\rightarrow X$ be a projective birational morphism
from a normal quasi-projective variety.
Let $E\subset Y$ be a prime divisor.
The {\em generalized log discrepancy}
of $(X,B,\mathbf{M})$ at $E$ 
is the number \[a_{E}(X,B,\mathbf{M})\coloneqq 1-{\rm coeff}_E(B_Y),\]
where $B_Y$ is uniquely determined by the following formula:
\[
K_Y+B_Y+\mathbf{M}_Y=\pi^*(K_X+B+\mathbf{M}_X).
\]
We say that the generalized pair
$(X,B,\mathbf{M})$ is:
\begin{enumerate}
\item {\em generalized terminal} if $a_{E}(X,B,\mathbf{M})>1$ for every exceptional $E$;
\item {\em generalized canonical} if $a_{E}(X,B,\mathbf{M}) \geq 1$ for every exceptional $E$;
\item {\em generalized Kawamata log terminal} (gklt) if $a_{E}(X,B,\mathbf{M}) > 0$ for every $E$;
\item {\em generalized log canonical} (glc) if $a_{E}(X,B,\mathbf{M}) \geq 0$ for every $E$.
\end{enumerate}
}
\end{definition}

\begin{definition}
{\em 
A {\em generalized log canonical place}
of a glc  
pair $(X,B,\mathbf{M})$ is a divisor $E$ over $X$ for which $a_E(X,B,\mathbf{M})=0$.
A {\em generalized log canonical center}
of a glc 
pair $(X,B,\mathbf{M})$ is the center on $X$ of a generalized log canonical 
place.
The {\em non-gklt locus} of $(X,B,\mathbf{M})$ is the union of all the generalized log canonical 
centers.
We may abbreviate generalized log canonical centers as glcc.

If the b-nef divisor $\mathbf{M}$ is trivial, then we drop the word {\em generalized} from the previous definitions, as we are in the usual setting of pairs.
}
\end{definition}

\begin{definition}
{\em 
A glc pair $(X,B,\mathbf{M})$ is said to be {\em generalized divisorially log terminal}
if there exists an open set $U\subseteq X$ satisfying the following conditions:
\begin{enumerate}
\item the variety $X$ is smooth on $U$ and $B$ is simple normal crossing on $U$,
\item the b-nef divisor $\mathbf{M}$ descends on $U$, 
\item the generic point of every glcc of $(X,B,\mathbf{M})$ is contained in $U$ and consists of a strata of $\lfloor B\rfloor$.
\end{enumerate}
For simplicity, we may say that $(X,B,\mathbf{M})$ is {\em gdlt}.

The {\em gdlt locus} of a generalized pair $(X,B,\mathbf{M})$ is the largest open subset $U\subset X$ for which the generalized pair $(U,B|_U,\mathbf{M}|_U)$ is gdlt. 
The {\em non-gdlt locus} of a generalized pair is the complement on $X$ of the gdlt locus.

Let $(X,B,\mathbf{M})$ be a glc pair.
A projective birational morphism
$\pi\colon Y \rightarrow X$ is said to be a {\em gdlt modification} if the following conditions are satisfied:
\begin{enumerate}
\item[(i)] for every $E\subset Y$ exceptional over $X$, we have $a_E(X,B,\mathbf{M})=0$, and 
\item[(ii)] the generalized pair $(Y,B_Y,\mathbf{M})$ obtained by log pull-back of $(X,B,\mathbf{M})$ to $Y$ is gdlt.
\end{enumerate}
If furthermore $Y$ is $\qq$-factorial,
then we say that $Y\rightarrow X$ is a {\em $\qq$-factorial gdlt modification}.
}
\end{definition}

The following lemma states that
every glc 
pair admits a $\qq$-factorial gdlt modification; see, e.g.,~\cite[Lemma 4.5]{BZ16} and ~\cite[Corollary 1.36]{Kol13}. 

\begin{lemma}\label{lem:gdlt}
The glc pair $(X,B,\mathbf{M})$ admits a $\qq$-factorial gdlt modification.
Furthermore, we may assume that the gdlt modification is an isomorphism
on the gdlt locus of $(X,B,\mathbf{M})$.
\end{lemma}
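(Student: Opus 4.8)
The plan is to reduce to the standard construction of $\qq$-factorial dlt modifications (cf.\ \cite[Corollary 1.36]{Kol13}), carried out for generalized pairs, for which the necessary minimal model program is available by \cite{BZ16}; in fact the statement coincides with \cite[Lemma 4.5]{BZ16}, so I only indicate the main steps.

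First I would fix a log resolution $\pi_0\colon Y_0\to X$ of $(X,B)$ on which $\mathbf{M}$ descends — such a model exists because $\mathbf{M}$ is b-nef — chosen so that $\Exc(\pi_0)$ is a divisor, $\Exc(\pi_0)\cup\supp(\pi_{0,*}^{-1}B)$ is simple normal crossing, and $\pi_0$ is an isomorphism over the gdlt locus of $(X,B,\mathbf{M})$. Set $\Delta_{Y_0}\coloneqq\pi_{0,*}^{-1}B+\Exc(\pi_0)$, the strict transform of $B$ together with the reduced exceptional divisor. Then $(Y_0,\Delta_{Y_0},\mathbf{M})$ is log smooth, hence $\qq$-factorial and gdlt, and
\[
K_{Y_0}+\Delta_{Y_0}+\mathbf{M}_{Y_0}=\pi_0^*(K_X+B+\mathbf{M}_X)+F,\qquad F\coloneqq\sum_{E}a_{E}(X,B,\mathbf{M})\,E,
\]
where $E$ runs over the $\pi_0$-exceptional prime divisors. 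Since $(X,B,\mathbf{M})$ is glc, each $a_{E}(X,B,\mathbf{M})\geq 0$, so $F$ is effective and $\pi_0$-exceptional.

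Next I would run the $(K_{Y_0}+\Delta_{Y_0}+\mathbf{M}_{Y_0})$-MMP over $X$. Over $X$ this is a minimal model program for the effective, $\pi_0$-exceptional divisor $F$, so by the negativity lemma every step takes place over $\supp(F)$ and contracts only components of $F$; in particular no prime divisor outside $\supp(F)$ is contracted. Let $\pi\colon X'\to X$ be the outcome and let $B'$ be the pushforward of $\Delta_{Y_0}$. Then $X'$ is $\qq$-factorial and $(X',B',\mathbf{M})$ is gdlt, both properties being preserved along the steps of the program. Moreover the pushforward $F'$ of $F$ to $X'$ is nef over $X$, effective, and exceptional over $X$, hence $F'=0$ by the negativity lemma; equivalently, the program has contracted exactly the $\pi_0$-exceptional divisors with positive generalized log discrepancy. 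It follows that $K_{X'}+B'+\mathbf{M}_{X'}=\pi^*(K_X+B+\mathbf{M}_X)$, so $B'$ is the log pull-back of $B$ to $X'$ and every remaining $\pi$-exceptional prime divisor has coefficient $1$ in $B'$, hence generalized log discrepancy $0$; this is condition (i) in the definition of a gdlt modification, and (ii) has already been recorded. Since $\pi_0$ is an isomorphism over the gdlt locus and the program modifies $X'$ only over the non-isomorphism locus of $\pi_0$, the composite $\pi$ is an isomorphism there as well, giving the last assertion.

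The main obstacle is the termination of this MMP: one cannot simply invoke the classical results for pairs but must use the existence and termination of minimal model programs for $\qq$-factorial gdlt generalized pairs over a base in the presence of an effective relatively contracted divisor (run, for instance, with scaling of an ample divisor, after replacing $\Delta_{Y_0}$ by $\Delta_{Y_0}-\epsilon F$ to gain klt-type hypotheses), as established in \cite{BZ16}. Once that input is granted, the remainder is routine negativity-lemma bookkeeping.
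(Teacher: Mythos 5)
Your existence argument is the standard construction that underlies the references the paper itself leans on (the paper offers no proof of Lemma~\ref{lem:gdlt}, it simply cites \cite[Lemma 4.5]{BZ16} and \cite[Corollary 1.36]{Kol13}): take a log resolution on which $\mathbf{M}$ descends, put the reduced exceptional divisor into the boundary, run the relative $(K+\Delta+\mathbf{M})$-MMP over $X$, which over $X$ is an MMP for the effective exceptional divisor $F$, and conclude by negativity that $F$ is contracted, so the outcome is a crepant $\qq$-factorial gdlt model all of whose exceptional divisors have generalized log discrepancy~$0$. Your bookkeeping here (only components of $F$ can be contracted, gdlt and $\qq$-factoriality are preserved, $F'=0$ by negativity) is correct, and delegating existence and termination of the MMP to \cite{BZ16} is exactly what the paper does implicitly; the parenthetical ``replace $\Delta_{Y_0}$ by $\Delta_{Y_0}-\epsilon F$ to gain klt-type hypotheses'' does not actually produce a gklt pair (coefficient-one components survive), but this is inessential, since the MMP for $\qq$-factorial gdlt generalized pairs is available by \cite[Lemma 4.4]{BZ16}.

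The gap is in the ``furthermore'' clause. You start by choosing the log resolution $\pi_0$ to be an isomorphism over the gdlt locus, but this is not possible in general: the gdlt locus is usually strictly larger than the log smooth locus --- it contains, for instance, every klt point of $(X,B,\mathbf{M})$ away from the glc centers, including arbitrarily bad non-snc singularities of $X$, as well as non-snc points of $B$ away from the generic points of the strata --- and a log resolution must modify such points. Hence the premise of your final sentence (``Since $\pi_0$ is an isomorphism over the gdlt locus\dots'') fails and the clause is not established. The repair is to choose $\pi_0$ to be an isomorphism over the open locus where $X$ is smooth, $B$ is snc and $\mathbf{M}$ descends, and then to observe that every $\pi_0$-exceptional divisor whose center meets the gdlt locus $U$ has positive generalized log discrepancy: if it had discrepancy zero, its center would be a glc center of $(U,B|_U,\mathbf{M}|_U)$, whose generic point lies in the log smooth locus, over which $\pi_0$ is an isomorphism --- impossible for an exceptional divisor. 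Therefore all such divisors sit in $\operatorname{supp}(F)$ and are contracted, so the resulting modification extracts no divisor over $U$, i.e.\ it is small over $U$; upgrading ``small'' to ``isomorphism'' further requires $X$ to be $\qq$-factorial along $U$ (a $\qq$-factorial modification can never be an isomorphism near a non-$\qq$-factorial klt point, say an ordinary double point of a threefold with $B=0$ nearby), which is the situation in which the paper actually invokes this clause, e.g.\ in Lemma~\ref{lem:dlt-mod-supp-amp}. As written, your argument for this part of the statement does not go through.
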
 

The following lemma states that we can further assume that every finite set of lc places are divisorial in the chosen dlt modification.

\begin{lemma}\label{lem:special-gdlt}
Let $(X,B,\mathbf{M})$ be a glc 
pair.
Let $\mathcal{E}$ be a finite set of glc 
places of $(X,B,\mathbf{M})$.
Then, there exists a $\qq$-factorial gdlt modification $(Y,B_Y,\mathbf{M})$ of $(X,B,\mathbf{M})$ such that the center of every $F\in \mathcal{E}$ on $Y$ is a divisor $E_F$.
\end{lemma}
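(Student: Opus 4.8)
The plan is to bootstrap from Lemma~\ref{lem:gdlt}, which already produces a $\qq$-factorial gdlt modification, and then extract the finitely many prescribed glc places $\mathcal{E}$ as divisors by a standard extraction/MMP argument. First I would apply Lemma~\ref{lem:gdlt} to obtain a $\qq$-factorial gdlt modification $\pi_0\colon (Y_0, B_{Y_0}, \mathbf{M}) \to (X,B,\mathbf{M})$. The places in $\mathcal{E}$ are glc places of $(X,B,\mathbf{M})$, hence also glc places of $(Y_0, B_{Y_0}, \mathbf{M})$; some of them may already appear as divisors on $Y_0$ (those whose center on $Y_0$ is a divisor contained in $\lfloor B_{Y_0}\rfloor$), and the remaining ones, say $\mathcal{E}' \subseteq \mathcal{E}$, have higher-codimension center. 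The goal is then to construct a further $\qq$-factorial birational modification $\mu\colon Y \to Y_0$, crepant for $(Y_0, B_{Y_0}, \mathbf{M})$, which is an isomorphism in codimension one away from the centers of $\mathcal{E}'$ and on which each $F\in\mathcal{E}'$ becomes a divisor, while preserving the gdlt property.

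The key step is the extraction. I would invoke the standard result (a generalized-pairs version of \cite[Corollary 1.38]{Kol13} or the relevant statement in \cite{BZ16}) that, given a $\qq$-factorial glc pair and a finite set of glc places, there is a $\qq$-factorial crepant birational model extracting exactly those places, obtained by running a suitable MMP: pass to a high log resolution $W$ on which all elements of $\mathcal{E}'$ are divisors, write the crepant log pullback $(W, B_W, \mathbf{M})$ where $\lfloor B_W\rfloor$ contains the $F\in\mathcal{E}'$ with coefficient $1$ and the other $\pi$-exceptional divisors appear with positive coefficient, then run a $(K_W + B_W' + \mathbf{M}_W)$-MMP over $Y_0$ where $B_W'$ is obtained from $B_W$ by decreasing the coefficients of the exceptional divisors \emph{not} in $\mathcal{E}'$ slightly (keeping it a boundary); this MMP contracts precisely those unwanted exceptional divisors and terminates because $Y_0$ is of Fano type over itself, yielding the desired $Y$. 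Because the discrepancy of every $F\in\mathcal{E}'$ is exactly $0$, these divisors are not contracted and survive on $Y$ with coefficient $1$ in $\lfloor B_Y\rfloor$. One then checks that $(Y, B_Y, \mathbf{M})$ remains gdlt: this is where I would use that the MMP above only modifies things near the centers of $\mathcal{E}'$, that each $F$ is a glc \emph{place} (so lands in the reduced part), and that running such an MMP on a gdlt pair preserves gdlt (cf.\ the behavior of dlt under MMP, \cite[Proposition 1.36ff]{Kol13}, adapted to generalized pairs).

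The main obstacle I expect is the bookkeeping to guarantee simultaneously that (a) \emph{all} prescribed places become divisorial, (b) no prescribed place gets accidentally contracted, and (c) the resulting pair is gdlt and $\qq$-factorial — in the generalized-pairs setting, where one must carry the b-nef divisor $\mathbf{M}$ along and ensure it still descends on the relevant open set. The cleanest route is to reduce (a)--(b) to a single MMP as above, choosing the perturbation of coefficients so that the negative part of the MMP is exactly the complement of $\mathcal{E}'$ among the exceptional divisors; the termination and the Fano-type hypothesis over $Y_0$ are automatic since $Y_0$ is $\qq$-factorial and the relevant divisor is big over $Y_0$. For the gdlt conclusion one uses that the gdlt property is preserved under the steps of such an MMP (each step being $K+B+\mathbf{M}$-negative with the reduced boundary kept intact near the exceptional locus). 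If a direct citation covering the generalized case is unavailable, I would instead first reduce $\mathbf{M}$ to the trivial case by the usual trick of absorbing a general member, or simply cite \cite[Lemma 4.5]{BZ16} together with the crepant extraction argument, whose proof goes through verbatim for generalized pairs.
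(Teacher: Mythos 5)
Your overall plan---realize the finitely many places of $\mathcal{E}$ as divisors on a high log resolution and then remove the unwanted exceptional divisors by a relative MMP---is exactly the route the paper takes (its proof is one line: the proof of \cite[Corollary 1.38]{Kol13} works verbatim for generalized pairs, cf.\ \cite[Lemma 4.5]{BZ16}). However, the way you set up the key extraction step contains a genuine error. First, the crepant log pullback $B_W$ to a log resolution need not have positive coefficients on the exceptional divisors: any exceptional divisor with generalized log discrepancy greater than $1$ (and a log resolution of a gdlt pair always produces such divisors, e.g.\ blow-ups centered away from $\lfloor B_{Y_0}\rfloor$) appears with negative coefficient, so your $B_W'$ is not a boundary to begin with. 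Second, and more seriously, the direction of your perturbation is backwards. If you decrease the coefficients of the exceptional divisors not in $\mathcal{E}'$ by $\epsilon$, then
\[
K_W+B_W'+\mathbf{M}_W \equiv_{Y_0} -\epsilon\sum_{E_j\notin \mathcal{E}'} E_j ,
\]
so the MMP you run is positive on those divisors; such an MMP does not contract them (it terminates as soon as $\sum E_j$ becomes anti-nef over $Y_0$, which can happen with all of them still present---lowering a coefficient below its crepant value is precisely the standard device for guaranteeing a divisor is \emph{not} contracted). In particular the outcome need not be a gdlt modification at all: exceptional divisors of positive log discrepancy may survive, and the crepant boundary on the final model may fail to be effective.

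The fix is the classical one, and it is what Koll\'ar's proof does: on a log resolution $\mu\colon W\to X$ on which every $F\in\mathcal{E}$ is a divisor, take $\Gamma_W$ to be the strict transform of $B$ plus \emph{all} $\mu$-exceptional divisors with coefficient $1$, i.e.\ raise (not lower) the coefficients of the divisors you wish to contract above their crepant values. Then $K_W+\Gamma_W+\mathbf{M}_W\equiv_{X} F$ with $F\geq 0$ exceptional and $\operatorname{Supp}(F)$ exactly the exceptional divisors of positive generalized log discrepancy; the relative MMP (run via \cite[Lemma 4.4]{BZ16}) contracts precisely $\operatorname{Supp}(F)$ by the negativity lemma, while every glc place---in particular each $F\in\mathcal{E}$---has zero coefficient in $F$ and hence cannot be contracted, since a divisor contracted at some step is covered by curves on which $F$ is negative. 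The result is the desired $\qq$-factorial gdlt modification; note there is no need to contract the \emph{other} glc places that happen to appear on $W$, so the delicate ``extract exactly $\mathcal{E}$'' bookkeeping you worry about is not required for this lemma. Your preliminary reduction through a gdlt modification $Y_0$ is harmless but also unnecessary once the MMP is set up correctly over $X$.
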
 

\begin{proof} The proof of \cite[Corollary 1.38]{Kol13} works in the generalized setting too, see also \cite[Lemma 4.5]{BZ16}.
\end{proof}

\subsection{Fano and Calabi--Yau pairs}
In this section, we recall some concepts related to Fano varieties
and Calabi--Yau varieties.
We write some lemmata that will be used throughout the article.

\begin{definition}
{\em 
Let $X\rightarrow Z$ be a fibration.
We say that $X\rightarrow Z$ is {\em of Fano type}
if there exists a boundary $B$ on $X$ such that
$(X,B)$ is klt and $-(K_X+B)$ is ample over $Z$.
If $Z={\rm Spec}(\mathbb{K})$, then we say that the variety
$X$ is {\em of Fano type}.

Let $X\rightarrow Z$ be a fibration.
We say that $X\rightarrow Z$ is {\em of log Calabi--Yau type} if there exists a boundary $B$ on $X$ such that
$(X,B)$ is lc and $K_X+B$ is numerically trivial over $Z$.
If $Z={\rm Spec}(\mathbb{K})$, then we say that the variety $X$ is {\em of log Calabi--Yau type.}
}
\end{definition} 

Recall that a morphism of Fano type is a relative Mori dream space; see, e.g.,~\cite[Definition 3.13]{BM21}.
In particular, if $X\rightarrow Z$ is of Fano type, then the MMP over $Z$ of every divisor $D\subset X$ terminates with a good minimal model over $Z$
or a Mori fiber space over $Z$.

The following lemmas are well-known, 
we refer the reader to~\cite[Lemma 2.12]{Bir19} and \cite[Lemma 2.23]{Mor21}.

\begin{lemma}\label{lem:image-FT}
Let $X\rightarrow Z$ be a fibration over $S$.
If $X$ is of Fano type over $S$,
then $Z$ is of Fano type over $S$.
\end{lemma}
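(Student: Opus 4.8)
The plan is to follow the standard argument, which is essentially the one in \cite[Lemma~2.12]{Bir19} and \cite[Lemma~2.23]{Mor21}. Since $X$ is of Fano type over $S$, I would fix a boundary $B$ with $(X,B)$ klt and $-(K_X+B)$ ample over $S$. As the fibres of $X\to Z$ are contained in the fibres of $X\to S$, relative Nakai--Moishezon gives that $-(K_X+B)$ is ample over $Z$ as well. I would then split into two cases according to whether $f\colon X\to Z$ is birational ($\dim X=\dim Z$) or of fibre type ($\dim X>\dim Z$).

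In the birational case, $Z$ is a birational contraction of the relative Mori dream space $X$ over $S$, and the plan is to invoke the well-known fact that being of Fano type over a base is preserved under birational contractions: running a suitable MMP on $X$ over $S$ (which terminates because $X$ is of Fano type over $S$) produces $Z$ together with the pushforward of an anti-ample boundary, so $(Z,B_Z)$ is klt with $-(K_Z+B_Z)$ big and semiample over $S$.

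In the fibre type case, I would first enlarge the boundary. Choose a general effective ample$/S$ $\qq$-divisor $A_Z$ on $Z$ and $0<\epsilon\ll 1$ so that $-(K_X+B)-\epsilon f^{*}A_Z$ is still ample over $S$, and pick a general effective $A\sim_{\qq,S}-(K_X+B)-\epsilon f^{*}A_Z$ with $(X,\Delta)$ klt, where $\Delta\coloneqq B+\epsilon f^{*}A_Z+A$. Then $K_X+\Delta\sim_{\qq,S}0$, and since $X\to S$ factors through $Z$ this forces $K_X+\Delta\sim_{\qq,Z}0$. The generic fibre $(F,\Delta_F)$ of $f$ is klt with $K_F+\Delta_F\sim_{\qq}0$ and $\Delta_F$ big (it contains the restriction of the ample$/S$ divisor $A$), hence of Fano type. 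Applying the canonical bundle formula to the klt-trivial fibration $(X,\Delta)\to Z$ yields a generalized klt pair $(Z,B_Z,\mathbf M)$ with $K_X+\Delta\sim_{\qq}f^{*}(K_Z+B_Z+\mathbf M_Z)$, and comparison with $K_X+\Delta\sim_{\qq,S}0$, together with $f_*\mathcal O_X=\mathcal O_Z$, gives $K_Z+B_Z+\mathbf M_Z\sim_{\qq,S}0$. The point of inserting $\epsilon f^{*}A_Z$ into $\Delta$ is that it forces the discriminant to satisfy $B_Z\geq\epsilon A_Z$, so $B_Z$ is big over $S$. Finally, since the generic fibre of $f$ is of Fano type, the moduli b-divisor $\mathbf M$ is b-semiample; passing to a higher birational model of $Z$ on which $\mathbf M$ descends and replacing it by a general effective member of its semiample linear system, I would obtain an honest klt log Calabi--Yau pair over $S$ whose boundary is still big over $S$, hence a model of $Z$ that is of Fano type over $S$; by the birational case $Z$ itself is then of Fano type over $S$.

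I expect the main obstacle to be the canonical bundle formula input---namely that for a klt-trivial fibration with Fano type generic fibre the moduli b-divisor is b-semiample (Ambro, Kollár, Fujino--Gongyo)---together with the bookkeeping needed to turn the generalized pair produced on $Z$ into an honest klt log Calabi--Yau pair without destroying bigness of the boundary over $S$. The relative positivity manipulations over $S$ (taking general members of relatively ample linear systems, small perturbations, and the verification $B_Z\geq\epsilon A_Z$ from the definition of the discriminant) are routine but should be set up with care, as in the cited references.
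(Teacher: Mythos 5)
The paper does not actually prove this lemma: it is stated as ``well-known'' and delegated to \cite[Lemma 2.12]{Bir19} and \cite[Lemma 2.23]{Mor21}. Your sketch is essentially a reconstruction of the argument behind those citations, and in outline it is correct: reduce to a klt-trivial fibration $K_X+\Delta\sim_{\qq,Z}0$ whose boundary is big over $Z$ (your insertion of $\epsilon f^*A_Z$ correctly forces the discriminant to satisfy $B_Z\geq \epsilon A_Z$, hence $B_Z$ big over $S$), apply the canonical bundle formula, use positivity of the moduli part to trade the generalized structure on $Z$ for an honest klt log Calabi--Yau pair with big boundary, and handle the birational case by pushing forward a klt Calabi--Yau pair with big boundary (no MMP is really needed there, since $Z$ is already the target of a morphism).

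Two points deserve more care than your sketch gives them. First, the positivity input: b-semiampleness of the moduli b-divisor of an arbitrary klt-trivial fibration is the Prokhorov--Shokurov conjecture and is open in general; what is available is either Ambro's theorem in the form that requires the boundary to be big over the base (the ``Shokurov boundary property'' setting, which your construction does satisfy because $\Delta\geq A$ with $A$ ample over $S$), or, more robustly, the weaker statement that $\mathbf{M}$ is b-nef (and abundant), after which one concludes directly from the generalized pair $(Z,B_Z,\mathbf{M})$ with $K_Z+B_Z+\mathbf{M}_Z\sim_{\qq,S}0$ and $B_Z\geq\epsilon A_Z$ big over $S$ by a standard nef-plus-big perturbation; the latter is how the cited references (Birkar, Moraga, and Fujino--Gongyo before them) actually argue, and you should either cite the bigness-hypothesis version precisely or phrase the last step in the generalized-pair language. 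Second, a bookkeeping issue you partly flagged: on the higher model $Z'$ where $\mathbf{M}$ descends, the trace of the boundary can have negative coefficients, so after replacing $\mathbf{M}_{Z'}$ by a general effective member you only get a sub-klt crepant structure on $Z'$; you should push it forward to $Z$ (where the boundary is effective and still $\geq\epsilon A_Z$) and conclude there, rather than claiming an ``honest'' pair on $Z'$ and then invoking the birational case. With these adjustments your proof is complete and coincides in substance with the argument the paper outsources to the literature.
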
 

\begin{lemma}\label{lem:contr-FT}
Let $(X, B, \mathrm{M})$ be a generalized log Calabi--Yau pair. Let $Y \dashrightarrow X$ be a birational map over $S$, only extracting divisors with log discrepancy in $[0,1)$ with respect to $(X, B, \mathrm{M})$.
If $X$ is of Fano type over $S$,
then $Y$ is of Fano type over $S$.
\end{lemma}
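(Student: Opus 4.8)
The plan is to reduce the statement to the non-generalized, absolute-type statement that is already recorded in the literature (\cite[Lemma 2.12]{Bir19}, \cite[Lemma 2.23]{Mor21}), which is precisely the content cited right before this lemma. First I would replace the birational map $Y\dashrightarrow X$ by an honest morphism: take a common resolution, or more economically a $\qq$-factorial model, so that we have projective birational morphisms $p\colon W\to Y$ and $q\colon W\to X$ over $S$ with $p$, $q$ birational and $W$ dominating both. Since $(X,B,\mathbf{M})$ is generalized log Calabi--Yau over $S$ (so $K_X+B+\mathbf{M}_X\equiv 0/S$), the log pull-back $(W,B_W,\mathbf{M})$ with $K_W+B_W+\mathbf{M}_W=q^*(K_X+B+\mathbf{M}_X)$ is again generalized log Calabi--Yau over $S$, and by Lemma~\ref{lem:contr-FT}'s hypothesis every $q$-exceptional divisor has generalized log discrepancy in $[0,1)$, i.e.\ appears in $B_W$ with positive coefficient; similarly, since $Y\dashrightarrow X$ only extracts divisors with log discrepancy in $[0,1)$, every $p$-exceptional divisor that is not $q$-exceptional also appears in $B_W$ with positive coefficient.

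Next I would pass from the generalized setting to the ordinary setting. Because $X$ is of Fano type over $S$, it is in particular klt and a (relative) Mori dream space, and one can choose a boundary $\Delta$ on $X$ with $(X,\Delta)$ klt and $-(K_X+\Delta)$ ample over $S$. The key point is that the generalized log discrepancies of $(X,B,\mathbf{M})$ control the ordinary log discrepancies of a suitably chosen klt pair: using that $\mathbf{M}$ is b-nef and that $-(K_X+\Delta)$ is ample$/S$, I would perturb to produce a boundary $\Delta'$ on $X$, $\rr$-linearly equivalent over $S$ to a small multiple of $-(K_X+\Delta)$ plus $B$ plus a descent of $\mathbf{M}$, such that $(X,\Delta')$ is still klt, $-(K_X+\Delta')$ is still ample$/S$ (hence $X$ is of Fano type$/S$ with this witnessing boundary), and every divisor extracted by $Y\dashrightarrow X$ still has log discrepancy in $[0,1)$ with respect to $(X,\Delta')$. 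This is where the b-nef hypothesis on $\mathbf{M}$ and the openness of the Fano-type and discrepancy conditions are used; the argument is the standard "absorb the nef part into the boundary" trick, cf.\ \cite[Lemma 4.5]{BZ16}.

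Finally, with $(X,\Delta')$ an honest klt log Calabi--Yau-type pair of Fano type over $S$ and $Y\dashrightarrow X$ extracting only divisors of log discrepancy in $[0,1)$, I would invoke the non-generalized statement \cite[Lemma 2.23]{Mor21} (equivalently \cite[Lemma 2.12]{Bir19}) to conclude that $Y$ is of Fano type over $S$. Concretely: the birational contraction $X\dashrightarrow Y$ is a step, or composition of steps, of an MMP for $-(K_X+\Delta')$ over $S$ after first extracting the relevant divisors on a $\qq$-factorial Fano-type model, so Fano-type-ness is preserved. I expect the main obstacle to be the second paragraph — the careful choice of the ordinary klt boundary $\Delta'$ absorbing $\mathbf{M}$ while simultaneously keeping ampleness of the anti-log-canonical divisor and keeping all extracted divisors at log discrepancy strictly less than $1$; the resolution step and the final MMP/Fano-type bookkeeping are routine once this reduction is in place.
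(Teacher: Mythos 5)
The paper itself gives no argument for this lemma: it is stated as ``well-known'' with references to \cite[Lemma 2.12]{Bir19} and \cite[Lemma 2.23]{Mor21}, so your plan of supplying a proof by reducing the generalized statement to the classical one is reasonable in spirit, and the overall architecture (extract the finitely many constrained divisors on a $\qq$-factorial Fano-type model, then pass to $Y$ by a birational contraction) is the standard one. But two points in your write-up are genuinely wrong. First, the bookkeeping on the common resolution $p\colon W\to Y$, $q\colon W\to X$: the hypothesis constrains only the divisors on $Y$ that are exceptional over $X$, i.e.\ the $q$-exceptional divisors of $W$ which are \emph{not} $p$-exceptional. It says nothing about divisors exceptional over both $X$ and $Y$ (these can have arbitrarily large log discrepancy and coefficient $\leq 0$ in $B_W$), nor about the divisors on $X$ contracted by $X\dashrightarrow Y$ (these may have coefficient $0$ in $B$, i.e.\ log discrepancy exactly $1$). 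Both claims in your first paragraph are therefore false as stated; fortunately they are not load-bearing.

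The serious gap is in the perturbation step you yourself identify as the crux. As described, $\Delta'$ either contains $B$ with its full coefficients --- in which case $(X,\Delta')$ cannot be klt whenever $(X,B,\mathbf{M})$ has non-klt places, which is precisely the case in which the paper uses the lemma (the extracted divisors are typically glc places and $B$ has coefficient-one components) --- or, if instead you take a general $\rr$-linearly equivalent representative of $B$, then the condition $a_E(X,\Delta')<1$ for the extracted divisors $E$ is destroyed, since a general member misses their centers and those log discrepancies jump to $\geq 1$. The missing ingredient is a convex combination with the Fano-type witness: pick $\Delta$ with $(X,\Delta)$ klt and $-(K_X+\Delta)$ ample over $S$, and consider $(X, tB+(1-t)\Delta, t\mathbf{M})$ for $t<1$ close to $1$. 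This pair is gklt, its anti-log-canonical class is still ample over $S$, and because only finitely many divisors are extracted and each has generalized log discrepancy strictly less than $1$ with respect to $(X,B,\mathbf{M})$, they still have log discrepancy $<1$ for $t$ near $1$. Only the (scaled) moduli part and small ample classes may then be traded for general effective representatives, and this must be done on a model where $\mathbf{M}$ descends and then pushed down, so that the coefficients along the strict transforms of the extracted divisors do not increase (cf.\ \cite[Lemma 4.5]{BZ16}); with this correction the reduction to the cited classical statement, and the final extraction-plus-contraction bookkeeping, go through.
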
 

The following lemma is proved using~\cite[Lemma 2.36]{Mor21}.

\begin{lemma}\label{lem:dlt-mod-base}
Let $\pi\colon X \rightarrow Z$ be a fibration of Fano type between $\qq$-factorial varieties. Let $(X,B,\mathbf{M})$ be a generalized log Calabi--Yau pair.
Let $(Z,B_Z,\mathbf{N})$ be the generalized pair obtained by the generalized canonical bundle formula.
Let $\phi_Z\colon Z'\rightarrow Z$ be a gdlt modification of $(Z,B_Z,\mathbf{N})$.
Then, there exists a commutative diagram
\[
\xymatrix{ 
(X,B,\mathbf{M})\ar[d]_-{\pi} & (X',B',\mathbf{M})\ar@{-->}[l]_-{\phi}\ar[d]^-{\pi'} \\ 
(Z,B_Z,\mathbf{N}) &
(Z',B_{Z'},\mathbf{N})\ar[l]^-{\phi_Z}
}
\]
satisfying the following conditions:
\begin{enumerate}
\item $X'$ is $\qq$-factorial,
\item $\phi$ is a crepant birational map
only extracting glc places, 
\item $\phi$ is an isomorphism at the generic point of $Z$, and
\item ${\pi'}^{-1}(\lfloor B_{Z'}\rfloor)\subseteq \lfloor B'\rfloor$.
\end{enumerate}
Further, if $\pi$ is a Mori fiber space, then $\pi'$ is a Mori fiber space.
\end{lemma}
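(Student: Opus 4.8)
The plan is to lift $\phi_Z$ to a crepant birational modification of $(X,B,\mathbf{M})$ over $Z'$, and then to correct it by a relative minimal model program run over $Z'$; the hypothesis that $X\to Z$ is of Fano type is exactly what makes every such program terminate. The core lifting step is \cite[Lemma~2.36]{Mor21}, whose proof carries over verbatim to generalized pairs: applied to $\phi_Z$ it produces a $\qq$-factorial variety $X_0$, a fibration $X_0\to Z'$, and a crepant birational map $X_0\dashrightarrow X$ over $Z'\to Z$ which extracts only generalized log canonical places, is an isomorphism over the generic point $\eta_Z$ of $Z$, and has $X_0$ of Fano type over $Z'$. Thus conditions (1)--(3) are already available at this stage, and the remaining work is to arrange (4) and the Mori fiber space addendum.

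For (4) I would first run the canonical bundle formula bookkeeping. Since the moduli b-divisor $\mathbf{N}$ is insensitive to crepant birational modifications of the base, the generalized pair $(Z',B_{Z'},\mathbf{N})$ defined by log pull-back of $(Z,B_Z,\mathbf{N})$ along $\phi_Z$ coincides with the one produced by the generalized canonical bundle formula applied to $(X_0,B_0,\mathbf{M})\to Z'$; in particular, for every component $D$ of $\lfloor B_{Z'}\rfloor$ there is a generalized log canonical center of $(X,B,\mathbf{M})$ dominating $D$. For the strict transforms of $\lfloor B_Z\rfloor$ this is the characterization of coefficient one in the discriminant, and each $\phi_Z$-exceptional divisor is a glc place of $(Z,B_Z,\mathbf{N})$, hence, on a model where it is divisorial, is dominated by a glc center of $(X,B,\mathbf{M})$. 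Using Lemma~\ref{lem:special-gdlt} — which only modifies fibres over proper subvarieties of $Z$, so it preserves the isomorphism over $\eta_Z$, and keeps $X_0$ of Fano type over $Z'$ by Lemma~\ref{lem:contr-FT} — I would then pass to a model on which, over each component of $\lfloor B_{Z'}\rfloor$, some divisor has coefficient one.

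Next I would set $\Delta$ to be the sum of the prime divisors of $X_0$ lying over $\lfloor B_{Z'}\rfloor$ with positive generalized log discrepancy with respect to $(X,B,\mathbf{M})$, and run the $\Delta$-MMP over $Z'$; since $K_{X_0}+B_0+\mathbf{M}_{X_0}\equiv_{Z'}0$ this is the same as a $(K_{X_0}+B_0+\mathbf{M}_{X_0}+t\Delta)$-MMP for $0<t\ll 1$, so every step is crepant for $(X,B,\mathbf{M})$, and it is an isomorphism over $\eta_Z$ because $\Delta$ is vertical over $Z'$. It terminates because $X_0$ is of Fano type over $Z'$, and the output $(X',B',\mathbf{M})$ contracts every component of $\Delta$: each such divisor is either exceptional over $X$ with center of codimension $\geq 2$ in $Z'$, or a non-movable component of a fibre over $\lfloor B_{Z'}\rfloor$ — effective but relatively anti-effective over $Z'$ — so it lies in the relative stable base locus. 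Since a contraction cannot empty a fibre, a coefficient-one divisor survives over each component of $\lfloor B_{Z'}\rfloor$, and (4) holds while (1)--(3) persist. When $\pi$ is a Mori fiber space this is in fact easier: $\rho(X/Z)=1$ forces every fibre of $X\to Z$ to be irreducible, so there are no superfluous vertical divisors to contract, and one simply extracts one glc place over each $\phi_Z$-exceptional divisor; counting Picard numbers gives $\rho(X'/Z')=\rho(X/Z)=1$, and together with the Fano type structure over $Z'$ this makes $\pi'$ a Mori fiber space.

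The hard part will be (4): I need to be sure that the vertical divisors over $\lfloor B_{Z'}\rfloor$ which are not log canonical places can genuinely be contracted by a crepant MMP over $Z'$ — rather than merely becoming relatively nef — and that no divisor meant to be kept is lost and the total space remains of Fano type throughout. This is precisely the point at which the Fano type hypothesis and \cite[Lemma~2.36]{Mor21} carry the argument.
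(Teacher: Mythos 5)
Your overall strategy---lift $\phi_Z$ via \cite[Lemma 2.36]{Mor21} and then correct by a relative MMP, with the Fano type hypothesis guaranteeing termination---is the same as the paper's, but you have inverted which conditions the citation actually delivers, and this leaves a genuine hole. As the paper uses it, \cite[Lemma 2.36]{Mor21} produces $(X_0,B_0,\mathbf{M})\to Z'$ satisfying (1), (2) and (4), together with Fano type over $Z'$, but \emph{not} (3); condition (3) is precisely where an extra idea is needed, and the paper obtains it by taking an ample divisor $A$ on $X$, its strict transform $A_0$ on $X_0$, and replacing $X_0$ by a small $\qq$-factorialization of the relative ample model of $A_0$ over $Z'$, which forces the map back to $X$ to be an isomorphism at the generic point of $Z$ while preserving (1), (2), (4). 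In your write-up, (3) is simply attributed to the citation and never argued, and your subsequent steps even endanger it: a modification as in Lemma~\ref{lem:special-gdlt} is a gdlt modification, which may extract \emph{horizontal} glc places (the pair need not be gdlt over the generic point of $Z$), so the claim that it ``only modifies fibres over proper subvarieties of $Z$'' is not justified. Conversely, the machinery you build for (4) is work the citation already does; and where you do need a contraction statement, ``lies in the relative stable base locus'' is not an argument---the correct tool is \cite[Lemma 2.9]{Lai11} on vertical divisors that are degenerate over the base, which is what the paper invokes.

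The Mori fiber space addendum is also not established. Irreducibility of the fibres of the \emph{original} $\pi$ does not mean there is nothing to contract on the lifted model: the divisors extracted over the $\phi_Z$-exceptional locus (and over $\lfloor B_{Z'}\rfloor$) sit alongside the strict transforms of the old fibre components, so preimages in $X_0$ of prime divisors of $Z'$ are typically reducible, and your count $\rho(X'/Z')=\rho(X/Z)=1$ presumes a very specific bookkeeping (exactly one extraction upstairs per $\phi_Z$-exceptional divisor, no contractions, and an actual morphism to $Z'$) that you never verify. The paper handles this by using (4) to choose a reduced divisor $S_0\subseteq\lfloor B_0\rfloor$ with exactly one prime component over each component of $\lfloor B_{Z'}\rfloor$, running a $(\lfloor B_0\rfloor-S_0)$-MMP over $Z'$, and applying \cite[Lemma 2.9]{Lai11} to contract all of $\lfloor B_0\rfloor-S_0$; combined with (3), which gives relative Picard rank one over the complement of $\lfloor B_{Z'}\rfloor$, this makes every preimage of a component of $\lfloor B_{Z'}\rfloor$ irreducible and yields $\rho(X'/Z')=1$. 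Without an argument of this kind the addendum does not go through as written.
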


\begin{proof}
By~\cite[Lemma 2.36]{Mor21}, there is a commutative diagram
\[
\xymatrix{ 
(X,B,\mathbf{M})\ar[d]_-{\pi} & (X_0,B_0,\mathbf{M})\ar@{-->}[l]_-{\phi}\ar[d]^-{\pi'} \\ 
(Z,B_Z,\mathbf{N}) &
(Z',B_{Z'},\mathbf{N})\ar[l]^-{\phi_Z}
}
\]
satisfying $(1)$, $(2)$, $(4)$, and that $X_0\rightarrow Z'$ is a fibration of Fano type.
Let $A$ be an ample divisor on $X$ and $A_0$ be its strict transform on $X_0$. We replace $X_0$ by a small $\qq$-factorialization of the relative ample model of $A_0$ over $Z'$. By doing so, condition $(3)$ is achieved, while $(1), (2),$ and $(4)$ are preserved.

Suppose further that $\pi$ is a Mori fiber space.
By $(3)$, the fibration $X_0\rightarrow Z'$ has relative Picard rank one over the complement of $\lfloor B_{Z'}\rfloor$.
By $(4)$, we can find a reduced divisor $S_0\subseteq \lfloor B_0\rfloor$ such that 
${\pi'}^{-1}(Q)$ contains a unique prime component of $S_0$ for each prime component $Q$ of $\lfloor B_{Z'}\rfloor$.
By running a $(\lfloor B_{0}\rfloor-S_0)$-MMP
 over $Z'$, we get a crepant birational model $(X',B',\mathbf{M})$ of $(X_0,B_0,\mathbf{M})$ with a fibration $\pi'\colon X'\rightarrow Z'$ of Fano type.
By~\cite[Lemma 2.9]{Lai11}, the previous MMP contracts all the prime components of $\lfloor B_{0}\rfloor-S_0$.
Conditions $(1)$-$(4)$ are preserved by this MMP, so $(X',B',\mathbf{M})$ satisfies these conditions.
Furthermore, the preimage of any
irreducible component of $\lfloor B_{Z'}\rfloor$ via $\pi'$ is irreducible.
Thus, $\pi'$ has relative Picard rank one and $(4)$ is achieved.
\end{proof}

The following is a consequence of~\cite[Lemma 3.3 \& 3.4]{Mor21}.

\begin{lemma}\label{lem:two-ray-game}
Let $\pi\colon X \rightarrow Z$ be a fibration between $\qq$-factorial varieties of Fano type over $\schbase$.
Let $Z\dashrightarrow Z'$ be a birational contraction over $\schbase$
with $Z'$ being $\qq$-factorial.
Then, there exists a commutative diagram over $\schbase$
\[
\xymatrix{
X\ar[d]_-{\pi}\ar@{-->}[r]^-{\phi} & X' \ar[d]^-{\pi'} \\
Z\ar@{-->}[r]_-{\phi_Z} & Z' \\
}
\]
where $\phi$ is a birational contraction and an isomorphism at the generic point of $Z$, 
$X'$ is $\qq$-factorial, and $\pi'$ is a fibration of Fano type. 
Further, if $\pi$ is a Mori fiber space, then $\pi'$ is a Mori fiber space.
\end{lemma}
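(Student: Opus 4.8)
The plan is to reduce the statement to the case where $Z \dashrightarrow Z'$ is a single step of a Minimal Model Program — either a divisorial contraction, a small contraction followed by a flip, or a Mori fiber contraction is excluded since $\phi_Z$ is birational — and then to lift each such elementary step to $X$ by the relative theory of Mori dream spaces. Concretely, since $Z$ is of Fano type over $\schbase$, it is a relative Mori dream space, so the birational contraction $\phi_Z\colon Z \dashrightarrow Z'$ can be realized by running a $D_Z$-MMP over $\schbase$ for a suitable divisor $D_Z$ on $Z$; I would fix such a $D_Z$ and let $D$ be its pullback (or strict transform after adding a general $\pi$-vertical ample) to $X$. The point is then to run the $D$-MMP over $\schbase$ on $X$ and check that its steps are precisely the pullbacks of the steps of the $D_Z$-MMP on $Z$.

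The key steps, in order, are as follows. First, invoke that $X$ and $Z$ are of Fano type over $\schbase$, hence relative Mori dream spaces, so that both the $D$-MMP on $X$ and the $D_Z$-MMP on $Z$ terminate and every intermediate model is $\qq$-factorial. Second, analyze a single MMP step for $D$ on $X$: because $\pi$ is a fibration with $\qq$-factorial base and $D$ is pulled back from $Z$ up to a $\pi$-ample perturbation, any $D$-negative extremal ray on $X$ must map to either a point or a curve on $Z$, and one shows that the contraction of such a ray either contracts a $\pi$-vertical divisor lying over a divisor contracted by the corresponding step on $Z$, or is induced by a $D_Z$-negative extremal contraction on $Z$; this is exactly the content imported from \cite[Lemma 3.3 \& 3.4]{Mor21}. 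Third, after each step replace $X$ by the new model and $Z$ by the corresponding model, noting that $\qq$-factoriality, the Fano type property over $\schbase$ (using Lemma~\ref{lem:contr-FT} or the analogue for contractions), and the property ``$\phi$ is an isomorphism at the generic point of $Z$'' are all preserved — the last because the MMP on $X$ only modifies $\pi$-vertical loci over the exceptional/flipping loci of $Z \dashrightarrow Z'$, which are of higher codimension in $Z$. Fourth, iterate until the base has been transformed all the way to $Z'$; the resulting $X'$ with $\pi'\colon X' \to Z'$ is the desired model, and $\phi\colon X \dashrightarrow X'$ is a birational contraction since each step was.

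For the Mori fiber space addendum: if $\pi$ is a Mori fiber space, then $\rho(X/Z)=1$, and I would check that this relative Picard number is unchanged under each step — a step that contracts a $\pi$-vertical divisor over a divisor of $Z$ drops $\rho(X)$ and $\rho(Z)$ in tandem, while a step induced from a small modification of $Z$ does not affect the generic fiber — so that $\rho(X'/Z')=1$ and $\pi'$ remains a Mori fiber space. The main obstacle I anticipate is the second step: controlling exactly which extremal rays appear in the $D$-MMP on $X$ and ensuring that no ``extraneous'' contraction occurs that is not dictated by the base MMP. This is where one leans most heavily on the already-cited results \cite[Lemma 3.3 \& 3.4]{Mor21}, and a careful choice of the perturbing $\pi$-ample divisor (general and sufficiently small) is what makes the $D$-negative rays on $X$ match the $D_Z$-negative rays on $Z$ step by step; making this matching precise, rather than just asserting it, is the technical heart of the argument.
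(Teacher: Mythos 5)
Your plan follows essentially the same route as the paper's proof: decompose $Z\dashrightarrow Z'$ into explicit MMP steps using the Fano-type (relative Mori dream space) property of $Z$ — the paper does this by running a $(K_Z+\Delta_Z+\epsilon A_Z)$-MMP to a minimal model mapping onto $Z'$, followed by an exceptional-divisor MMP over $Z'$ — and then lift these steps to $X$ via \cite[Lemma 3.3 \& 3.4]{Mor21}, observing that small (resp.\ divisorial) steps on the base correspond to small (resp.\ divisorial) steps upstairs, so that the relative Picard rank is preserved and the Mori fiber space addendum follows. Since the delicate matching of extremal rays is delegated to the same citation in both arguments, your proposal is correct and essentially coincides with the paper's proof.
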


\begin{proof}
Since $X$ is of Fano type over $\schbase$,
then $Z$ is of Fano type over $\schbase$ as well; see Lemma~\ref{lem:image-FT}.
Then, we can find boundaries
$\Delta$ and $\Delta_Z$ on $X$ and $Z$, respectively, 
that are big over $S$,
such that
$(X,\Delta)$ and $(Z,\Delta_Z)$ are klt log Calabi--Yau.
Let $(Z',\Delta_{Z'})$ be the log Calabi--Yau pair induced on $Z'$.
Let $A_{Z'}$ be an ample divisor on $Z'$
and $A_Z$ be its strict transform on $Z$.
If $\epsilon>0$ is small enough, then $Z'$ is an ample model
for the klt pair
$(Z,\Delta_Z+\epsilon A_Z)$.
Let $Z\dashrightarrow Z_1\dashrightarrow \dots \dashrightarrow Z_k$ be the steps
of a $(K_Z+\Delta_Z+\epsilon A_Z)$-MMP over $S$.
Since $Z_k$ is a minimal model for $A_Z$,
we conclude that there is a projective birational morphism $p\colon Z_k\rightarrow Z'$
between $\qq$-factorial varieties of Fano type.
Let $E$ be the exceptional divisor of $p$.
The $E$-MMP over $Z$ terminates on $Z$ itself.
Let $Z_k\dashrightarrow Z_{k+1}\dashrightarrow \dots \dashrightarrow Z_r:=Z$ be the steps of this MMP.

By \cite[Proof of Lemma 3.3 \& 3.4]{Mor21},
there exists a commutative diagram 
\[
\xymatrix{
X\ar[d]^-{\pi}\ar@{-->}[r]^-{\phi_1} & X_1\ar[d]^-{\pi_1}\ar@{-->}[r]^-{\phi_2} & X_2\ar[d]^-{\pi_3}\ar@{-->}[r]^-{\phi_1} & \dots \ar@{-->}[r]^-{\phi_r} & X_r\ar[d]^-{\pi_k} \\
Z \ar@{-->}[r] & Z_1 \ar@{-->}[r] & Z_2 \ar@{-->}[r] & \dots \ar@{-->}[r] & Z_r,
}
\]
where each $\phi_k$ is a 
birational contraction between $\qq$-factorial varieties.
We set $X':=X_r$ and $\pi':=\pi_r$.
The birational map $\phi_i$ is small whenever
$Z_{i-1}\dashrightarrow Z_i$ is small,
and $\phi_i$ contracts a prime divisor 
whenever $Z_{i-1}\rightarrow Z_i$ contracts a prime divisor.
Hence, $\rho(\pi_i)$ is independent of $i$.
This finishes the proof.
\end{proof} 

\begin{lemma}
\label{cor:mod-base-mod-tot-space}
Let $\pi\colon X \rightarrow Z$ be a fibration between $\qq$-factorial varieties of Fano type over $\schbase$. Let $(X,B,\mathbf{M})$ be a generalized log Calabi--Yau pair.
Let $(Z,B_Z,\mathbf{N})$ be the generalized pair obtained by the generalized canonical bundle formula. Let $\phi_Z\colon Z'\dashrightarrow Z$ be a birational map
between $\qq$-factorial varieties extracting only glc places of $(Z,B_Z,\mathbf{N})$.
Then, there exists a commutative diagram
\[
\xymatrix{ 
(X,B,\mathbf{M})\ar[d]_-{\pi} & (X',B',\mathbf{M})\ar@{-->}[l]_-{\phi}\ar[d]^-{\pi'} \\ 
(Z,B_Z,\mathbf{N}) &
(Z',B_{Z'},\mathbf{N})\ar@{-->}[l]^-{\phi_Z}
}
\]
satisfying the following conditions:
\begin{enumerate}
\item $X'$ is $\qq$-factorial,
\item $\phi$ is a crepant birational map
only extracting glc places, 
\item $\phi$ is an isomorphism at the generic point of $Z$, and
\item ${\pi'}^{-1}(\lfloor B_{Z'}\rfloor)\subseteq \lfloor B'\rfloor$.
\end{enumerate}
Further, if $\pi$ is a Mori fiber space, then $\pi'$ is a Mori fiber space.
\end{lemma}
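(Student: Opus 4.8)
The strategy is to factor $\phi_Z$ as a $\qq$-factorial gdlt modification followed by a birational contraction, to lift the first piece with Lemma~\ref{lem:dlt-mod-base} and the second with Lemma~\ref{lem:two-ray-game}, and then to transport the generalized log Calabi--Yau structure along the resulting birational contraction of total spaces. To set up the factorization, let $F_1,\dots,F_m$ be the $\phi_Z$-exceptional prime divisors of $Z'$; by hypothesis each $F_j$ is a glc place of $(Z,B_Z,\mathbf{N})$. By Lemma~\ref{lem:special-gdlt} there is a $\qq$-factorial gdlt modification $\psi_Z\colon Z''\to Z$ of $(Z,B_Z,\mathbf{N})$ on which every $F_j$ is divisorial. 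Then every prime divisor of $Z'$ is the strict transform of a prime divisor of $Z''$ (namely of the corresponding divisor of $Z$, which is also a divisor of $Z''$ since $\psi_Z$ is a morphism, or of one of the $F_j$, divisorial on $Z''$ by construction), so $\psi\colon Z''\dashrightarrow Z'$ is a birational contraction over $\schbase$; moreover $Z'$ is $\qq$-factorial by hypothesis, and $Z''$ is of Fano type over $\schbase$ by Lemma~\ref{lem:contr-FT} applied to $\psi_Z$ (which extracts only glc places of $(Z,B_Z,\mathbf{N})$) and the fact that $Z$ is of Fano type over $\schbase$.

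Next I would carry out the two lifts. Applying Lemma~\ref{lem:dlt-mod-base} to $\psi_Z\colon Z''\to Z$ yields a crepant birational map $\phi''\colon(X'',B'',\mathbf{M})\dashrightarrow(X,B,\mathbf{M})$ and a fibration $\pi''\colon X''\to Z''$ such that $X''$ is $\qq$-factorial, $\phi''$ extracts only glc places, $\phi''$ is an isomorphism over the generic point of $Z$, and ${\pi''}^{-1}(\lfloor B_{Z''}\rfloor)\subseteq\lfloor B''\rfloor$; if $\pi$ is a Mori fiber space, so is $\pi''$. Since $\phi''$ extracts only glc places of $(X,B,\mathbf{M})$ and $X$ is of Fano type over $\schbase$, Lemma~\ref{lem:contr-FT} shows that $X''$ is of Fano type over $\schbase$ as well. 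I would then apply Lemma~\ref{lem:two-ray-game} to the fibration $\pi''\colon X''\to Z''$ between $\qq$-factorial varieties of Fano type over $\schbase$ and to the birational contraction $\psi\colon Z''\dashrightarrow Z'$: this produces a birational contraction $\phi'\colon X''\dashrightarrow X'$, an isomorphism over the generic point of $Z''$, with $X'$ $\qq$-factorial, $\pi'\colon X'\to Z'$ a fibration of Fano type (a Mori fiber space when $\pi''$ is), fitting into a commutative square over $\psi$.

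Finally I would assemble the pair and verify the conclusions. Set $B':=\phi'_*B''$ and $\phi:=\phi''\circ(\phi')^{-1}\colon X'\dashrightarrow X$. Since $K_{X''}+B''+\mathbf{M}_{X''}\sim_{\rr}0$ over $\schbase$, the pair $(X'',B'',\mathbf{M})$ is glc, and $\phi'$ is a birational contraction, a standard application of the negativity lemma (cf.\ \cite{Kol13}) shows that $(X',B',\mathbf{M})$ is a generalized log Calabi--Yau pair crepant to $(X'',B'',\mathbf{M})$; hence $\phi$ is crepant birational. Condition (1) and the Mori fiber space clause are immediate from the two applications above. For condition (2), a $\phi$-exceptional prime divisor $E$ of $X'$ is, since $\phi'$ is a birational contraction, the strict transform of a prime divisor $E''$ of $X''$, and $E''$ is then $\phi''$-exceptional, hence a glc place of $(X,B,\mathbf{M})$; thus $E''\subseteq\lfloor B''\rfloor$, and by crepancy of $\phi'$ we get $E\subseteq\lfloor B'\rfloor$, so $E$ is a glc place of $(X,B,\mathbf{M})$. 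Condition (3) follows by composing the two isomorphism statements, since the generic point of $Z''$ lies over that of $Z$. For condition (4), compatibility of the canonical bundle formula with crepant birational base change gives $\lfloor B_{Z'}\rfloor=\psi_*\lfloor B_{Z''}\rfloor$, and the $2$-ray game lift of Lemma~\ref{lem:two-ray-game} (following \cite[Proof of Lemma 3.3 \& 3.4]{Mor21}) is constructed so that, for each prime component $Q'$ of $\lfloor B_{Z'}\rfloor$ with strict transform $Q''$ on $Z''$, one has ${\pi'}^{-1}(Q')=\phi'_*\,{\pi''}^{-1}(Q'')$; combined with ${\pi''}^{-1}(\lfloor B_{Z''}\rfloor)\subseteq\lfloor B''\rfloor$ and crepancy of $\phi'$, this yields ${\pi'}^{-1}(\lfloor B_{Z'}\rfloor)\subseteq\lfloor B'\rfloor$.

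The main obstacle is condition (4): one must check that passing from $X''$ to $X'$ via Lemma~\ref{lem:two-ray-game} does not create a new divisorial component lying over $\lfloor B_{Z'}\rfloor$ and outside $\lfloor B'\rfloor$. This is exactly where the fine structure of the $2$-ray game lift of \cite{Mor21} is used: each step of the base MMP lifts either to a small map --- which alters neither the boundary nor the vertical prime divisors in codimension one --- or to a divisorial contraction of a component of the preimage of a boundary divisor, so that the inclusion ${\pi}^{-1}(\lfloor B_{Z}\rfloor)\subseteq\lfloor B\rfloor$ propagates through the lift just as it is established in the proof of Lemma~\ref{lem:dlt-mod-base}.
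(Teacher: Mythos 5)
Your proof follows the paper's argument exactly: factor $\phi_Z$ through a $\qq$-factorial gdlt modification of $(Z,B_Z,\mathbf{N})$ provided by Lemma~\ref{lem:special-gdlt}, so that the residual map to $Z'$ is a birational contraction (using that the $\phi_Z$-contracted divisors are glc places), then lift the modification with Lemma~\ref{lem:dlt-mod-base} and the contraction with Lemma~\ref{lem:two-ray-game}. The paper's proof consists of precisely these steps stated in three sentences; your extra verifications of conditions (1)--(4) and of the Fano-type hypotheses needed to invoke the two lemmas, including the discussion of how the two-ray game lift propagates ${\pi}^{-1}(\lfloor B_{Z}\rfloor)\subseteq\lfloor B\rfloor$, only make explicit what the paper leaves implicit.
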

\begin{proof}
The birational map $\phi_{Z}$ factors as 
\[\phi_{Z} \colon (Z',B_{Z'},\mathbf{N}) \dashleftarrow (Z_0,B_{Z_0},\mathbf{N}) \to (Z,B_Z,\mathbf{N})\]
where $(Z_0,B_{Z_0},\mathbf{N})$ is a $\qq$-factorial dlt modification of $(Z,B_Z,\mathbf{N})$ extracting the glc places contracted by $\phi_{Z}$, which exists by Lemma~\ref{lem:special-gdlt}. By assumption, all the divisors contracted by $\phi_{Z}$ are glc places, so $(Z_0,B_{Z_0},\mathbf{N}) \dashrightarrow (Z',B_{Z'},\mathbf{N})$ must be a birational contraction. The statement now follows from Lemma~\ref{lem:dlt-mod-base} and Lemma~\ref{lem:two-ray-game}.
\end{proof}

The following lemma is well-known to the experts. The proof in the case of pairs is in~\cite[Theorem 3.1]{BDCS20}.
In the case of generalized pairs, the proof works verbatim as in~\cite[Proof of Proposition 3.2]{Mor21}.

\begin{lemma}\label{lem:tw-mfs}
Let $(X,B,\mathbf{M})$ be a gklt Calabi--Yau pair.
Then, there exists a birational contraction $X\dashrightarrow X'$
and a fibration $X'\rightarrow Z$ satisfying the following:
\begin{enumerate}
\item the fibration $X'\rightarrow Z$ is a composition of Mori fiber spaces, and 
\item either $Z$ is a point or a klt variety with $K_Z\equiv 0$.
\end{enumerate} 
Furthermore, both $X'$ and $Z$ can be chosen to be $\qq$-factorial. 
\end{lemma}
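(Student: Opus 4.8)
The plan is to follow the strategy of \cite[Theorem~3.1]{BDCS20} and its generalized refinement in \cite{Mor21}, arguing by induction on $n=\dim X$; the two-ray game of Lemma~\ref{lem:two-ray-game} will be used to transport the Mori fiber space structure from the base of a fibration up to its total space. The base case $n=0$ is trivial (both $X'$ and $Z$ are the point), so assume $n\geq 1$ and that the statement holds in dimension $<n$.

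First I would pass to a $\qq$-factorialization of $(X,B,\mathbf{M})$: a small, hence crepant, birational morphism $X_0\to X$ with $X_0$ $\qq$-factorial. Then $(X_0,B_0,\mathbf{M})$ is again a gklt Calabi--Yau pair and $X\dashrightarrow X_0$ is a birational contraction (it extracts no divisor), so it is enough to argue for $(X_0,B_0,\mathbf{M})$; thus assume $X$ is $\qq$-factorial. Now $-K_X\equiv B+\mathbf{M}_X$ is pseudoeffective, being a sum of the effective divisor $B$ and the nef divisor $\mathbf{M}_X$. If $K_X$ is pseudoeffective as well, then $K_X\equiv 0$, because the pseudoeffective cone of a projective variety contains no line; in this case we are done by taking $X'=X$ and $Z=X$, with the identity as an empty composition of Mori fiber spaces, since $X$ is $\qq$-factorial klt with $K_X\equiv 0$. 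Otherwise $K_X$ is not pseudoeffective, and I would run a $K_X$-MMP with scaling of an ample divisor; since $K_X$ is not pseudoeffective, it terminates with a Mori fiber space. As $K_X+B+\mathbf{M}_X\equiv 0$, the strict transform of $B$ keeps defining a gklt Calabi--Yau structure along this MMP — the steps are crepant for $(X,B,\mathbf{M})$ by the negativity lemma — so we arrive at a birational contraction $X\dashrightarrow X_1$ onto a $\qq$-factorial gklt Calabi--Yau pair $(X_1,B_1,\mathbf{M})$, equipped with a Mori fiber space $f\colon X_1\to Y_1$.

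Next I would descend to the base. The fibration $f$ is of Fano type over $Y_1$ and has relative Picard rank one: $(X_1,0)$ is klt and $-K_{X_1}$ is ample over $Y_1$. Applying the generalized canonical bundle formula to the Calabi--Yau fibration $(X_1,B_1,\mathbf{M})\to Y_1$ endows $Y_1$ with a generalized pair $(Y_1,B_{Y_1},\mathbf{N})$ which is gklt and satisfies $K_{Y_1}+B_{Y_1}+\mathbf{N}_{Y_1}\equiv 0$; after a $\qq$-factorialization it is a $\qq$-factorial gklt Calabi--Yau pair of dimension $<n$. By the inductive hypothesis there are a birational contraction $g\colon Y_1\dashrightarrow Y_1'$ and a fibration $h\colon Y_1'\to Z$ which is a composition of Mori fiber spaces, with $Y_1'$ and $Z$ $\qq$-factorial and $Z$ a point or klt with $K_Z\equiv 0$. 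Since $f$ is a Mori fiber space, the two-ray game — Lemma~\ref{lem:two-ray-game}, used as in its proof to propagate the base MMP underlying $g$ step by step via \cite[Lemmas~2.36, 3.3, 3.4]{Mor21} — yields a commutative diagram with a birational contraction $\phi\colon X_1\dashrightarrow X_1'$, with $X_1'$ $\qq$-factorial, and with $f'\colon X_1'\to Y_1'$ again a Mori fiber space. Composing all of the birational contractions above gives a birational contraction $X\dashrightarrow X_1'$, and $f'$ followed by $h$ exhibits $X_1'\to Z$ as a composition of Mori fiber spaces with $Z$ a point or $\qq$-factorial klt with $K_Z\equiv 0$; this closes the induction.

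The hard part will be the middle step, i.e.\ producing and controlling the Mori fiber space $X_1\to Y_1$. First, one has to know that the $K_X$-MMP can be run and terminates; this relies crucially on $K_X$ being \emph{not} pseudoeffective (so that one lands on a Mori fiber space) rather than on the still-open termination of flips in general. Second, and more essentially, one has to check that the base of this Mori fiber space carries a genuine gklt \emph{Calabi--Yau} structure, and not merely that it is klt; this is precisely where the hypothesis $K_X+B+\mathbf{M}_X\equiv 0$ enters twice — to keep the MMP steps crepant for $(X,B,\mathbf{M})$, and to make the canonical bundle formula output a numerically trivial class on $Y_1$, which is what allows the inductive hypothesis to be applied to $Y_1$.
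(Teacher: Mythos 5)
Your proposal is correct and is essentially the argument the paper itself points to: the paper offers no proof of Lemma~\ref{lem:tw-mfs} beyond citing \cite[Theorem 3.1]{BDCS20} and \cite[Proof of Proposition 3.2]{Mor21}, and your induction (pass to a $\qq$-factorialization, observe that $K_X$ pseudoeffective forces $K_X\equiv 0$ since $-K_X\equiv B+\mathbf{M}_X$ is pseudoeffective, otherwise run a $K_X$-MMP to a Mori fiber space, endow the base with a gklt Calabi--Yau structure via the generalized canonical bundle formula, induct, and lift the base modification by the two-ray game) is precisely that strategy. The one point worth making explicit is that Lemma~\ref{lem:two-ray-game} cannot be invoked in one stroke with the base scheme a point, since $X_1$ and $Y_1$ need not be of Fano type absolutely; as you indicate, it must be applied step by step, taking the auxiliary base to be the target of each extremal contraction or flip in the base modification, where the relative Fano-type hypotheses do hold because $X_1\to Y_1$ is a $K$-negative Mori fiber space and each base step makes $-K$ relatively big over its target.
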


As a consequence, we have the following lemma.

\begin{lemma}\label{lem:tower-mfs-picard}
Let $X$ be a variety of 
Fano type, and $(X,B,\mathbf{M})$ be a gklt Calabi--Yau pair.
Then, there exists a birational contraction $X\dashrightarrow X'$ with $\rho(X')\leq \dim(X')$. Furthermore, $X'$ can be chosen to be $\qq$-factorial.
\end{lemma}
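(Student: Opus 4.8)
The plan is to combine Lemma~\ref{lem:tw-mfs} with Lemma~\ref{lem:tower-mfs-picard}'s ambient hypothesis by an induction on dimension, extracting a Picard rank bound from the tower of Mori fiber spaces. First I would apply Lemma~\ref{lem:tw-mfs} to the gklt Calabi--Yau pair $(X,B,\mathbf{M})$ to obtain a birational contraction $X\dashrightarrow X_1$ and a fibration $f\colon X_1\rightarrow Z$ which is a composition of Mori fiber spaces, with both $X_1$ and $Z$ taken $\qq$-factorial, and where $Z$ is either a point or a klt variety with $K_Z\equiv 0$. Since $X$ is of Fano type and $X\dashrightarrow X_1$ is a birational contraction, $X_1$ is of Fano type as well (one extracts no divisors, so Lemma~\ref{lem:contr-FT} or a direct argument applies), and hence by Lemma~\ref{lem:image-FT} the base $Z$ is also of Fano type. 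But a Fano type variety with $K_Z\equiv 0$ is a point: indeed it carries a klt pair $(Z,\Gamma)$ with $-(K_Z+\Gamma)$ ample, so $-\Gamma\equiv K_Z+(-(K_Z+\Gamma))$ would be ample with $\Gamma\geq 0$, forcing $Z$ to be a point. Therefore $Z$ is a point and $f\colon X_1\rightarrow \Spec(\kk)$ is itself a composition of Mori fiber spaces.

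Next I would bound $\rho(X_1)$ using the tower structure. Write the composition as $X_1 = Y_0 \rightarrow Y_1 \rightarrow \cdots \rightarrow Y_k = \Spec(\kk)$, where each $Y_{j-1}\rightarrow Y_j$ is a Mori fiber space between $\qq$-factorial varieties, hence has relative Picard rank one. Then $\rho(Y_{j-1}) = \rho(Y_j) + 1$ for each $j$, so $\rho(X_1) = k$. On the other hand each Mori fiber space has relative dimension at least one, so $\dim(X_1) = \sum_{j=1}^k (\dim Y_{j-1} - \dim Y_j) \geq k$. Combining, $\rho(X_1)\leq \dim(X_1) = \dim(X)$, which is the assertion with $X':=X_1$. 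The $\qq$-factoriality of $X'$ is already guaranteed by Lemma~\ref{lem:tw-mfs}.

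The main obstacle, and the point that needs the most care, is the reduction "$Z$ is a point". One must make sure the Fano type hypothesis genuinely propagates through a birational contraction and through the base of a Mori-type fibration: the first uses that a birational contraction extracts no divisors (so one is in the hypotheses of Lemma~\ref{lem:contr-FT}, taking the boundary to be the one witnessing Fano type), and the second is exactly Lemma~\ref{lem:image-FT}. Granting that, the vanishing of $K_Z$ on a Fano type $Z$ forces $Z$ to be zero-dimensional by the ampleness argument above, so the case "$Z$ a klt variety with $K_Z\equiv 0$" collapses and one is left with a pure tower of Mori fiber spaces over a point, where the Picard rank versus dimension bookkeeping is immediate.
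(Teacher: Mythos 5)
Your proposal is correct and follows essentially the same route as the paper: apply Lemma~\ref{lem:tw-mfs}, use Lemmas~\ref{lem:contr-FT} and~\ref{lem:image-FT} to see that the base of the tower is of Fano type and hence cannot satisfy $K_Z\equiv 0$ unless it is a point, and then compare Picard rank and dimension along the tower of Mori fiber spaces. The only difference is that you spell out why a Fano type variety with $K_Z\equiv 0$ must be a point, a step the paper leaves implicit.
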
 

\begin{proof}
By Lemma~\ref{lem:tw-mfs}
there is a birational contraction $X\dashrightarrow X'$ and a fibration $\phi \colon X'\rightarrow X_k$
that factorizes as
$X_0:=X' \rightarrow X_1 \rightarrow \dots \rightarrow X_k$, where each
$\phi_k \colon X_i\rightarrow X_{i+1}$ is a Mori fiber space.
Furthermore, either 
$\dim X_k=0$ or $X_k$ is klt Calabi--Yau with $K_{X_k}\equiv 0$. 
We argue that this latter case does not happen.
Indeed, by construction $X'$ is of Fano type
and hence by Lemma~\ref{lem:image-FT} each $X_i$, including $X_k$, must be of Fano type.
If $X_k$ is of Fano type, then $K_{X_k}\not \equiv 0$.
We conclude that $\dim X_k=0$.
Since $\dim(X_i) \geq \dim(X_{i+1})+1$ and $\rho(X_i) = \rho(X_{i+1})+1$, we obtain that $\rho(X_i) \leq \dim(X_{i})$.
\end{proof} 

The following Lemma \ref{lem:from-FT-to-tw-MFS} is a relative version of Lemma \ref{lem:tw-mfs}.

\begin{lemma}\label{lem:from-FT-to-tw-MFS}
Let $X\rightarrow Z$ be a morphism of Fano type.
Then, there exists a birational contraction $X\dashrightarrow X'$ over $Z$ and a fibration 
$X'\rightarrow Z'$ over $Z$ such that
\begin{enumerate}
\item the morphism $Z'\rightarrow Z$ is birational, and 
\item the fibration $X'\rightarrow Z'$ is a tower of Mori fiber spaces.
\end{enumerate}
Furthermore, both $X'$ and $Z'$ can be chosen to be $\qq$-factorial.
\end{lemma}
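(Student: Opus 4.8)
The plan is to argue by induction on the relative dimension $d:=\dim X-\dim Z$, peeling off one Mori fiber space at a time and using the $2$-ray game of Lemma~\ref{lem:two-ray-game} to transport modifications of the base up to the total space. First I would replace $X$ by a small $\qq$-factorialization, which is again of Fano type over $Z$, so that we may assume $X$ is $\qq$-factorial. If $d=0$, i.e.\ $X\rightarrow Z$ is birational, we simply take $X'=Z':=X$ (the trivial tower of length $0$), and there is nothing to prove. So assume $d\geq 1$.

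Since $X$ is of Fano type over $Z$ and $d\geq 1$, the anticanonical $-K_X$ is big over $Z$: writing $-(K_X+B)$ ample over $Z$ for a suitable klt boundary $B\geq 0$, we have $-K_X=-(K_X+B)+B$, a sum of an ample-over-$Z$ and an effective divisor, so $-K_X$ is big over $Z$ and hence $K_X$ is not pseudo-effective over $Z$ (restricting to a general, positive-dimensional fibre). As $X\rightarrow Z$ is a relative Mori dream space, a $K_X$-MMP over $Z$ terminates, necessarily with a Mori fiber space; this yields a birational contraction $\psi\colon X\dashrightarrow X_1$ over $Z$ together with a Mori fiber space $g\colon X_1\rightarrow Y$ over $Z$. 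Here $X_1$ is $\qq$-factorial and of Fano type over $Z$ (both preserved by the MMP over $Z$), $Y$ is $\qq$-factorial (base of a Mori fiber space from a $\qq$-factorial variety) and of Fano type over $Z$ by Lemma~\ref{lem:image-FT}, and $\dim Y-\dim Z<d$. Apply the inductive hypothesis to $Y\rightarrow Z$: there is a birational contraction $Y\dashrightarrow Y'$ over $Z$ and a fibration $Y'\rightarrow Z'$ over $Z$ with $Y',Z'$ $\qq$-factorial, $Z'\rightarrow Z$ birational, and $Y'\rightarrow Z'$ a tower of Mori fiber spaces. Now I would feed the fibration $g\colon X_1\rightarrow Y$ and the birational contraction $Y\dashrightarrow Y'$ into Lemma~\ref{lem:two-ray-game} with $\schbase=Z$, obtaining a commutative diagram over $Z$
\[
\xymatrix{
X_1\ar[d]_-{g}\ar@{-->}[r]^-{\phi} & X' \ar[d]^-{g'} \\
Y\ar@{-->}[r] & Y'
}
\]
where $\phi$ is a birational contraction, $X'$ is $\qq$-factorial, $g'\colon X'\rightarrow Y'$ is a fibration, and crucially---because $g$ is a Mori fiber space---$g'$ is again a Mori fiber space. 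Then $X\dashrightarrow X_1\dashrightarrow X'$ is a birational contraction over $Z$, and $X'\xrightarrow{g'}Y'\rightarrow Z'$ exhibits $X'\rightarrow Z'$ as a tower of Mori fiber spaces over $Z$ with $Z'\rightarrow Z$ birational and $X',Z'$ both $\qq$-factorial, which is what we want.

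The genuinely delicate point is not the termination of the MMP (which is for free from the relative Mori dream space property) but the passage, at each stage of the induction, from the modification $Y\dashrightarrow Y'$ of the base to a compatible modification $X_1\dashrightarrow X'$ of the total space that still fibres over $Y'$ and still does so as a Mori fiber space; this is exactly the content of Lemma~\ref{lem:two-ray-game} and its ``Furthermore'' clause, so once we have that lemma the argument is essentially bookkeeping. The only routine checks I would still spell out are that small $\qq$-factorializations and MMP-steps over $Z$ preserve ``Fano type over $Z$'', that the base of a Mori fiber space out of a $\qq$-factorial variety is $\qq$-factorial, and that a composition of birational contractions over $Z$ is a birational contraction over $Z$---all standard.
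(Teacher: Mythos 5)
Your proposal is correct and follows essentially the same route as the paper: run a $K_X$-MMP over $Z$ to reach a Mori fiber space, apply induction to its Fano-type base (Lemma~\ref{lem:image-FT}), and lift the resulting base modification to the total space via the $2$-ray game of Lemma~\ref{lem:two-ray-game}, whose ``Furthermore'' clause preserves the Mori fiber space structure. The only cosmetic difference is that you induct on the relative dimension with an explicit base case $d=0$, while the paper inducts on dimension and treats the case where the Mori fiber space base is already birational to $Z$ as the stopping point.
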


\begin{proof}
We run a $K_X$-MMP over $Z$ and let
$X\dashrightarrow X_1 \dashrightarrow  \dots \dashrightarrow X_k$ be the steps of this MMP and $X_k\rightarrow W$ be the associated Mori fiber space. 
If $W\rightarrow Z$ is birational, then we are done.
Indeed, we just set $Z':=W$.
Assume that $W\rightarrow Z$ is not birational.
By Lemma~\ref{lem:image-FT}, we know that $W\rightarrow Z$ is a morphism of Fano type.
Hence, we may proceed by induction on the dimension.
There exists a birational contraction
$W\dashrightarrow W_m$ and a fibration $W_m\rightarrow Z'$ over $Z$ such that:
\begin{itemize}
\item the fibration $W_m\rightarrow Z'$ is composition of Mori fiber spaces, and 
\item the morphism $Z'\rightarrow Z$ is birational.
\end{itemize} 
Applying Lemma~\ref{lem:two-ray-game}, there exists a birational contraction $X_k\dashrightarrow X_m$
and a Mori fiber space $X_m\rightarrow W_m$
making the following diagram commutative:
\[
\xymatrix{
X\ar[dd]\ar@{-->}[r] & X_k\ar@{-->}[r]\ar[d] & X_m\ar[d] \\
& W
\ar@{-->}[r]\ar[ld] & W_m\ar[d] \\ 
Z & & Z'.\ar[ll]
}
\]
Note that $X_m\rightarrow Z'$ is a composition of Mori fiber spaces. Further, the outcome of an MMP and the base of Mori fiber spaces are $\qq$-factorial whenever the domain is $\qq$-factorial, which can be granted up to a small $\qq$-factorialization of $X$. This implies that both $X'$ and $Z'$ can be chosen $\qq$-factorial, and finishes the proof.
\end{proof} 

The following Lemma \ref{lem:bounding-comp} is a generalization of Lemma \ref{lem:tower-mfs-picard} for glc pairs.

\begin{lemma}\label{lem:bounding-comp}
Let $(X,B,\mathbf{M})$ be a generalized log Calabi--Yau pair of dimension $n$. 
There exists a crepant model $(X',B',\mathbf{M})$ of $(X,B,\mathbf{M})$ and a fibration $X'\rightarrow Z$ satisfying the following:
\begin{enumerate}
\item the variety $X'$ is $\qq$-factorial, 
\item every divisor extracted by $X'\dashrightarrow X$ is a glc place of $(X,B,\mathbf{M})$, 
\item every component of $B'$ dominates $Z$, and
\item $X' \to Z$ is a tower of Mori fiber spaces, so $\rho(X'/Z) \leq n$. 
\end{enumerate}
In particular, $Z$ is either a point or a klt variety with $K_{Z} \equiv 0$, and $X'\rightarrow Z$ is the MRC fibration of $X$.
\end{lemma}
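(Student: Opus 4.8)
The plan is to argue by induction on $n=\dim X$, the base case $n=0$ being vacuous. For the inductive step, using Lemma~\ref{lem:gdlt} I would first replace $(X,B,\mathbf{M})$ by a $\qq$-factorial gdlt modification: this settles $(1)$, keeps us inside the crepant-equivalence class, and extracts only glc places, so it is harmless for $(2)$. Write $S=\lfloor B\rfloor$; since the pair is dlt, $(X,B-\epsilon S,\mathbf{M})$ is $\qq$-factorial gklt for $0<\epsilon<1$ and $K_X+(B-\epsilon S)+\mathbf{M}_X\equiv-\epsilon S$. If $S=0$, then $(X,B,\mathbf{M})$ is gklt Calabi--Yau, and Lemma~\ref{lem:tw-mfs} directly gives a birational contraction $X\dashrightarrow X'$ onto a $\qq$-factorial $X'$ together with a tower of Mori fiber spaces $X'\to Z$, where $Z$ is a point or a $\qq$-factorial klt variety with $K_Z\equiv0$; a contraction extracts no divisor, so $(1),(2),(4)$ hold. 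If $S\neq0$, then $-\epsilon S$ is not pseudo-effective, so I would run a $(K_X+B-\epsilon S+\mathbf{M})$-MMP with scaling of an ample divisor: it either contracts the strict transform of $S$ entirely, landing on a $\qq$-factorial gklt Calabi--Yau pair which the previous case handles, or terminates with a Mori fiber space $X_1\to W$ with $\dim W<n$. Every step of this MMP is negative for $K_X+B-\epsilon S+\mathbf{M}$ but trivial for $K_X+B+\mathbf{M}_X\equiv0$, hence crepant for the log Calabi--Yau structure; so $(X_1,B_1,\mathbf{M})$ is a $\qq$-factorial lc Calabi--Yau pair, crepant birational to $(X,B,\mathbf{M})$ via a birational contraction, and in particular extracting only glc places.

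Next I would descend to the base and recurse. Let $(W,B_W,\mathbf{N})$ be the pair produced by the generalized canonical bundle formula for $X_1\to W$; it is a $\qq$-factorial glc Calabi--Yau pair of dimension $<n$, so by the inductive hypothesis it admits a $\qq$-factorial crepant model $(W',B_{W'},\mathbf{N})$, with $W'\dashrightarrow W$ extracting only glc places, a tower of Mori fiber spaces $W'\to Z$ onto a $\qq$-factorial $Z$ that is a point or a klt variety with $K_Z\equiv0$, and $\rho(W'/Z)\leq\dim W'$. Applying Lemma~\ref{cor:mod-base-mod-tot-space} to the Mori fiber space $X_1\to W$ and to the map $W'\dashrightarrow W$ then lifts the base modification: one obtains a $\qq$-factorial crepant model $(X',B',\mathbf{M})$ of $(X_1,B_1,\mathbf{M})$, with $X'\dashrightarrow X_1$ extracting only glc places and a Mori fiber space $X'\to W'$. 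Composing, $X'\to W'\to Z$ is a tower of Mori fiber spaces, $\rho(X'/Z)\leq1+\rho(W'/Z)\leq1+\dim W'\leq n$, and $X'\dashrightarrow X$ extracts only glc places; this gives $(1),(2),(4)$.

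It remains to read off $(3)$ and the closing assertions, with no further modification. In every case we have a $\qq$-factorial crepant model $(X',B',\mathbf{M})$ of $(X,B,\mathbf{M})$ extracting only glc places, together with a tower of Mori fiber spaces $\pi\colon X'\to Z$ onto a $\qq$-factorial $Z$ which is a point or a klt variety with $K_Z\equiv0$. The fibers of $\pi$ are rationally connected and, in the latter case, $Z$ is not uniruled, so $\pi$ is the MRC fibration of $X'$, hence of $X$. If $Z$ is a point, $(3)$ is trivial. Otherwise, let $(Z,D_Z,\mathbf{P})$ be the pair given by the generalized canonical bundle formula for $(X',B',\mathbf{M})$ over $Z$, so $D_Z\geq0$, $\mathbf{P}$ is b-nef, and $K_Z+D_Z+\mathbf{P}_Z\equiv0$; since $K_Z\equiv0$, we get $D_Z+\mathbf{P}_Z\equiv0$, hence $D_Z\equiv0$ and then $D_Z=0$, an effective $\rr$-divisor numerically trivial on a projective variety being zero. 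If some prime component $E$ of $B'$ did not dominate $Z$, then along the tower $\pi$ its image in $Z$ would be a prime divisor $P$; as $E$ appears in $B'$ with positive coefficient, the discriminant $D_Z$ would be positive along $P$, contradicting $D_Z=0$. Hence every component of $B'$ dominates $Z$, which is $(3)$.

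I expect the main obstacle to be twofold. First, keeping $(2)$ intact through all the birational operations: one has to check that the divisorial contractions occurring in the MMP and in Lemma~\ref{cor:mod-base-mod-tot-space} never force the re-extraction of a divisor that is not a glc place, and that the composition of the gdlt modification with the subsequent birational contractions extracts only glc places of the original pair. Second, Lemma~\ref{cor:mod-base-mod-tot-space} requires $X_1\to W$ to be a Fano-type fibration; this should follow since $-(K_{X_1}+B_1-\epsilon S_1+\mathbf{M}_{X_1})$ is ample over $W$ with $(X_1,B_1-\epsilon S_1)$ klt, once $\mathbf{M}$ is arranged to descend on $X_1$. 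Termination of the MMP at a Mori fiber space is the standard consequence of non-pseudo-effectiveness in the generalized setting (cf.~\cite{BZ16}). By contrast, the conceptual heart is the third paragraph: condition $(3)$ comes for free once a crepant tower-of-Mori-fiber-spaces model has been built without extracting non-glc-place divisors.
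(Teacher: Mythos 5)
Your route is genuinely different from the paper's: you induct on dimension and try to manufacture the fibration yourself by running a single $(K_X+B-\epsilon\lfloor B\rfloor+\mathbf{M}_X)$-MMP and then lifting the inductively obtained base model, whereas the paper obtains the key structure in one stroke from \cite[Theorem 49]{KX16} and \cite[Theorem 4.2]{FS20} (a crepant model $X_0\to Z_0$ that is of Fano type over $Z_0$ with every glcc horizontal, so that the induced base pair is gklt), and afterwards only modifies the base via Lemma~\ref{lem:tw-mfs}, Lemma~\ref{lem:two-ray-game} and Lemma~\ref{lem:from-FT-to-tw-MFS}. The gap in your version sits exactly at the point those citations are designed to cover: to apply Lemma~\ref{cor:mod-base-mod-tot-space} (and the two-ray game inside it) to the Mori fiber space $X_1\to W$, you must know that $X_1$ is of Fano type over $W$ (with $\schbase=W$; taking $\schbase=\Spec\kk$ is hopeless, since $X$ is an arbitrary log Calabi--Yau pair). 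What your MMP gives is only that $(X_1,B_1-\epsilon S_1,\mathbf{M})$ is gklt with $-(K_{X_1}+B_1-\epsilon S_1+\mathbf{M}_{X_1})$ ample over $W$; this is a \emph{generalized} log Fano structure, not a Fano type structure. Your proposed fix, ``once $\mathbf{M}$ is arranged to descend on $X_1$'', cannot be arranged: $X_1$ is whatever the MMP produces, and the trace $\mathbf{M}_{X_1}$ is in general only pseudo-effective there, so $-(K_{X_1}+B_1-\epsilon S_1)$ need not be relatively ample and $(X_1,B_1-\epsilon S_1)$ alone does not witness Fano type. The claim you need (a gklt generalized pair whose generalized log canonical divisor is anti-ample, or anti-nef and big, over $W$ has ambient variety of Fano type over $W$) is true and standard in the generalized-pairs literature, but it is not proved or quoted anywhere in this paper and is not trivial; without it (or without invoking \cite{FS20,KX16} as the paper does) your inductive lifting step does not go through.

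Two smaller points. First, the dichotomy you state for the MMP is partly impossible: the $(K_X+B-\epsilon S+\mathbf{M}_X)$-MMP is $S$-positive, so by the negativity lemma it can never contract $S$ (the paper uses exactly this observation in Theorem~\ref{thm:mod-with-large-bound}); harmless, since only the Mori-fiber-space branch is used, but the case analysis should be corrected. Second, your argument for condition (3) and the final assertions is fine in substance and is essentially the one-line justification the paper gives (``since $Z$ is klt Calabi--Yau''): a vertical component of $B'$ over the tower of Mori fiber spaces maps onto a prime divisor of $Z$, forces a positive coefficient in the discriminant of the canonical bundle formula, and this contradicts $K_Z\equiv 0$ together with effectivity of the discriminant and pseudo-effectivity of the moduli part; you should, however, say explicitly why vertical divisors map onto divisors of $Z$ (relative Picard rank one at each stage of the tower) and why the moduli part is pseudo-effective.
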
 

\begin{proof}
By~\cite[Theorem 49]{KX16} and~\cite[Theorem 4.2]{FS20},
 there exists a crepant model $(X_0,B_0,\mathbf{M})$ of $(X,B,\mathbf{M})$ and a fibration $X_0\rightarrow Z_0$ satisfying the following conditions:
\begin{itemize}
\item $X_0$ is $\qq$-factorial,
\item every exceptional divisor of $X_0\dashrightarrow X$ is a glc place of $(X,B,\mathbf{M})$,
\item every glcc of $(X_0,B_0,\mathbf{M})$ dominates $Z_0$, and 
\item $X_0$ is of Fano type over $Z_0$.
\end{itemize} 
We may assume that $Z_0$ is $\qq$-factorial.
Let $(Z_0,B_{Z_0},\mathbf{N})$ be the generalized Calabi--Yau pair induced by the generalized canonical bundle formula.
Then, $(Z_0,B_{Z_0},\mathbf{N})$ is gklt. By Lemma~\ref{lem:tw-mfs}, there
exists a birational contraction $Z_0\dashrightarrow Z_1$ 
and a fibration $Z_1\rightarrow Z$ satisfying the following:
\begin{itemize}
\item $Z_1$ and $Z$ are $\qq$-factorial,
\item $Z_1\rightarrow Z$ is a composition of Mori fiber spaces, and 
\item either $Z$ is a point or a klt variety with $K_Z\equiv 0$.
\end{itemize} 
Apply Lemma~\ref{lem:two-ray-game} recursively 
to obtain a birational contraction 
$X_0\dashrightarrow X_1$
to a $\qq$-factorial variety $X_1$ and a fibration $X_1\rightarrow Z_1$ of Fano type.
By Lemma~\ref{lem:from-FT-to-tw-MFS}, 
there exist a birational contraction
$X_1\dashrightarrow X_2$,
a composition of Mori fiber spaces $ X_2\rightarrow Z_2$, and a birational morphism $Z_2\rightarrow Z_1$, such that $X_2$ and $Z_2$ are $\qq$-factorial, and
the following diagram commutes:
\[
\xymatrix{
X & X_0\ar@{-->}[l]\ar@{-->}[r]\ar[d] & X_1\ar[d]\ar@{-->}[r] & X_2\ar[d] \\ 
& Z_0\ar@{-->}[r] & Z_1\ar[d] & Z_2\ar[l] \\ 
& &  Z. & 
}
\]
Let $E$ be the reduced exceptional divisor of 
$Z_2\rightarrow Z_1$. The $E$-MMP over $Z_1$ contracts all the components of $E$, and terminates with $Z_1$, since $Z_1$ is $\qq$-factorial. By Lemma \ref{lem:two-ray-game}, there exist a birational contraction 
$X_2\dashrightarrow X_3$,
a composition of Mori fiber spaces $X_3\rightarrow Z_3$, such that $X_3$ is $\qq$-factorial, and the following diagram commutes
\[
\xymatrix{
X & X_0 \ar@{-->}[l] \ar@{-->}[r] & X_2\ar@{-->}[r]\ar[d] & X_3\ar[d] \\ 
& & Z_2\ar@{-->}[r] \ar[d]& Z_1\ar[ld]  \\ 
& & Z. & 
}
\]

We set $X':=X_3$.
Let $(X',B',\mathbf{M})$ be the glc pair induced on $X'$.
By construction, we know that $X'$ is $\qq$-factorial so $(1)$ holds. 
As $X_0\dashrightarrow X$ only extracts glc places of $(X,B,\mathbf{M})$ and $X_0\dashrightarrow X_3$ is a contraction, then $(2)$ holds.
Since $Z$ is klt Calabi--Yau, every component of $B'$ dominates $Z$. By construction, $X' \to Z$ is a tower of Mori fiber space. This finishes the proof.

\end{proof}

In the following lemmas, we study the birational behaviour of ample divisors supported on the boundary of a log Calabi--Yau pair.

\begin{lemma}\label{lem:dlt-mod-supp-amp}
Let $(X,B,\mathbf{M})$ be a $\qq$-factorial generalized log Calabi--Yau pair. 
Let $X\rightarrow Z$ be a fibration.
Let $E$ be an effective divisor satisfying the following conditions:
\begin{enumerate}
\item $E$ is supported on $\lfloor B\rfloor$, 
\item $E$ supports an ample divisor over $Z$, and 
\item $E$ contains every non-gdlt center of $(X,B,\mathbf{M})$.
\end{enumerate} 
There exists a $\qq$-factorial gdlt modification $p \colon (X',B',\mathbf{M})\to 
(X,B,\mathbf{M})$ such that:
\begin{enumerate}
\item[(i)] $p^*E$ is supported on $\lfloor B'\rfloor$, and
\item[(ii)] $p^*E$ supports an ample divisor over $Z$.
\end{enumerate} 
\end{lemma}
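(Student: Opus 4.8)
The plan is to take the $\qq$-factorial gdlt modification $p\colon (X',B',\mathbf{M})\to (X,B,\mathbf{M})$ furnished by Lemma~\ref{lem:gdlt}, chosen so that $p$ is an isomorphism over the gdlt locus of $(X,B,\mathbf{M})$, and to verify that (i) and (ii) hold for this single $p$; no further birational modification should be needed.

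For (i): every $p$-exceptional prime divisor $F$ is a glc place of $(X,B,\mathbf{M})$, so $\coeff_F(B')=1-a_F(X,B,\mathbf{M})=1$ and $F\subseteq\lfloor B'\rfloor$. Every prime component $E_i$ of $E$ satisfies $\coeff_{E_i}(B)=1$ since $\supp E\subseteq\supp\lfloor B\rfloor$, and crepancy of $p$ forces $\coeff_{p^{-1}_*E_i}(B')=1$, hence $p^{-1}_*E_i\subseteq\lfloor B'\rfloor$ as well. Since every prime component of $p^*E$ is either the strict transform of some $E_i$ or a $p$-exceptional divisor, this yields $\supp(p^*E)\subseteq\supp\lfloor B'\rfloor$.

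For (ii) I would first record the key consequence of hypothesis (3). Because $p$ is an isomorphism over the gdlt locus, every $p$-exceptional divisor has center in the non-gdlt locus of $(X,B,\mathbf{M})$, which by (3) lies in $\supp E$; hence every $p$-exceptional divisor is a prime component of $p^*E$, and in particular $\supp(p^*A)\subseteq\supp(p^*E)$ for the ample$/Z$ divisor $A$ of hypothesis (2). Now $p^*A$ is nef over $Z$ but fails to be ample over $Z$ precisely along the curves contracted by $p$, so the idea is to correct it by a small $p$-exceptional perturbation. Since $X$ is $\qq$-factorial, for any $p$-ample divisor $A'$ on $X'$ the pushforward $p_*A'$ is $\qq$-Cartier; set $G\coloneqq p^*p_*A'-A'$, which is $p$-exceptional with $-G$ numerically equivalent to $A'$ over $X$, hence $p$-ample, and therefore $G\geq 0$ by the negativity lemma. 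For $0<\epsilon\ll 1$ the divisor $p^*A-\epsilon G$ is ample over $Z$: on a curve $C$ contracted over $Z$, either $p$ contracts $C$ and $(p^*A-\epsilon G)\cdot C=\epsilon(-G)\cdot C>0$, or $p_*C$ is a nonzero curve contracted by $X\to Z$, so $(p^*A)\cdot C=A\cdot p_*C>0$ dominates the bounded error term $\epsilon\, G\cdot C$; the usual compactness argument for a slice of $\overline{\mathrm{NE}}(X'/Z)$ makes a uniform choice of $\epsilon$ possible. Finally $\supp(p^*A-\epsilon G)\subseteq\supp(p^*A)\cup\Exc(p)\subseteq\supp(p^*E)$ by the first sentence of this paragraph, so $p^*E$ supports a divisor ample over $Z$.

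The step carrying the real content is the construction of $G$ together with the perturbation: this is what turns the merely nef divisor $p^*A$ into an ample one while keeping the support inside $p^*E$, and it is exactly here that $\qq$-factoriality of $X$ (so that $p_*A'$ is $\qq$-Cartier) and hypothesis (3) (so that $\Exc(p)\subseteq\supp(p^*E)$) are simultaneously needed. Everything else — (i), and the support bookkeeping in (ii) — is formal, following from crepancy and the choice of $p$ as an isomorphism over the gdlt locus. The only mild subtlety I anticipate is checking that a gdlt modification $p$ with this last property does not extract divisors with center outside $\supp E$, which is immediate once one uses the freedom in Lemma~\ref{lem:gdlt} to keep $p$ an isomorphism there.
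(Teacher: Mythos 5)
Your proposal is correct and follows essentially the same route as the paper: take the gdlt modification of Lemma~\ref{lem:gdlt} that is an isomorphism over the gdlt locus, observe that exceptional divisors and strict transforms of components of $E$ lie in $\lfloor B'\rfloor$, and perturb $p^*A$ by a small multiple of an effective $p$-exceptional divisor whose negative is $p$-ample, using hypothesis (3) to keep the support inside $\supp(p^*E)$. Your explicit construction of $G=p^*p_*A'-A'$ via the negativity lemma merely spells out the fact the paper asserts in one line (``since $X$ is $\qq$-factorial, there exists an effective $p$-exceptional $F$ with $-F$ $p$-ample''), so there is no substantive difference.
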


\begin{proof}
Let $p\colon X'\rightarrow X$ be a $\qq$-factorial gdlt modification of $(X,B,\mathbf{M})$ that is an isomorphism over the gdlt locus of $(X,B,\mathbf{M})$; see Lemma~\ref{lem:gdlt}.
Since $X$ is $\qq$-factorial, there exists an effective $p$-exceptional divisor $F$ on $X'$ such that $-F$ is $p$-ample.
On the other hand, since the support of $E$ 
contains every non-gdlt center of $(X,B,\mathbf{M})$, we have that the support of $p^*E$ contains the support of $F$. In particular, we have that 
$p^*E-\epsilon F$ is ample for $\epsilon>0$ small enough.
It suffices to show that $p^*E$ is supported on $\lfloor B'\rfloor$. 
Note that $p^*E$ is supported on the union
of the strict transform of $\lfloor B\rfloor$ and the $p$-exceptional divisors.
Since $X'\rightarrow X$ is a gdlt modification, then every exceptional divisor is contained in $\lfloor B'\rfloor$. This finishes the proof. 
\end{proof} 

\begin{lemma}\label{lem:-S-MMP-non-dlt}
Let $(X,B,\mathbf{M})$ be a $\qq$-factorial gdlt log Calabi--Yau pair.
Let $S$ be a component of $\lfloor B\rfloor$.
Then, we may run a $(K_X+B-S+\mathbf{M}_X)$-MMP 
with steps 
$X\dashrightarrow X_1 \dashrightarrow X_2 \dashrightarrow \dots \dashrightarrow X_k$.
For each $i$, we let $(X_i,B_i,\mathbf{M})$ be the induced generalized log Calabi--Yau pair on $X_i$ and $S_i$ the strict transform of $S$ on $X_i$.
Then, for each $i$, the non-gdlt locus of 
$(X_i,B_i,\mathbf{M})$ is contained in the support of $S_i$.
\end{lemma}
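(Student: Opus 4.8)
The plan is to argue by induction on the number of steps $i$ of the MMP, the base case $i=0$ being the hypothesis that $(X,B,\mathbf{M})$ is gdlt, so its non-gdlt locus is empty and hence trivially contained in $\Supp S_0 = \Supp S$. For the inductive step, suppose the non-gdlt locus of $(X_i,B_i,\mathbf{M})$ is contained in $\Supp S_i$, and consider the next step $X_i \dashrightarrow X_{i+1}$, which contracts a $(K_{X_i}+B_i-S_i+\mathbf{M}_{X_i})$-negative extremal ray $R$. The key point is that $(X_i, B_i, \mathbf M)$ is generalized log canonical (indeed generalized log Calabi--Yau) everywhere, and the extremal contraction $X_i \to Y$ associated to $R$ is either a divisorial contraction or a flip; in either case the induced pair $(X_{i+1},B_{i+1},\mathbf M)$ on $X_{i+1}$ is again $\qq$-factorial and generalized log canonical, and crepant to $(X_i,B_i,\mathbf M)$. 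What I need to control is where the gdlt property can fail on $X_{i+1}$.

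First I would record that $-S_i$ is relatively ample over $Y$ for the extremal contraction $X_i \to Y$: since $K_{X_i}+B_i+\mathbf{M}_{X_i} \equiv 0$ over $Y$ (it is $\rr$-linearly trivial) while $K_{X_i}+B_i-S_i+\mathbf{M}_{X_i}$ is anti-ample over $Y$ by the definition of the step, we get $S_i \equiv -(K_{X_i}+B_i-S_i+\mathbf{M}_{X_i})$ is ample over $Y$. In particular $S_i$ dominates $Y$ under the contracted locus, so $S_i$ meets every curve in a fibre of $X_i \to Y$; consequently the flipped/contracted locus on $X_{i+1}$ is contained in $\Supp S_{i+1}$ (for a flip, $S_{i+1}$ is the strict transform and the flipped curves lie in it because $S_i$ positively intersects the flipping curves; for a divisorial contraction the image of the exceptional divisor is covered by $S_{i+1}$). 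Away from this locus, $X_i \dashrightarrow X_{i+1}$ is an isomorphism, so the gdlt property is preserved there, and by the inductive hypothesis the only possible failure locus of gdlt on $X_i$ away from the contracted locus already sits inside $\Supp S_i$, hence inside $\Supp S_{i+1}$ after the isomorphism. Combining, the non-gdlt locus of $(X_{i+1},B_{i+1},\mathbf M)$ is contained in $\Supp S_{i+1}$.

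The main obstacle, and the step that requires the most care, is the claim that the new non-gdlt locus created by the MMP step is contained in $\Supp S_{i+1}$ — that is, that passing through a flip or a divisorial contraction can only destroy the gdlt property along (the strict transform of) $S$ and nowhere else. This uses precisely that $S_i$ is ample over the contraction base: any glcc of $(X_{i+1},B_{i+1},\mathbf M)$ is the image (or strict transform) of a glcc of $(X_i,B_i,\mathbf M)$ under a crepant birational map, and a glcc whose image is not contained in $\Supp S_{i+1}$ must map isomorphically under $X_i \dashrightarrow X_{i+1}$ near its generic point, hence remains a gdlt stratum. I would make this precise by working over the contraction morphism $X_i \to Y$ and $X_{i+1}\to Y$, noting that the open locus $U_i \subseteq X_i$ where $(X_i,B_i,\mathbf M)$ is gdlt contains $X_i \setminus \Supp S_i$ by induction, that $X_i \dashrightarrow X_{i+1}$ restricts to an isomorphism $X_i \setminus (\text{contracted locus}) \xrightarrow{\ \sim\ } X_{i+1}\setminus(\text{co-contracted locus})$, and that both contracted loci lie in $\Supp S_i$ and $\Supp S_{i+1}$ respectively by the ampleness of $S_i/Y$. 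Then $(X_{i+1} \setminus \Supp S_{i+1}, B_{i+1}|_\bullet, \mathbf M|_\bullet)$ is isomorphic to an open subset of $(U_i, B_i|_{U_i},\mathbf M|_{U_i})$, hence gdlt, which is exactly what is needed.
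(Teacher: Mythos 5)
Your argument is correct in substance, but it takes a genuinely different route from the paper's. The paper's proof is a two-line reduction: since $S$ is a component of $\lfloor B\rfloor$, the auxiliary pair $(X,B-S,\mathbf{M})$ is gdlt, and gdlt singularities are preserved under steps of its own MMP (the MMP being run is exactly a $(K_X+B-S+\mathbf{M}_X)$-MMP, whose existence is quoted from \cite[Lemma 4.4]{BZ16}); hence $(X_i,B_i-S_i,\mathbf{M})$ is gdlt for every $i$, and since $B_i$ and $B_i-S_i$ agree on $X_i\setminus S_i$, the pair $(X_i,B_i,\mathbf{M})$ is gdlt there, so every non-gdlt center lies in $S_i$. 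You instead re-prove the localized statement by hand, by induction on the steps: you use that $S_i\equiv -(K_{X_i}+B_i-S_i+\mathbf{M}_{X_i})$ (from the log Calabi--Yau condition) to see that $S_i$ is ample over the base of each extremal contraction, deduce that the contracted locus, respectively the flipped locus, lands inside $\Supp S_{i+1}$, and conclude via the isomorphism away from that locus together with the inductive hypothesis. Your route is more geometric and self-contained about \emph{why} the statement holds, at the cost of being longer; the paper's route buys brevity by black-boxing the standard fact that (g)dlt is preserved under its own MMP.

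Two small points you should tighten. First, the statement includes the assertion that one \emph{may} run a $(K_X+B-S+\mathbf{M}_X)$-MMP; your proof assumes the steps exist, whereas for generalized pairs this requires a result such as \cite[Lemma 4.4]{BZ16}, which the paper cites. Second, in the flip case your parenthetical justification (``the flipped curves lie in $S_{i+1}$ because $S_i$ positively intersects the flipping curves'') should be routed through the identity you already set up: on $X_{i+1}$ one still has $K_{X_{i+1}}+B_{i+1}+\mathbf{M}_{X_{i+1}}\sim_{\rr}0$, so $S_{i+1}\equiv -(K_{X_{i+1}}+B_{i+1}-S_{i+1}+\mathbf{M}_{X_{i+1}})$ is anti-ample over the base of the flip, whence every flipped curve meets $S_{i+1}$ negatively and is contained in it. With these repairs the induction closes as you describe.
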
 

\begin{proof}
First, note that we can run a $(K_X+B-S+\mathbf{M}_X)$-MMP by~\cite[Lemma 4.4]{BZ16}.
Gdlt singularities are preserved under the MMP. Hence, for each $i$, the generalized pair
$(X_i,B_i-S_i,\mathbf{M})$ is gdlt.  
This implies that $(X_i,B_i,\mathbf{M})$ is gdlt on $X_i\setminus S_i$.
Thus, any non-gdlt center must be contained in $S_i$.
\end{proof} 

\begin{lemma}\label{lem:prop-supp-ampl-mod}
Let $(X,B,\mathbf{M})$ be a $\qq$-factorial gdlt log Calabi--Yau pair over $W$.
Assume that $\lfloor B\rfloor$ properly supports an effective divisor that is ample over $W$.
Let $p \colon Y\rightarrow X$ be a projective birational morphism extracting only glc places of $(X,B,\mathbf{M})$.
Let
$(Y,B_Y,\mathbf{M})$ be the generalized pair induced on $Y$ by log pull-back.
Then $\lfloor B_Y\rfloor$ properly supports an effective divisor that is ample over $W$.
\end{lemma}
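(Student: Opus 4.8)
The plan is to follow the strategy used in the proof of Lemma~\ref{lem:dlt-mod-supp-amp}: pull back a well-chosen ample divisor supported on $\lfloor B\rfloor$ and correct it by a $p$-exceptional divisor. First I would fix an effective divisor $A$ on $X$ which is ample over $W$, with $\supp A\subseteq\supp\lfloor B\rfloor$ and $\supp A\neq\supp\lfloor B\rfloor$ (that is what ``properly supports'' means), and choose a prime component $S$ of $\lfloor B\rfloor$ not contained in $\supp A$. Since the relative ample cone in $N^{1}(X/W)_{\rr}$ is open, adding to $A$ a sufficiently small effective multiple of the sum of the components of $\lfloor B\rfloor$ other than $S$ keeps $A$ effective and ample over $W$; after this replacement I may assume $\supp A=\supp\lfloor B\rfloor\setminus S$, with every component of $\lfloor B\rfloor$ other than $S$ appearing in $A$ with positive coefficient.

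The main point is to locate the centers of the $p$-exceptional divisors. Here I would use that $(X,B,\mathbf{M})$ is gdlt: the generic point of every glc center lies in the gdlt locus, where $X$ is smooth, $B$ is snc and $\mathbf{M}$ descends, so the glc centers are strata of $\lfloor B\rfloor$, the divisorial ones are exactly the prime components of $\lfloor B\rfloor$, and each such component is the center of a unique glc place, namely itself. Consequently, since $p$ extracts only glc places and its exceptional divisors are genuinely exceptional, each $p$-exceptional prime $E$ has $\mathrm{center}_{X}(E)$ equal to a stratum of $\lfloor B\rfloor$ of codimension at least two, hence contained in at least two prime components of $\lfloor B\rfloor$; in particular $\mathrm{center}_{X}(E)$ lies in some component other than $S$, and therefore in $\supp A$. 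From this I would read off that $p^{*}A$ has positive coefficient along every $p$-exceptional prime and along the strict transform of every component of $\lfloor B\rfloor$ other than $S$, and coefficient zero along the strict transform $S_{Y}$ of $S$ (and along any further prime of $Y$). As the $p$-exceptional primes together with the strict transforms of the components of $\lfloor B\rfloor$ are precisely the components of $\lfloor B_{Y}\rfloor$, this yields $\supp(p^{*}A)=\supp\lfloor B_{Y}\rfloor\setminus S_{Y}$, a proper subset of $\supp\lfloor B_{Y}\rfloor$.

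To conclude, I must replace $p^{*}A$, which is only nef and big over $W$, by a genuinely ample divisor without enlarging its support. Since $X$ is $\qq$-factorial and $p$ is projective birational, there is an effective $p$-exceptional divisor $F$ on $Y$ with $-F$ ample over $X$; then for $N\gg 0$ the divisor $p^{*}(NA)-F$ is ample over $W$ (standard: a $p$-ample divisor plus the pullback of a sufficiently positive $W$-ample divisor is $W$-ample) and effective — the effectivity being exactly where the previous paragraph is used, since along each $p$-exceptional prime $E$ the coefficient of $p^{*}(NA)$ grows with $N$ because $\mathrm{center}_{X}(E)\subseteq\supp A$, and so eventually dominates the coefficient of $F$. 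As $\supp(p^{*}(NA)-F)\subseteq\supp(p^{*}A)\subseteq\supp\lfloor B_{Y}\rfloor$ and $S_{Y}$ is not in this support, $\lfloor B_{Y}\rfloor$ properly supports the effective, $W$-ample divisor $p^{*}(NA)-F$, which is what we want.

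I expect the only genuine obstacle to be the middle step, namely the claim that every $p$-exceptional divisor has center in $\supp A$: it relies on the structural fact that for a gdlt generalized pair the sole glc place with divisorial center is that divisor itself, which forces the exceptional glc places to have centers of codimension at least two and hence to lie on at least two boundary components of $\lfloor B\rfloor$. Once this is in hand, the enlargement of $A$ and the correction by $-F$ are routine and parallel the argument of Lemma~\ref{lem:dlt-mod-supp-amp}.
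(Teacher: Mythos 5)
Your proof is correct and follows essentially the same route as the paper's: enlarge $A$ to an ample divisor supported on all components of $\lfloor B\rfloor$ except $S$, observe that every $p$-exceptional prime is a glc place whose (necessarily codimension $\geq 2$) center is a stratum of $\lfloor B\rfloor$ and hence lies in the enlarged support, and then correct the pullback by an effective $p$-exceptional $F$ with $-F$ $p$-ample. The only differences are cosmetic (your $p^{*}(NA)-F$ with $N\gg 0$ versus the paper's $p^{*}A'-\delta F$ with $\delta\ll 1$, and your more explicit justification of the claim about exceptional centers).
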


\begin{proof}
Let $A$ be the effective ample divisor that
is properly supported on $\lfloor B\rfloor$.
Let $S$ be a component of $\lfloor B\rfloor$
that is not contained in the support of $A$.
Then, for $\epsilon>0$ small enough, the divisor
$A':=A+\epsilon(\lfloor B\rfloor-S)$ is effective, 
ample over $W$, and properly supported on $\lfloor B\rfloor$.
Furthermore, the support of $A'$ 
 contains every gdlt center of $(X,B,\mathbf{M})$ except for $S$.
Since $X$ is $\qq$-factorial, there exists an effective $p$-exceptional divisor $F$
such that $-F$ is $p$-ample.
The center of any $p$-exceptional divisor in $X$ is a non-divisorial gdlt center of $(X,B,\mathbf{M})$, so it is contained in $A'$. Equivalently, $p^*A'$
contains $F$. 
Hence, the divisor $p^*A'-\delta F$ is effective for $\delta$ small enough and it is ample over $W$.
Note that $p^*A'-\delta F$ is contained in $\lfloor B_Y\rfloor$.
Furthermore, the support of $p^*A'-\delta F$ does not contain $S_Y\subset \lfloor B_Y\rfloor$, the strict transform of $S$ in $Y$.
We conclude that $p^*A'-\delta F$ is an effective divisor, ample over $W$, and properly supported in $\lfloor B_Y\rfloor$.
\end{proof}

\subsection{Dual complexes}
In this section, we recall the concept of dual complexes of Calabi--Yau pairs (or generalized pairs). For cell complexes, we follow the terminology of \cite[\S 2.1]{Hatcher}.

\begin{definition}\label{defn:dualcomplex}
{\em 
Let $(Y, B_Y)$ be a dlt pair with $B^{=1}_Y=\sum_{i \in I} B_{i}$. 
The \emph{dual complex} of $(Y, B_Y)$, denoted $\mathcal{D}(B_Y)$, is the regular $\Delta$-complex whose vertices correspond to irreducible components of $B^{=1}_Y$
and whose $k$-cells correspond to strata of $B^{=1}_Y$, i.e., connected components of $B_{i_0}\cap \dots \cap B_{i_{k}}$.
}
\end{definition}

\begin{definition}
Let $(X,B,\mathbf{M})$ be a generalized log Calabi--Yau pair (not necessarily dlt). Then $\mathcal{D}(X,B,\mathbf{M})$ is the PL-homeomorphism type of the dual complex of a gdlt modification $(Y,B_Y,\mathbf{M}) \to (X,B,\mathbf{M})$, i.e.,
$\mathcal{D}(X,B,\mathbf{M}) \simeq_{PL}\mathcal{D}(B_{Y})$.
\end{definition}

The definition is independent of the choice of the dlt modification by \cite[Proposition 11]{dFKX17}. We recall the following statement from~\cite[Theorem 19]{dFKX17}.

\begin{proposition}\label{prop:dc-collapse}
Let $(X,\Delta)$ be a dlt pair.
Let $X\dashrightarrow X'$ be either a flip
or a divisorial contraction induced by a $(K_X+\Delta)$-negative extremal ray $R$.
Let $\Delta'$ be the image of $\Delta$ on $X'$.
Assume that there exists a prime component $S$ of $\lfloor \Delta \rfloor$ 
such that $S \cdot R>0$. Then $\mathcal{D}(X,\Delta)$ collapses to $\mathcal{D}(X',\Delta')$.
\end{proposition}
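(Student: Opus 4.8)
The statement is \cite[Theorem~19]{dFKX17}; here is how I would reprove it. The plan is to track the strata of $\lfloor\Delta\rfloor$ through $\phi$ and realize the change in the dual complex as an explicit sequence of elementary collapses pivoted at the vertex $v_S$ of $\mathcal{D}(X,\Delta)$ corresponding to $S$. First I would record the structural features of $\phi$: being a step of the $(K_X+\Delta)$-MMP it is a birational contraction, it extracts no divisor, and $(X',\Delta'=\phi_*\Delta)$ is again dlt; in the divisorial case the contracted divisor $E$ satisfies $E\cdot R<0$, so the hypothesis $S\cdot R>0$ forces $E\neq S$, and $E$ is the only component of $\lfloor\Delta\rfloor$ that $\phi$ can contract. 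Since discrepancies agree (flip) or are non-decreasing (divisorial contraction) and $(X,\Delta)$ is lc, every lc centre of $(X',\Delta')$ is an lc centre of $(X,\Delta)$ pushed forward by $\phi$; conversely each stratum of $\lfloor\Delta\rfloor$ either maps birationally onto a stratum of $\lfloor\Delta'\rfloor$ with the same set of components or is ``destroyed'' (contained in $E$, or contracted so that its image drops dimension or is no longer an lc centre). This should exhibit $\mathcal{D}(X',\Delta')$ as a subcomplex of $\mathcal{D}(X,\Delta)$; let $\Sigma$ be the set of simplices in the complement, an up-set for the face relation. The first technical point is that $\phi$ creates \emph{no new strata}: a new stratum on $X'$ could only live over the small image of the locus contracted by $\phi$, and one rules this out using that $(K_X+\Delta)\cdot R<0$ bounds the length of the extremal ray $R$ (cone theorem), hence bounds how many components of $\lfloor\Delta\rfloor$ can meet a fixed curve $C$ in $R$, while $S\cdot C>0$ pins such a $C$ against $S$.

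The geometric input driving the collapse is that $\Sigma$ is ``visible from $S$''. Any destroyed stratum $W$ is contained in $E$ or contains a curve $C$ of $R$; since $S\cdot R>0$ we get $S\cdot C>0$, so $C$ — hence $W$ — meets $S$, and in the divisorial case $S\cap E$ is a nonempty divisor in $S$. Combining this with the length bound via adjunction — restricting $K_X+\Delta$ to $E$, or to a stratum containing an $R$-curve, shows that along the contracted curves \emph{all} boundary components other than $S$ are ``vertical'', so intersecting a destroyed stratum with $S$ still produces a destroyed stratum — one checks that $\Sigma$ is stable under the operation of adjoining or deleting the vertex $v_S$. This gives a face-compatible perfect matching of $\Sigma$ into pairs $\{\tau,\tau\cup\{v_S\}\}$; performing the corresponding elementary collapses in order of decreasing dimension of $\tau\cup\{v_S\}$ (an acyclic matching, as $v_S$ is a fixed pivot and $\Sigma$ is an up-set) realizes $\mathcal{D}(X,\Delta)\searrow\mathcal{D}(X,\Delta)\setminus\Sigma=\mathcal{D}(X',\Delta')$. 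In the divisorial case this in particular collapses the vertex $v_E$ together with its whole star, each of its simplices being paired off through $v_S$.

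The main obstacle is precisely the bookkeeping of the two previous paragraphs: proving that no new strata appear on $X'$, and that the destroyed strata pair up correctly through $S$ even when $W\cap S$ is reducible — one must verify, using the negativity of $K_X+\Delta$ on $R$ and adjunction, that exactly one connected component of $W\cap S$ is again a destroyed stratum, so that the symmetric-difference-with-$v_S$ operation is well defined on $\Sigma$. This numerical local analysis of extremal contractions of dlt pairs is the technical core of \cite[Theorem~19]{dFKX17}; for surface pairs and for toric pairs it can be read off directly from the classification of extremal contractions.
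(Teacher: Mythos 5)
The paper does not prove this proposition; it is recalled verbatim from \cite[Theorem 19]{dFKX17}, so your sketch has to stand on its own, and while its overall shape matches the known argument (no new strata after the step, $\mathcal{D}(X',\Delta')$ realized as the subcomplex of surviving cells, the removed cells forming an up-set, and $S\cdot R>0$ driving the collapse), the execution has a genuine gap. Two justifications are already off: in a $(K_X+\Delta)$-negative flip the discrepancies do not ``agree'' --- they are non-decreasing and strictly increase exactly for valuations whose centre lies in the flipping (resp.\ flipped) locus, and it is this strictness (cf.\ \cite[Lemma 3.38]{KM98}), not any length bound from the cone theorem, that shows no new lc centres appear and identifies the destroyed cells with the strata contained in $\mathrm{Ex}$ of the contraction; likewise the assertion that along contracted curves all boundary components other than $S$ are ``vertical'' is neither true in general (several components may be positive on $R$) nor what is needed.

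The serious problem is the core step: the claim that the destroyed set $\Sigma$ is stable under adjoining or deleting $v_S$, hence perfectly matched by $\tau\leftrightarrow\tau\cup\{v_S\}$, is asserted rather than proved, and it is false as stated. In the divisorial case, a zero-dimensional stratum contained in the contracted divisor $E$ but not lying on $S$ is destroyed and corresponds to a maximal cell, so it can be paired neither with $\tau\cup\{v_S\}$ (not a cell) nor with $\tau\setminus\{v_S\}$; the hypothesis $S\cdot R>0$ only guarantees that the positive-dimensional fibres of the contraction meet $S$, not that every destroyed stratum does. In the flip case, nothing you say excludes a destroyed stratum whose defining divisors include $S$ but whose enlargement obtained by dropping $S$ is not contained in the flipping locus, in which case deleting $v_S$ exits $\Sigma$. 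Because of such configurations the collapse cannot be organized as a single global matching pivoted at $v_S$; the actual proof in \cite{dFKX17} (and the related arguments in \cite{KX16}) proceeds by induction on dimension, using that $S$ is relatively ample for the extremal contraction and running the analysis on the exceptional locus (via adjunction and a relative MMP) to collapse the star of the destroyed locus onto its surviving boundary. The multi-component issue for $W\cap S$ that you flag is part of the same difficulty. In short, your text reduces the theorem to precisely the statement that needs proof, so as a proof attempt it is incomplete; as used in the paper, the proposition is simply cited.
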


\subsection{Coregularity}
The coregularity of a log Calabi--Yau pair is an invariant measuring the dimension of its minimal strata. It appears in previous work in relation to the indices of log Calabi--Yau pairs~\cite{FMM22}, or in connections with 
log canonical thresholds and existence of complements~\cite{FMP22,FFMP22,Mor22}.

\begin{definition}
{\em Let $(X,B,\mathbf{M})$ be a generalized log Calabi--Yau pair. 
The {\em coregularity} of $(X,B,\mathbf{M})$, denoted ${\rm coreg}(X,B,\mathbf{M})$, is the non-negative integer
\[
\dim X - \dim \mathcal{D}(X,B,\mathbf{M}) -1. 
\]
The coregularity of a log Calabi--Yau pair of dimension $n$ takes values in the set $\{0,1,\dots,n\}$. 
Equivalently, it is the dimension of a minimal lc center
in any gdlt modification of a log Calabi--Yau pair.
}
\end{definition} 

The following lemma shows that the coregularity is invariant for crepant birational log Calabi--Yau pairs; see, e.g.,~\cite[Proposition 3.11]{FMM22}.

\begin{lemma}\label{lem:coreg-bir}
Let $(X,B,\mathbf{M})$ and $(X',B',\mathbf{M})$ be a crepant birational generalized log Calabi--Yau pair.
Then, we have that
\[
{\rm coreg}(X,B,\mathbf{M})=
{\rm coreg}(X',B',\mathbf{M}).
\]
\end{lemma}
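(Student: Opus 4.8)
The plan is to reduce the statement to the fact that the dual complex is a crepant birational invariant, since the coregularity is by definition a function of the two quantities $\dim X$ and $\dim \mathcal{D}(X,B,\mathbf{M})$, of which the first is trivially a birational invariant. More precisely, I would first observe that $\dim X=\dim X'$ because $X$ and $X'$ are birational, so it remains to check $\dim\mathcal{D}(X,B,\mathbf{M})=\dim\mathcal{D}(X',B',\mathbf{M})$, and in fact I would prove the stronger statement that these two complexes are $\simeq_{\mathrm{PL}}$. By Lemma~\ref{lem:gdlt} we may replace $(X,B,\mathbf{M})$ and $(X',B',\mathbf{M})$ by $\qq$-factorial gdlt modifications $(Y,B_Y,\mathbf{M})$ and $(Y',B_{Y'},\mathbf{M})$ without changing the dual complexes (by definition $\mathcal{D}(X,B,\mathbf{M})\simeq_{\mathrm{PL}}\mathcal{D}(B_Y)$, etc.), so the whole problem becomes: two crepant birational $\qq$-factorial gdlt generalized log Calabi--Yau pairs have PL-homeomorphic dual complexes.

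To handle this, I would take a normal variety $W$ with projective birational morphisms to $Y$ and to $Y'$ that are crepant with respect to both pairs; such a $W$ exists because $Y\dashrightarrow Y'$ is crepant birational. Since generalized log discrepancies are computed on the common function field, a prime divisor over $W$ is a generalized log canonical place of $(Y,B_Y,\mathbf{M})$ if and only if it is one of $(Y',B_{Y'},\mathbf{M})$; in particular the two pairs have the same finite set $\mathcal{E}$ of glc places whose center on $Y$ or on $Y'$ is a divisor. Applying Lemma~\ref{lem:special-gdlt} (to the log pull-back on $W$, whose glc places coincide with those of the two gdlt pairs because everything is generalized log canonical), I would produce a single $\qq$-factorial gdlt modification $(V,B_V,\mathbf{M})$ for which every element of $\mathcal{E}$ is divisorial and such that the induced maps $V\to Y$ and $V\to Y'$ are both \emph{morphisms}. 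Then $V\to Y$ and $V\to Y'$ are crepant birational morphisms of $\qq$-factorial gdlt pairs whose exceptional divisors are components of the reduced boundary; factoring each into divisorial contractions for a suitable MMP and invoking Proposition~\ref{prop:dc-collapse} at every step, I get that $\mathcal{D}(B_V)$ collapses onto both $\mathcal{D}(B_Y)$ and $\mathcal{D}(B_{Y'})$, and \cite[Proposition~11]{dFKX17} (whose proof applies verbatim in the generalized setting, as the b-nef part $\mathbf{M}$ is inert with respect to the combinatorics of $\lfloor B\rfloor$) upgrades these collapses to PL-homeomorphisms. Hence $\mathcal{D}(B_Y)\simeq_{\mathrm{PL}}\mathcal{D}(B_{Y'})$, so the two dual complexes have equal dimension and the coregularities agree.

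The step I expect to be the main obstacle is verifying the hypothesis of Proposition~\ref{prop:dc-collapse} at each divisorial contraction, namely the existence of a component $S$ of the reduced boundary with $S\cdot R>0$ for the contracted ray $R$: the exceptional divisor itself is negative on $R$, so one must order the extractions so that at every stage the divisor being contracted meets some other boundary component, and must check that nothing goes wrong when the relevant stratum is contracted. This is precisely the delicate part of the argument in \cite[§15--19]{dFKX17}, and its generalized-pair version is exactly what is recorded in \cite[Proposition~3.11]{FMM22}; rather than reproducing it, I would simply cite that reference, noting that the only new ingredient over \cite{dFKX17} is bookkeeping for $\mathbf{M}$, which changes none of the combinatorics.
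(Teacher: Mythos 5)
Your bottom line coincides with the paper's treatment: the paper gives no argument for Lemma~\ref{lem:coreg-bir} beyond the pointer to \cite[Proposition 3.11]{FMM22}, and your reduction (coregularity equals $\dim X-\dim\mathcal{D}(X,B,\mathbf{M})-1$, the first term is a birational invariant, so everything follows once the dual complex is known to be a crepant birational invariant) is exactly what that citation records. So, read as a proof-by-citation, your proposal lands in the same place as the paper. Be aware, though, that what you defer to \cite[Proposition 3.11]{FMM22} is not a technical sub-step about ordering divisorial contractions: up to the trivial $\dim X$ bookkeeping it \emph{is} the statement of the lemma, so your "delicate part" is carrying the entire proof, just as it does in the paper.

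The self-contained argument you sketch around it has a genuine gap, namely the construction of $V$. Lemma~\ref{lem:special-gdlt}, like any gdlt modification, extracts only valuations of generalized log discrepancy $0$; but resolving the indeterminacy of the crepant birational map $Y\dashrightarrow Y'$ in general forces the extraction of valuations of positive log discrepancy. Already for a small crepant map between $\qq$-factorial models (a flop supported away from the non-klt locus, say), any normal variety dominating both $Y$ and $Y'$ by morphisms must extract a divisor, and that divisor is not a glc place; a small morphism onto a $\qq$-factorial variety is an isomorphism, so one cannot avoid this. Hence the model $V$ you describe need not exist, and Lemma~\ref{lem:special-gdlt} cannot produce it (also note that the lemma applies to glc pairs, whereas the crepant pull-back to a common resolution $W$ may have negative coefficients, so it is not a legitimate input). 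Two further points: if such a $V$ did exist, the collapse step would be superfluous, since $V\to Y$ and $V\to Y'$ would themselves be gdlt modifications and $\mathcal{D}(B_V)\simeq_{\rm PL}\mathcal{D}(B_Y)$, $\mathcal{D}(B_V)\simeq_{\rm PL}\mathcal{D}(B_{Y'})$ would follow directly from the well-definedness of the dual complex; and conversely, knowing only that $\mathcal{D}(B_V)$ collapses onto both complexes would not give a PL-homeomorphism between them --- \cite[Proposition 11]{dFKX17} is the well-definedness statement, not a device for upgrading collapses. So either quote \cite[Proposition 3.11]{FMM22} (or \cite[Proposition 11]{dFKX17} in the classical case) outright, as the paper does, or be prepared to reproduce the genuinely delicate comparison of models that those references carry out.
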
 

\subsection{Complexity}
The complexity of a generalized pair
$(X,B,\mathbf{M})$ is an invariant that detects whether the pair is toric. 

\begin{definition}
{\em 
Let $(X,B,\mathbf{M})$ be a generalized log pair with $B=\sum_{i \in I} a_i B_i$. Set $|B| \coloneqq \sum_{i \in I} a_i$, and $\langle B \rangle \subset \Cl_\qq(X)$ be the span of the Weil divisors $B_i$. 
The {\em complexity} of the generalized pair $(X,B,\mathbf{M})$
is the number 
\[
c(X,B,\mathbf{M}):= \dim X + \rank \Cl_\qq(X) - |B|.
\]
The {\em fine complexity} of the generalized pair $(X,B,\mathbf{M})$
is the number 
\[
\overline{c}(X,B,\mathbf{M}):= \dim X + \rank \langle B \rangle - |B|.
\]
}
\end{definition} 

The following theorem is proved in~\cite[Theorem 1.2]{BMSZ18}; see also~\cite{MS21}. It states that the complexity
of a generalized log Calabi--Yau pair is non-negative.
Further, if the complexity is zero, then 
the ambient variety $X$ is toric and some of the components of $B$ are torus invariant.

\begin{theorem}\label{thm:comp-noneg}
Let $(X,B,\mathbf{M})$ be a generalized log Calabi--Yau pair.
Then, we have that
\[c(X,B,\mathbf{M})\geq \overline{c}(X,B,\mathbf{M})\geq 0.\]
Furthermore, if the equality holds 
then $\mathbf{M}$ is the trivial b-nef divisor,  $(X, \lfloor B \rfloor)$ is a toric pair. 
\end{theorem}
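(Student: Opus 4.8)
The plan is to follow the strategy of Brown--McKernan--Svaldi--Zong. The inequality $c(X,B,\mathbf{M})\geq\overline{c}(X,B,\mathbf{M})$ is immediate, since $\langle B\rangle$ is by definition a subspace of $\Cl_\qq(X)$ and hence $\rank\langle B\rangle\leq\rank\Cl_\qq(X)$; so the content is the bound $\overline{c}\geq 0$ together with the equality statement. First I would replace $(X,B,\mathbf{M})$ by a $\qq$-factorial gdlt modification, which exists by Lemma~\ref{lem:gdlt}: every extracted lc place raises $\rho$, $|B|$ and $\rank\langle B\rangle$ each by one, so neither complexity changes, and now $X$ is $\qq$-factorial so that $\rank\Cl_\qq(X)=\rho(X)$ (if this number is infinite there is nothing to prove). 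Then the argument proceeds by induction on $n=\dim X$, the case $n\le 1$ being a direct computation on $\pp^1$ (and the case $B=0$ being trivial, as then $c=\dim X+\rho(X)>0$).

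For the inductive step the idea is to reduce to a Mori fiber space. Using $K_X+B+\mathbf{M}_X\equiv 0$ and $B\neq 0$, one picks $0\leq B'\leq B$ with $B-B'$ effective and nonzero, so that $K_X+B'+\mathbf{M}_X\equiv-(B-B')$ is not pseudoeffective, and runs a $(K_X+B'+\mathbf{M}_X)$-MMP, which terminates with a Mori fiber space $f\colon X''\to Z$ with $X''$ $\qq$-factorial. Bookkeeping of this MMP shows that no step increases the complexity: a flip leaves $\rho$, $|B|$ and $\Cl_\qq$ unchanged, while a divisorial contraction of a prime divisor $E$ lowers $\rho$ by one and lowers $|B|$ by the coefficient of $E$ in $B$ (which is $0$, so that the complexity strictly drops, whenever $E\not\subseteq\Supp B$). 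Hence it suffices to prove $\overline{c}\geq 0$ and the equality statement for $f\colon X''\to Z$; for the fine complexity one additionally arranges, if necessary, that the boundary components span $\Cl_\qq$, so that the fine and coarse complexities agree throughout the argument.

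On the Mori fiber space $f$ one separates the components of $\lfloor B\rfloor$ into those dominating $Z$ (the horizontal part) and those that do not (the vertical part). Restricting to a general fiber $F$ gives a generalized log Calabi--Yau pair $(F,B|_F,\mathbf{M}|_F)$ of dimension $\dim F<n$ with $\rho(F)$ controlled by $\rho(X''/Z)=1$, and the induction hypothesis bounds the contribution of the horizontal part to $|B|$; the vertical part is accounted for by the base pair $(Z,B_Z,\mathbf{N})$ produced by the generalized canonical bundle formula, to which the induction hypothesis applies again, using $\rho(X'')=\rho(Z)+1$. Assembling these estimates gives $\overline{c}(X'',B,\mathbf{M})\geq 0$, hence $\overline{c}(X,B,\mathbf{M})\geq 0$. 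For the equality case one traces equality backwards: it forces equality at every MMP step and on $f$, whence $\mathbf{M}=\mathbf{N}=0$, the fibration $f$ is toric with the general fiber a weighted projective space carrying its coordinate boundary, and $(Z,\lfloor B_Z\rfloor)$ is toric by induction; therefore $(X'',\lfloor B\rfloor)$ is a toric pair, and since toricity is inherited backwards along flips and divisorial contractions of toric pairs (all intermediate complexities being $0$), one concludes that $(X,\lfloor B\rfloor)$ is toric and $\mathbf{M}$ is trivial.

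The step I expect to be the main obstacle is the Mori fiber space analysis together with the propagation of toricity: making the fiber--base bookkeeping for the fine complexity sharp enough to land exactly at $0$, and then upgrading the numerical equality to an actual toric structure. This is the technical heart of~\cite{BMSZ18}, and ultimately rests on a characterization of (relatively) toric varieties in terms of the number of boundary components.
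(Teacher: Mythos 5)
The paper does not actually prove Theorem~\ref{thm:comp-noneg}: it is imported verbatim from \cite[Theorem 1.2]{BMSZ18} (see also \cite{MS21}), so there is no internal argument to compare yours against. Your proposal is, in substance, a compressed retelling of the strategy of the cited proof (pass to a $\qq$-factorial dlt model, induct on dimension, run an MMP keeping track of how $\rank\Cl_\qq$, $\rank\langle B\rangle$ and $|B|$ change, analyse the terminal Mori fiber space via general fiber and base, and treat the equality case by identifying a toric structure and pulling it back), so at the level of approach it is the right plan and matches the source the paper relies on.

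Taken as a proof, though, it has gaps beyond the parts you explicitly defer to \cite{BMSZ18}. First, the statement here is for generalized pairs, which \cite{BMSZ18} does not treat: the claim that equality forces $\mathbf{M}$ to be the trivial b-nef divisor cannot be obtained by ``tracing equality backwards'', because neither $c$ nor $\overline{c}$ sees $\mathbf{M}$ at all. What actually happens is that equality forces $B$ to be reduced and equal to the full toric boundary of a complete toric variety (the number of rays being exactly $\dim X+\rank\Cl_\qq(X)$), so $K_X+B\sim 0$ and hence $\mathbf{M}_X\equiv 0$; one then still needs a negativity-lemma argument on a model where $\mathbf{M}$ descends to conclude that the b-divisor itself is trivial. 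This is the content of the generalized extension the paper points to, not of \cite{BMSZ18}. Second, your remark that one can ``arrange that the boundary components span $\Cl_\qq$'' is not something one can arrange; the correct route (and the one taken in \cite{BMSZ18}) is to prove non-negativity for the fine complexity --- in fact for arbitrary decompositions, which is what makes the fiber/base induction close up --- and deduce the coarse inequality from $c\geq\overline{c}$. Third, the backward propagation of toricity along the MMP steps is not automatic from the intermediate complexities being zero: it requires a genuine lemma (lifting the torus action, or a Cox-ring argument, when extracting complexity-zero log canonical places of a toric log Calabi--Yau pair), which is precisely part of the technical heart you defer. None of these are wrong turns, but they are where the actual work lies, so as a standalone argument the proposal is an outline of \cite{BMSZ18} rather than a proof.
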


\subsection{Birational complexity}
The birational complexity is the mimimum
among all the complexities 
of birational models of a given generalized pair.

\begin{definition}
{\em 
Let $(X,B,\mathbf{M})$ be a generalized log Calabi--Yau pair.
We say that a generalized pair $(X',B',\mathbf{M})$ is a {\em crepant model} of $(X,B,\mathbf{M})$ if the following conditions are satisfied:
\begin{itemize}
\item $X'$ is birational to $X$, and
\item there exists a common resolution $p\colon Y\rightarrow X$ and $q\colon Y\rightarrow X'$ for which 
\[
p^*(K_X+B+\mathbf{M}_X)=q^*(K_{X'}+B'+\mathbf{M}_{X'}).
\]
\end{itemize} 
}
\end{definition} 

From the definition it follows that any crepant model of a generalized log Calabi--Yau pair is generalized log Calabi--Yau. 

\begin{definition}
{\em 
The {\em birational complexity} of a generalized log Calabi--Yau pair $(X,B,\mathbf{M})$ is the infimum:
\[
c_{\rm bir}(X,B,\mathbf{M}):=
\inf \{ c(X',B',\mathbf{M}) \mid\text{$(X',B',\mathbf{M})$ is a crepant model of $(X,B,\mathbf{M})$}\}.
\]
}
\end{definition} 

The definition of the birational complexity
and the fact that the complexity is nonnegative
implies the following:

\begin{proposition}\label{cor:bcomp-noneg}
Let $(X,B,\mathbf{M})$ be a generalized log Calabi--Yau pair.
Then, 
$c_{\rm bir}(X,B,\mathbf{M})\geq 0$.
\end{proposition}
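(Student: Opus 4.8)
The plan is to deduce Proposition~\ref{cor:bcomp-noneg} directly from Theorem~\ref{thm:comp-noneg} together with the definitions of crepant model and birational complexity. The key observation is that the class of objects over which we take the infimum is nonempty and consists entirely of generalized log Calabi--Yau pairs, so each term in the infimum is a nonnegative real number.

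First I would recall that, by definition,
\[
c_{\rm bir}(X,B,\mathbf{M})=\inf\{\, c(X',B',\mathbf{M}) \mid (X',B',\mathbf{M}) \text{ is a crepant model of } (X,B,\mathbf{M})\,\}.
\]
The pair $(X,B,\mathbf{M})$ is itself a crepant model of $(X,B,\mathbf{M})$ (take $Y=X$ with $p=q=\mathrm{id}$), so the index set of the infimum is nonempty; in particular $c_{\rm bir}(X,B,\mathbf{M})\leq c(X,B,\mathbf{M})<\infty$. Next, as remarked immediately after the definition of crepant model in the excerpt, every crepant model $(X',B',\mathbf{M})$ of a generalized log Calabi--Yau pair is again a generalized log Calabi--Yau pair. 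Therefore Theorem~\ref{thm:comp-noneg} applies to each such $(X',B',\mathbf{M})$ and yields $c(X',B',\mathbf{M})\geq \overline{c}(X',B',\mathbf{M})\geq 0$.

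Finally, since each element of the set $\{\, c(X',B',\mathbf{M}) \mid (X',B',\mathbf{M}) \text{ is a crepant model of } (X,B,\mathbf{M})\,\}$ is $\geq 0$, its infimum is $\geq 0$ as well, i.e.\ $c_{\rm bir}(X,B,\mathbf{M})\geq 0$. There is essentially no obstacle here: the only point requiring a moment's care is that the infimum of a set of nonnegative reals is nonnegative (a monotonicity property of $\inf$), and that the infimum is taken over a nonempty set so that $c_{\rm bir}$ is a well-defined real number rather than $+\infty$ — both of which are immediate. The statement is recorded mainly because $c_{\rm bir}$ will be used as a numerical invariant throughout the paper, and nonnegativity justifies the phrase "birational complexity" behaving like a complexity.
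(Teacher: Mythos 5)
Your proposal is correct and matches the paper's reasoning: the paper also derives the proposition directly from the fact that every crepant model of a generalized log Calabi--Yau pair is itself generalized log Calabi--Yau, so Theorem~\ref{thm:comp-noneg} makes each complexity in the infimum nonnegative. The extra remarks on nonemptiness of the set of crepant models are harmless and only confirm the invariant is a well-defined nonnegative number.
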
 

The following three lemmas imply that the birational complexity is indeed a minimum,
and it is computed by some $\qq$-factorial gdlt pair.
\begin{lemma}\label{lem:bcomp-dlt-mod}
Let $(X,B,\mathbf{M})$ be a generalized log Calabi--Yau pair. Let $Y \dashrightarrow X$ be a $\qq$-factorial birational contraction extracting only glc places of $(X,B,\mathbf{M})$, e.g., a dlt modification of $(X,B,\mathbf{M})$. Let $(Y,B_Y,\mathbf{M})$ be the generalized pair induced by log pullack on $Y$. Then, we have that
\begin{align*}
|B_X|-\rho(X) & \geq |B_Y|-\rho(Y),\\
|\lfloor B_{X}\rfloor|-\rho(X) & \geq |\lfloor B_{Y}\rfloor|-\rho(Y), \\ c(X,B,\mathbf{M}) & \geq c(Y,B_Y,\mathbf{M}),
\end{align*}
and equality holds if and only if $X$ is $\qq$-factorial.
\end{lemma}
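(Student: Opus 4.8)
The plan is to compare the two pairs $(X,B,\mathbf{M})$ and $(Y,B_Y,\mathbf{M})$ divisor by divisor and via the map on class groups. Let $\phi\colon Y\dashrightarrow X$ be the given $\qq$-factorial birational contraction extracting only glc places, and let $\Exc$ denote the (reduced) sum of the $\phi^{-1}$-exceptional prime divisors on $Y$. Since $\phi$ only extracts glc places of $(X,B,\mathbf{M})$, each such exceptional divisor $E$ has $a_E(X,B,\mathbf{M})=0$, i.e.\ appears in $B_Y$ with coefficient $1$. Writing $B_Y = \phi_*^{-1}B + \Exc$ (the strict transform of $B$ plus the exceptional locus, all exceptional coefficients being exactly $1$ by the glc-place hypothesis), we immediately get
\[
|B_Y| = |B_X| + \#\{\text{exceptional primes}\}, \qquad |\lfloor B_Y\rfloor| = |\lfloor B_X\rfloor| + \#\{\text{exceptional primes}\}.
\]

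Next I would handle the Picard/class numbers. Since $\phi$ is a birational contraction, the strict transform gives an injection $\Cl_\qq(X)\hookrightarrow \Cl_\qq(Y)$ whose cokernel is spanned by the classes of the $\phi^{-1}$-exceptional prime divisors; hence
\[
\rho(Y)=\rank\Cl_\qq(Y)\le \rank\Cl_\qq(X) + \#\{\text{exceptional primes}\} = \rho(X) + \#\{\text{exceptional primes}\},
\]
with equality exactly when those exceptional classes are linearly independent in $\Cl_\qq(Y)$, which — because $Y$ is $\qq$-factorial — happens precisely when the exceptional divisors are $\qq$-linearly independent of $\phi^*\Cl_\qq(X)$, i.e.\ precisely when $X$ itself is $\qq$-factorial (if $X$ is $\qq$-factorial then $\Cl_\qq(X)=\Pic_\qq(X)$ and $\Cl_\qq(Y)=\Pic_\qq(Y)=\phi^*\Pic_\qq(X)\oplus\bigoplus \qq E_i$; conversely if $X$ is not $\qq$-factorial the non-$\qq$-Cartier locus forces a relation). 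Combining the two displays,
\[
|B_Y|-\rho(Y)\ \ge\ |B_X|-\rho(X),\qquad |\lfloor B_Y\rfloor|-\rho(Y)\ \ge\ |\lfloor B_X\rfloor|-\rho(X),
\]
and subtracting from $\dim X=\dim Y$ gives $c(X,B,\mathbf{M})\ge c(Y,B_Y,\mathbf{M})$, with equality iff $X$ is $\qq$-factorial.

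The one genuinely delicate point is the equality case, i.e.\ showing that $\rho(Y)=\rho(X)+\#\{\text{exceptional primes}\}$ holds if and only if $X$ is $\qq$-factorial; the forward direction ($X$ $\qq$-factorial $\Rightarrow$ equality) is the standard fact that blowing up / extracting $k$ prime divisors from a $\qq$-factorial variety raises the Picard rank by exactly $k$, while the converse requires noting that if $X$ is not $\qq$-factorial then some Weil divisor on $X$ is not $\qq$-Cartier, and tracking its strict transform and total transform on $Y$ produces a nontrivial $\qq$-linear relation among $\phi^*\Cl_\qq(X)$ and the exceptional classes, so the inequality is strict. I expect this $\qq$-factoriality bookkeeping — phrased cleanly via the exact sequence relating $\Cl_\qq$ of $X$ and $Y$ through the exceptional divisors — to be the main (and only) obstacle; everything else is immediate from the glc-place hypothesis and $\dim X=\dim Y$.
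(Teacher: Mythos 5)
There is a genuine gap, and it sits exactly at the point you flagged as delicate: the comparison of Picard numbers. In this paper $\rho$ denotes the Picard number, which for a non-$\qq$-factorial $X$ is in general strictly smaller than $\rank \Cl_\qq(X)$. Your chain $\rho(Y)=\rank\Cl_\qq(Y)\le \rank\Cl_\qq(X)+e=\rho(X)+e$ (with $e$ the number of exceptional primes) silently identifies $\rank\Cl_\qq(X)$ with $\rho(X)$, which is legitimate only when $X$ is $\qq$-factorial --- precisely the case the equality statement is meant to isolate. The correct comparison goes the other way: $\rank\Cl_\qq(Y)=\rank\Cl_\qq(X)+e$ (pushforward of classes is surjective with kernel spanned by the $e$ exceptional primes), $\rho(Y)=\rank\Cl_\qq(Y)$ because $Y$ is $\qq$-factorial, and $\rho(X)\le \rank\Cl_\qq(X)$ always, with equality if and only if $X$ is $\qq$-factorial; hence $\rho(Y)-\rho(X)\ge e$. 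Combined with $|B_Y|=|B_X|+e$ and $|\lfloor B_Y\rfloor|=|\lfloor B_X\rfloor|+e$ (this bookkeeping of yours is correct and is the same first step as the paper's proof), this yields $|B_X|-\rho(X)\ge |B_Y|-\rho(Y)$, as the lemma asserts. Your displayed conclusion $|B_Y|-\rho(Y)\ge |B_X|-\rho(X)$ is the reverse of the statement, and it is in fact false whenever $X$ is not $\qq$-factorial (e.g.\ for a dlt modification of a non-$\qq$-factorial glc pair, where $\rho(Y)-\rho(X)>e$).

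Two further points. First, your equality analysis is misplaced: equality is not governed by linear independence of the exceptional classes from classes coming from $X$ --- that independence holds unconditionally (and note that strict transform does not descend to a map $\Cl_\qq(X)\to\Cl_\qq(Y)$ when $X\dashrightarrow Y$ is not a morphism; the clean statement is the pushforward exact sequence). What varies is whether $\Pic_\qq(X)=\Cl_\qq(X)$, i.e.\ whether $\rho(X)=\rank\Cl_\qq(X)$; this is where $\qq$-factoriality of $X$ enters. Second, since the complexity is defined using $\rank\Cl_\qq$ and not the Picard number, the inequality $c(X,B,\mathbf{M})\ge c(Y,B_Y,\mathbf{M})$ follows from $\rank\Cl_\qq(Y)\le\rank\Cl_\qq(X)+e$ together with $|B_Y|=|B_X|+e$ and $\dim X=\dim Y$; it does not follow by ``subtracting from $\dim X=\dim Y$'' from inequalities phrased in terms of Picard numbers. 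So the proposal needs to keep the two ranks separate throughout: one comparison ($\rank\Cl_\qq$) handles the complexity, the other ($\rho$, via $\qq$-factoriality of $Y$ and $\rho(X)\le\rank\Cl_\qq(X)$) handles the first two inequalities and the equality criterion.
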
 

\begin{proof}
Let $e$ be the number of prime exceptional divisors of $Y\dashrightarrow X$. Since every exceptional divisor appears in $\lfloor B_Y\rfloor$, we have that \[|B_Y|=|B|+e, \qquad |\lfloor B_Y\rfloor|=|\lfloor B\rfloor|+e, \qquad \rho(Y)-\rho(X) \geq \dim_{\qq} \Cl(Y)_{\qq} - \dim_{\qq} \Cl(X)_{\qq} = e,\] and the equality holds if and only if $X$ is $\qq$-factorial. Hence, the inequalities of the statement follows immediately. 
\end{proof}

\begin{lemma}\label{lem:computing-bcomp}
Let $(X,B,\mathbf{M})$ be a generalized log Calabi--Yau pair.
A crepant model computing the birational complexity of $(X,B,\mathbf{M})$ exists, and it can be chosen to be $\qq$-factorial and gdlt. 
\end{lemma}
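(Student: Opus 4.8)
The plan is to reduce the statement to the previously established lemmas on crepant models. First I would show that the infimum defining $c_{\rm bir}(X,B,\mathbf{M})$ is attained. Since the complexity of any generalized log Calabi--Yau pair is a sum $\dim X + \rank\Cl_\qq(X') - |B'|$, and $\dim X'=\dim X$ is fixed while $\rank\Cl_\qq(X')-|B'|$ takes values in a discrete set once one observes (via Lemma~\ref{lem:bcomp-dlt-mod}) that passing to a $\qq$-factorialization can only decrease the complexity, it suffices to bound the number of possibilities. Concretely, fix a minimizing sequence of crepant models $(X_j,B_j,\mathbf{M})$; by Lemma~\ref{lem:bcomp-dlt-mod} we may replace each by a $\qq$-factorial dlt modification without increasing the complexity, so we may assume each $(X_j,B_j,\mathbf{M})$ is $\qq$-factorial and gdlt. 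On such a model every prime component of $\lfloor B_j\rfloor$ is a divisorial lc place, and the components of $B_j^{<1}$ contribute coefficients bounded away from $1$; combined with the nonnegativity $c(X_j,B_j,\mathbf{M})\geq 0$ from Theorem~\ref{thm:comp-noneg}, the quantity $\rank\Cl_\qq(X_j)$ is bounded along the sequence, hence so is $|B_j|$. Since the coefficients of $B$ are fixed real numbers and the number of lc places contributing to $\lfloor B_j\rfloor$ is thereby bounded, the complexity takes only finitely many values in the minimizing sequence, so the infimum is a minimum.

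Next I would upgrade a model computing the minimum to a $\qq$-factorial gdlt one. Let $(X',B',\mathbf{M})$ be a crepant model with $c(X',B',\mathbf{M})=c_{\rm bir}(X,B,\mathbf{M})$. Apply Lemma~\ref{lem:gdlt} to obtain a $\qq$-factorial gdlt modification $p\colon (X'',B'',\mathbf{M})\to (X',B',\mathbf{M})$; this is again a crepant model of $(X,B,\mathbf{M})$ by transitivity of crepant equivalence. By Lemma~\ref{lem:bcomp-dlt-mod} applied to $p$ we have $c(X',B',\mathbf{M})\geq c(X'',B'',\mathbf{M})$, and since $c(X',B',\mathbf{M})$ already equals the infimum $c_{\rm bir}(X,B,\mathbf{M})$ and all complexities of crepant models are $\geq c_{\rm bir}$, we conclude $c(X'',B'',\mathbf{M})=c_{\rm bir}(X,B,\mathbf{M})$. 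Thus $(X'',B'',\mathbf{M})$ is a $\qq$-factorial gdlt crepant model computing the birational complexity.

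I expect the main obstacle to be the finiteness argument that makes the infimum a minimum, that is, bounding $\rank\Cl_\qq(X_j)$ and the number of boundary components uniformly along a minimizing sequence. One clean way to organize this: on a $\qq$-factorial gdlt model one has $c = \dim X + \rho(X_j) - |B_j| \geq \overline c \geq 0$, and since $|B_j^{=1}|$ equals the number of components of $\lfloor B_j\rfloor$ while $|B_j^{<1}|$ is bounded by the (fixed) sum of the non-integral coefficients of $B$, the inequality $\rho(X_j) \leq |B_j| - \dim X$ together with the bound on the number of divisorial lc places above $(X,B,\mathbf{M})$ with coefficient $1$ forces $\rho(X_j)$ into a finite set. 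Alternatively, if one prefers to avoid a direct boundedness statement, one can cite the fact that crepant birational dlt models of a fixed lc pair have dual complexes of bounded dimension and that their Picard numbers are controlled after running suitable MMPs, as in the proof of Lemma~\ref{lem:bounding-comp}; either route yields that only finitely many values of the complexity occur among dlt crepant models, which is all that is needed.
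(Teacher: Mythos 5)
The second half of your proposal (taking a minimizer, passing to a $\qq$-factorial gdlt modification via Lemma~\ref{lem:gdlt}, and using Lemma~\ref{lem:bcomp-dlt-mod} to see the complexity does not increase) is correct and agrees with the paper. The gap is in the attainment step. You claim that non-negativity of the complexity bounds $\rank\Cl_\qq(X_j)$ along a minimizing sequence, later write the inequality $\rho(X_j)\leq |B_j|-\dim X$, and invoke ``the bound on the number of divisorial lc places above $(X,B,\mathbf{M})$ with coefficient $1$.'' All three claims fail: Theorem~\ref{thm:comp-noneg} gives $\rho(X_j)\geq |B_j|-\dim X$ (i.e.\ it bounds $|B_j|$ by $\rho(X_j)+\dim X$, not the reverse); a non-klt lc pair has in general infinitely many divisorial lc places; and the Picard rank of $\qq$-factorial gdlt crepant models of a fixed pair is unbounded --- already for $(\pp^2,H_1+H_2+H_3)$ every toric blow-up is a gdlt crepant model of complexity $0$, so even a minimizing sequence can have $\rho(X_j)\to\infty$. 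Moreover, since $B$ is an $\rr$-divisor, discreteness of the set of complexities genuinely requires a uniform bound on the number of boundary components together with finiteness of the possible coefficients; your remark that the coefficients of $B_j^{<1}$ are ``bounded away from $1$'' is not the relevant fact (one needs them bounded away from $0$, and lying in a finite set). Without such a bound the values $\rho(X_j)-|B_j|$ can accumulate and the infimum need not be attained.

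The paper resolves exactly this point by first replacing an arbitrary crepant model by a special one, via Lemma~\ref{lem:bounding-comp} and Lemma~\ref{lem:bcomp-dlt-mod}: a $\qq$-factorial crepant model $X'$ with a tower of Mori fiber spaces $X'\to Z$ onto a point or a klt Calabi--Yau base, with $\rho(X'/Z)\leq n$ and every component of $B'$ horizontal over $Z$, and with complexity no larger than that of the model one started with. On such models the coefficients of $B'$ lie in the finite set $\mathcal{B}'=\{1-a\}$ of log discrepancy values in $[0,1]$ of $(X,B,\mathbf{M})$ (by \cite[Corollary 2.13]{KM98}), with minimum $\gamma>0$, and the non-negativity of the complexity \emph{in the relative setting} over $Z$ (\cite[Theorem 1]{MS21}) combined with $\rho(X'/Z)\leq n$ bounds the number of components of $B'$ by $\lfloor 2n/\gamma\rfloor$; hence $|B'|$ ranges in a finite set and the infimum is a minimum. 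Your closing sentence, citing ``Picard numbers controlled after running suitable MMPs as in Lemma~\ref{lem:bounding-comp},'' points in this direction, but the reduction to models with bounded \emph{relative} Picard rank and horizontal boundary must actually be carried out; the absolute bound you propose cannot work.
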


\begin{proof}
By Lemma~\ref{lem:bounding-comp}, 
there exists a $\qq$-factorial crepant model $(X',B',\mathbf{M})$ of $(X,B,\mathbf{M})$
that admits a fibration $X'\rightarrow Z$ satisfying the following conditions:
\begin{enumerate}
\item \label{item:glcplace} every divisor extracted by $X'\dashrightarrow X$ is a glc place of $(X,B,\mathbf{M})$, 
\item \label{item:dominate} every component of $B'$ dominates $Z$, and 
\item \label{item:Picbound} $X' \to Z$ is a tower of Mori fiber spaces, so $\rho(X'/Z) \leq n$. 
\end{enumerate}
By \eqref{item:glcplace} and Lemma \ref{lem:bcomp-dlt-mod}, we can write
\begin{equation}\label{eq:ineq-comple}
c(X,B,\mathbf{M})\geq c(X',B',\mathbf{M}) = \dim X' + \rho(X') - |B'|. 
\end{equation}
We claim that the RHS of \eqref{eq:ineq-comple} attains value in a discrete set of non-negative numbers, so it admits a minimum. To this end, it suffices to show that $|B'|$ lies in a finite set independent of the crepant model $(X',B',\mathbf{M})$. Indeed, by~\cite[Corollary 2.13]{KM98}
applied to a log resolution of $(X,B,\mathbf{M})$,
there is a finite set $\mathcal{B}\subset [0,1]$ such that every non-terminal valuation $F$ over $(X,B,\mathbf{M})$ satisfies that
$a_F(X,B,\mathbf{M})\in \mathcal{B}$. Set $\mathcal{B}':=\{1-b\mid b\in \mathcal{B}\}$ and $\gamma \coloneqq \min \{ \mathcal{B}'\}$. Then the coefficients of $B'$ are contained in $\mathcal{B}'$, and by the non-negativity of the complexity in the relative setting (see \cite[Theorem 1]{MS21}), and since $\rho(X'/Z) \leq n$, $B'$ must have at most $k:=\lfloor \gamma^{-1}2n\rfloor$ components. Note that $k$ is independent of the chosen crepant model $(X',B',\mathbf{M})$. Therefore, the coefficients of $B'$, and so $|B'|$, belongs to a finite set independent of the crepant model $(X',B',\mathbf{M})$.

We conclude that the birational complexity is computed by a crepant model, and by Lemma \ref{lem:bcomp-dlt-mod}, the model can be chosen to be $\qq$-factorial and gdlt.
\end{proof} 

\begin{lemma}\label{lem:model-comp-bcomp}
Let $(X,B,\mathbf{M})$ be a $\qq$-factorial 
generalized log Calabi--Yau pair for which $c_{\rm bir}(X,B,\mathbf{M})=c(X,B,\mathbf{M})$.
Let $q\colon X\dashrightarrow X'$ be a birational contraction onto a $\qq$-factorial variety.
Then, the divisorial part of the exceptional locus of $q$ is contained in $\lfloor B\rfloor$.
Furthermore, if $(X',B',\mathbf{M})$ is the generalized pair induced on $X'$,
then 
$c_{\rm bir}(X',B',\mathbf{M})=
c(X',B',\mathbf{M})$.
\end{lemma}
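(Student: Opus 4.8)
The plan is to first establish that the divisorial part of the exceptional locus of $q$ lies in $\lfloor B\rfloor$, and then to deduce the equality of complexities. For the first claim, let $E$ be a prime divisor on $X$ contracted by $q$. Since $(X,B,\mathbf{M})$ is generalized log Calabi--Yau and $q$ is a birational contraction onto a $\qq$-factorial variety, resolving $q$ by a common resolution $p\colon Y\to X$, $p'\colon Y\to X'$ and comparing discrepancies shows that the coefficient of $E$ in $B$ controls the discrepancy of $E$ with respect to $(X',B',\mathbf{M})$: one has $a_E(X',B',\mathbf{M}) = a_E(X,B,\mathbf{M}) = 1-\operatorname{coeff}_E(B)$. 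If $E$ were \emph{not} contained in $\lfloor B\rfloor$, then $\operatorname{coeff}_E(B)<1$, so $a_E(X',B',\mathbf{M})>0$, i.e., $E$ is a non-glc-place, in fact a place of positive log discrepancy, extracted by the inverse map $X'\dashrightarrow X$. The idea is that extracting such a divisor strictly decreases the complexity: forming the pair $(X,B+\epsilon E')$ — or rather comparing $c(X,B,\mathbf{M})$ with $c(X',B',\mathbf{M})$ directly via $\rho(X)=\rho(X')+(\text{number of such }E)$ (both being $\qq$-factorial) and $|B| \le |B'| + \sum_E \operatorname{coeff}_E(B) < |B'| + (\text{number of such }E)$ — yields $c(X',B',\mathbf{M}) < c(X,B,\mathbf{M}) = c_{\rm bir}(X,B,\mathbf{M})$, contradicting Proposition~\ref{cor:bcomp-noneg} together with the fact that $(X',B',\mathbf{M})$ is a crepant model (hence its complexity is $\ge c_{\rm bir}$). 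Here the inequality on Picard numbers uses that contracting a divisor from a $\qq$-factorial variety to a $\qq$-factorial variety drops $\rho$ by exactly one per divisor, as in the proof of Lemma~\ref{lem:bcomp-dlt-mod}.

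With the first claim in hand, every exceptional divisor of $q$ appears in $\lfloor B\rfloor$ with coefficient exactly $1$, so (writing $e$ for the number of prime exceptional divisors of $q$, all $\qq$-factorial varieties being in play) we get $|B_{X'}| = |B| - e$ and $\rho(X') = \rho(X) - e$, whence
\[
c(X',B',\mathbf{M}) = \dim X' + \rho(X') - |B'| = \dim X + \rho(X) - e - (|B|-e) = c(X,B,\mathbf{M}).
\]
Since $(X',B',\mathbf{M})$ is a crepant model of $(X,B,\mathbf{M})$, any crepant model of $(X',B',\mathbf{M})$ is also a crepant model of $(X,B,\mathbf{M})$, so $c_{\rm bir}(X',B',\mathbf{M}) \ge c_{\rm bir}(X,B,\mathbf{M})$; and trivially $c_{\rm bir}(X',B',\mathbf{M}) \le c(X',B',\mathbf{M})$. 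Combining, $c_{\rm bir}(X,B,\mathbf{M}) \le c_{\rm bir}(X',B',\mathbf{M}) \le c(X',B',\mathbf{M}) = c(X,B,\mathbf{M}) = c_{\rm bir}(X,B,\mathbf{M})$, forcing all inequalities to be equalities; in particular $c_{\rm bir}(X',B',\mathbf{M}) = c(X',B',\mathbf{M})$.

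The step I expect to be the main obstacle is the strict inequality $c(X',B',\mathbf{M}) < c(X,B,\mathbf{M})$ used to derive a contradiction when some exceptional $E$ is not in $\lfloor B\rfloor$: one must be careful that $q$ may contract several divisors simultaneously, some in $\lfloor B\rfloor$ and some not, and that $q$ need not be a single extremal contraction, so the bookkeeping of $\rho$ and $|B|$ should be done globally rather than step by step. The clean way around this is to note that $(X',B',\mathbf{M})$ is a crepant model, apply Lemma~\ref{lem:bcomp-dlt-mod} to the $\qq$-factorial birational contraction $q$ (reading it as extracting glc places in the reverse direction is not quite the setup, so instead one uses directly that $\rho(X)-\rho(X') = e$ by $\qq$-factoriality and that $|B| - |B'| = \sum_{E \text{ exc}} \operatorname{coeff}_E(B) \le e$ with equality iff all coefficients are $1$), giving $c(X',B',\mathbf{M}) \le c(X,B,\mathbf{M})$ with equality iff every exceptional divisor lies in $\lfloor B\rfloor$; the minimality $c(X,B,\mathbf{M}) = c_{\rm bir}(X,B,\mathbf{M}) \le c(X',B',\mathbf{M})$ then forces equality and hence the first claim, after which the complexity identity above is immediate.
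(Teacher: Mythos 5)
Your final ``clean'' argument is exactly the paper's proof: both compare $c(X,B,\mathbf{M})$ and $c(X',B',\mathbf{M})$ via the global bookkeeping $\rho(X)-\rho(X')=e$ and $|B|-|B'|=\sum_{E\,\mathrm{exc}}\operatorname{coeff}_E(B)\leq e$, and use the minimality $c(X,B,\mathbf{M})=c_{\rm bir}(X,B,\mathbf{M})\leq c(X',B',\mathbf{M})$ to force equality, hence all exceptional coefficients equal $1$ and $c(X',B',\mathbf{M})=c_{\rm bir}(X',B',\mathbf{M})$. The initial discrepancy-based detour is unnecessary (and its divisor count was slightly off), but you correctly discard it, so the proposal is correct and essentially identical to the paper's argument.
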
 

\begin{proof}
Let $r$ be the number of divisors contracted by $q$,
and $s$ be the sum of their coefficients in $B$.
Then, we have that
\[
c(X,B,\mathbf{M}) = c(X',B',\mathbf{M}) + r - s. 
\]
As we are assuming that $(X,B,\mathbf{M})$ computes the birational complexity, we must have $r=s$, i.e., every exceptional divisor is a component of $\lfloor B\rfloor$. 
\end{proof} 

\subsection{PL spheres}
In this section, we discuss a characterization of PL spheres. We refer to \cite{RS1982} for the standard terminology in piecewise linear topology.

\begin{proposition}\label{prop:PL-sphere}
Let $M$ be an $n$-dimensional closed PL manifold.
Assume that $M$ is the union of two collapsible subcomplexes $D_1$ and $D_2$ glued along their boundaries $\partial D_1 = \partial D_2$, i.e., $M=D_1 \cup_{\partial D_1 = \partial D_2} D_2$.

If $D_1 \simeq_{\rm PL} B^{n}$,
then $M \simeq S^{n}$, and $M\simeq_{\rm PL} S^n$ if $n\neq 4$.
\end{proposition}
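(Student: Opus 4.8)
\medskip

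The plan is to combine two classical facts from PL topology: first, that a collapsible PL manifold with boundary is a PL ball (this is essentially Whitehead's theorem in dimensions $\neq 4$, and follows from the disproof of the Hauptvermutung subtleties being absent once we already assume $D_1 \simeq_{\rm PL} B^n$), and second, that gluing two PL balls along their boundary spheres yields a sphere. So the first step is to record that since $M$ is a closed PL $n$-manifold and $M = D_1 \cup D_2$ with $D_1, D_2$ full-dimensional subcomplexes meeting along $\partial D_1 = \partial D_2$, each $D_i$ is a compact PL $n$-manifold with boundary $\partial D_i$, and $\partial D_i$ is a closed PL $(n-1)$-manifold. Next I would use collapsibility: a collapsible simplicial complex is contractible, hence each $D_i$ is a contractible PL manifold with boundary. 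In particular $\partial D_1 = \partial D_2$ is a homology $(n-1)$-sphere (by the long exact sequence of the pair $(D_i, \partial D_i)$ together with Poincaré–Lefschetz duality and $H_*(D_i) = H_*(\text{pt})$).

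\medskip

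The second step is to upgrade ``contractible PL manifold with boundary'' to ``PL ball.'' By hypothesis we are \emph{given} $D_1 \simeq_{\rm PL} B^n$, so the work is only for $D_2$. Here I would invoke that a collapsible PL manifold is a PL ball in all dimensions: collapsibility is much stronger than contractibility, and the standard reference (e.g.\ \cite{RS1982}, the regular neighbourhood / Whitehead collapsing theory) gives that a collapsible compact PL $n$-manifold is PL-homeomorphic to $\Delta^n$. Actually this is exactly where the dimension hypothesis should be examined: the statement ``collapsible $\Rightarrow$ PL ball'' is true in all dimensions $n \le 3$ and for $n \ge 6$ it follows from the $h$-cobordism theorem, while $n = 4, 5$ require separate arguments; since the proposition's conclusion only claims a topological sphere in general and a PL sphere for $n \neq 4$, I would handle $D_2$ as a topological ball always and as a PL ball when $n \neq 4$. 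Concretely: $D_2$ is a contractible PL manifold whose boundary is a homotopy sphere (simply connected by van Kampen applied to $M = D_1 \cup D_2$ with $D_1$ a ball — here I use that $\partial D_1$ is connected for $n \ge 2$, the case $n \le 1$ being trivial — plus the homology computation above), so by the (PL) Poincaré conjecture $\partial D_2 \cong S^{n-1}$, and then coning or using the relative Poincaré theorem identifies $D_2$ with a ball.

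\medskip

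The third and final step is the gluing. Once $D_1 \simeq_{\rm PL} B^n$ and $D_2 \cong B^n$ (PL if $n \neq 4$) and they are glued along a PL-homeomorphism $\partial D_1 \to \partial D_2$ of their boundary $(n-1)$-spheres, the result $M = D_1 \cup_{\partial} D_2$ is $S^n$: any self-homeomorphism of $S^{n-1}$ extends to the ball after possibly composing with a reflection, and in the PL category the group of PL self-homeomorphisms of $S^{n-1}$ modulo those extending over the ball is trivial for $n \neq 4$ (this is the statement that $\Gamma_n = 0$, i.e.\ no exotic PL spheres, for $n \neq 4$), so the two gluings give PL-homeomorphic results and one of them is the standard $S^n$. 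Thus $M \simeq_{\rm PL} S^n$ for $n \neq 4$, and $M \simeq S^n$ topologically for all $n$.

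\medskip

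The main obstacle is the dimension-$4$ and dimension-$5$ bookkeeping: ensuring that ``collapsible PL manifold with boundary $\Rightarrow$ PL/topological ball'' is invoked with the correct hypotheses, and correctly citing the failure (for $n=4$) of uniqueness of PL structures on the sphere so that only a topological conclusion survives there. Everything else — the homology and fundamental-group computations, the Poincaré–Lefschetz duality, and the extension-of-boundary-homeomorphism argument — is routine.
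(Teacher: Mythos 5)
Your plan follows a genuinely different route from the paper's. The paper never tries to recognize $D_2$: it only uses that $D_2$ is contractible. It takes a regular neighborhood $N_1$ of $D_1$ in $M$, which is a PL ball because $D_1$ is collapsible (Whitehead's regular neighborhood theorem, Rourke--Sanderson), notes that $N_1\setminus D_1$ is homotopy equivalent to $S^{n-1}$, and then applies Mayer--Vietoris and van Kampen to ${\rm int}(N_1)$ and ${\rm int}(D_2)$ to conclude that $M$ is a simply connected integral homology sphere, hence a homotopy sphere; the topological Poincar\'e conjecture gives $M\simeq S^n$ and the PL Poincar\'e conjecture (Smale) gives $M\simeq_{\rm PL}S^n$ for $n\neq 4$. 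You instead want to show that $D_2$ itself is a (PL) ball and then glue two balls, which is workable but strictly heavier.

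Two steps in your version are genuine gaps as written. First, you assert that $D_2$ is a compact PL $n$-manifold with boundary $\partial D_1$. This is not among the hypotheses ($D_2$ is only a collapsible subcomplex) and is not formally automatic from full-dimensionality: one needs a PL input such as Newman's theorem (the closed complement of a PL $m$-ball subpolyhedron of a PL $m$-sphere is a PL $m$-ball), applied to the link of $x$ in $D_1$ sitting inside the link of $x$ in $M$ for $x\in\partial D_1$, to exclude local knotting of $\partial D_1$ in $M$. Second, the crux ``collapsible/contractible PL manifold with boundary $S^{n-1}$ is a ball'' is left at ``coning or the relative Poincar\'e theorem,'' which is not an argument precisely in the dimensions where it is delicate: for $n=4$ you need Freedman for the topological conclusion, and for $n=5$ (and in fact for all $n\neq 4$) the honest route is to cap $D_2$ off with a ball, apply the PL Poincar\'e conjecture to $D_2\cup_{S^{n-1}}B^n$, and then use Newman's theorem again -- i.e.\ you end up invoking the same Poincar\'e-type input that the paper applies directly to $M$, plus extra steps. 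Two smaller points: $\partial D_2=\partial D_1\simeq_{\rm PL}S^{n-1}$ is given, so no van Kampen or homology-sphere computation is needed for it; and the $\Gamma_n$ discussion is a smooth-category concern -- in the PL and topological categories the Alexander trick extends any homeomorphism of $S^{n-1}$ over $B^n$ in every dimension, so the final gluing is automatic once both pieces are balls. With these points supplied your approach goes through, but the paper's homotopy-theoretic argument avoids all of them.
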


\begin{proof}
A regular neighborhood $N$ of a subcomplex $X \subset M$ is a neighbourhood of $X$, collapsing onto $X$, which is further a PL-submanifold of $M$ with boundary; see \cite[Corollary 3.30]{RS1982}. By \cite[Corollary 3.27]{RS1982}, any regular neighborhood $N_1$ of $D_1$ is PL-homeomorphic to $B^{n}$. Moreover, $N_1 \setminus D_1$ is homotopic equivalent to $S^{n-1}$. This follows from \cite[Corollary 3.19]{RS1982} together with the fact that $D_1$ is a regular neighbourhood of itself. Now suppose without loss of generality that $n>1$. Apply Mayer--Vietoris and Van Kampen theorems to the contractible open sets $\mathrm{int}(N_1)$ and $ \mathrm{int}(D_2)$, such that $\mathrm{int}(N_1) \cap \mathrm{int}(D_2)\sim N_1 \setminus D_1 \sim S^{n-1}$. We obtain that $M$ has the integral homology of $S^{n}$, and it is simply connected. Therefore, $M$ is a homotopy $n$-sphere by \cite[Theorems 4.32 and 4.33]{Hatcher}. By the topological Poincar\'{e} conjecture, we conclude that $M \simeq S^{n}$. By the PL Poincar\'e conjecture, that holds for $n\neq 4$, we have also $M\simeq_{\rm PL} S^n$; see~\cite[Theorem B]{Sma61}.
\end{proof}

\section{Birational complexity and coregularity}

In this section, we show upper bounds for the birational complexity of a generalized log Calabi--Yau pair.
The main technical statement of this section is the following theorem.

\begin{theorem}\label{thm:mod-with-large-bound}
Let $X$ be a variety of Fano type.
Let $(X,B,\mathbf{M})$ be a generalized log Calabi--Yau pair. There exists a crepant birational map $(Y,B_Y,\mathbf{M}) \dashrightarrow (X,B,\mathbf{M})$, only extracting glc places,
satisfying the following:
\begin{enumerate}
\item $Y$ is $\qq$-factorial,
\item $(Y,B_Y,\mathbf{M})$ is gdlt, and 
\item the number of components of $\lfloor B_Y\rfloor$
is at least
$\rho(Y)-{\rm coreg}(Y,B_Y,\mathbf{M})$.
\end{enumerate} 
\end{theorem}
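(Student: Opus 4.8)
The plan is to reduce to the situation where the pair is already dlt with a nice fibration structure, and then bound the number of boundary components from below using the non-negativity of the complexity applied fiberwise together with control over the coregularity.

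\medskip

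\textbf{Step 1: reduce to a good model.} First I would apply Lemma~\ref{lem:bounding-comp} (or its proof via \cite{KX16,FS20}) to replace $(X,B,\mathbf{M})$ by a $\qq$-factorial crepant model $(X_0,B_0,\mathbf{M})$, extracting only glc places, admitting a fibration $X_0\to Z$ which is a tower of Mori fiber spaces, with every component of $B_0$ dominating $Z$, and with $\rho(X_0/Z)\le n$. Here $Z$ is either a point or a klt Calabi--Yau variety with $K_Z\equiv 0$; since $X$ is of Fano type, Lemma~\ref{lem:image-FT} forces $Z$ to be of Fano type, hence $Z$ is a point. So after this reduction $X_0\to \operatorname{Spec}\kk$ is a tower of Mori fiber spaces and $\rho(X_0)\le n$. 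Then I apply Lemma~\ref{lem:special-gdlt} to pass to a $\qq$-factorial gdlt modification $(Y,B_Y,\mathbf{M})$; extracting further glc places only increases $\rho(Y)$ and the number of components of $\lfloor B_Y\rfloor$ by the same amount (Lemma~\ref{lem:bcomp-dlt-mod}), and leaves the coregularity unchanged (Lemma~\ref{lem:coreg-bir}), so it suffices to prove the component bound for $(Y,B_Y,\mathbf{M})$.

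\medskip

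\textbf{Step 2: locate a minimal lc center and use induction along the tower.} Let $c=\operatorname{coreg}(Y,B_Y,\mathbf{M})$. A minimal glc center $W$ of the gdlt pair $(Y,B_Y,\mathbf{M})$ has dimension $c$, and it is cut out generically by $\dim Y - c$ components of $\lfloor B_Y\rfloor$. The idea is to run the tower of Mori fiber spaces $Y\to Y_1\to\cdots\to Y_m$ (point), tracking at each stage how many components of $\lfloor B_Y\rfloor$ the boundary must contain. More precisely, I would use adjunction/the canonical bundle formula: on the generic fiber $F$ of the first Mori fiber space $Y\to Y_1$, the restricted pair $(F,B_Y|_F,\mathbf{M})$ is generalized log Calabi--Yau with $\rho(F)=1$, so by the non-negativity of the complexity (Theorem~\ref{thm:comp-noneg}) applied on $F$, the number of components of $\lfloor B_Y\rfloor$ meeting $F$ is at least $\dim F + \rho(F) - |B_Y|_F|\ge \dim F + 1 - \dim F$... — more carefully, the relevant bound is that $\lfloor B_Y\rfloor$ has at least $\dim F+1-(\text{contribution of the non-reduced part})$ vertical-to-$Y_1$ components, and inductively the base $(Y_1,B_{Y_1},\mathbf{N})$ contributes its own components. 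Summing the Mori fiber space contributions $1$ at each of the $\rho(Y)$ stages and subtracting the number of stages whose fiber is disjoint from $\lfloor B_Y\rfloor$ — which is controlled by the coregularity, since those are exactly the directions along which the minimal center fails to be cut — yields that $\#\{\text{components of }\lfloor B_Y\rfloor\}\ge \rho(Y)-c$.

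\medskip

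\textbf{Step 3: bookkeeping the coregularity defect.} The cleanest formulation of Step~2 is probably: run a $(K_Y+B_Y-S+\mathbf{M}_Y)$-type MMP as in Lemma~\ref{lem:-S-MMP-non-dlt} peeling off components one at a time, or argue by induction on $\rho(Y)$ directly. At each Mori fiber space $Y_i\to Y_{i+1}$, either some component of $\lfloor B_{Y_i}\rfloor$ is horizontal (dominates nothing, i.e.\ is positive on the fiber) — contributing a genuinely new vertex not coming from the base — or the fiber of $Y_i\to Y_{i+1}$ is entirely contained in the gklt locus, and this can happen at most $c$ times total, because each such stage raises the dimension of a minimal lc center by one relative to the base. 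So after $\rho(Y)$ stages we have produced at least $\rho(Y)-c$ distinct components. I expect the main obstacle to be exactly this bookkeeping: making precise that the number of "defect" stages in the tower (those not forcing a new boundary component) is bounded by the coregularity, which requires carefully relating the coregularity of the total space to that of the fibers and the base under the canonical bundle formula, using the fact that every glcc of $(Y,B_Y,\mathbf{M})$ dominates $Z$ (here a point) so that minimal centers are "spread out" along the tower. This is where I would invest the bulk of the argument, likely invoking a fiberwise complexity estimate and the behaviour of coregularity under adjunction.
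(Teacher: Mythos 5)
There is a genuine gap, and it sits exactly where you suspect it does. Two concrete problems. First, your appeal to Theorem~\ref{thm:comp-noneg} on the generic fiber $F$ points in the wrong direction: non-negativity of the (fine) complexity gives $|B_Y|_F| \leq \dim F + \rho(F)$, i.e.\ an \emph{upper} bound on the number of boundary components on the fiber, whereas Theorem~\ref{thm:mod-with-large-bound} needs a \emph{lower} bound on the number of components of $\lfloor B_Y\rfloor$. A Mori fiber space stage in your tower need not contribute any horizontal component of the \emph{reduced} boundary at all (the horizontal part of $B$ may have coefficients $<1$, or be empty when the generic fiber of that stage lies in the gklt locus), so "one new vertex per stage, except for defect stages" is not forced by anything you have set up. Second, the claim that the number of defect stages in a \emph{fixed} tower is at most the coregularity is not bookkeeping; it is the whole theorem, and it is false as stated for a fixed model: a component of $\lfloor B_{Z}\rfloor$ on the base (produced by the canonical bundle formula) need not be dominated by any divisorial component of $\lfloor B\rfloor$ on the total space of your fixed tower --- the responsible glc place can be non-divisorial, i.e.\ only visible after a further crepant extraction. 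So counting vertical components of $\lfloor B_Y\rfloor$ against $\rho$ of the base cannot be done on the model produced by Lemma~\ref{lem:bounding-comp}; you must be allowed to change the model over the base, and then restore a Mori fiber space structure, which your proposal never addresses.

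The paper's proof avoids both issues by inducting on dimension rather than along a fixed tower. One chooses a component $E$ of $\lfloor B_0\rfloor$ on a gdlt modification and runs a $(K_{X_0}+B_0-E+\mathbf{M}_{X_0})$-MMP: this is an $E$-positive MMP, so $E$ survives and is horizontal over the resulting Mori fiber space base $Z_1$, which is what supplies the $+1$ matching $\rho(X_2)=\rho(Z_2)+1$ (no fiberwise complexity estimate is used). The inductive hypothesis is applied to the base pair $(Z_1,B_{Z_1},\mathbf{N})$, and --- this is the step missing from your outline --- the improved base model is lifted to the total space via Lemma~\ref{cor:mod-base-mod-tot-space}, whose key output ${\pi_2}^{-1}(\lfloor B_{Z_2}\rfloor)\subseteq \lfloor B_2\rfloor$ guarantees one total-space boundary component per base boundary component. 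Finally, the coregularity comparison you gesture at ("each defect stage raises the dimension of a minimal lc center") is replaced by the inequality ${\rm coreg}(X_2,B_2,\mathbf{M})\geq {\rm coreg}(Z_2,B_{Z_2},\mathbf{N})$ from \cite[Proposition 3.28]{Mor22}, which is a statement about adjunction along the fibration and not something you can extract from the tower combinatorics alone. If you want to salvage your plan, you would essentially have to re-prove these two ingredients (the lift of the base model with the preimage condition, and the coregularity inequality across the fibration), at which point you have reproduced the paper's induction.
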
 

\begin{proof}
We proceed by induction
on the dimension of $X$.
The statement is clear in dimension one, where $\rho(X)=1$ and $X$ is a rational curve.
If $(X,B,\mathbf{M})$ is gklt,
then the statement is a consequence of Lemma~\ref{lem:tower-mfs-picard}.
Thus, we may assume that $(X,B,\mathbf{M})$ is glc
but it is not gklt.
Let $(X_0,B_0,\mathbf{M})$ be a gdlt modification of $(X,B,\mathbf{M})$; see Lemma~\ref{lem:gdlt}.
Let $E$ be a prime component of $\lfloor B_0\rfloor$.
We run a $(K_{X_0}+B_0-E+\mathbf{M})$-MMP. 
This MMP terminates with the glc pair $(X_1,B_1,\mathbf{M})$, endowed with the Mori fiber space $\pi_1\colon X_1\rightarrow Z_1$.

First, assume that $Z_1$ is a point.
Then, we have that $\rho(X_1)=1$.
If we ran a $(-E)$-MMP,
by the negativity lemma,
the divisor $E$ is not contracted
by the birational map
$X_0\dashrightarrow X_1$, so
 $\lfloor B_0\rfloor$ has at least one component.
Let $(Y,B_Y,\mathbf{M})$ be a $\qq$-factorial gdlt modification of $(X_1,B_1,\mathbf{M})$.
By Lemma~\ref{lem:bcomp-dlt-mod}, we conclude that
$|\lfloor B_Y\rfloor| - \rho(Y)\geq 0$.
This finishes the proof in this case.

Now, assume that $Z_1$ is a positive dimensional variety.
By Lemmas~\ref{lem:image-FT} and~\ref{lem:contr-FT}, we know that $Z_1$ is a variety of Fano type.
Let $(Z_1,B_{Z_1},\mathbf{N})$ be the generalized pair induced
by the generalized canonical bundle formula.
By induction on dimension, there exists a crepant birational map $(Z_2,B_{Z_2},\mathbf{N}) \dashrightarrow (Z_1,B_{Z_1},\mathbf{N})$, only extracting glc places, such that $(Z_2,B_{Z_2},\mathbf{N})$ is $\qq$-factorial gdlt, and 
\begin{equation}\label{eq:increase|B_Z2|}
|\lfloor B_{Z_2}\rfloor|  \geq \rho(Z_2)-{\rm coreg}(Z_2,B_{Z_2},\mathbf{N}).
\end{equation}
We have the following diagram:
\[
\xymatrix{
X & X_0 \ar[l] \ar@{-->}[r] & X_1\ar[d]_-{\pi_0} &  \\ 
 & & Z_1 & Z_2, \ar@{-->}[l] \\
}
\]
where the horizontal arrows are birational maps
and the vertical arrow is a fibration. By Lemma~\ref{cor:mod-base-mod-tot-space}, there exists a commutative diagram of $\qq$-factorial varieties 
\[
\xymatrix{
X & X_0 \ar[l] \ar@{-->}[r] & (X_1,B_1,\mathbf{M}) \ar[d]_-{\pi_0} &   (X_2,B_2,\mathbf{M})\ar[d]^-{\pi_2} \ar@{-->}[l]_-{\phi}\\ 
 & & (Z_1,B_{Z_1},\mathbf{N}) & (Z_2,B_{Z_2},\mathbf{N}) \ar@{-->}[l]\\
}
\]
satisfying the following conditions
\begin{enumerate}
\item $\phi \colon X_0 \dashrightarrow X_2$ only extracts glc places, 
\item $\pi_1 \colon X_2 \to Z_2$ is a Mori fiber space, and
\item ${\pi_2}^{-1}(\lfloor B_{Z_2}\rfloor)\subseteq \lfloor B_2\rfloor$.
\end{enumerate} 
 By (2), for every component $S$ of $\lfloor B_{Z_2}\rfloor$ there exists a unique component of $\lfloor B_2\rfloor$ that maps onto $S$. Note also that the strict transform of $E$ on $X_1$ is a divisor that is horizontal over $Z_1$.
 By the commutativity of the diagram, we conclude
 that the strict transform of the $\pi_0$-ample divisor $E$ on $X_2$ is a horizontal divisor over $Z_2$. We conclude that 
 \begin{equation}\label{eq:comparing-sum}
 |\lfloor B_2\rfloor| \geq |\lfloor B_{Z_2}\rfloor|+1.
 \end{equation}

By~\cite[Proposition 3.28]{Mor22}, we know that
\begin{equation}\label{eq:comparing-coreg}
{\rm coreg}(X_2,B_2,\mathbf{M})
\geq 
{\rm coreg}(Z_2,B_{Z_2},\mathbf{N}).
\end{equation}
Since $\pi_2$ is a Mori fiber space, we have that $\rho(X_2)=\rho(Z_2)+1$.
Putting this equality
together with inequalities~\eqref{eq:increase|B_Z2|}, ~\eqref{eq:comparing-sum} and 
~\eqref{eq:comparing-coreg},
we conclude that
\begin{equation}\label{eq:boundary-x2}
|\lfloor B_2\rfloor| 
\geq 
\rho(X_2)-{\rm coreg}(X_2,B_2,\mathbf{M}).
\end{equation}
Let $(Y,B_Y,\mathbf{M})$ be a $\qq$-factorial dlt modification of $(X_2,B_2,\mathbf{M})$. Since $Y$ and $X_2$ are both $\qq$-factorial, we obtain the required inequality
\[|\lfloor B_Y\rfloor|-\rho(Y)=|\lfloor B_2\rfloor|-\rho(X_2) \geq -{\rm coreg}(X_2,B_2,\mathbf{M})=-{\rm coreg}(X_0,B_0,\mathbf{M}),\]
by Lemma~\ref{lem:bcomp-dlt-mod}, inequality~\eqref{eq:boundary-x2} and Lemma~\ref{lem:coreg-bir} respectively.
This finishes the proof.
\end{proof}

Now, we turn to prove the main result of this section, which implies Theorem~\ref{theorem:bcomp-bound}.

\begin{theorem}\label{thm:gbcomp-bound}
Let $X$ be a variety of Fano type of dimension $n$.
Let $(X,B,\mathbf{M})$ be a generalized log Calabi--Yau pair.
Then, the following
inequalities hold:
\begin{equation}
\label{eq:1-gen}
0\leq c_{\rm bir}(X,B,\mathbf{M})
\leq {\rm coreg}(X,B,\mathbf{M})+n
\end{equation} 
and 
\begin{equation}
\label{eq:2-gen}
c_{\rm bir}(X,B,\mathbf{M}) \leq 2n,
\end{equation} 
and the inequality is strict if $(X,B)$ is a log Calabi--Yau pair.
\end{theorem}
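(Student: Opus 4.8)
The plan is to deduce everything from Theorem~\ref{thm:mod-with-large-bound} together with Lemma~\ref{lem:computing-bcomp}. First I would invoke Theorem~\ref{thm:mod-with-large-bound} applied to the variety of Fano type $X$ and the generalized pair $(X,B,\mathbf{M})$: this produces a crepant birational model $(Y,B_Y,\mathbf{M})$, extracting only glc places, which is $\qq$-factorial and gdlt, and such that the number of components of $\lfloor B_Y\rfloor$ is at least $\rho(Y)-\mathrm{coreg}(Y,B_Y,\mathbf{M})$. Since $(Y,B_Y,\mathbf{M})$ is a crepant model, $c_{\rm bir}(X,B,\mathbf{M})\leq c(Y,B_Y,\mathbf{M})=\dim Y+\rho(Y)-|B_Y|$. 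Now I would bound $|B_Y|$ from below: since every coefficient of $\lfloor B_Y\rfloor$ equals $1$ and $Y$ is $\dim X = n$-dimensional, we have $|B_Y|\geq |\lfloor B_Y\rfloor|\geq \rho(Y)-\mathrm{coreg}(Y,B_Y,\mathbf{M})$. Combining, and using that $\dim Y=n$ and that the coregularity is a crepant-birational invariant (Lemma~\ref{lem:coreg-bir}), so $\mathrm{coreg}(Y,B_Y,\mathbf{M})=\mathrm{coreg}(X,B,\mathbf{M})$, we get
\[
c_{\rm bir}(X,B,\mathbf{M})\leq c(Y,B_Y,\mathbf{M})\leq n+\rho(Y)-\big(\rho(Y)-\mathrm{coreg}(Y,B_Y,\mathbf{M})\big)=n+\mathrm{coreg}(X,B,\mathbf{M}),
\]
which is inequality~\eqref{eq:1-gen}; the lower bound $0\le c_{\rm bir}$ is Proposition~\ref{cor:bcomp-noneg}.

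For inequality~\eqref{eq:2-gen} I would simply combine~\eqref{eq:1-gen} with the fact that the coregularity of an $n$-dimensional generalized log Calabi--Yau pair takes values in $\{0,1,\dots,n\}$, so $\mathrm{coreg}(X,B,\mathbf{M})\leq n$ and hence $c_{\rm bir}(X,B,\mathbf{M})\leq 2n$. For the strictness claim when $(X,B)$ is an honest log Calabi--Yau pair (no b-nef part), the point is that equality $c_{\rm bir}=2n$ would force $\mathrm{coreg}(X,B)=n$, i.e.\ $\mathcal{D}(X,B)=\emptyset$ and $\lfloor B_Y\rfloor=0$ on the model; then the bound on the number of components degenerates to $0\geq \rho(Y)$, forcing $\rho(Y)=0$, which is impossible for a positive-dimensional projective variety (or else $n=0$, where $c_{\rm bir}=0<2n$ fails vacuously—so one treats $n\ge 1$). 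More carefully: tracking the inequalities, equality throughout~\eqref{eq:1-gen} combined with $\mathrm{coreg}=n$ requires $|B_Y|=|\lfloor B_Y\rfloor|$ and $|\lfloor B_Y\rfloor|=\rho(Y)-\mathrm{coreg}=\rho(Y)-n$; but simultaneously the chain in Theorem~\ref{thm:mod-with-large-bound} was built so that when the coregularity equals the dimension the fibration tower in Lemma~\ref{lem:bounding-comp} has $Z=X'$ a point-or-Calabi--Yau base and $\rho(Y)\leq n$, giving $|\lfloor B_Y\rfloor|\leq 0$ and hence $\rho(Y)=n$ and $\lfloor B_Y\rfloor=0$; then the coefficients of $B_Y$ lie strictly below $1$, so $|B_Y|>0$ is possible only with fractional coefficients, and a more refined version of the non-negativity of the complexity in Theorem~\ref{thm:comp-noneg}, applied relatively, rules out $c(Y,B_Y,\mathbf{M})=2n$ unless $\mathbf{M}$ is trivial and the pair is toric—in which case $\mathcal{D}$ is a sphere and $\mathrm{coreg}=0\neq n$ for $n\ge 1$, a contradiction.

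The main obstacle I anticipate is precisely the strictness statement: the two bounds $c_{\rm bir}\le\mathrm{coreg}+n$ and $\mathrm{coreg}\le n$ together only give $\le 2n$, and extracting a strict inequality requires showing the two bounds cannot be simultaneously sharp. The cleanest route is to argue that if $\mathrm{coreg}(X,B)=n$ then $\mathcal{D}(X,B)$ is empty, so on the model $(Y,B_Y)$ computing the birational complexity (Lemma~\ref{lem:computing-bcomp}) we have $\lfloor B_Y\rfloor=0$; since $(Y,B_Y)$ is $\qq$-factorial klt log Calabi--Yau with $\rho(Y)\le n$ (by the tower-of-Mori-fiber-spaces structure, Lemma~\ref{lem:bounding-comp}/\ref{lem:tower-mfs-picard}), we get $c(Y,B_Y)=n+\rho(Y)-|B_Y|\le 2n-|B_Y|$, and $|B_Y|>0$ holds because $K_Y\not\sim_{\rr}0$ on a Fano-type variety forces $B_Y\ne 0$ (otherwise $K_X+B=K_X$ is not $\rr$-trivial, contradicting the log Calabi--Yau hypothesis with $\mathbf{M}$ trivial). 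Hence $c_{\rm bir}(X,B)=c(Y,B_Y)<2n$, completing the proof; the subtlety is handling the edge cases $n=0,1$ and making sure the model computing $c_{\rm bir}$ can be taken with the Mori-fiber-space tower structure, which is exactly what Lemma~\ref{lem:bounding-comp} provides.
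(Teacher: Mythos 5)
Your proofs of \eqref{eq:1-gen} and \eqref{eq:2-gen} are exactly the paper's: invoke Theorem~\ref{thm:mod-with-large-bound}, bound $|B_Y|\geq |\lfloor B_Y\rfloor|\geq \rho(Y)-{\rm coreg}(Y,B_Y,\mathbf{M})$, use Lemma~\ref{lem:coreg-bir} and ${\rm coreg}\leq n$; and your ``cleanest route'' for strictness is also the paper's argument: reduce to the case ${\rm coreg}(X,B)=n$, i.e.\ $(X,B)$ klt (the case ${\rm coreg}<n$ being settled by \eqref{eq:1-gen}), take a birational contraction $X\dashrightarrow X'$ with $X'$ $\qq$-factorial and $\rho(X')\leq n$ from Lemma~\ref{lem:tower-mfs-picard}, and use $B'\neq 0$ — the paper gets this from bigness of $B\sim_{\rr}-K_X$, you from $K_{X'}\not\sim_{\rr}0$ on the Fano-type variety $X'$, which is the same point. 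Two corrections to your write-up: Lemma~\ref{lem:computing-bcomp} does not assert that a model computing $c_{\rm bir}$ has $\rho\leq n$, so you should not run the strictness argument on that model; this is harmless, because only the inequality $c_{\rm bir}(X,B)\leq c(X',B')<2n$ for the (arbitrary) crepant model supplied by Lemma~\ref{lem:tower-mfs-picard} is needed, not the equality $c_{\rm bir}=c(Y,B_Y)$. Also, in your first-pass sketch the bound of Theorem~\ref{thm:mod-with-large-bound} with ${\rm coreg}=n$ and $\lfloor B_Y\rfloor=0$ degenerates to $\rho(Y)\leq n$, not to $0\geq\rho(Y)$, and the subsequent detour through a relative non-negativity of the complexity and toric pairs is unnecessary once the klt-case argument above is in place.
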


\begin{proof}
The lower bound in~\eqref{eq:1-gen} follows from Proposition~\ref{cor:bcomp-noneg}. The upper bound is a corollary of Theorem~\ref{thm:mod-with-large-bound}. Indeed, in the notation of Theorem~\ref{thm:mod-with-large-bound}, we have
\[c_{\rm bir}(X,B,\mathbf{M}) \leq c(Y,B_Y,\mathbf{M})=\dim Y+\rho(Y)-|B_Y|\leq \dim Y + {\rm coreg}(Y,B_Y,\mathbf{M})
=
\dim X + {\rm coreg}(X,B,\mathbf{M}).\]
By definition, ${\rm coreg}(X,B,\mathbf{M})\leq n$, and this implies inequality~\eqref{eq:2-gen}. 

The inequality is strict if $(X,B)$ is a log Calabi--Yau pair of Fano type. To this end, observe that the equality
${\rm coreg}(X,B)=n$
holds
if and only if
the pair $(X,B)$ is klt.
By Lemma~\ref{lem:tower-mfs-picard},
there exists a birational contraction
$X\dashrightarrow X'$ 
such that $X'$ is $\qq$-factorial
and $\rho(X')\leq n$.
Since $X\dashrightarrow X'$ is a contraction from a variety of Fano type, then
$X$ is a variety of Fano type.
Let $(X',B')$ be the log Calabi--Yau pair induced on $X'$.
Note that $B'$ is non-trivial as
$B$ is a big effective divisor.
Therefore, we have that 
\[
c(X',B')=\dim X'+\rho(X')-|B'| 
\leq 2n - |B'|<2n,
\]
where the last inequality follows from the fact that $B'$ is a non-trivial divisor.
Thus, we have that 
\[
c_{\rm bir}(X,B) \leq c(X',B')<2n.
\]
This finishes the proof of the theorem.
\end{proof}

\begin{proof}[Proof of Corollary~\ref{introcor:bcomp-coreg-0}]
Let $X$ be a variety of Fano type and absolute coregularity zero.
By~\cite[Theorem 4]{FFMP22}, there exists a $2$-complement $B$ of $X$.
Let $(X,B)$ be the associated log Calabi--Yau pair.
Let $Y\rightarrow X$ be the index one cover of $K_X+B$, i.e., the double cover of $X$ such that
$\pi^*(K_X+B)=K_Y+B_Y \sim 0$.
The pair $(Y,B_Y)$ has coregularity zero, so by~\cite[Theorem 4.2]{FS20}, it is crepant equivalent to $(Z,B_Z)$ where
$Z$ is of Fano type.
The index of a log Calabi--Yau pair is independent of its birational model, so
$K_Z+B_Z\sim 0$.
Then, by Theorem~\ref{theorem:bcomp-bound},
we have that $c_{\rm bir}(X,B)\in \{0,\dots,n\}$.
\end{proof}

\begin{proof}[Proof of Theorem~\ref{theorem:bcomp-zero}]
Let $(X,B)$ be a log Calabi--Yau pair 
with $K_X+B\sim 0$.
Assume that $c_{\rm bir}(X,B)=0$.
By Lemma~\ref{lem:computing-bcomp}, there exists a $\qq$-factorial dlt crepant model $(X',B') \dashrightarrow (X,B)$ with the property that $c(X, B)=0$. By \cite[Theorem 1.2]{BMSZ18}, $X'$ is a toric variety and $B'$ is the full toric boundary; for this we use that $K_{X'} + B' \sim 0$, and not just that $K_X+B$ is $\qq$-linearly trivial. In particular, there is a crepant birational map \[(\pp^n,H_1+\dots+H_{n+1})\dashrightarrow (X',B')\dashrightarrow (X,B).\] 

On the other hand, given the crepant birational map
$(\pp^n,H_1+\dots+H_{n+1})\dashrightarrow (X,B)$, then 
$0 \leq c_{\rm bir}(X,B)\leq c(\pp^n,H_1+\dots+H_{n+1})=0$, i.e., $c_{\rm bir}(X,B)=0$.

Finally, all crepant birational invariants of $(X, B)$ coincide with those of $(\pp^n,H_1+\dots+H_{n+1})$, e.g.,
\begin{align*}
{\rm coreg}(X,B)  = & {\rm coreg}(\pp^n,H_1+\dots+H_{n+1})=0\\
\mathcal{D}(X,B)   \simeq_{\rm PL} & \mathcal{D}(\pp^n,H_1+\dots+H_{n+1})\simeq_{\rm PL}S^{n-1}.
\end{align*}
\end{proof} 

\section{Ample divisors in the boundary}

In this section, we study the existence of positive divisors 
on the reduced boundary $\lfloor B\rfloor$ of log Calabi--Yau pairs $(X,B)$. The first theorem in this section concerns ample divisors
on the reduced boundary, which generalize \cite[Theorem 4.2]{FS20} and \cite[Theorem 4.9]{KX16}.

\begin{theorem}\label{thm:ample-div-bound-with-FT}
Let $(X,B,\mathbf{M})$ be a generalized log Calabi--Yau pair.
Let $X\rightarrow \schbase$ be a fibration 
such that $X$ is of Fano type over $\schbase$
and every glcc
of $(X,B,\mathbf{M})$ dominates $\schbase$.
    Then, there exists a crepant birational map $(X',B',\mathbf{M}) \dashrightarrow (X,B,\mathbf{M})$, only extracting glc places, and a fibration $X' \to W$ over $\schbase$ satisfying the following:
\begin{enumerate} 
\item $X'$ is $\qq$-factorial,
\item $(X',B',\mathbf{M})$ is gdlt,
\item every glcc of $(X',B',\mathbf{M})$ dominates $W$, and 
\item $\lfloor B'\rfloor$ fully supports an ample divisor over $W$.
\end{enumerate} 
\end{theorem}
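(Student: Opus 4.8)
The strategy is to take the relative Koll\'ar--Xu model, on which $\lfloor B_0\rfloor$ carries a semiample and big positive divisor, and to upgrade this divisor to an ample one by passing to its relative ample model (which destroys $\qq$-factoriality) and then taking a $\qq$-factorial gdlt modification of the latter (which restores it). First I would apply \cite[Theorem 4.9]{KX16} and \cite[Theorem 4.2]{FS20} (compare Lemma~\ref{lem:bounding-comp}) to obtain a crepant birational map $(X_0,B_0,\mathbf{M})\dashrightarrow (X,B,\mathbf{M})$ extracting only glc places, with $X_0$ $\qq$-factorial and $(X_0,B_0,\mathbf{M})$ gdlt, together with a fibration $X_0\to W$ over $\schbase$ such that every glcc of $(X_0,B_0,\mathbf{M})$ dominates $W$ and $\lfloor B_0\rfloor$ fully supports a semiample and big divisor $A_0$ over $W$. (Should the cited statement only yield a divisor supported on $\lfloor B_0\rfloor$, full support can be arranged; and if it is not stated in the gdlt form, one replaces $X_0$ by a $\qq$-factorial gdlt modification, which keeps all the listed properties, since log pull-back preserves semiampleness and bigness over $W$ and the extracted divisors are reduced components.) At this stage $\lfloor B_0\rfloor$ need not support any ample divisor over $W$, and this upgrade is the only real difficulty.

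Next I would set $g\colon X_0\to W_1$ to be the relative ample model of $A_0$ over $W$. Since $A_0$ is big over $W$, the morphism $g$ is birational; writing $A_0=g^*H_1$ for an ample-over-$W$ $\qq$-Cartier divisor $H_1$ on $W_1$, the glc pair $(W_1,B_1,\mathbf{M}):=g_*(X_0,B_0,\mathbf{M})$ is crepant to $(X_0,B_0,\mathbf{M})$ and satisfies $\Supp(H_1)=\Supp(\lfloor B_1\rfloor)$; in particular $H_1$ fully supports $\lfloor B_1\rfloor$, and every glc center of $(W_1,B_1,\mathbf{M})$ is contained in $\Supp(H_1)$ — here one uses that $(X_0,B_0,\mathbf{M})$ is gdlt, so its glc centers are strata of $\lfloor B_0\rfloor$, together with the identity $g(\Supp\lfloor B_0\rfloor)=\Supp\lfloor B_1\rfloor$ coming from $A_0=g^*H_1$. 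I would also point out that $g$ may contract some prime components of $\lfloor B_0\rfloor$; this is harmless, because the conclusion of the theorem only concerns the prime divisors on the final model, not those on $X_0$.

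Now take a $\qq$-factorial gdlt modification $p\colon (X',B',\mathbf{M})\to (W_1,B_1,\mathbf{M})$ (Lemma~\ref{lem:gdlt}) and let $X'\to W$ be the composite $X'\xrightarrow{p}W_1\to W$, a fibration over $\schbase$. A routine bookkeeping check gives that $(X',B',\mathbf{M})$ is crepant to $(X,B,\mathbf{M})$, that $X'\dashrightarrow X$ extracts only glc places of $(X,B,\mathbf{M})$ (each prime divisor on $X'$ exceptional over $X$ is either $p$-exceptional, hence a glc place of $(W_1,B_1,\mathbf{M})$, equivalently of $(X,B,\mathbf{M})$, or the strict transform of a prime divisor on $W_1$ exceptional over $X$, again a glc place since $X_0\dashrightarrow X$ extracts only glc places), and that every glcc of $(X',B',\mathbf{M})$ dominates $W$ (glc places are unchanged, and the center on $W$ of each is the image of a glc center of $(X_0,B_0,\mathbf{M})$, which dominates $W$). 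It remains to produce the positive divisor. Since $X'$ is $\qq$-factorial and $p$ is projective birational, choose an effective $p$-exceptional $F$ with $-F$ being $p$-ample. Because the center on $W_1$ of every $p$-exceptional prime divisor is a glc center of $(W_1,B_1,\mathbf{M})$, hence contained in $\Supp(H_1)$, each such divisor occurs with positive coefficient in $p^*H_1$; together with $\Supp(H_1)=\Supp(\lfloor B_1\rfloor)$ this yields $\Supp(p^*H_1)=\Supp(\lfloor B'\rfloor)$. Finally set $E:=p^*H_1-\epsilon F$ with $0<\epsilon\ll 1$: it is effective and still fully supported on $\lfloor B'\rfloor$, and it is ample over $W$ because $H_1$ is ample over $W$ while $-F$ is $p$-ample (the same perturbation computation as in the proof of Lemma~\ref{lem:dlt-mod-supp-amp}). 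Thus $(X',B',\mathbf{M})\to W$ satisfies $(1)$--$(4)$.

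The main obstacle is exactly this upgrade from ``semiample and big'' to ``ample'' while keeping the total space $\qq$-factorial and gdlt and only extracting glc places: taking the ample model destroys $\qq$-factoriality, and re-resolving it risks introducing exceptional divisors that are not glc places and ruining full support of the positive divisor. The leverage is that the Koll\'ar--Xu divisor is \emph{fully} supported on $\lfloor B_0\rfloor$, which forces every exceptional divisor produced along the way to lie in the support of $p^*H_1$, so that the single perturbation $p^*H_1-\epsilon F$ is at once effective, fully supported on $\lfloor B'\rfloor$, and ample over $W$.
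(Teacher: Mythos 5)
There is a genuine gap, and it sits exactly at the point your last paragraph identifies as the crux. After passing to the relative ample model $g\colon X_0\to W_1$ of $A_0$, the variety $W_1$ is in general no longer $\qq$-factorial, and your repair step asserts: ``Since $X'$ is $\qq$-factorial and $p$ is projective birational, choose an effective $p$-exceptional $F$ with $-F$ being $p$-ample.'' That existence statement is false in this generality: it needs the \emph{target} of $p$ to be $\qq$-factorial (this is precisely the hypothesis of Lemma~\ref{lem:dlt-mod-supp-amp}, whose proof takes $F=p^*p_*A-A$ for an ample $A$, a construction that requires pulling back Weil divisors from the base). Here the target is $W_1$, which is exactly where $\qq$-factoriality was destroyed. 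Concretely, $g$ may contract a curve $C\subset\lfloor B_0\rfloor$ with $A_0\cdot C=0$ whose image $w\in W_1$ is a non-$\qq$-factorial point that is \emph{not} a glc center and at which $(W_1,B_1,\mathbf{M})$ is still gdlt. A gdlt modification $p$ (which by the constraints of the theorem may only extract glc places) is then forced to be small over a neighborhood of $w$: there is no $p$-exceptional divisor over $w$ at all, so no effective exceptional $F$ with $-F$ $p$-ample exists, and $p^*H_1$ is merely nef and big over $W$ — it has degree zero on the small extracted curve over $w$ — so $E=p^*H_1-\epsilon F$ is never ample over $W$ for any admissible choice of $F$ and $\epsilon$. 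One cannot evade this by perturbing with other components of $\lfloor B'\rfloor$ without a new argument; making such a perturbation work is the actual difficulty of the theorem.

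This is the obstruction the paper flags explicitly (``we can take the ample model of $H$ and lose the $\qq$-factoriality \dots it is not clear whether the techniques of \cite{KX16,FS20} can be modified to get both a $\qq$-factorial model and $\lfloor B\rfloor$ supporting an ample divisor''), and it is why the paper's proof avoids ample models altogether. Instead it argues by induction on $\dim X$: take a $\qq$-factorial gdlt modification, run a $(K_{X_0}+B_0-S+\mathbf{M}_{X_0})$-MMP over $\schbase$ to reach a Mori fiber space $X_k\to Z$, apply the inductive hypothesis to the generalized pair on $Z$ given by the canonical bundle formula, and lift the resulting model of the base back to the total space via the two-ray game (Lemmas~\ref{lem:dlt-mod-base}, \ref{lem:two-ray-game}, \ref{cor:mod-base-mod-tot-space}). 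Ampleness over the new base then comes for free from relative Picard rank one (the strict transform of $S$ is horizontal, hence ample over $Z'$), and the ample divisor over $W$ is assembled as this horizontal piece plus a pullback from the base; Lemma~\ref{lem:dlt-mod-supp-amp} is invoked only with a $\qq$-factorial base, where the exceptional perturbation $-\epsilon F$ genuinely exists. To fix your write-up you would either have to prove an analogue of Lemma~\ref{lem:dlt-mod-supp-amp} over non-$\qq$-factorial targets (which fails as stated, by the small-modification example above) or replace the ample-model step by an argument of the paper's type.
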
 

\begin{proof}
We prove the statement by induction on the dimension of $X$.
Let $(X_0,B_0,\mathbf{M})$ be a $\qq$-factorial gdlt modification of $(X,B,\mathbf{M})$.
Let $S$ be a component of $B_0$.
We run a $(K_{X_0}+B_0-S+\mathbf{M}_{X_0})$-MMP over $\schbase$.
Since this divisor is not pseudo-effective, the MMP must terminate with a Mori fiber space.
Let $X_0\dashrightarrow X_1 \dashrightarrow X_2 \dashrightarrow \dots \dashrightarrow X_k$ be the steps of this MMP
and $\pi \colon X_k\rightarrow Z$ be the induced Mori fiber space.
Let $(X_k,B_k,\mathbf{M})$ be the associated generalized log Calabi--Yau pair on $X_k$.
Let $S_k$ be the push-forward of $S$ on $X_k$.

First, assume that $Z$ is a point.
Then $X_k$ is a Fano variety of Picard rank one.
By Lemma~\ref{lem:-S-MMP-non-dlt},
we know that the support of $S_k$ 
contains all the non-gdlt centers of $(X_k,B_k,\mathbf{M})$.
By Lemma~\ref{lem:dlt-mod-supp-amp}, 
we know that there exists a $\qq$-factorial 
dlt modification $(X',B',\mathbf{M})$
of $(X_k,B_k,\mathbf{M})$ such that $B'$ supports an ample divisor. This finishes the proof in the case that $Z$ is a point.

Suppose then that $\dim Z > 0$. By Lemma~\ref{lem:image-FT},
we know that $Z$ is of Fano type over $\schbase$.
By the generalized canonical bundle formula
there exists a generalized pair structure
$(Z,B_{Z},\mathbf{N})$ that is generalized log Calabi--Yau.
Note that every
glcc
of $(Z,B_Z,\mathbf{N})$ is horizontal over $\schbase$.
By induction on the dimension, there exists a crepant birational map $(Z',B_{Z'},\mathbf{M}) \dashrightarrow (Z,B_{Z},\mathbf{M})$, only extracting glc places, 
and a fibration $Z'\rightarrow W$
satisfying the following conditions:
\begin{enumerate}
\item $Z'$ is $\qq$-factorial,
\item $(Z',B_{Z'},\mathbf{N})$ is gdlt, 
\item every glcc of $(Z',B_{Z'},\mathbf{N})$ dominates $W$, and 
\item  $\lfloor B_{Z'}\rfloor$ fully supports an ample divisor over $W$.
\end{enumerate} 
We obtain the following commutative diagram
\[
\xymatrix{
(X,B,\mathbf{M})\ar[dd]\ar@{-->}[r] & (X_k,B_k,\mathbf{M})
\ar[d]^-{\pi} &  \\ 
 & (Z,B_Z,\mathbf{N})\ar[dl] & (Z',B_{Z'},\mathbf{N})\ar[d] 
 \ar@{-->}[l]  \\ 
 \schbase & &  W.\ar[ll] \\
}
\]
where the horizontal arrows are birational
and the vertical arrows
are fibrations. Since $X$ is of Fano type over $\schbase$,
we can apply Lemma~\ref{cor:mod-base-mod-tot-space}.
Hence, there exists 
a $\qq$-factorial glc pair
$(Y,B_{Y},\mathbf{M})$
and a crepant birational map
$\phi \colon (Y,B_{Y},\mathbf{M}) \dashrightarrow (X_k,B_k,\mathbf{M})$ satisfying the following conditions:
\begin{enumerate}
\item[(i)] $\pi_Y\colon Y\rightarrow Z'$ is a Mori fiber space, 
\item[(ii)] $\pi_Y^{-1}\lfloor B_{Z'}\rfloor\subseteq \lfloor B_{Y}\rfloor$, and
\item[(iii)] every glcc of $(Y,B_{Y},\mathbf{M})$ dominates $W$.
\end{enumerate}
This leads to the following commutative diagram
\[
\xymatrix{
(X,B,\mathbf{M})\ar[dd]\ar@{-->}[r] & (X_k,B_k,\mathbf{M})
\ar[d]^-{\pi} &  \ar@{-->}[l]_-{\phi} (Y,B_{Y},\mathbf{M}) \ar[d]^-{\pi_Y}\\ 
 & (Z,B_Z,\mathbf{N})\ar[dl] & (Z',B_{Z'},\mathbf{N})\ar[d] 
 \ar@{-->}[l]  \\ 
 \schbase & &  W.\ar[ll] \\
}
\]

Let $S_{Y}$ be the strict transform of $S_k$ on $Y$. By construction,
$S_{Y}$ dominates $Z'$.
Since
$\rho(Y/Z')=1$, 
we conclude that $S_{Y}$ is ample over $Z'$.
Furthermore, since $\phi$ is an isomorphism at the generic point of $Z$,
the support of $S_{Y}$
contains every non-gdlt
center of $(Y,B_{Y},\mathbf{M})$
that dominates $Z'$.
On the other hand,
by (ii) above, $\lfloor B_{Y}\rfloor$
contains every vertical
glcc of $(Y,B_{Y},\mathbf{M})$.
Hence, the divisor
$\lfloor B_{Y}\rfloor$
supports an ample divisor over $W$
and contains every non-gdlt center of $(Y,B_{Y},\mathbf{M})$.
By Lemma~\ref{lem:dlt-mod-supp-amp},
there exists a $\qq$-factorial gdlt modification
$(X',B',\mathbf{M})$ of $(Y,B_{Y},\mathbf{M})$
for which $\lfloor B'\rfloor$ supports an ample over $W$.
By construction, every glcc of $(X',B',\mathbf{M})$ dominates $W$.
This finishes the proof.
\end{proof} 

Using previous work due to Filipazzi and Svaldi, we can get rid of the Fano type assumption on the previous theorem.
The following theorem is a version of Theorem~\ref{theorem:supp-amp}
in the context of generalized pairs.

\begin{theorem}\label{thm:ample-div-bound}
Let $(X,B,\mathbf{M})$ be a generalized log Calabi--Yau pair.
There exists a crepant birational map $(X',B',\mathbf{M}) \dashrightarrow (X,B,\mathbf{M})$ and a fibration
$X'\rightarrow W$ satisfying the following:
\begin{enumerate}
\item $X'$ is $\qq$-factorial,
\item $(X',B',\mathbf{M})$ is gdlt,
\item every glcc of $(X',B',\mathbf{M})$ dominates $W$, and 
\item $\lfloor B'\rfloor$ fully supports an ample divisor over $W$.
\end{enumerate}   
\end{theorem}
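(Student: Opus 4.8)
The plan is to reduce to Theorem~\ref{thm:ample-div-bound-with-FT} by first replacing $(X,B,\mathbf{M})$ with a crepant model that fibers over a base with Fano-type total space, using the structural result of Koll\'ar--Xu and Filipazzi--Svaldi that was already invoked in the proof of Lemma~\ref{lem:bounding-comp}.

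First, by \cite[Theorem 49]{KX16} and \cite[Theorem 4.2]{FS20}, and exactly as in the opening paragraph of the proof of Lemma~\ref{lem:bounding-comp}, there exists a crepant model $(X_0,B_0,\mathbf{M})$ of $(X,B,\mathbf{M})$ together with a fibration $\pi_0\colon X_0\rightarrow Z_0$ such that $X_0$ is $\qq$-factorial, every divisor extracted by $X_0\dashrightarrow X$ is a glc place of $(X,B,\mathbf{M})$, every glcc of $(X_0,B_0,\mathbf{M})$ dominates $Z_0$, and $X_0$ is of Fano type over $Z_0$; moreover we may assume $Z_0$ is $\qq$-factorial. The point is that this structural result does all the work of eliminating the Fano-type hypothesis: it produces a base $\schbase = Z_0$ over which the new total space becomes of Fano type, which is precisely the input demanded by Theorem~\ref{thm:ample-div-bound-with-FT}.

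Next, I would apply Theorem~\ref{thm:ample-div-bound-with-FT} to $(X_0,B_0,\mathbf{M})$ with the fibration $X_0\rightarrow Z_0$ playing the role of $X\rightarrow\schbase$. Its two hypotheses, namely that $X_0$ is of Fano type over $Z_0$ and that every glcc of $(X_0,B_0,\mathbf{M})$ dominates $Z_0$, are exactly what the previous step provides. This yields a crepant birational map $(X',B',\mathbf{M})\dashrightarrow(X_0,B_0,\mathbf{M})$, only extracting glc places, and a fibration $X'\rightarrow W$ over $Z_0$, such that $X'$ is $\qq$-factorial, $(X',B',\mathbf{M})$ is gdlt, every glcc of $(X',B',\mathbf{M})$ dominates $W$, and $\lfloor B'\rfloor$ fully supports an ample divisor over $W$. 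Composing, $(X',B',\mathbf{M})\dashrightarrow(X_0,B_0,\mathbf{M})\dashrightarrow(X,B,\mathbf{M})$ is a crepant birational map (crepant birational maps compose), and $X'\rightarrow W$ together with this composite map satisfies conditions $(1)$--$(4)$ verbatim; note that we neither claim nor need that the composite only extracts glc places, so losing track of that property when passing through $X_0\dashrightarrow X$ is harmless.

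I do not expect a genuine obstacle here, since the heavy lifting is absorbed into Theorem~\ref{thm:ample-div-bound-with-FT} and into the Koll\'ar--Xu/Filipazzi--Svaldi reduction. The only points requiring a little care are bookkeeping ones: that the intermediate base $Z_0$ can indeed be taken $\qq$-factorial so that Theorem~\ref{thm:ample-div-bound-with-FT} applies without friction, and that the "every glcc dominates the base" and "glc place" conditions behave well under composition of the two crepant maps --- both of which are routine and already handled in the same style in the proof of Lemma~\ref{lem:bounding-comp}.
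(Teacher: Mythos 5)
Your proposal is correct and follows essentially the same route as the paper: the paper's proof simply invokes \cite[Theorem 4.2]{FS20} to produce a crepant gdlt model that is of Fano type over a base with all glcc's dominating it, and then applies Theorem~\ref{thm:ample-div-bound-with-FT}. Your extra bookkeeping (e.g.\ $\qq$-factoriality of $Z_0$, composing the two crepant maps) is harmless but not needed, since Theorem~\ref{thm:ample-div-bound-with-FT} imposes no $\qq$-factoriality hypothesis on the base and the statement being proved does not require the composite to extract only glc places.
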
 

\begin{proof}
By~\cite[Theorem 4.2]{FS20},
there exists a crepant birational model
$(X_1,B_1,\mathbf{M})$ of $(X,B,\mathbf{M})$
and a fibration $X_1\rightarrow \schbase$ such that
\begin{itemize}
\item 
$(X_1,B_1,\mathbf{M})$ is gdlt, 
\item every glcc of $(X_1,B_1,\mathbf{M})$ dominates $\schbase$, and 
\item $X_1$ is of Fano type over $\schbase$.
\end{itemize} 
Then, the statement follows from Theorem~\ref{thm:ample-div-bound-with-FT}.
\end{proof} 

In the next theorem, we show that 
Theorem~\ref{thm:ample-div-bound}.(4)
can be improved
if we have some control
on the birational complexity
of $(X,B,\mathbf{M})$.

\begin{theorem}\label{thm:proper-supp-amp-with-FT}
Let $(X,B,\mathbf{M})$ be a $\qq$-factorial generalized log Calabi--Yau pair.
Let $X\rightarrow \schbase$ be a fibration of Fano type.
Assume that every glcc of $(X,B,\mathbf{M})$ dominates $\schbase$
and that $|\lfloor B\rfloor|-\rho(X/\schbase)>0$.
Then, there exists a crepant birational {map $\phi \colon (X',B',\mathbf{M}) \dashrightarrow (X,B,\mathbf{M})$ over $\schbase$, only extracting glc places,} 
and a fibration $X'\rightarrow W$
over $\schbase$
satisfying the following:
\begin{enumerate}
\item  $X'$ is $\qq$-factorial,
\item 
$(X',B',\mathbf{M})$ is gdlt, 
\item every glcc of $(X',B',\mathbf{M})$
dominates $W$, and 
\item $\lfloor B'\rfloor$ properly supports an effective ample divisor over $W$.
\end{enumerate}
\end{theorem}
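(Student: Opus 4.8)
The plan is to derive the statement from its ``fully supported'' counterpart Theorem~\ref{thm:ample-div-bound-with-FT}, while carrying along one extra numerical bound, and then to trade ``fully supports'' for ``properly supports'' by a short linear algebra argument in $N^1(X'/W)$.

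First I would rerun the proof of Theorem~\ref{thm:ample-div-bound-with-FT} over $\schbase$, obtaining a crepant birational map $\phi\colon (X',B',\mathbf{M})\dashrightarrow (X,B,\mathbf{M})$ over $\schbase$ extracting only glc places, and a fibration $X'\to W$ over $\schbase$, such that $X'$ is $\qq$-factorial, $(X',B',\mathbf{M})$ is gdlt, every glcc of $(X',B',\mathbf{M})$ dominates $W$, and $\lfloor B'\rfloor$ fully supports an effective divisor ample over $W$. The additional point is to track, along that argument, the integer $\delta\coloneqq|\lfloor\,\cdot\,\rfloor|-\rho(\,\cdot\,/\,\mathrm{base})$ and to observe that it never decreases, so that $|\lfloor B'\rfloor|-\rho(X'/W)=\delta(X')\geq\delta(X)=|\lfloor B\rfloor|-\rho(X/\schbase)>0$. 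Concretely: a $\qq$-factorial gdlt modification of a $\qq$-factorial pair leaves $\delta$ unchanged, by Lemma~\ref{lem:bcomp-dlt-mod}; along the $(K_{X_0}+B_0-S+\mathbf{M})$-MMP a flip preserves both $|\lfloor\,\cdot\,\rfloor|$ and $\rho$ while a divisorial contraction lowers $\rho$ by $1$ and $|\lfloor\,\cdot\,\rfloor|$ by $1$ or $0$, so $\delta$ is non-decreasing; and in the inductive step one passes from a Mori fibre space $X_k\to Z$ and a model $(Z',B_{Z'},\mathbf{N})\to W$ of the base (supplied by the lower-dimensional case of this same argument, so that $\delta(Z')\geq\delta(Z)$ by induction) to a Mori fibre space $\pi_Y\colon Y\to Z'$ by Lemma~\ref{cor:mod-base-mod-tot-space}; there $\rho(Y/W)=\rho(Z'/W)+1$, and $\lfloor B_Y\rfloor$ contains the irreducible preimage of each component of $\lfloor B_{Z'}\rfloor$ as well as the strict transforms of all horizontal components of $\lfloor B_k\rfloor$ over $Z$ (among them the relatively ample $S$), while the canonical bundle formula forces each vertical coefficient-one component of $B_Y$ to lie over a coefficient-one component of $B_{Z'}$. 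A short count then yields $\delta(Y)\geq\delta(X_k)$, and $\delta(X')=\delta(Y)$ after the concluding gdlt modification.

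It then remains to upgrade the support. Write $\lfloor B'\rfloor=D_1+\dots+D_k$ with $k=|\lfloor B'\rfloor|>\rho(X'/W)=\dim_{\rr}N^1(X'/W)_{\rr}$, so the classes $[D_1],\dots,[D_k]$ satisfy a nontrivial relation $\sum_ic_i[D_i]=0$ in $N^1(X'/W)_{\rr}$; since $X'$ is $\qq$-factorial we may take $c_i\in\qq$. Let $A=\sum_ia_iD_i$, $a_i\in\qq_{>0}$, be an effective divisor ample over $W$ fully supported on $\lfloor B'\rfloor$. For $t\in\qq$ the divisor $A_t\coloneqq\sum_i(a_i+tc_i)D_i$ satisfies $A_t-A\equiv 0$ over $W$, hence $A_t$ is ample over $W$ (it is $\qq$-Cartier, $X'$ being $\qq$-factorial). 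Choosing $t_0$ of smallest absolute value with $a_j+t_0c_j=0$ for some $j$, all coefficients $a_i+t_0c_i$ are $\geq 0$, so $A_{t_0}$ is an effective divisor ample over $W$ with $\Supp A_{t_0}\subseteq\lfloor B'\rfloor-D_j\subsetneq\lfloor B'\rfloor$; thus $\lfloor B'\rfloor$ properly supports an effective divisor ample over $W$, and $\phi$ together with $X'\to W$ satisfies (1)--(4). Note that, because $(X',B',\mathbf{M})$ is already $\qq$-factorial and gdlt, no further modification is needed.

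I expect the main obstacle to be the bookkeeping in the first step: each operation is elementary, but one must carefully match $|\lfloor\,\cdot\,\rfloor|$ against the relative Picard number through the base changes of the dimension induction and the Mori fibration pull-backs used in Theorem~\ref{thm:ample-div-bound-with-FT}, the crux being that the horizontal boundary component $S$ (and any further horizontal component) is never lost and accounts for the unit by which the relative Picard number grows at each Mori fibration.
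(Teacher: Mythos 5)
Your proposal is correct, but it reaches the conclusion by a genuinely different mechanism than the paper. The paper proves Theorem~\ref{thm:proper-supp-amp-with-FT} by its own induction on dimension: after the $(K_{X_0}+B_0-S+\mathbf{M}_{X_0})$-MMP it splits according to whether $\lfloor B_1\rfloor$ has one or at least two horizontal components over the Mori fibre space base; in the first case the hypothesis $|\lfloor B\rfloor|-\rho>0$ descends to the base and the \emph{proper} support is supplied by the inductive hypothesis (then propagated upwards via Lemma~\ref{lem:prop-supp-ampl-mod} and a minimal-relative-dimension argument), while in the second case a spare horizontal component is simply left out of $\pi^*\lfloor B_{Z_2}\rfloor+S_2$. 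You instead prove, in effect, a strengthened $\delta$-preserving version of the full-support Theorem~\ref{thm:ample-div-bound-with-FT} and then manufacture proper support purely numerically: since $|\lfloor B'\rfloor|>\rho(X'/W)$, the boundary classes satisfy a rational relation in $N^1(X'/W)$, and sliding the fully supported relatively ample divisor along that relation until a coefficient vanishes preserves relative ampleness (Kleiman: ampleness over $W$ of a $\qq$-Cartier divisor is a numerical property; and $A_{t_0}\neq 0$ because $X'\to W$ has positive relative dimension) while strictly shrinking the support. This endgame is clean, avoids the two-case analysis and Lemma~\ref{lem:prop-supp-ampl-mod} entirely, and isolates a reusable fact (full support plus $|\lfloor B'\rfloor|>\rho(X'/W)$ already gives proper support); the cost is the bookkeeping you flag, which does close: $\delta$ drops by at most $h-1$ when passing to the Mori fibre space base (vertical coefficient-one components of $B_k$ map to distinct coefficient-one components of $B_Z$ --- note this is the count one level below where you phrase it, for $X_k\to Z$ rather than $Y\to Z'$) and is regained on lifting, because conditions (3)--(4) of Lemma~\ref{cor:mod-base-mod-tot-space} ensure the horizontal components survive (isomorphism over the generic point of $Z$) and the preimages of the components of $\lfloor B_{Z'}\rfloor$ are distinct boundary divisors, while $\rho(\,\cdot\,/W)$ grows by exactly one across the Mori fibration; gdlt modifications of $\qq$-factorial models and MMP steps only increase $\delta$, in the spirit of Lemma~\ref{lem:bcomp-dlt-mod}. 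With those two small points made explicit, your route is a valid and somewhat more economical alternative to the paper's proof.
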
 

\begin{proof}
We proceed by induction on the dimension of $X$. Let $(X_0,B_0,\mathbf{M})$ be a $\qq$-factorial gdlt crepant birational model of $(X,B,\mathbf{M})$ such that $X_{0} \dashrightarrow X$ only extracts glc places.
Let $X_0\rightarrow Z_0$ be a fibration over $\schbase$, and $S$ a component of $\lfloor B_0\rfloor$ dominating $Z_0$.
Let $X_0\dashrightarrow X_1$ be the outcome of a $(K_{X_0}+B_0-S+\mathbf{M}_{X_0})$-MMP over $Z_0$,
and $X_1\rightarrow Z_1$ be the Mori fiber space. {We obtain a commutative diagram as follows:}
\[
\xymatrix{
X & \ar@{-->}[l] X_0\ar@{-->}[r]\ar[d] & X_1 \ar[d] \\ 
& Z_0\ar[d] & Z_1\ar[l]\ar[ld] \\ 
& \schbase.
}
\]
Assume that the relative dimension of $X_1\rightarrow Z_1$ is minimal among the fibrations over $\schbase$ obtained in the previous way.
Let $(X_1,B_1,\mathbf{M})$ be the generalized pair induced on $X_1$.
Let $(Z_1,B_{Z_1},\mathbf{N})$ be the generalized log Calabi--Yau pair induced on $Z_1$ by the generalized canonical bundle formula.
\color{black}
By Lemma~\ref{lem:image-FT} {and ~\ref{lem:contr-FT},} we know that $Z_1$ is of Fano type over $\schbase$. 

By Lemma~\ref{lem:bcomp-dlt-mod},
we conclude that $|\lfloor B_0\rfloor|-\rho(X_0/\schbase)>0$.
In particular, we have that $|\lfloor B_1\rfloor|-\rho(X_1/\schbase)>0$.
There are two cases,
depending on the number of components of $\lfloor B_1\rfloor$ that are horizontal over $Z_1$.\\

\textit{Case 1:} There is a unique component of $\lfloor B_1\rfloor$ that is horizontal over $Z_1$.\\ 

Note that this component must be 
$S_1$ the image of $S$ on $X_1$.
We conclude that $\lfloor B_1\rfloor$ has at least 
$\rho(Z_1/\schbase)+1$ vertical components over $Z_1$.
Thus, we have that $|\lfloor B_{Z_1}\rfloor|-\rho(Z_1/\schbase)>0$.
Hence, the generalized pair $(Z_1,B_{Z_1},\mathbf{N})$ satisfies all the hypothesis of the statement.
By induction on the dimension, there exist a $\qq$-factorial gdlt crepant birational map $(Z_2,B_{Z_2},\mathbf{N}) \dashrightarrow (Z_1,B_{Z_1},\mathbf{N})$, only extracting glc places,
and a fibration $Z_2\rightarrow W$ over $\schbase$
with the following conditions:
\begin{itemize}
\item every glcc of $(Z_2,B_{Z_2},\mathbf{N})$ dominates $W$, and 
\item the divisor $\lfloor B_{Z_2}\rfloor$ properly supports an effective ample divisor over $W$.
\end{itemize} 
Note that $X_1$ is of Fano type over $\schbase$, 
so we may apply Lemma~\ref{cor:mod-base-mod-tot-space}.
By doing so, we obtain a crepant birational map $(X_2,B_2,\mathbf{M})\dashrightarrow (X_1,B_1,\mathbf{M})$, only extracting glc places, and  a Mori fiber space
$\pi_2\colon X_2\rightarrow Z_2$ such that
$\pi_2^{-1}\lfloor B_{Z_2}\rfloor\subseteq \lfloor B_2\rfloor$.

Let $(X_3,B_3,\mathbf{M})$ be 
a $\qq$-factorial gdlt modification of $(X_2,B_2,\mathbf{M})$.
Recall that there is an effective divisor $A_{Z_2}$ that is properly
supported on $\lfloor B_{Z_2}\rfloor$ that is ample over $W$.
Furthermore, there is a component $S_3$
of $B_3$ that is horizontal over $Z_3$.
We run a $(K_{X_3}+B_3-S_3+\mathbf{M}_{X_3})$-MMP over $Z_2$.
Let $X_3\dashrightarrow X_4 \dashrightarrow \dots \dashrightarrow X_k$ be the steps of such MMP
and $X_k\rightarrow Z_k$ be the associated Mori fiber space.
We obtain a commutative diagram as follows:
\[
\xymatrix{
X & \ar@{-->}[l] X_3\ar@{-->}[r]\ar[d] & X_k \ar[d]^-{\pi_k} \\ 
& Z_2\ar[d] & Z_k\ar[l]\ar[ld] \\ 
& W,
}
\]
where $Z_k\rightarrow Z_2$ is a projective birational morphism, 
by the minimality assumption on 
the relative dimension of $X_1\rightarrow Z_1$.
Let $(Z_k,B_{Z_k},\mathbf{N})$ be the generalized pair induced on 
$Z_k$. By construction,
the morphism $(Z_k,B_{Z_k},\mathbf{N})\rightarrow (Z_2,B_{Z_2},\mathbf{N})$ is crepant. Since $X_3 \to Z_2$ is a composition of a gdlt modification and a Mori fiber space, every prime divisor on $X_3$, vertical over $Z_2$, is either the preimage of a prime divisor on $Z_2$, or contained in $\lfloor B_2\rfloor$. Therefore,
every prime divisor on $Z_k$, exceptional over $Z_2$, is contained in $\lfloor B_{Z_k}\rfloor$.
By Lemma~\ref{lem:prop-supp-ampl-mod},
we conclude that $\lfloor B_{Z_k}\rfloor$ properly supports an ample divisor $A_{Z_{k}}$ over $W$.
By Lemma~\ref{lem:-S-MMP-non-dlt}, the non-gdlt locus of $(X_k,B_{X_k},\mathbf{M})$ is contained in $S_k$, the strict transform of $S_2$ on $X_k$.
Note that the divisor $E:=\pi_k^* A_{Z_k}+\epsilon S_k$
 satisfies the following conditions:
\begin{itemize}
\item $E$ is effective,
\item $E$ is properly supported on $\lfloor B_k\rfloor$, 
\item $E$ contains every non-gdlt center of $(X_k,B_{k},\mathbf{M})$, and
\item $E$ is ample over $W$.
\end{itemize}
By Lemma~\ref{lem:dlt-mod-supp-amp}, there is a $\qq$-factorial gdlt modification $(X',B',\mathbf{M})$ of $(X_k,B_{k},\mathbf{M})$ such that
$\lfloor B'\rfloor$ properly supports an effective divisor that is ample over $W$.
Note that every glcc of $(X',B',\mathbf{M})$ dominates $W$.
This finishes the proof in this case.\\

\textit{Case 2:} There are at least two components of $\lfloor B_1\rfloor$ that are horizontal over $Z_1$.\\ 
 
Since $Z_1\rightarrow \schbase$ is a fibration of Fano type, we can apply Theorem~\ref{thm:ample-div-bound-with-FT}. Hence, there exist a crepant birational map $\phi \colon (Z_2,B_{Z_2},\mathbf{N})\dashrightarrow (Z_1,B_{Z_1},\mathbf{N})$, and a  fibration
$Z_2\rightarrow W$, satisfying the following conditions:
\begin{enumerate}
\item $\phi$ only extracts glc places, 
\item $(Z_2,B_{Z_2},\mathbf{N})$ is a $\qq$-factorial gdlt pair,
\item every glcc of $(Z_2,B_{Z_2},\mathbf{N})$ dominates $W$, and 
\item $\lfloor B_{Z_2}\rfloor$ fully supports an ample divisor over $W$.
\end{enumerate}
By Lemma~\ref{cor:mod-base-mod-tot-space}, we obtain a $\qq$-factorial
glc pair $(X_2,B_2,\mathbf{M})$ 
that is crepant birational
to $(X_1,B_1,\mathbf{M})$ and satisfies the following conditions:
\begin{enumerate}
\item[(i)] $X_2\dashrightarrow X_1$ only extracts glc places, 
\item[(ii)] 
$\pi_3\colon X_2\to Z_2$ is a Mori fiber space, 
\item[(iii)] 
$\pi_3^{-1}\lfloor B_{Z_2}\rfloor \subseteq \lfloor B_2\rfloor$, and 
\item[(iv)] there is a component $S_2$ of $B_2$ that contains all horizontal non-gdlt centers of $(X_2,B_2,\mathbf{M})$.
\end{enumerate}
Indeed, by construction, the MMP involved in Lemma~\ref{cor:mod-base-mod-tot-space} induces an isomorphism between the general fibers of $X_2 \to Z_2$ and $X_1 \to Z_1$, where $S_1$ contains all horizontal non-gdlt centers of $(X_1,B_1,\mathbf{M})$ by Lemma~\ref{lem:-S-MMP-non-dlt}.

By (ii) and (iv), the divisor 
$E_2:=\pi_3^*B_{Z_2}+S_2$
supports an ample divisor.
By (iii) and the fact that $\lfloor B_2\rfloor$
contains at least two horizontal divisors over $Z_2$, 
we conclude that $\lfloor B_2\rfloor$ properly supports
an effective divisor that is ample over $W$.
By $(3)$, every glcc of $(X_2,B_2,\mathbf{M})$ dominates $W$.
By (iv), the divisor $E_2$ contains all non-gdlt centers of $(X_2,B_2,\mathbf{M})$.
Thus, we may apply Lemma~\ref{lem:dlt-mod-supp-amp}, to conclude that there exists a $\qq$-factorial
gdlt modification $(X',B',\mathbf{M})$ of $(X_2,B_2,\mathbf{M})$
for which $\lfloor B'\rfloor$ properly supports
and effective divisor that is ample over $W$.
This finishes the proof.
\end{proof} 

We turn to prove a more general statement without the Fano type assumption. 

\begin{theorem}\label{thm:proper-supp-amp}
Let $(X,B,\mathbf{M})$ be a $\qq$-factorial generalized log Calabi--Yau pair with $X$.
Assume that $|\lfloor B\rfloor|-\rho(X)>0$.
Then, there exists a crepant birational map $\phi \colon (X',B',\mathbf{M}) \dashrightarrow (X,B,\mathbf{M})$, only extracting glc places, 
and a fibration
$X'\rightarrow Z$ satisfying the following:
\begin{enumerate}
\item $X'$ is $\qq$-factorial,
\item $(X',B',\mathbf{M})$ is gdlt,
\item every glcc of $(X',B',\mathbf{M})$ dominates $Z$, and 
\item $\lfloor B'\rfloor$ properly supports an effective divisor
which is ample over $Z$.
\end{enumerate} 
\end{theorem}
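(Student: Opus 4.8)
We plan to deduce Theorem~\ref{thm:proper-supp-amp} from its Fano-type counterpart Theorem~\ref{thm:proper-supp-amp-with-FT}, in exact analogy with the way Theorem~\ref{thm:ample-div-bound} is deduced from Theorem~\ref{thm:ample-div-bound-with-FT}: we first pass, via the work of Koll\'ar--Xu and Filipazzi--Svaldi, to a crepant model carrying a fibration of Fano type over a base, and then invoke the relative statement. The only point that genuinely requires an argument is that the numerical hypothesis $|\lfloor B\rfloor|-\rho(X)>0$ survives this reduction.

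First we apply \cite[Theorem 4.2]{FS20} together with \cite[Theorem 49]{KX16}, exactly as in the proof of Lemma~\ref{lem:bounding-comp}, to obtain a $\qq$-factorial crepant model $(X_1,B_1,\mathbf{M})$ of $(X,B,\mathbf{M})$ and a fibration $X_1\to\schbase$ such that $X_1\dashrightarrow X$ extracts only glc places, every glcc of $(X_1,B_1,\mathbf{M})$ dominates $\schbase$, and $X_1$ is of Fano type over $\schbase$. We then check that $|\lfloor B_1\rfloor|-\rho(X_1/\schbase)>0$. Since $\rho(X_1/\schbase)\le\rho(X_1)$, it suffices to prove $|\lfloor B_1\rfloor|-\rho(X_1)\ge|\lfloor B\rfloor|-\rho(X)$. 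By Lemma~\ref{lem:special-gdlt} we choose a $\qq$-factorial gdlt modification $(Y,B_Y,\mathbf{M})\to(X,B,\mathbf{M})$ extracting, among others, every glc place of $(X,B,\mathbf{M})$ that is a prime divisor on $X_1$; then every prime divisor of $X_1$ is a prime divisor of $Y$, so $Y\dashrightarrow X_1$ is a birational contraction of $\qq$-factorial varieties. Lemma~\ref{lem:bcomp-dlt-mod} together with the $\qq$-factoriality of $X$ gives $|\lfloor B_Y\rfloor|-\rho(Y)=|\lfloor B\rfloor|-\rho(X)$, and if $Y\dashrightarrow X_1$ contracts $r$ prime divisors, $r_0$ of them lying in $\lfloor B_Y\rfloor$, then by the same Picard number bookkeeping as in the proof of Lemma~\ref{lem:bcomp-dlt-mod} one gets $|\lfloor B_1\rfloor|=|\lfloor B_Y\rfloor|-r_0$ and $\rho(X_1)=\rho(Y)-r$, so that $|\lfloor B_1\rfloor|-\rho(X_1)=(|\lfloor B\rfloor|-\rho(X))+(r-r_0)\ge|\lfloor B\rfloor|-\rho(X)>0$.

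Now all the hypotheses of Theorem~\ref{thm:proper-supp-amp-with-FT} hold for the $\qq$-factorial pair $(X_1,B_1,\mathbf{M})$ with its Fano-type fibration $X_1\to\schbase$, so it yields a $\qq$-factorial gdlt generalized log Calabi--Yau pair $(X',B',\mathbf{M})$, a crepant birational map $(X',B',\mathbf{M})\dashrightarrow(X_1,B_1,\mathbf{M})$ over $\schbase$ extracting only glc places, and a fibration $X'\to W$ over $\schbase$ such that every glcc of $(X',B',\mathbf{M})$ dominates $W$ and $\lfloor B'\rfloor$ properly supports an effective divisor ample over $W$. Composing with $X_1\dashrightarrow X$, any prime divisor of $X'$ exceptional over $X$ is either exceptional over $X_1$, hence a glc place of $(X_1,B_1,\mathbf{M})$ and therefore of $(X,B,\mathbf{M})$ by crepancy, or a prime divisor of $X_1$ exceptional over $X$, hence again a glc place of $(X,B,\mathbf{M})$; thus the composite $(X',B',\mathbf{M})\dashrightarrow(X,B,\mathbf{M})$ extracts only glc places. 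Taking $Z:=W$ then gives conditions (1)--(4). The only non-formal step is the preservation of $|\lfloor B\rfloor|-\rho(X)>0$ when passing to the Fano-type model, and this reduces to the elementary behaviour of reduced boundary coefficients and Picard numbers under birational contractions, so I do not anticipate a genuine obstacle there.
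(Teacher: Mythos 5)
Your proposal is correct and follows essentially the same route as the paper: pass to the Fano-type model of \cite[Theorem 4.2]{FS20}, verify that the hypothesis $|\lfloor B\rfloor|-\rho(X)>0$ descends to $|\lfloor B_1\rfloor|-\rho(X_1/\schbase)>0$, and then conclude by Theorem~\ref{thm:proper-supp-amp-with-FT}. The only difference is that you spell out two points the paper leaves implicit — the Picard-number bookkeeping when $X\dashrightarrow X_1$ also contracts divisors (via an auxiliary gdlt modification from Lemma~\ref{lem:special-gdlt}, rather than a direct appeal to Lemma~\ref{lem:bcomp-dlt-mod}) and the check that the composite map extracts only glc places — both of which are correct and harmless elaborations.
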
 

\begin{proof}
By~\cite[Theorem 4.2]{FS20},
there exists a $\qq$-factorial crepant
birational model
$(X_1,B_1,\mathbf{M})$ of $(X,B,\mathbf{M})$
and a fibration
$X_1\rightarrow \schbase$ such that
\begin{enumerate}
\item[(i)]
$(X_1,B_1,\mathbf{M})$ is gdlt,
\item[(ii)] $X_1\dashrightarrow X$ only extracts glc places of $(X,B,\mathbf{M})$, 
\item[(iii)] every glcc of $(X_1,B_1,\mathbf{M})$ dominates $\schbase$, and 
\item[(iv)] $X_1$ is of Fano type over $\schbase$.
\end{enumerate} 
By (ii) and Lemma~\ref{lem:bcomp-dlt-mod}, we have that 
$|\lfloor B_1\rfloor|-\rho(X_1)>0$.
In particular, we have that
$|\lfloor B_1\rfloor|-\rho(X_1/\schbase)>0$.
Then we apply Theorem~\ref{thm:proper-supp-amp-with-FT} to conclude the proof.
\end{proof} 

Recall that a dual complex of a dlt pair is a regular
$\Delta$-complex. It is simplicial if and only if every face is prescribed by the collection
of its vertices, i.e., if and only if the intersection of any collection of divisors is either empty or irreducible (equivalently connected). Here we observe that the dual complex $\mathcal{D}(X',B',\mathbf{M})$ can be made simplicial preserving the properties (1)-(4) in Theorem~\ref{thm:proper-supp-amp}.

\begin{lemma}\label{lem:simplicial} There exists generalized pairs $(X',B',\mathbf{M})$ as in Theorem \ref{thm:proper-supp-amp} with the additional property that the dual complex $\mathcal{D}(X',B',\mathbf{M})$ is simplicial. More precisely, one can replace $(X',B',\mathbf{M})$ with a pair $(X'',B'',\mathbf{M})$ satisfying all properties (1)-(4) in Theorem~\ref{thm:proper-supp-amp} whose dual complex  $\mathcal{D}(X'',B'',\mathbf{M})$ is a baricentric subdivision of $\mathcal{D}(X',B',\mathbf{M})$.
\end{lemma}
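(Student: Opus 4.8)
\emph{Proof strategy.}
The plan is to realize the barycentric subdivision of $\mathcal{D}:=\mathcal{D}(X',B',\mathbf{M})$ geometrically, as a sequence of crepant modifications each effecting one stellar subdivision of the dual complex.

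The starting point is that strata of a gdlt pair are centres of glc places. If $W$ is a stratum of $\lfloor B'\rfloor$ of codimension $c$ in $X'$ — equivalently a $(c-1)$-cell of $\mathcal{D}$ — then at the generic point of $W$ the pair $(X',B',\mathbf{M})$ is snc, so exactly $c$ components $S_1,\dots,S_c$ of $\lfloor B'\rfloor$ pass through $W$; indeed a further component of $B'$ through $W$ would violate the snc condition at a codimension-$c$ point of $S_1\cap\dots\cap S_c$. Hence the divisor $E_W$ obtained by blowing up the generic point of $W$ satisfies $a_{E_W}(X',B',\mathbf{M})=0$: it is a glc place with $\operatorname{center}_{X'}E_W=W$. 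Blowing up $W$ and passing to a small $\qq$-factorialization — which preserves gdlt-ness and leaves the dual complex unchanged — effects a stellar subdivision of $\mathcal{D}$ at the barycentre of the cell of $W$.

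I would then enumerate the strata of $\lfloor B'\rfloor$ in non-increasing order of the dimension of the associated cell (the minimal strata first), and for each stratum $W$ whose cell has dimension $\ge 1$ extract (on the current model) the corresponding place $E_W$ as above. This produces a tower of $\qq$-factorial gdlt crepant birational models
\[
(X'',B'',\mathbf{M})=(X_N,B_N,\mathbf{M})\to\dots\to(X_1,B_1,\mathbf{M})\to(X_0,B_0,\mathbf{M})=(X',B',\mathbf{M}),
\]
where each $X_i\to X_{i-1}$ is a birational morphism extracting only the glc place $E_{W_{i-1}}$. Since the barycentric subdivision of a regular $\Delta$-complex is obtained by iterated stellar subdivisions at barycentres taken in non-increasing order of dimension, one checks directly that $\mathcal{D}(X'',B'',\mathbf{M})=\operatorname{sd}(\mathcal{D})$; in particular it is simplicial. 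Properties (1) and (2) of Theorem~\ref{thm:proper-supp-amp} hold by construction; the composite $X''\to\dots\to X'\to Z$ is a fibration, and as each step extracts only glc places, every glcc of $(X'',B'',\mathbf{M})$ surjects onto a glcc of $(X',B',\mathbf{M})$ and hence dominates $Z$, giving (3); finally, applying Lemma~\ref{lem:prop-supp-ampl-mod} to the birational morphism $X''\to X'$ (or step by step to the $X_i\to X_{i-1}$) preserves the property that $\lfloor B''\rfloor$ properly supports an effective divisor ample over $Z$, giving (4).

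The step I expect to be the main obstacle is ensuring that each modification extracts \emph{exactly one} new glc place, with no spurious exceptional divisors that would over-subdivide $\mathcal{D}$. Concretely, one must know that blowing up a stratum of a $\qq$-factorial gdlt pair inside its snc locus, followed by a small $\qq$-factorialization, again yields a $\qq$-factorial gdlt pair whose only new boundary component is $E_W$ — this is essentially the realization of stellar subdivisions underlying \cite[\S\,2--3]{dFKX17}. Alternatively, one may extract all the $E_W$ at once via Lemma~\ref{lem:special-gdlt} and then contract any superfluous exceptional divisors by an MMP over $X'$, exactly as in the proof of Lemma~\ref{lem:dlt-mod-base}. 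Once the intermediate models are under control, the combinatorial identification $\mathcal{D}(X'',B'',\mathbf{M})=\operatorname{sd}(\mathcal{D})$ and the verification of (1)--(4) are routine.
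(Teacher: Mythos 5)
Your choice of valuations is the right one --- for each stratum $W$ of $\lfloor B'\rfloor$ of codimension at least two, the divisor $E_W$ obtained by blowing up the generic point of $W$ is indeed a glc place, and these are exactly the divisors the paper extracts --- and your verifications of (1)--(3) and of (4) via Lemma~\ref{lem:prop-supp-ampl-mod} would be fine once a suitable model is in hand. The problem is that the step you yourself flag as ``the main obstacle'' is the actual content of the lemma, and your formulation of it does not work as stated. Blowing up the closure $\overline W$ and passing to a small $\qq$-factorialization behaves like a stellar subdivision only over the snc locus of $(X',B',\mathbf{M})$: over the points of $\overline W$ in the non-snc locus, the exceptional locus of ${\rm Bl}_{\overline W}X'$ may contain several divisorial components, some of which can again be glc places (extra vertices), and the resulting pair need not be gdlt; a small $\qq$-factorialization removes none of this. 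Even if one extracts only the single valuation $E_W$ (possible, e.g., via Lemma~\ref{lem:special-gdlt} and a relative MMP), identifying the dual complex of the new model with the stellar subdivision is not something one ``checks directly'': since the PL type is automatically preserved, the whole issue is the actual cell structure, and for that one needs the new pair to be gdlt with the generic points of its strata mapping into the snc locus of the old pair --- precisely the content of \cite[Remark 10]{dFKX17}, which your write-up defers.

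This is exactly how the paper argues, but globally rather than stepwise: it blows up the (transforms of the) unions $Z_i$ of $i$-dimensional strata in increasing order of $i$, takes a log resolution $p$ that is an isomorphism over the snc locus, and runs the relative $(K_Y+E+\mathbf{M}_Y)$-MMP over $X'$; since $K_Y+E+\mathbf{M}_Y\sim_{\qq,X'}\sum_j a_{G_j}(X',B',\mathbf{M})\,G_j$ with every $a_{G_j}>0$, this MMP contracts exactly the exceptional divisors lying over the non-snc locus and terminates in a $\qq$-factorial gdlt modification $g\colon (X'',B'',\mathbf{M})\to (X',B',\mathbf{M})$ whose strata map into the snc locus, so that \cite[Remark 10]{dFKX17} gives the barycentric subdivision; property (4) is then obtained by the explicit perturbation $g^*(A+\epsilon(\lfloor B'\rfloor-D))-\delta E_{X''}$ (your appeal to Lemma~\ref{lem:prop-supp-ampl-mod} would also do). Your fallback --- extract all the $E_W$ at once by Lemma~\ref{lem:special-gdlt} and contract ``superfluous exceptional divisors'' by an MMP over $X'$ --- is close in spirit, but note that the superfluous exceptional divisors of a gdlt modification are themselves glc places, and contracting them neither guarantees gdlt-ness of the outcome nor the control on strata needed for the combinatorial identification; the divisors one must contract are the positive-log-discrepancy divisors of a log resolution chosen to be an isomorphism over the snc locus. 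So the missing piece is genuinely the dFKX-style argument, not a routine verification.
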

\begin{proof}
We follow closely \cite[Remark 10]{dFKX17} and \cite[Lemma 4.3.3]{MMS2022}. Let $(X',B',\mathbf{M})$ be a generalized pair as in Theorem~\ref{thm:proper-supp-amp}. Let $Z_i \subset X'$ be the union of the $i$-dimensional strata of $\lfloor B' \rfloor$. Consider the sequence of blowups
\[\Pi \colon X'_{n-1} \xrightarrow{\pi_{n-2}} X'_{n-2} \to \ldots \xrightarrow{\pi_0} X'_{0} \coloneqq X',\]
where $\pi_i \colon X'_{i+1} \to X'_{i}$ denotes the blow-up of the birational transform $(\pi_{0} \circ \ldots \pi_{i-1})^{-1}_*(Z_{i})$. Let $p \colon Y \to X'_{n-1}$ be a log resolution of the pair $(X'_{n-1}, \Pi^{-1}_* B' + \Exc(\Pi))$ that is an isomorphism over the snc locus of $(X_{n-1}, \Pi^{-1}_* B' + \Exc(\Pi))$. Let $E \coloneqq \sum^{n-2}_{i=0} E_i + \sum_{j \in J} G_j$ be the exceptional locus of $\Pi \circ p$, where $E_i$ are the strict transform of the reduced sum of the $\pi_i$-exceptional divisors whose center is contained in the snc locus of $(X', B', \mathrm{M}_{X'})$. By construction, $(\Pi\circ p)(G_j)$ does not lie in the snc locus of $(X', B', \mathrm{M}_{X'})$, so by definition of dlt pair, the log discrepancy $a_{G_j}(X', B', \mathbf{M})$ is positive. Note that
\[K_{Y} + E + \mathbf{M}_Y\sim_{\Pi \circ p, \qq} \sum_{j \in J} a_{G_j}(X', B', \mathbf{M}) G_j.\]
The $(\Pi\circ p)$-relative $(K_{Y} + E + \mathbf{M}_Y)$-MMP terminates with a $\qq$-factorial gdlt modification $g \colon (X'', B'', \mathbf{M}) \to (X', B', \mathbf{M})$, which contracts all and only the divisors $G_j$; use for instance \cite[Lemma 4.4]{BZ16} and \cite[\S 1.35]{Kol13}. Observe that the image of the generic point of the strata of $B''$ is contained in the snc locus of $(X', B', \mathrm{M}_{X'})$. Hence, $\mathcal{D}(X'',B'',\mathbf{M})$ is a barycentric subdivision of $\mathcal{D}(X',B',\mathbf{M})$ from \cite[Remark 10]{dFKX17}. 

Note that $(X'',B'',\mathbf{M})$ enjoys property (4) of Theorem~\ref{thm:proper-supp-amp}, as $(X',B',\mathbf{M})$ does too. Indeed, let $A$ be an effective ample divisor properly supported on $\lfloor B'\rfloor$. Let $D$ be a component of $\lfloor B'\rfloor$ not contained in the support of $A$. Since $g \colon X'' \to X'$ is a birational morphism of $\qq$-factorial varieties, then the divisor
\[g^*(A+\epsilon(\lfloor B'\rfloor-D))-\delta E_{X''}\]
on $X''$ is ample, where $E_{X''}$ is the pushforward of $E$ on $X''$, and $0 < \delta \ll \epsilon \ll 1$.
\end{proof}

To conclude this section, we give a proof of Theorem~\ref{theorem:proper-supp-amp}. 

\begin{proof}[Proof of Theorem~\ref{theorem:proper-supp-amp}]
Let $(X,B)$ be a log Calabi--Yau pair of dimension $n$.
Assume that $K_X+B\sim 0$ and $c_{\rm bir}(X,B)<n$.
By Lemma~\ref{lem:computing-bcomp}, there exists a crepant birational model
$(X',B')$ of $(X,B)$
for which $c(X',B')=c_{\rm bir}(X,B)<n$.
Furthermore, we may assume that $(X',B')$ is dlt and $X'$ is $\qq$-factorial.
Since $K_X+B\sim 0$, we have that 
$K_{X'}+B'\sim 0$ so $B'$ is reduced.
From the inequality
\[
c(X',B')<n
\]
we deduce that 
$|B'|>\rho(X')$.
Thus, the statement follows from Theorem~\ref{thm:proper-supp-amp} and Lemma~\ref{lem:simplicial}.
\end{proof} 

\section{Positive divisors in the boundary} 

In this section, we study the existence of positive divisors in the boundary of a log Calabi--Yau pair.
We introduce the following theorem which is a generalization of Theorem~\ref{theorem:big-decomp} to the setting of generalized pairs.

\begin{theorem}\label{theorem:big-decomp-gen}
Let $X$ be an $n$-dimensional klt variety and let $(X,B,\mathbf{M})$ be a generalized log Calabi--Yau pair.
Let $\sum_{i\in I}B_i$ be a decomposition of $\lfloor B\rfloor$ into big divisors.
Then, we have that $|I|\leq n+1$.
Furthermore, if the equality holds, then
the b-nef divisor $\mathbf{M}$ is trivial and 
we have a crepant birational contraction
$(X,B)\dashrightarrow (T,B_T)$ where $T$ is a weighted projective space with its toric boundary $B_{T}$. 
In particular, $\mathcal{D}(X,B)$ is PL-homeomorphic to the boundary of $n$-dimensional simplex.
\end{theorem}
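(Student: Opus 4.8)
The plan is to induct on $n=\dim X$, pushing $(X,B,\mathbf{M})$ to a Mori fiber space and reducing to the Picard rank one case, where the complexity inequality of Theorem~\ref{thm:comp-noneg} finishes the count. \emph{Reductions and base case.} After a small $\qq$-factorialization (a crepant birational contraction preserving each $B_i$ being big, keeping $\lfloor B\rfloor=\sum_i B_i$, and altering neither $\mathcal{D}(X,B)$ nor the conclusion) one may assume $X$ is $\qq$-factorial; passing further to a $\qq$-factorial gdlt crepant model and absorbing the (finitely many, glc-place) exceptional divisors into $B_0$, one may assume $(X,B,\mathbf{M})$ is gdlt. Note $\lfloor B\rfloor=\sum_i B_i$ is big, being a sum of big divisors, so $-(K_X+\mathbf{M}_X)\sim_{\rr}B$ is big. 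For $n=1$, on a klt (hence smooth) curve $\sum_i\deg B_i=\deg\lfloor B\rfloor\le\deg B=-\deg K_X-\deg\mathbf{M}_X\le 2$, so $|I|\le 2$; if $|I|=2$ then $\deg B=2$, forcing $\deg K_X=-2$ and $\deg\mathbf{M}_X=0$, hence $X\cong\pp^1$, $\mathbf{M}$ trivial, $B=\{0\}+\{\infty\}=B_T$ on $\pp(1,1)$, and $\mathcal{D}(X,B)\simeq_{\mathrm{PL}}\partial\Delta^1$.

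\emph{Inductive step.} Fix one piece $B_0$. Since $B_0$ is big, $-B_0=K_X+(B-B_0)+\mathbf{M}_X$ is not pseudo-effective, so a $(K_X+B-B_0+\mathbf{M}_X)$-MMP terminates with a Mori fiber space $\pi\colon X'\to Z$; this MMP is crepant for $(X,B,\mathbf{M})$, so $\mathcal{D}(X,B)\simeq_{\mathrm{PL}}\mathcal{D}(X',B')$ for the induced pair $(X',B',\mathbf{M})$. The key point is that \emph{a big divisor is never contracted by a birational contraction}: if $D$ were contracted then $h^0(mD)\le 1$ for all $m$, so $\kappa(D)\le 0<n$. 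Hence the push-forward $B_i'$ of each piece is again big (as $\vol(B_i')\ge\vol(B_i)>0$) and nonzero, distinct pieces remain disjoint, and $\lfloor B'\rfloor=\sum_i B_i'$ is still a decomposition into $|I|$ big divisors. Moreover $B_0'$ is $\pi$-ample, while a divisor vertical over a fibration has vanishing top self-intersection and so is not big; therefore every $B_i'$ dominates $Z$. If $\dim Z>0$, restrict to a general fiber $F$: then $(F,B'|_F,\mathbf{M}|_F)$ is a generalized log Calabi--Yau pair of dimension $n-\dim Z<n$ with $F$ klt, and writing $B_i'\sim_{\rr}A_i+E_i$ with $A_i$ ample (so $A_i|_F$ is ample, $E_i|_F$ effective, and $B_i'|_F\neq 0$) the sum $\lfloor B'|_F\rfloor=\sum_i B_i'|_F$ is a decomposition into $|I|$ big divisors; by induction $|I|\le (n-\dim Z)+1\le n$. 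If $\dim Z=0$, then $\rho(X')=1$, and Theorem~\ref{thm:comp-noneg} gives $0\le c(X',B',\mathbf{M})=n+1-|B'|\le n+1-|\lfloor B'\rfloor|\le n+1-|I|$. In either case $|I|\le n+1$.

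\emph{The equality case.} If $|I|=n+1$ then by the above we are necessarily in the case $\dim Z=0$, all inequalities are equalities, $c(X',B',\mathbf{M})=0$, and $B'=\lfloor B'\rfloor$ is reduced with exactly $n+1$ prime components, one for each piece. By Theorem~\ref{thm:comp-noneg}, $\mathbf{M}$ is trivial and $(X',B')$ is a toric pair; a $\qq$-factorial projective toric variety of Picard rank one and dimension $n$ is a weighted projective space $T=\pp(a_0,\dots,a_n)$, whose fan has exactly $n+1$ rays, so $B'=B_T$ is the full toric boundary. Since all the birational maps used are birational contractions, the composite $(X,B)\dashrightarrow (T,B_T)$ is a crepant birational contraction, and $\mathcal{D}(X,B)\simeq_{\mathrm{PL}}\mathcal{D}(T,B_T)\simeq_{\mathrm{PL}}\partial\Delta^n$.

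\emph{Main obstacle.} I expect the genuine difficulty to be running the $(K_X+B-B_0+\mathbf{M}_X)$-MMP and ensuring its termination with a Mori fiber space in the generalized log canonical (gdlt, not gklt) setting — which I would handle by first reducing to a Fano-type fibration via \cite[Theorem 4.2]{FS20} as in the proof of Lemma~\ref{lem:bounding-comp} and using \cite[Lemma 4.4]{BZ16} — together with the routine but nontrivial bookkeeping that keeps the ambient of the general fiber klt and keeps the composite map a genuine contraction. The conceptual crux that makes the induction close cleanly is the remark that bigness is preserved under push-forward by birational contractions, so the decomposition of $\lfloor B\rfloor$ into big pieces survives the entire process intact.
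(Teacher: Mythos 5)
Your overall strategy (run a $(K_X+B-B_0+\mathbf{M}_X)$-MMP to a Mori fiber space, observe that big divisors are never contracted, finish with the complexity-zero criterion) is the same as the paper's, but your decision to induct on dimension creates two linked gaps that the paper's argument deliberately avoids. First, the reduction ``passing further to a $\qq$-factorial gdlt crepant model and absorbing the exceptional divisors into $B_0$'' does not preserve the hypothesis: a gdlt modification is not small, and the strict transform of a big divisor under a birational extraction need not be big (it is $\pi^*B_i$ minus an effective exceptional divisor, and the volume can drop to zero -- the centers extracted by a gdlt modification are lc centers, which typically sit inside the $B_i$). After this step you no longer know that the pieces are big, so the key claim that none of them is contracted by the MMP, and that their restrictions to the general fiber are big, loses its justification. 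A small $\qq$-factorialization is harmless because section rings of strict transforms are unchanged; the paper therefore only takes a small $\qq$-factorialization and runs the MMP directly on the glc (not gdlt) pair.

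Second, your inductive step applies the theorem to $(F,B'|_F,\mathbf{M}|_F)$ and asserts ``$F$ klt'' without proof. If you drop the gdlt reduction (as you must, by the first point), this is genuinely in doubt: the MMP only preserves the glc pair structure, and it can contract a coefficient-one component of $\lfloor B\rfloor$ (say an abelian or K3 surface inside the boundary of a threefold log Calabi--Yau) onto a strictly lc point, so the underlying variety $X'$, and hence its general fiber, need not be klt. The $\qq$-factorial gdlt bookkeeping that would give $X'$ klt is exactly the step that destroys bigness, so the two fixes are incompatible as written. The paper closes the argument without induction: on the general fiber $F$ of the Mori fiber space it only invokes the non-negativity of the \emph{fine} complexity of $(F,\Gamma_F,\mathbf{M}|_F)$ (Theorem~\ref{thm:comp-noneg}), which holds for any generalized log Calabi--Yau pair with no klt hypothesis on the ambient variety, and this yields $|I|\leq \dim F+\rho(X_k/Z)=\dim F+1\leq n+1$ in one shot; your equality-case analysis then goes through as in the paper. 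Replacing your inductive step by this fiberwise complexity bound is the natural repair.
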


\begin{proof}
Since $X$ has klt singularities,
we may pass to a small $\mathbb{Q}$-factorialization.
We replace the components of the decomposition $B_i$ with their strict transforms.
Thus, we may assume that $X$ is itself $\mathbb{Q}$-factorial.
Assume that the decomposition is non-trivial
and let $B_1$ be a component of the decomposition. 
We run a $(K_X+B-B_1+\mathbf{M}_X)$-MMP.
The divisor $(X,B-B_1,\mathbf{M})$ is glc and $K_X+B-B_1+\mathbf{M}_X$ is not pseudo-effective, hence this minimal model program must terminate with a Mori fiber space.
Let $X\dashrightarrow X_1 \dashrightarrow X_2 \dashrightarrow \dots \dashrightarrow X_k$ be the steps of this MMP and $X_k\rightarrow Z$ be the induced Mori fiber space.
Let $B_k$ be the push-forward of $B$ to $X_k$.
Let $\sum_{i\in I} B_{k,i}$ be the decomposition
induced on $\lfloor B_k\rfloor$.
Note that as each $B_i$ is big then none of these divisors can be contracted by the birational contraction $X\dashrightarrow X_k$.
Thus, each $B_{k,i}$ is a big divisor.
Hence, we have a decomposition of $\lfloor B_k\rfloor$ into $|I|$ big divisors.
Again, as each $B_{k,i}$ is big, then 
all these divisors are horizontal over $Z$.
Let $F$ be the general fiber of $X_k\rightarrow Z$.
Let $\Gamma_F$ be the restriction of $\lfloor B_k\rfloor$ to $F$.
Then, the pair $(F,\Gamma_F)$ is either log Fano or log Calabi--Yau.
Furthermore, the divisor $\Gamma_F$ is reduced with at least $|I|$ components. By the non-negativity of the fine complexity $\overline{c}(F,\Gamma_{F}, \mathbf{M}|_{F})$ (cf Theorem \ref{thm:comp-noneg}),  we conclude that 
\begin{equation}
\label{eq:upper-bound-f}
|I| \leq \dim F + \rho(X_{k}/Z) = \dim F + 1 \leq n+1.
\end{equation}
This proves the first statement of the theorem.
Now, we assume that the equality holds.
Then the sequence of inequalities~\eqref{eq:upper-bound-f} implies that $\dim F= n$, so $Z$ is a point.
Thus $X_k$ has Picard rank one.
By Theorem~\ref{thm:comp-noneg}, we conclude that 
$\mathbf{M}$ is the trivial b-nef divisor
and $(X_k,\lfloor B_k\rfloor)$ is a toric pair.
We argue that $B_k$ must be reduced in this case.
Indeed, by construction, $\lfloor B_k \rfloor$ has $n+1$ components.
If $\{B_k\}$ was non-trivial, then 
the log Calabi--Yau pair $(X,B_k)$ would have negative complexity, contradicting Theorem~\ref{thm:comp-noneg}.
Thus, we conclude that $X_k$ is a projective toric variety of Picard rank one, i.e., a weighted projective space
and $B_k$ is the toric boundary.
By construction, the birational map
$(X,B)\dashrightarrow (X_k,B_k)$ is crepant. In particular, $\mathcal{D}(X,B)$ is PL-homeomorphic to $\mathcal{D}(X_{k},B_{k})$, which is isomorphic to the boundary of $n$-dimensional simplex.
\end{proof}

Now, we turn to give an upper bound for the number of components of movable decompositions of the boundary divisor of a log Calabi--Yau pair.

\begin{theorem}\label{theorem:mov-decomp-gen}
Let $X$ be an $n$-dimensional klt variety and let $(X,B,\mathbf{M})$ be a generalized log Calabi--Yau pair.
Let $\sum_{i\in I} B_i$ be a decomposition of $\lfloor B\rfloor$ into movable divisors.
Then, we have that $|I|\leq 2n$.
Furthermore, if the equality holds,
then the b-nef divisor $\mathbf{M}$ is trivial
and there is an isomorphism
$(X,B)\simeq ((\mathbb{P}^1)^n,B_T)/A$
where $B_T$ is the toric divisor
and $A\leqslant \mathbb{G}_m^n\leqslant {\rm Aut}((\mathbb{P}^1)^n,B_T)$.
In particular, $\mathcal{D}(X,B)$ is PL-homemorphic to an $n$-dimensional orthoplex.
\end{theorem}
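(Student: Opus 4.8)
The plan is to imitate the proof of Theorem~\ref{theorem:big-decomp-gen}: run a minimal model program to reach a Mori fiber space and induct on $n$, but now sorting the pieces of the decomposition into those that dominate the base and those that do not. After a small $\qq$-factorialization and replacing the $B_i$ by strict transforms, I may assume $X$ is $\qq$-factorial; the case $n=1$ forces $X=\pp^1$, $\mathbf{M}$ trivial and $\lfloor B\rfloor=B=p_1+p_2$, giving $(X,B)\simeq(\pp^1,\{0\}+\{\infty\})$. Assuming the decomposition nontrivial, pick a nonzero piece $B_1$ and run a $(K_X+B-B_1+\mathbf{M}_X)$-MMP; since $-B_1$ is not pseudo-effective this terminates with a Mori fiber space $\pi\colon X_k\to Z$, with induced generalized log Calabi--Yau pair $(X_k,B_k,\mathbf{M})$ and strict transforms $B_{k,i}$. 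The MMP steps are small (which preserves the movable cone) or divisorial contractions (whose exceptional divisors are not movable), so each $B_{k,i}$ stays movable and nonzero, and $B_{k,1}\equiv_Z-(K_{X_k}+B_k-B_{k,1}+\mathbf{M}_{X_k})$ is $\pi$-ample.

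If $Z$ is a point then $\rho(X_k)=1$, so each $B_{k,i}$ is ample, hence big, and Theorem~\ref{theorem:big-decomp-gen} gives $|I|\le n+1\le 2n$, with equality only if $n=1$. If $1\le d:=\dim Z\le n-1$, I first use Lemmas~\ref{lem:dlt-mod-base} and~\ref{cor:mod-base-mod-tot-space}, applied to a $\qq$-factorial dlt modification of the base pair $(Z,B_Z,\mathbf{N})$ from the canonical bundle formula, to replace $\pi$ by a crepant-birational Mori fiber space (still denoted $\pi\colon X_k\to Z$, only extracting glc places) with $\pi^{-1}(\lfloor B_Z\rfloor)\subseteq\lfloor B_k\rfloor$ and $\pi^{-1}(W)$ irreducible for every component $W$ of $\lfloor B_Z\rfloor$. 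Write $I=I_{\rm h}\sqcup I_{\rm v}$ according to whether $B_{k,i}$ dominates $Z$, and put $a=|I_{\rm h}|$, $b=|I_{\rm v}|$, $f=n-d=\dim F$ for a general fiber $F$. For $i\in I_{\rm h}$: the restriction $N^1(X_k)\to N^1(F)$ kills $\pi^*N^1(Z)$, hence factors through the rank-one space $N^1(X_k/Z)$, so $[B_{k,i}|_F]$ is a positive multiple of the ample class $[B_{k,1}|_F]$ and $B_{k,i}|_F$ is ample; thus $\sum_{i\in I_{\rm h}}B_{k,i}|_F$ is a decomposition of $\lfloor B_F\rfloor$ into $a$ big divisors in the generalized log Calabi--Yau pair $(F,B_F,\mathbf{M}|_F)$, whence $a\le f+1$ by Theorem~\ref{theorem:big-decomp-gen}. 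For $i\in I_{\rm v}$: each $B_{k,i}$ is a positive combination of divisors $\pi^{-1}(W)$ with $W\subseteq\lfloor B_Z\rfloor$, and by the projection formula $H^0(X_k,m\pi^*D)=H^0(Z,mD)$ the corresponding effective $\qq$-divisor $L_i$ on $Z$ is movable; the $L_i$ have disjoint supports inside $\lfloor B_Z\rfloor$, so induction gives $b\le 2d$. Altogether $|I|=a+b\le(f+1)+2d=n+d+1\le 2n$.

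Equality forces $d=n-1$, $f=1$, $a=2$, $b=2(n-1)$, and equality in the inductive bound for $(Z,B_Z,\mathbf{N})$. By induction $\mathbf{N}$ is trivial and $(Z,B_Z)\simeq((\pp^1)^{n-1},B_T)/A_Z$ with $A_Z\leqslant\G_m^{n-1}$, so $\rho(Z)=n-1$ and $\rho(X_k)=n$. Since $|B_k|\ge|\lfloor B_k\rfloor|\ge 2n$, nonnegativity of the complexity (Theorem~\ref{thm:comp-noneg}) forces $|B_k|=2n$, $B_k$ reduced, and $c(X_k,B_k)=0$; hence $\mathbf{M}$ is trivial and $(X_k,B_k)$ is a toric pair with full reduced toric boundary. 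The Mori fiber space $X_k\to Z$ is a $\pp^1$-bundle $\pp_Z(\mathcal{O}\oplus\mathcal{L})$ whose two horizontal boundary components are disjoint sections; both sections movable forces $\mathcal{L}$ and $\mathcal{L}^{-1}$ movable, hence $\mathcal{L}$ torsion, and pulling back along the cyclic cover trivializing $\mathcal{L}$ gives $(X_k,B_k)\simeq((\pp^1)^n,B_T)/A$ with $A\leqslant\G_m^n$. Finally $X\dashrightarrow X_k$ only extracts and contracts log canonical places of the toric pair $(X_k,B_k)$, hence is a toric birational map; since $X$ is $\qq$-factorial with $\rho(X)=\mathrm{rank}\,\Cl_\qq(X)\ge n=\rho(X_k)$ it extracts nothing, and if it contracted a toric divisor then the strict transform of some torus-invariant component of $B$ would become rigid, contradicting movability, so $X\dashrightarrow X_k$ is an isomorphism and $(X,B)\simeq((\pp^1)^n,B_T)/A$. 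The dual complex $\mathcal{D}(X,B)$ then equals $\mathcal{D}((\pp^1)^{n-1},B_T)*S^0$, the suspension of the $(n-1)$-orthoplex, i.e.\ the $n$-dimensional orthoplex.

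The main obstacle is the equality analysis: reconstructing the precise toric structure $((\pp^1)^n,B_T)/A$ from the numerical data ($\rho(X_k)=n$, complexity $0$, $\mathbf{M}$ trivial) together with the $\pp^1$-bundle structure, and then upgrading the crepant birational equivalence $X\dashrightarrow X_k$ to a genuine isomorphism via the rigidity of toric log Calabi--Yau pairs. A secondary point, used already for the inequality, is the descent of the vertical pieces to movable divisors on $Z$, which is why the Mori fiber space must first be replaced by one adapted to a dlt modification of the base—so that the preimage of each boundary component of $Z$ is irreducible—using the modification lemmas from Section~2.
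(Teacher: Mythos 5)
Your inequality argument is essentially the paper's: run a $(K_X+B-B_1+\mathbf{M}_X)$-MMP to a Mori fiber space, split the pieces into horizontal and vertical, bound the horizontal ones on a general fiber and the vertical ones by induction on the base. Two remarks there: the detour through Lemmas~\ref{lem:dlt-mod-base} and \ref{cor:mod-base-mod-tot-space} is both unnecessary and risky — the $(\lfloor B_0\rfloor - S_0)$-MMP inside Lemma~\ref{lem:dlt-mod-base} contracts boundary components and the extraction step can destroy movability of strict transforms, and you never check that the decomposition (with all $|I|$ pieces, still movable) survives on the new model. The paper avoids this entirely: since $\rho(X_k/Z)=1$, every vertical prime divisor is automatically the pullback of a prime divisor on $Z$, so the vertical pieces descend to movable divisors on $Z$ with no change of model. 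Your use of Theorem~\ref{theorem:big-decomp-gen} on the fiber instead of the fine-complexity bound is fine in spirit (though it needs $F$ klt, which you do not verify), and the numerics match.

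The genuine gaps are in the equality analysis, which is where the real content lies. First, the claim that $X_k\to Z$ is a split $\pp^1$-bundle $\pp_Z(\oo\oplus\mathcal{L})$ with the two horizontal boundary components as disjoint sections is unjustified and in fact false in cases the theorem itself allows: for $n=2$ take $X_k=((\pp^1)^2,B_T)/\mu_2$ with $\mu_2$ the diagonal subgroup of $\G_m^2$; the induced fibration to $\pp^1$ is a $K$-negative extremal conic fibration with exactly two horizontal boundary divisors, but it has two multiple fibers and four $A_1$ points, so it is not $\pp_Z(\oo\oplus\mathcal{L})$ for any $\mathcal{L}$, and the "cyclic cover trivializing $\mathcal{L}$" step has nothing to attach to. The paper handles precisely this phenomenon by a fan computation: after complexity zero forces $(X_k,B_k)$ toric, one writes $\Sigma(1)=\{\pm e_n,\, v_i+c_ie_n,\, -v_i-k_ie_n\}$ and uses movability of the two horizontal toric divisors $D(\pm e_n)$ to force $c_i=k_i$, i.e.\ the moment polytope is a parallelotope; your proposal is missing this step or any substitute for it. Second, the upgrade of $X\dashrightarrow X_k$ to an isomorphism is not proved: the contracted divisors need not be lc places of $(X_k,B_k)$ (fractional components of $B$ may be contracted), so "only extracts and contracts lc places, hence toric" is unfounded; the "rigidity" contradiction for a divisorial contraction is asserted without a mechanism (the paper produces one explicitly, via the toric projection $\pi_S\colon X_k\to\pp^1$ whose reducible fiber shows the strict transform of a boundary component is covered by curves intersecting it negatively, contradicting movability); and you ignore the possibility that $X\dashrightarrow X_k$ is a nontrivial composition of flips, which contracts no divisor at all — the paper excludes this using that $\mathrm{Eff}(X_k)=\mathrm{Nef}(X_k)$ for the parallelotope variety, so a flip would create a big non-semiample divisor. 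As written, the equality case of your proof does not go through.
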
 

\begin{proof}
We proceed by induction on the dimension, the statement being clear in dimension one.
We may pass to a small $\mathbb{Q}$-factorialization of $X$ and assume that each component $B_i$ of the decomposition is $\qq$-Cartier.
Let $B_1$ be a component of the decomposition.
We run a $(K_X+B-B_1+\mathbf{M}_X)$-MMP.
Since the divisor $K_X+B-B_1+\mathbf{M}_X$ is not pseudo-effective, the minimal model program must terminate with a Mori fiber space. 
Let $X\dashrightarrow X_1 \dashrightarrow X_2 \dashrightarrow \dots \dashrightarrow X_k$ be the steps of this MMP and $\phi\colon X_k\rightarrow Z$ be the induced Mori fiber space.
Let $B_k$ be the push-forward of the divisor $B$ on $X_k$.
By construction, the generalized pair
$(X_k,B_k,\mathbf{M})$ is glc and Calabi--Yau.
Note that no component $B_i$ of the decomposition of $\lfloor B\rfloor$ is contracted by the MMP
as these divisors are movable.
Therefore, the divisor $\lfloor B_k\rfloor$ admits a decomposition into movable divisors with $i:=|I|$ components.
Let $B_{k,1},\dots,B_{k,i}$ be the components of the decomposition of $\lfloor B_k\rfloor$.
Up to re-ordering these components,
we may assume that $B_{k,1},\dots,B_{k,f}$ are horizontal over the base 
while
$B_{k,f+1},\dots,B_{k,i}$ are vertical over the base.
For each $j\in \{f+1,\dots,i\}$, 
we let $B_{Z,j}$ be the unique effective divisor on $Z$ 
for which $\phi^*B_{Z,j}=B_{k,j}$.
By construction, the divisors $B_{Z,j}$ are movable
and $B_{Z,j}\wedge B_{Z,j'}=0$ for every $j\neq j'$.
Let $(Z,B_Z,\mathbf{N})$ be the log Calabi--Yau glc pair induced by the generalized canonical bundle formula. The divisors $B_{Z,j} \subseteq \lfloor B_Z\rfloor$, with $j\in \{f+1,\dots,i\}$,
induce a decomposition
of $\lfloor B_Z\rfloor$ into movable divisors.
By induction on the dimension 
we have that
\begin{equation}\label{ineq:base}
i-f \leq 2\dim Z. 
\end{equation}
On the other hand, 
let $F$ be the general fiber of $\phi$.
Let $\Gamma_F$ be the restriction of $\lfloor B_k\rfloor$ to $F$.
Then, the pair $(F,\Gamma_F)$ is either log Fano or log Calabi--Yau. 
Furthermore, the divisor $\Gamma_F$ has at least $f$ components. 
By the non-negativity of the fine complexity $\overline{c}(F,\Gamma_{F}, \mathbf{M}|_{F})$ (cf Theorem \ref{thm:comp-noneg}),   we conclude that 
\begin{equation}\label{ineq:fiber}
f \leq \dim F+\rho(X_k/Z)= \dim F+1. 
\end{equation}
Adding inequality~\eqref{ineq:base}
and inequality~\eqref{ineq:fiber}, 
we obtain
\begin{equation}\label{ineq:2n}
i = (i-f)+f \leq 2\dim Z + \dim F + 1 
\leq 2n.
\end{equation}
This finishes the proof of the first statement.

From now on, we assume that the equalities $i=|I|=2\dim X$ hold. 
Then, all the inequalities in equation~\eqref{ineq:2n} are indeed equalities.
We conclude that $\dim F=1$.
Thus, the Mori fiber space $X_k\rightarrow Z$ is a conic bundle.
Furthermore, the equality must hold in~\eqref{ineq:base}.
Thus, by induction on the dimension, 
we conclude that 
$\mathbf{N}$ is the trivial b-nef divisor, 
$Z$ is a projective toric variety
whose moment polytope is an $n$-dimensional parallelotope,
and $B_Z$ is the toric boundary.
Moreover, the equality must hold in~\eqref{ineq:fiber}.
This implies that $\lfloor B_k \rfloor$ 
must have $2$ horizontal components over $Z$.
We conclude that 
$\lfloor B_k\rfloor$ has exactly $2n$ prime components. This value is exactly 
\[
\dim X_k + \rho(X_k) = 
\dim Z + \rho(Z)+2 = 2n.
\]
Thus, we may apply Theorem~\ref{thm:comp-noneg}
to conclude that $\mathbf{M}$ is the trivial b-nef divisor 
and $(X_k,B_k)$ is a toric log Calabi--Yau pair.

We turn to argue that the projective toric variety $X_k$ is associated to an $n$-dimensional parallelotope.
Let $\Sigma_0 \subset N_\qq \simeq \qq^{n-1}$ be the fan defining the projective toric variety $Z$, i.e., $X(\sigma_0)\simeq Z$.
Since the moment polytope associated to $\Sigma_0$ is a parallelotope,
we conclude that 
\[
\Sigma_0(1):=\{v_1,\dots,v_{n-1},\dots,-v_1,\dots,-v_{n-1}\},
\]
where the vectors $v_1,\dots,v_{n-1}$ gives a basis of $N_\qq$.
Let $\Sigma \subset N_\qq \oplus \qq$ be the fan defining $X_k$.
Since the conic bundle $X_k\rightarrow Z$ must be induced by a linear maps between fans, 
we have that 
\[
\Sigma(1):=\{e_n,-e_n, v_1+c_1e_n, \dots ,v_n+c_ne_n, -v_1-k_1e_n,\dots, -v_n-k_ne_n\}, 
\]
for certain rational numbers $c_i$ and $k_i$.
It suffices to show that the vectors of 
$\Sigma(1)\setminus \{e_n,-e_n\}$ lie in a common hyperplane of $N_\qq$.
As the toric divisor $D(e_n)$ associated to $e_n\in \Sigma(1)$ is movable, there exists an effective torus invariant divisor $E$ such that 
$E$ does not contain $D(e_n)$ on its support and $D(e_n)\sim_\qq E$.
This means that there exists a linear functional $H_1$ on $N_\qq\oplus \qq$ such that $H_1(e_n)>0$ and $H_1(v)\leq 0$ for every $v\in \Sigma(1) \setminus\{e_n\}$.
Applying the same argument to $-e_n$, we conclude that there exists a linear functional $H_2$ on $N_\qq \oplus \qq$ such that $H_2(-e_n)>0$ and $H_2(v)\leq 0$ for every $v\in \Sigma(1) \setminus\{-e_n\}$.
For each $i \in \{1,\dots,n\}$, 
we have that 
\[
-k_iH_1(e_n)\leq H_1(v_i) \leq c_iH_1(-e_n).  
\]
This implies that $0\leq (c_i-k_i)H_1(-e_n)$ so $c_i-k_i\leq 0$.
Analogously, we have that 
\[
-k_iH_2(e_n) \leq H_2(v_i) \leq c_iH_2(-e_n).
\]
This implies that $0\leq (c_i-k_i)H_2(-e_n)$ so $c_i-k_i\geq 0$. 
We conclude $c_i=k_i$ for each $i$. 
Thus, the rays of $\Sigma(1)\setminus\{e_n,-e_n\}$ are contained in a hyperplane. 
We conclude that $X_k$ is the projective toric variety whose moment polytope is an $n$-dimensional parallelotope.

Finally, we argue that the birational contraction $(X,B)\dashrightarrow (X_k,B_k)$ is indeed an isomorphism. 
The final step of this minimal model program $X_{k-1}\dashrightarrow X_k$
is either a flip or a divisorial contraction.  
If it was a flip, then $X_k$ would admit a big divisor which is not semiample. Since the moment polytope of $X_{k}$ is a parellelotope, the effective cone and nef cone of $X_k$ agree, which excludes the existence of such divisors. Hence, the last step 
$\pi \colon X_{k-1}\to X_k$ of the MMP must be a divisorial contraction, with exceptional divisor $E$. By construction, the strict transform of each irreducible divisor of $B_k$ on $X_{k-1}$ is a movable divisor in $X_{k-1}$.
Indeed, each divisor is the push-forward on $X_{k-1}$ of a movable component on a decomposition of $\lfloor B \rfloor$ on $X$ (in particular none of the components contains $E$ as they are movable).
Since the contraction
$\pi\colon (X_{k-1},B_{k-1}) \to (X_k,B_k)$ is crepant, and $K_{X_{k}}+B_{k}=K_{X_{k}}+\lfloor B_{k} \rfloor \sim 0$, then the log pullback of $(X_k,B_k)$ on $X_{k-1}$ is reduced, i.e., $a_E(X_k,B_k)\in \{0,1\}$. This means that the center 
$c_E(X_k)$ of the divisor $E$ on $X_{k}$ lies in $B_k$; otherwise $E$ would be a terminal valuation, i.e., $a_E(X_k,B_k)>1$. Let $S$ be an irreducible component of $B_{k}$ containing $c_E(X_k)$. Let $\pi_S \colon X_k\rightarrow \mathbb{P}^1$ be the fibration whose fiber over $0 \in \mathbb{P}^1$ is $S$, which is induced by the linear projection of the moment parallelotope of $X_{k}$ onto one of its edges.
Consider the composition 
$p:=\pi_S\circ \pi \colon X_{k-1}\rightarrow \mathbb{P}^1$.
The strict transform $S_{k-1}$ of $S$ in $X_{k-1}$ is one of the two components of the fiber of $p$ over $0$. 
This implies that $S_{k-1}$ is covered by $(S_{k-1})$-negative curves, so $S_{k-1}$ is not a movable divisor, which is a contradiction.
We conclude that the last step
of the MMP $X\dashrightarrow X_k$ is neither a flip nor a divisorial contraction.
Therefore the MMP is indeed trivial, i.e., there is an isomorphism
$(X,B)\simeq (X_k,B_k)$.
\end{proof} 

\section{Dimension of the dual complex} 

In this section, we study the relation between the birational complexity
of a log Calabi--Yau pair
and the dimension of the associated dual complex.
The following theorem is the main technical statement that we will use to prove
Theorem~\ref{theorem:bcomp-vs-dim}.

\begin{theorem}\label{thm:dim-dc-many-assump}
Let $(X,B,\mathbf{M})$ be a $\qq$-factorial log Calabi--Yau pair. 
Let $X\rightarrow Z$ be a fibration satisfying the following conditions:
\begin{enumerate}
    \item[(i)]  $X\rightarrow Z$ is of Fano type,
    \item [(ii)] every glcc of $(X,B,\mathbf{M})$ dominates $Z$, and 
    \item[(iii)] the relative Picard rank $\rho(X/Z)$ is minimal among $\qq$-factorial varieties $X'$ that admit a birational contraction $X\dashrightarrow X'$ over $Z$.
\end{enumerate}
Let $\Gamma\leqslant \lfloor B\rfloor$ be a reduced divisor. 
Let $r:=|\Gamma|-\rho(X/Z)$ and assume $r\geq 0$. 
Then, there exists a small birational map $\pi\colon X\dashrightarrow X'$ over $Z$ satisfying the following:
\begin{itemize}
    \item $X'$ is $\qq$-factorial,
    \item $\pi_*\Gamma$ admits a decomposition into $i\geq r+1$ prime divisors $B_i$ movable over $Z$, and 
    \item $B_1\cap \dots \cap B_{r} \neq \emptyset$.
\end{itemize}
In particular, we have that $\dim \mathcal{D}(X,B,\mathbf{M})\geq r-1$.
\end{theorem}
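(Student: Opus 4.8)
The plan is to induct on $\dim X$: run a minimal model program over $Z$ to reach a Mori fibre space $\psi\colon X_1\to W$, and then either read off the conclusion from a general fibre or descend to the base $W$ and pull the answer back up through the two‑ray game. The starting point is to exploit hypothesis (iii). Since a birational contraction over $Z$ cannot increase $\rho(-/Z)$ and (iii) forbids it to decrease, every birational contraction $X\dashrightarrow X''$ over $Z$ onto a $\qq$-factorial variety is small. Two consequences: (a) for every effective divisor $D$ the $D$-MMP over $Z$ (which terminates, $X/Z$ being a relative Mori dream space) consists only of flips, so \emph{every} prime divisor on $X$ is movable over $Z$ — a property preserved under subsequent small birational maps over $Z$ and under pushforward; and (b) the base $W$ of any Mori fibre space obtained from $X$ by an MMP over $Z$ is again minimal over $Z$, for otherwise Lemma~\ref{lem:two-ray-game} would yield a non‑small birational contraction of $X$ over $Z$ decreasing $\rho(-/Z)$.

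\textbf{Producing the Mori fibre space.} Pick a component $S\leqslant\Gamma$ and run a $(K_X+B-S+\mathbf{M}_X)$-MMP over $Z$; since $-S$ is not pseudoeffective over $Z$ this ends with a Mori fibre space $\psi\colon X_1\to W$, and by (a) the map $X\dashrightarrow X_1$ is small, with the transform of $S$ now $\psi$-ample. Let $\Gamma_1$ be the transform of $\Gamma$, not contracted since its components are movable, with $k:=|\Gamma|=|\Gamma_1|=\rho(X/Z)+r$ prime components, of which $h\geq 1$ are horizontal over $W$. As $\rho(X_1/W)=1$, every horizontal effective prime divisor on $X_1$ is $\psi$-ample and every vertical prime divisor is the support of a pullback from $W$; in particular the $k-h$ vertical components of $\Gamma_1$ descend to a reduced $\Gamma_W\leqslant\lfloor B_W\rfloor$ on $W$, where $(W,B_W,\mathbf{N})$ is the canonical bundle formula pair, with $|\Gamma_W|=k-h$ and $\rho(W/Z)=\rho(X/Z)-1$, so $r_W:=|\Gamma_W|-\rho(W/Z)=r-h+1$. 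Restricting to a general fibre $F$ of $\psi$ gives the generalized log Calabi--Yau pair $(F,B_1|_F,\mathbf{M}|_F)$ with $\rho(F)=1$, and Theorem~\ref{thm:comp-noneg} applied to it yields $h\leq\dim F+1$.

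\textbf{Case analysis and the dual complex.} If $\dim W=0$, then $X_1$ is a $\qq$-factorial Fano of Picard rank one, all $k=r+1$ components of $\Gamma_1$ are ample, and restricting ample divisors successively shows that $\min(k,\dim X)\geq r$ of them meet, the extremal value $r=\dim X$ (with $X_1$ a weighted projective space, $\dim X$ of whose toric boundary components meet at a torus fixed point) being handled by Theorem~\ref{thm:comp-noneg}. If $\dim W>0$ and $h\geq r+2$, then $\dim F\geq h-1\geq r+1$, and already on the general fibre $\min(h,\dim F)\geq r+1$ of the $\psi$-ample horizontal divisors meet. If $\dim W>0$ and $h\leq r+1$ (so $r_W\geq 0$), apply the inductive hypothesis to $(W,B_W,\mathbf{N})\to Z$ — legitimate since $W\to Z$ is of Fano type by Lemma~\ref{lem:image-FT}, is minimal over $Z$ by (b), and has every glc centre dominating $Z$ — obtaining a small $W\dashrightarrow W''$ over $Z$ after which $r_W$ of the transformed components of $\Gamma_W$ share a point; pull this back through Lemma~\ref{lem:two-ray-game} to a small $X_1\dashrightarrow X'$ with $\psi'\colon X'\to W''$ a Mori fibre space. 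The $r_W$ corresponding vertical components of $\pi_*\Gamma$, being supports of pullbacks, intersect in $\psi'^{-1}$ of that point, a non‑empty vertical subvariety surjecting onto a subvariety of $W''$; intersecting it successively with $\min(h,\dim F)$ of the $\psi'$-ample horizontal divisors (ample on every fibre) keeps it non‑empty, and $\dim F\geq h-1$ gives $r_W+\min(h,\dim F)\geq(r-h+1)+(h-1)=r$ mutually intersecting components. In every case $\pi\colon X\dashrightarrow X'$ is small over $Z$, $\pi_*\Gamma$ has $k\geq r+1$ components, all movable over $Z$ by (a), and $r$ of them, say $B_1,\dots,B_r$, satisfy $B_1\cap\dots\cap B_r\neq\emptyset$. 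Finally, since $(X',B',\mathbf{M})$ is log canonical and $X'$ is $\qq$-factorial, $B_1\cap\dots\cap B_r$ is a non‑empty union of log canonical centres of pure codimension $r$ (log canonicity forbids $r$ reduced components from being mutually tangent, by repeated adjunction), so ${\rm coreg}(X',B',\mathbf{M})\leq\dim X-r$ and $\dim\mathcal{D}(X',B',\mathbf{M})=\dim X-{\rm coreg}(X',B',\mathbf{M})-1\geq r-1$; by crepant birational invariance $\mathcal{D}(X,B,\mathbf{M})\simeq_{\rm PL}\mathcal{D}(X',B',\mathbf{M})$.

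\textbf{Main obstacle.} The delicate part is the case $\dim W>0$: one must keep every birational map small throughout, verify that the inductive hypotheses — movability over $Z$, domination of the base by all log canonical centres, and minimality of $\rho(-/Z)$ — are genuinely inherited by the Mori fibre space base $W$, and use the fine complexity bound $h\leq\dim F+1$ to ensure that the horizontal components not accounted for on $W$ can be absorbed into the intersection produced on the general fibre.
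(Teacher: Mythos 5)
Your construction follows the paper's proof almost step for step: the same induction on dimension, the same use of hypothesis (iii) to conclude that every birational contraction over $Z$ onto a $\qq$-factorial variety is small and hence that every prime divisor is movable over $Z$, the passage to a Mori fibre space $X_1\to W$ by an MMP over $Z$, the horizontal/vertical bookkeeping giving $r=(h-1)+r_W$, the bound $h\le \dim F+1$ from non-negativity of the (fine) complexity on a general fibre, the inductive step applied to the base pair $(W,B_W,\mathbf{N})$ obtained from the canonical bundle formula, and the lift of the small modification of the base through Lemma~\ref{lem:two-ray-game}. In places you are even a bit more careful than the paper: you treat the case $r_W<0$ (equivalently $h\ge r+2$) directly on the general fibre, a case the paper's split into $r_W=0$ and $r_W>0$ passes over, and you make explicit the inequality $h-1\le\dim F$ that is needed for the successive intersections with relatively ample divisors to remain non-empty.

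The one genuine gap is the final inference. From $B_1\cap\dots\cap B_r\ne\emptyset$ you deduce that this locus is a union of glc centres of pure codimension $r$ (acceptable, given $\qq$-factoriality, the standard bound on the number of coefficient-one components through a point of an lc pair, and the theorem that components of intersections of (g)lc centres are (g)lc centres), and you then conclude ${\rm coreg}(X',B',\mathbf{M})\le \dim X-r$, hence $\dim\mathcal{D}\ge r-1$. This last step is not valid: the coregularity is the minimal dimension of a stratum of a \emph{gdlt modification}, not the minimal dimension of an lc centre of the (possibly non-gdlt) pair itself, and the latter can be strictly smaller. For instance, for a simple elliptic surface singularity with trivial boundary, or more generally whenever a plt log Calabi--Yau divisor is contracted crepantly to a point, the point is an lc centre of dimension $0$ while the dual complex is a single vertex and the coregularity equals $\dim X-1$. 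So the existence of a glc centre of dimension $\dim X-r$ does not by itself yield $\dim\mathcal{D}(X,B,\mathbf{M})\ge r-1$. What is true, and what the paper relies on, is the finer statement that $r$ coefficient-one components of a $\qq$-factorial (g)lc pair with non-empty intersection force an $(r-1)$-cell in the dual complex; this needs an adjunction/link argument on a dlt model, and the paper simply invokes \cite[Proposition 34 and Corollary 38]{dFKX17} for it. Replacing your coregularity shortcut by that citation (or by the corresponding adjunction argument) repairs the proof; everything before that point stands.
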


\begin{proof}
First, we show the statement when $\rho(X/Z)=1$.
In this case, we may take $X'=X$.
Every component of $\Gamma$ is ample over $Z$ in this case.
Indeed, by (ii) every component of $\Gamma$ dominates $Z$.
Hence, we may consider the decomposition of $\Gamma$ into its $r+1$ irreducible components
in which case it is clear that the intersection of $r$  of them is non-trivial. In particular, we have that $\dim \mathcal{D}(X,B,\mathbf{M})\geq r-1$ by \cite[Proposition 34 and Corollary 38]{dFKX17}.

Now, we assume that $\rho(X/Z)\geq 2$.
We run a $K_X$-MMP over $Z$. 
This MMP must terminate with a Mori fiber space over $Z$. 
Let $X\dashrightarrow X_1 \dashrightarrow \dots \dashrightarrow X_k$ be the steps of the MMP over $Z$
and $X_k\rightarrow W$ be the Mori fiber space over $Z$.
By the assumption (iii), we know that this MMP does not contract divisors, i.e., 
the birational map $X\dashrightarrow X_k$ is small
and $X_k$ is $\qq$-factorial.
Let $(X_k,B_k,\mathbf{M})$ be the generalized log Calabi--Yau pair induced on $X_k$. Let $(W,B_W,\mathbf{N})$ be the generalized log Calabi--Yau pair induced on $W$ by the generalized canonical bundle formula.
Note that $|\Gamma_k|-\rho(X_k/Z)=r$, where $\Gamma_k$ is the image on $X_k$ of $\Gamma$.
Write $\Gamma_{\rm ver}$ for the 
vertical components of $\Gamma_k$,
and $\Gamma_{\rm hor}$ for the horizontal components.
Let $\Gamma_W$ be the reduced sum of the images of $\Gamma_{\rm ver}$ in $W$.
Let $\gamma_0:=|\Gamma_{\rm hor}|-1$.
Then, we have that 
\[
r=|\Gamma_k|-\rho(X_k/Z) = 
|\Gamma_{\rm hor}|+|\Gamma_{\rm ver}|-\rho(X_k/Z) = 
\gamma_0 + 
(|\Gamma_{\rm ver}|-\rho(W/Z)) 
=
\gamma_0 + (|\Gamma_W|-\rho(W/Z)).
\]
Set $r_W:=|\Gamma_W|-\rho(W/Z)$.
Note that $r= \gamma_0+r_W$ and $\Gamma_W\leqslant \lfloor B_W\rfloor$.\\

\textit{Case 1:} We assume that $r_W=0$.\\

In this case, there are $r+1$ components of $\Gamma_k$
that are horizontal over $W$.
Each of these components is movable over $Z$.
Indeed, let $E$ be a component of $\Gamma_k$.
If $E$ is not movable over $Z$, then the $E$-MMP over $Z$ will eventually contract $E$.
This would contradict condition (iii).
Note that we can run such an MMP because of the Fano type assumption (i).
We may set $X':=X_k$.
Hence, the divisor $\Gamma_k$
admits a decomposition into at least $r+1$ prime divisors that are movable over $Z$.
Furthermore, since all these prime divisors are ample over $W$ we conclude that the intersection of $r$ of them 
is non-trivial.
In particular, we have that $\dim \mathcal{D}(X,B,\mathbf{M})\geq r-1$.\\

\textit{Case 2:} We assume that $r_W>0$.\\

Note that $W\rightarrow Z$ is a morphism of Fano type.
We claim that $\rho(W/Z)$ is minimal among $\qq$-factorial varieties that admit a birational contraction $W\dashrightarrow W'$ over $Z$.
Otherwise, there exists a birational contraction $W\dashrightarrow W'$ over $Z$ with $W'$ being $\qq$-factorial and $\rho(W'/Z)<\rho(W/Z)$.
By Lemma~\ref{lem:two-ray-game}, there exists a birational contraction $X_k\dashrightarrow Y$ over $Z$ with $Y$ being $\qq$-factorial and $\rho(Y/Z)<\rho(X/Z)$.
This contadicts our assumption (iii).
Hence, we have that $W\rightarrow Z$ is of Fano type
and $\rho(W/Z)$ is minimal. Recall that $|\Gamma_W|-\rho(W/Z)=r_W>0$ and 
$\Gamma_W\leq \lfloor B_W\rfloor$.
Therefore, the generalized pair $(W,B_W,\mathbf{N})$ satisfies all the assumptions of the theorem and $\dim W<\dim X$.
By induction on dimension, there exists a small birational contraction $\pi_W\colon W\dashrightarrow W'$ over $Z$ 
such that the following conditions are satisfied:
\begin{enumerate}
    \item[(a)] every glcc of $(W',B_{W'},\mathbf{N})$ dominates $Z$, 
    \item[(b)] $\Gamma_{W'}:={\pi_W}_*(\Gamma_W)$ admits a decomposition into $i\geq r_W+1$ prime divisors that are movable over $Z$,  
    \item[(c)] at least $r_W$ prime divisors have non-trivial intersection.
\end{enumerate}
We call $\Gamma_{W',j}$ the prime divisors of the decomposition of ${\pi_W}_*(\Gamma_W)$.
Without loss of generality, assume that the first $r_W$ of these prime divisors intersect non-trivially. By Lemma~\ref{lem:two-ray-game}, there exists a commutative diagram
\[
\xymatrix{
X\ar@{-->}[r]\ar[dd] &  X_k\ar@{-->}[r]\ar[d] & X'\ar[d]^-{\psi}\\
 & W\ar[ld] \ar@{-->}[r] & W'\ar[lld] \\ 
 Z & & 
}
\]
satisfying the following conditions:
\begin{enumerate}
\item $X_k\dashrightarrow X'$ is a small birational contraction;
\item $\psi \colon X' \to W'$ is a Mori fiber space,
\item every glcc of $(X',B',\mathbf{M})$ dominates $Z$, and
\item $\psi^{-1} (\lfloor\Gamma_{W'} \rfloor) \subseteq \Gamma'$,
where $\Gamma'$ is the pushforward of $\Gamma$ on $X'$.
\end{enumerate}
Let $\Gamma_1,\dots,\Gamma_f$ be the prime components of $\Gamma_{\rm hor}$. Recall that $f=\gamma_0+1$.
Arguing as in the previous paragraph we see that each $\Gamma_i$ is movable over $Z$.
Then, the sum 
\begin{equation}\label{eq:sum-gamma}
\Gamma_1+\dots+\Gamma_f+\psi^*\Gamma_{W',1}+\dots+\psi^*\Gamma_{W',r_W+1},
\end{equation}
gives a decomposition of $\Gamma'$
into prime divisors that are movable over $Z$.
Note that $f+r_W+1= \gamma_0+1+r_W+1 \geq r+1$.
Thus, the divisor $\Gamma'$ admits a decomposition
into at least $r+1$ prime divisors that are movable over $Z$.
It suffices to argue that all
the divisors in the sum~\eqref{eq:sum-gamma} have non-empty intersection.
By $(c)$, we know that there exists a point $w\in \Gamma_{W',1}\cap \dots \Gamma_{W',r_W}$.
Since each $\Gamma_i$ is ample over $W$, we conclude that 
$\psi^{-1}(w)\cap \Gamma_1 \cap \dots \cap \Gamma_{f-1}$ is non-empty, equivalently \ $f-1+r_{W} = r$ divisors have non-empty intersection. This finishes the proof.
\end{proof}

\begin{theorem}\label{thm:dim-dc}
Let $(X,B,\mathbf{M})$ be a generalized $\mathbb{Q}$-factorial log Calabi--Yau pair. 
Let $X\rightarrow S$ be a fibration.
Assume that every glcc of $(X,B,\mathbf{M})$ dominates $S$.
Let $r:=|\lfloor B\rfloor|-\rho(X/Z)$
and assume that $r\geq 0$.
Then, there exists a crepant birational model
$(X',B',\mathbf{M})$ of $(X,B,\mathbf{M})$ over $S$
and a fibration $X'\rightarrow Z$ over $S$ satisfying the following:
\begin{itemize}
    \item $X'$ is $\qq$-factorial,
    \item every glcc of $(X',B',\mathbf{M})$ dominates $Z$,
    \item $\lfloor B'\rfloor$ admits a decomposition $\sum_{i\in I} B_i$, with $i\geq r+1$, into prime divisors $B_i$ movable over $Z$, 
    \item $B_1\cap \dots \cap B_r\neq \emptyset$.
\end{itemize}
In particular, we have that $\dim \mathcal{D}(X,B,\mathbf{M})\geq r-1$.
\end{theorem}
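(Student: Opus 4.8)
The plan is to reduce the statement to Theorem~\ref{thm:dim-dc-many-assump}, whose hypotheses --- Fano type over the base, every glc center dominating the base, and minimal relative Picard number --- I would engineer on a suitable crepant model while keeping track of the invariant $r$.

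First I would replace $(X,B,\mathbf{M})$ by a $\qq$-factorial gdlt modification; as $X$ is already $\qq$-factorial, Lemma~\ref{lem:bcomp-dlt-mod} leaves $|\lfloor B\rfloor|-\rho(X/S)$ unchanged, and a crepant modification carries glc centers onto glc centers, so every glcc still dominates $S$. Next I would run the Koll\'ar--Xu/Filipazzi--Svaldi construction relatively over $S$ (\cite[Theorem 49]{KX16}, \cite[Theorem 4.2]{FS20}) to get a $\qq$-factorial crepant model $(X_1,B_1,\mathbf{M})$ with a fibration $X_1\to Z_1$ over $S$, extracting only glc places, every glcc of $(X_1,B_1,\mathbf{M})$ dominating $Z_1$, and $X_1$ of Fano type over $Z_1$. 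If $e$ divisors are extracted then $|\lfloor B_1\rfloor|=|\lfloor B\rfloor|+e$ and $\rho(X_1/S)=\rho(X/S)+e$ by $\qq$-factoriality, while the tower $X_1\to Z_1\to S$ gives $\rho(X_1/Z_1)\le\rho(X_1/S)$; hence $r_1:=|\lfloor B_1\rfloor|-\rho(X_1/Z_1)\ge r$. I would then choose a $\qq$-factorial $X_2$ with a birational contraction $X_1\dashrightarrow X_2$ over $Z_1$ of minimal $\rho(X_2/Z_1)$; a divisorial contraction lowers $\rho(\cdot/Z_1)$ by one and $|\lfloor\cdot\rfloor|$ by zero or one, so $r_2:=|\lfloor B_2\rfloor|-\rho(X_2/Z_1)\ge r_1\ge r$, while Fano type over $Z_1$ is preserved (Lemma~\ref{lem:contr-FT}) and, comparing centers on a common resolution with $X_1$, every glc center of $(X_2,B_2,\mathbf{M})$ still dominates $Z_1$. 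Thus $(X_2,B_2,\mathbf{M})\to Z_1$ meets hypotheses (i)--(iii) of Theorem~\ref{thm:dim-dc-many-assump}, and with $\Gamma:=\lfloor B_2\rfloor$ one has $|\Gamma|-\rho(X_2/Z_1)=r_2\ge 0$.

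Applying Theorem~\ref{thm:dim-dc-many-assump} then produces a small birational map $\pi\colon X_2\dashrightarrow X'$ over $Z_1$ with $X'$ $\qq$-factorial such that $\lfloor B'\rfloor=\pi_*\Gamma$ decomposes into $i\ge r_2+1\ge r+1$ prime divisors movable over $Z_1$, with $r_2\ge r$ of them having nonempty common intersection, so in particular $B_1\cap\dots\cap B_r\neq\emptyset$. Setting $Z:=Z_1$, the composite $(X',B',\mathbf{M})\dashrightarrow(X,B,\mathbf{M})$ is crepant birational over $S$, the variety $X'$ is $\qq$-factorial with a fibration $X'\to Z\to S$, and every glcc of $(X',B',\mathbf{M})$ dominates $Z$ (again by comparing centers of glc places on a common resolution with $X_2$, using that $\pi$ is small and over $Z$). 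Finally, $\mathcal{D}$ is a crepant-birational invariant, so $\dim\mathcal{D}(X,B,\mathbf{M})=\dim\mathcal{D}(X_2,B_2,\mathbf{M})\ge r_2-1\ge r-1$.

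The main obstacle I anticipate is twofold: running the Filipazzi--Svaldi reduction in the relative setting over $S$ while extracting only glc places, and the bookkeeping ensuring that $r$ never decreases. The latter rests on $\rho(X/Z)\le\rho(X/S)$ for a factorization $X\to Z\to S$, on Lemma~\ref{lem:bcomp-dlt-mod} (invariance of $|\lfloor B\rfloor|-\rho$ under extracting glc places from a $\qq$-factorial variety), and on the monotonicity of $|\lfloor B\rfloor|-\rho(\cdot/Z_1)$ under divisorial contractions over $Z_1$. Once these are set, the reduction is routine; the degenerate case $\lfloor B\rfloor=0$ (which forces $r=0$) carries no content and is discarded at the outset.
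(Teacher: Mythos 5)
Your proposal follows essentially the same route as the paper's proof: reduce to a Fano-type fibration over the base via \cite[Theorem 4.2]{FS20} while tracking the quantity $|\lfloor B\rfloor|-\rho(\cdot/\cdot)$ with Lemma~\ref{lem:bcomp-dlt-mod}, then pass to a $\qq$-factorial birational contraction of minimal relative Picard rank and apply Theorem~\ref{thm:dim-dc-many-assump}. The only imprecision is the count $|\lfloor B_1\rfloor|=|\lfloor B\rfloor|+e$, $\rho(X_1/S)=\rho(X/S)+e$, since the Koll\'ar--Xu/Filipazzi--Svaldi model may also contract divisors of $X$; the paper handles this by comparing through a gdlt model dominating both $X$ and $X_1$, and in any case the inequality $r_1\geq r$ survives because each contracted divisor drops $\rho$ by exactly one and $|\lfloor B\rfloor|$ by at most one, which is the monotonicity you yourself invoke.
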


\begin{proof}
First, we reduce to the case in which the fibration $X\rightarrow S$ is of Fano type.
By~\cite[Theorem 4.2]{FS20},
 there exists a $\qq$-factorial gdlt crepant birational model $(X_1,B_1,\mathbf{M})$ of $(X,B,\mathbf{M})$ 
and a fibration $X_1\rightarrow W$ over $S$ satisfying the following:
\begin{enumerate}
    \item every glcc of $(X_1,B_1,\mathbf{M})$ dominates $W$, 
    \item every divisor extracted by the birational map $X_1\dashrightarrow X$ is a glcc of $(X,B,\mathbf{M})$, and
    \item $X_1$ is of Fano type over $W$.
\end{enumerate}
Let $(X_2,B_2,\mathbf{M})$ be a $\qq$-factorial gdlt modification of $(X,B,\mathbf{M})$ 
that admits a birational contraction to $X_1$.
By Lemma~\ref{lem:bcomp-dlt-mod}, we have that 
$|\lfloor B_2\rfloor|-\rho(X_2/Z)=r$.
In particular, we conclude that 
$|\lfloor B_1\rfloor|-\rho(X_1/Z)\geq r$.
Thus, we may replace the generalized pair $(X,B,\mathbf{M})$ and the fibration $X\rightarrow S$
with the generalized pair $(X_1,B_1,\mathbf{M})$
and the fibration $X_1\rightarrow W$.
By doing so, we may assume that $X\rightarrow S$ is a morphism of Fano type.

From now on, we assume that the fibration $X\rightarrow S$ is of Fano type.
Let $X_0$ be the variety 
that minimizes $\rho(X_0/Z)$ among
all $\qq$-factorial varieties
that admit a birational contraction
$X\dashrightarrow X_0$ over $Z$.
Let $(X_0,B_0,\mathbf{M})$ be the generalized log Calabi--Yau pair induced on $X_0$.
Note that, by construction, we have that 
\[
|\lfloor B_0\rfloor|-\rho(X_0/Z) \geq 
|\lfloor B\rfloor| -\rho(X/Z) = r. 
\]
We may replace $(X,B,\mathbf{M})$
and the fibration $X\rightarrow S$ 
with $(X_0,B_0,\mathbf{M})$ and the fibration $X_0\rightarrow S$.

Thus, in the statement of the theorem we may further assume:
\begin{enumerate}
    \item the morphism $X\rightarrow S$ is of Fano type, and 
    \item the relative Picard rank $\rho(X/S)$ is minimal.
\end{enumerate}
Hence, the statement follows from Theorem~\ref{thm:dim-dc-many-assump}.
\end{proof}

\begin{proof}[Proof of Theorem~\ref{theorem:bcomp-vs-dim}]
Let $(X,B)$ be a log Calabi--Yau pair with $K_X+B\sim 0$.
Let $c=c_{\rm bir}(X,B)$. 
By Lemma~\ref{lem:computing-bcomp}, there exists a $\qq$-factorial crepant birational dlt model $(X_0,B_0)$ on which 
\[
\dim X_0 + \rho(X_0) - |B_0|=c.
\]
Equivalently, we write $|B_0|-\rho(X_0) = n-c$.
By~\cite[Theorem 4.2]{FS20},
 there exists a crepant birational model $(X_1,B_1,\mathbf{M})$ of $(X_0,B_0,\mathbf{M})$ and a fibration $X_1\rightarrow S$ satisfying the following conditions:
\begin{itemize}
    \item $(X_1,B_1,\mathbf{M})$ is gdlt,
    \item every glcc of $(X_1,B_1,\mathbf{M})$ dominates $S$, 
    \item every exceptional divisor of $X_1\dashrightarrow X_0$ is a glc place of $(X_0,B_0,\mathbf{M})$, and 
    \item $X_1$ is of Fano type over $S$. 
\end{itemize}
By the third aforementioned condition, 
we conclude that $|B_1|-\rho(X_1)\geq n-c$. 
In particular, we have that $|B_1|-\rho(X_1/S)\geq n-c$.
We apply Theorem~\ref{thm:dim-dc} to the pair $(X_1,B_1,\mathbf{M})$ and the fibration $X_1\rightarrow S$. Then, there exist a crepant birational model $(X',B',\mathbf{M})$ of $(X,B,\mathbf{M})$ 
over $S$ and a fibration $X'\rightarrow Z$ over $S$
satisfying the following conditions:
\begin{itemize}
    \item $X'$ is $\qq$-factorial,
    \item every glcc of $(X',B',\mathbf{M})$ dominates $Z$,
    \item $\lfloor B'\rfloor= \sum_{i\in I} B_i$, with $i\geq n-c$, where $B_i$ are prime divisors movable over $Z$,  and
    \item $B_1\cap \dots \cap B_{n-c-1}\neq \emptyset$.
\end{itemize}
In particular, we have that $\dim \mathcal{D}(X,B,\mathbf{M}) \geq n-c-1$. This finishes the proof.
\end{proof}

\section{Dual complexes
of non-maximal birational complexity}

In this section, we explore the connection between the birational complexity and dual complexes. Our first theorem states that dual complexes of non-maximal birational complexity is union of two collapsible subcomplexes.

\begin{theorem}\label{thm:dc-non-max-bc-gen}
Let $(X,B,\mathbf{M})$ be a generalized log Calabi--Yau pair of dimension $n$. Assume that $c_{\rm bir}(X,B,\mathbf{M})<n$, then $\mathcal{D}(X,B,\mathbf{M})$ is the union of two collapsible subcomplexes.
\end{theorem}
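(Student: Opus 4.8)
Since the dual complex is a crepant birational invariant, it suffices to exhibit one crepant birational model whose dual complex is a union of two collapsible subcomplexes. By Lemma~\ref{lem:computing-bcomp} we may replace $(X,B,\mathbf{M})$ by a $\qq$-factorial gdlt crepant model on which the complexity equals $c_{\rm bir}(X,B,\mathbf{M})<n=\dim X$; this provides, on a suitable crepant model, the inequality $|\lfloor B\rfloor|-\rho(X)>0$ required by Theorem~\ref{thm:proper-supp-amp}. Applying Theorem~\ref{thm:proper-supp-amp} together with Lemma~\ref{lem:simplicial}, we obtain a $\qq$-factorial gdlt crepant model $(X',B',\mathbf{M})$ and a fibration $X'\to Z$ such that every glcc of $(X',B',\mathbf{M})$ dominates $Z$, the complex $\mathcal{D}(X',B',\mathbf{M})$ is simplicial, and $\lfloor B'\rfloor$ \emph{properly} supports an effective divisor $A$ which is ample over $Z$. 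As in the proofs of Theorems~\ref{thm:proper-supp-amp-with-FT} and~\ref{thm:proper-supp-amp}, we may also arrange that $X'\to Z$ is of Fano type. It suffices to treat this model.

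\textbf{Step 2: the two subcomplexes.} Pick a prime component $S$ of $\lfloor B'\rfloor$ with $S\not\subseteq\supp(A)$, which exists because $A$ is properly supported. For $0<\epsilon\ll 1$ the divisor $A':=A+\epsilon(\lfloor B'\rfloor-S)$ is still effective and ample over $Z$, and now $\supp(A')=\supp(\lfloor B'\rfloor-S)$. Let $v_S$ denote the vertex of $\mathcal{D}(X',B',\mathbf{M})$ attached to $S$, and set
\[
D_1:=\overline{\operatorname{st}}(v_S),\qquad D_2:=\mathcal{D}\bigl(X',\,\lfloor B'\rfloor-S,\,\mathbf{M}\bigr),
\]
the closed star of $v_S$ and the full subcomplex spanned by the remaining vertices. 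Since $\mathcal{D}(X',B',\mathbf{M})$ is simplicial, every simplex either contains $v_S$, and then lies in $D_1$, or it does not, and then lies in $D_2$; hence $\mathcal{D}(X',B',\mathbf{M})=D_1\cup D_2$. The complex $D_1=v_S*\operatorname{lk}(v_S)$ is a cone, hence collapsible, so the whole matter reduces to the collapsibility of $D_2$.

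\textbf{Step 3: $D_2$ is collapsible.} The generalized pair $(X',B'-S,\mathbf{M})$ is $\qq$-factorial gdlt, every stratum of $\lfloor B'-S\rfloor=\lfloor B'\rfloor-S$ dominates $Z$, and $K_{X'}+(B'-S)+\mathbf{M}_{X'}\sim_{\rr}-S$. Run the $(K_{X'}+B'-S+\mathbf{M}_{X'})$-MMP over $Z$; it terminates because $X'\to Z$ is of Fano type. At each step the contracted extremal ray $R$ is spanned by a curve $C$ contracted over $Z$, so $A'\cdot C>0$ by the relative ampleness of $A'$, and therefore some prime component $E$ of $A'$, which is a component of $\lfloor B'-S\rfloor$, satisfies $E\cdot R>0$. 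Because $R$ is $(K_{X'}+(B'-S)+\mathbf{M}_{X'})$-negative, Proposition~\ref{prop:dc-collapse} (in the generalized setting) shows that the dual complex of the source collapses onto that of the target. Iterating along the MMP, $D_2$ collapses onto the dual complex of the final Mori fiber space $X''\to W$ over $Z$; passing to the base via the generalized canonical bundle formula, under which relative ampleness of the image of $A'$ and domination of the strata are preserved, and inducting on dimension, one is reduced to a pair whose reduced boundary has at most one component, whose dual complex is a point or empty. Hence $D_2$ is collapsible, and $\mathcal{D}(X,B,\mathbf{M})\simeq_{\rm PL}\mathcal{D}(X',B',\mathbf{M})=D_1\cup D_2$ is a union of two collapsible subcomplexes.

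\textbf{Main obstacle.} The crux is Step 3: it is a relative, generalized version of the de Fernex--Koll\'ar--Xu collapsibility statement---if a $\qq$-factorial gdlt pair is of Fano type over $Z$, all strata of its reduced boundary dominate $Z$, and that reduced boundary fully supports an ample-over-$Z$ divisor, then its dual complex is collapsible. Executing it requires organizing the induction along the MMP, including the sub-case where $S$ (or another boundary component) is contracted, and the descent through the base of a Mori fiber space, much as in the proof of Theorem~\ref{thm:proper-supp-amp-with-FT}; one also needs the extension of Proposition~\ref{prop:dc-collapse} to generalized pairs, which follows the argument of~\cite{dFKX17} verbatim. A secondary, more bookkeeping-type point is the reduction in Step 1 to a crepant model on which the \emph{reduced} boundary beats the Picard rank.
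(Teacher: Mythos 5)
Your Steps 2 and 3 reproduce, in the case where $\mathbf{M}$ is trivial and $B$ is reduced, essentially the paper's own argument (closed star of the chosen vertex as one collapsible piece, and collapsing the complementary subcomplex along a relative MMP with scaling via Proposition~\ref{prop:dc-collapse}, ending with \cite[Proposition 24]{KX16} at the Mori fiber space). The genuine gap is in Step 1: the inequality $c(X',B',\mathbf{M})<n$ only gives $|B'|>\rho(X')$, where $|B'|$ is the \emph{sum of the coefficients} of $B'$, whereas the hypothesis of Theorem~\ref{thm:proper-supp-amp} is $|\lfloor B'\rfloor|-\rho(X')>0$, a count of \emph{reduced} components. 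When $\{B'\}$ or $\mathbf{M}$ is non-trivial these are not comparable, and the conclusion you want can genuinely fail: for $(\pp^2,L+\tfrac{2}{3}E)$ of Example~\ref{ex:supp-ample} one has $c_{\rm bir}=\tfrac{4}{3}<2$, yet on every crepant model $\lfloor B\rfloor$ has at most one component, so no divisor is properly supported on it and your Steps 2--3 never get off the ground. So your argument, as written, only covers the case $K_X+B\sim_{\rr}0$ with $B$ reduced and $\mathbf{M}$ trivial.

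The paper closes this by a case distinction you are missing. If $\mathbf{M}$ or $\{B\}$ is non-trivial, it does not invoke the proper-support theorem at all: it uses Theorem~\ref{thm:ample-div-bound} (full support of an ample-over-$W$ divisor, with no complexity hypothesis) and runs the $(K_{X'}+\lfloor B'\rfloor)$-MMP with scaling of $A$ over $W$; since $K_{X'}+\lfloor B'\rfloor\sim_{\rr}-(\{B'\}+\mathbf{M}_{X'})$ this terminates with a Mori fiber space, Proposition~\ref{prop:dc-collapse} applies at every step, and the \emph{entire} dual complex is collapsible, which trivially gives the statement. Only in the remaining case ($\mathbf{M}$ trivial, $B$ reduced) does $c_{\rm bir}<n$ yield $|\lfloor B'\rfloor|>\rho(X')$ on a model computing the birational complexity, and there the paper proceeds as you do. Two smaller remarks on Step 3: in this organization the MMP is an honest pair MMP (the boundary being reduced), so no generalized-pair extension of Proposition~\ref{prop:dc-collapse} is needed; and the terminal Fano-fibration case is handled directly by \cite[Proposition 24]{KX16} (the boundary dominates the base because the ample divisor is supported on it), so the inductive descent through the base via the canonical bundle formula that you sketch is unnecessary.
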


\begin{proof}
First, we assume that either $\mathbf{M}$ is non-trivial or $\{B\}$ is non-trivial.
By Theorem~\ref{thm:ample-div-bound}, there exists a crepant birational model $(X',B',\mathbf{M})$ of $(X,B,\mathbf{M})$ and a fibration $X'\rightarrow W$ satisfying the following conditions:
\begin{enumerate}
    \item $X'$ is $\qq$-factorial,
    \item $(X',B',\mathbf{M})$ is gdlt, 
    \item every glcc of $(X',B',\mathbf{M})$ dominates $W$, 
    \item $X'\dashrightarrow X$ only extracts glc places of $(X,B,\mathbf{M})$, and 
    \item $\lfloor B'\rfloor$ fully supports an ample divisor over $W$.
\end{enumerate}
By (4), we have that either $\mathbf{M}$ or $\{B'\}$ is non-trivial.
Let $A$ be the divisor that is fully supported on $\lfloor B'\rfloor$
and is ample over $W$.
We argue that $\mathcal{D}(X',\lfloor B'\rfloor)\simeq_{\mathrm{PL}} \mathcal{D}(X,B,\mathbf{M})$
is collapsible.
We run a $(K_{X'}+\lfloor B' \rfloor)$-MMP with scaling of $A$ over $W$.
Let $X'\dashrightarrow X'_1 \dashrightarrow X'_2 \dashrightarrow \dots \dashrightarrow X'_k$ be the steps of this MMP
and $X'_k\rightarrow Z$ be the associated Mori fiber space.
For each $i$, we denote by $A_i$ the push-forward of $A$ on $X'_i$.
For each $i$, we denote by $B'_i$ the push-forward of $B'$ on $X'_i$. 
Then, the pair $(X'_i,\lfloor B'_i\rfloor)$ is dlt for each $i$.
Since this MMP is with scaling of $A$ over $i$, 
for each step $X'_j\dashrightarrow X'_{j+1}$
the extremal ray $R_j$ satisfies 
$(K_{X'_j}+\lfloor B'_j\rfloor)\cdot R_j <0$ and
$A'_j\cdot R_j>0$.
Note that $A'_j$ is an effective divisor
that is fully supported on $\lfloor B'_j\rfloor$.
Thus, we conclude that $R_j$ must intersect
a prime component of $\lfloor B'_j\rfloor$ positively.
Henceforth, all the conditions of Proposition~\ref{prop:dc-collapse} are satisfied and we conclude that for each $j$
there is a collapse 
\[
\mathcal{D}(X'_j,\lfloor B'_j\rfloor)
\rightarrow 
\mathcal{D}(X'_{j+1},\lfloor B'_{j+1}\rfloor).
\]
Thus, it suffices to show that $\mathcal{D}(X'_k,\lfloor B'_k\rfloor)$ is collapsible. This follows by~\cite[Proposition 24]{KX16}, since $(X'_k, \lfloor B'_{k} \rfloor) \to Z$ is a Fano fibration, and $R_k$ intersects a prime component of $\lfloor B'_{k} \rfloor$ positively, i.e., $\lfloor B'_{k} \rfloor$ dominates $Z$.

From now on, we assume that $\mathbf{M}$ is the trivial b-nef divisor and $B$ is reduced.
By Lemma~\ref{lem:computing-bcomp}, there exists a $\mathbb{Q}$-factorial dlt crepant birational model $(X_0,B_0)$ of $(X,B)$ 
for which $|B_0|-\rho(X_0)>0$.
Hence, we may apply Theorem~\ref{thm:proper-supp-amp} and Lemma~\ref{lem:simplicial} to conclude that there exists
a crepant birational model $(X',B')$ of $(X,B)$ and a fibration $X'\rightarrow W$ satisfying the following:
\begin{enumerate}
    \item $X'$ is $\qq$-factorial,
    \item $(X',B')$ is dlt, 
    \item every lcc of $(X',B')$ dominates $W$,
    \item there exists a decomposition $B' = E + (B'-E)$, where $E$ is a prime divisor and $B'-E$ is the support of an effective divisor that is ample over $Z$, and 
    \item $\mathcal{D}(X',B')$ is simplicial.
\end{enumerate}
Let $F$ be the reduced sum of all the divisors in $B'$ that intersect $E$ non-trivially. Since $\mathcal{D}(X', B')$ is simplicial, the dual complex $\mathcal{D}(X',F)$ is the cone over the dual complex $\mathcal{D}(E,(F-E)|_E)$, so it is collapsible.
Further, we have that
\[
\mathcal{D}(X',B') =
\mathcal{D}(X', B'-E) \cup 
\mathcal{D}(X',F).
\]
Hence, it suffices to show that $\mathcal{D}(X',B'-E)$ is a collapsible complex.
We run a $(K_{X'}+B'-E)$-MMP with scaling of $A$ over $W$, and conclude as in the case that either $\mathbf{M}$ is non-trivial or $\{B\}$ is non-trivial.
\end{proof}

\begin{theorem}\label{thm:dc-smooth-gen}
Let $(X,B,\mathbf{M})$ be a generalized log Calabi--Yau pair of dimension $n$
with $c_{\rm bir}(X,B,\mathbf{M})<n$. If $\mathcal{D}(X,B,\mathbf{M})$ is a PL-manifold, then either 
$\mathcal{D}(X,B,\mathbf{M})\simeq S^k$ or
$\mathcal{D}(X,B,\mathbf{M})\simeq_{\rm PL} D^k$. with $k\leq n-1$. 
In the former case, $\mathcal{D}(X,B,\mathbf{M})\simeq_{\mathrm{PL}} S^k$ if further $\dim \mathcal{D}(X,B,\mathbf{M})\neq 4$.
\end{theorem}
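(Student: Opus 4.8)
The plan is to combine Theorem~\ref{thm:dc-non-max-bc-gen} with the structural result Proposition~\ref{prop:PL-sphere} about PL manifolds. By Theorem~\ref{thm:dc-non-max-bc-gen}, the hypothesis $c_{\rm bir}(X,B,\mathbf{M})<n$ forces $M\coloneqq\mathcal{D}(X,B,\mathbf{M})$ to be the union of two collapsible subcomplexes $D_1$ and $D_2$. Since we are now assuming $M$ is a PL-manifold, we may take $k\coloneqq\dim M$, and by the coregularity bound $k\leq n-1$. The first step is to analyze the decomposition $M=D_1\cup D_2$. Because $D_1$ and $D_2$ are collapsible, each is in particular contractible, and a collapsible complex sitting inside a closed PL-manifold has a regular neighborhood that is a PL-ball by \cite[Corollary 3.27]{RS1982}; so one shows $M$ is covered by two PL-balls (take regular neighborhoods $N_1\supseteq D_1$, $N_2\supseteq D_2$, and check that together they still cover $M$, shrinking as needed so their intersection deformation-retracts onto a collar $\partial N_1\sim N_1\setminus D_1$).

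The second step distinguishes two cases according to whether $M$ has nonempty boundary. If $\partial M\neq\emptyset$, then one of the two collapsible pieces, say $D_2$, must contain $\partial M$ in the sense that $M$ collapses onto a ball from the boundary side; here the goal is to conclude $M\simeq_{\rm PL} D^k$ directly from the fact that a compact contractible PL-manifold with boundary which is covered by a single PL-ball is PL-homeomorphic to a ball (again via regular-neighborhood theory, \cite[Corollary 3.27]{RS1982}, together with the PL Schoenflies-type statement that $M$ being a regular neighborhood of a collapsible subcomplex makes it a ball). If $\partial M=\emptyset$, then $M$ is a closed PL-manifold written as $D_1\cup_{\partial D_1=\partial D_2}D_2$ with both $D_i$ collapsible; taking a regular neighborhood $N_1$ of $D_1$ gives $N_1\simeq_{\rm PL}B^k$, and $M\setminus\mathrm{int}(N_1)$ is then a regular neighborhood of $D_2$, hence also a PL-ball. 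This puts us exactly in the situation of Proposition~\ref{prop:PL-sphere}.

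The third step is then a direct invocation: Proposition~\ref{prop:PL-sphere} with $n$ replaced by $k$ gives $M\simeq S^k$, and $M\simeq_{\rm PL}S^k$ provided $k\neq 4$. Combining the two cases yields the dichotomy in the statement: either $\mathcal{D}(X,B,\mathbf{M})\simeq S^k$ (topologically, and PL if $\dim\mathcal{D}\neq 4$) or $\mathcal{D}(X,B,\mathbf{M})\simeq_{\rm PL}D^k$, with $k\leq n-1$ in both cases.

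The main obstacle I expect is the bookkeeping in the second step: one must verify carefully that the two collapsible subcomplexes produced by Theorem~\ref{thm:dc-non-max-bc-gen} are \emph{subcomplexes} glued precisely along their common boundary $\partial D_1=\partial D_2$ (so that $M$ really is of the form required by Proposition~\ref{prop:PL-sphere}), and that in the boundary case exactly one of them absorbs $\partial M$. A priori the decomposition coming from the MMP-with-scaling argument only gives a union of two collapsible pieces, not an a priori control on how they meet; so one needs to argue that after passing to a subdivision (the barycentric subdivision already present in Lemma~\ref{lem:simplicial} is convenient here) the intersection $D_1\cap D_2$ is a subcomplex which, by a Mayer--Vietoris and duality computation as in the proof of Proposition~\ref{prop:PL-sphere}, must be a PL-sphere $S^{k-1}$ bounding each $D_i$. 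Once this is in place the topology is essentially formal.
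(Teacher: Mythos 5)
Your skeleton (combine Theorem~\ref{thm:dc-non-max-bc-gen} with Proposition~\ref{prop:PL-sphere}, splitting into the closed and boundary cases) is the right one, but the proposal has a genuine gap exactly at the point you flag as the ``main obstacle'', and the patch you suggest would not close it. You treat the output of Theorem~\ref{thm:dc-non-max-bc-gen} as a black box ``$M=D_1\cup D_2$ with $D_1,D_2$ collapsible'' and then try to force it into the hypotheses of Proposition~\ref{prop:PL-sphere}. For arbitrary collapsible subcomplexes this cannot be done by a Mayer--Vietoris/duality computation: homology cannot distinguish a PL sphere from a homology sphere, and $D_1\cap D_2$ need not even be a manifold, so there is no formal argument showing the pieces are glued along a common boundary $\partial D_1=\partial D_2$ with $D_1\simeq_{\rm PL}B^k$. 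The paper avoids this entirely by using the \emph{specific} decomposition built in the proof of Theorem~\ref{thm:dc-non-max-bc-gen}: after Lemma~\ref{lem:simplicial} the complex $\mathcal{D}(X',B')$ is simplicial, $\mathcal{D}(X',F)$ is the closed star of the vertex $v_E$ (hence a PL ball, because $\mathcal{D}$ is a PL manifold), and it meets the complementary subcomplex $\mathcal{D}(X',B'-E)$ exactly along the link of $v_E$; only then is Proposition~\ref{prop:PL-sphere} applicable. Relatedly, your closed-case claim that $M\setminus\mathrm{int}(N_1)$ is a regular neighborhood of $D_2$ is unjustified: $D_2$ generally meets $\mathrm{int}(N_1)$, and nothing guarantees the closed complement of $N_1$ collapses onto anything. (Your covering-by-two-balls observation does give the topological sphere statement via the union-of-two-open-cells theorem, but not the PL refinement and not the disk case, and it is not the paper's route.)

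The boundary case is essentially missing. Your key assertion --- that a compact contractible PL manifold with boundary which is ``covered by a single PL ball'' is PL-homeomorphic to a ball --- is false as stated: contractibility is far weaker than collapsibility (Mazur-type manifolds are compact contractible PL manifolds with boundary that are not balls), and ``covered by a PL ball'' is not a sufficient condition in any reading. What is actually needed, and what the paper proves, is that $\mathcal{D}(X',B')$ itself collapses onto the PL disk $\mathcal{D}(X',B'-E)$, and this requires showing that the vertex $v_E$ lies on $\partial\mathcal{D}(X',B')$. The paper rules out the alternative by a concrete argument you do not have: if $v_E$ were interior, one performs further blow-ups so that $\mathcal{D}(X',F)\cap\partial\mathcal{D}(X',B')=\emptyset$, and then $\partial\mathcal{D}(X',B'-E)=\partial\mathcal{D}(X',F)\sqcup\partial\mathcal{D}(X',B')$ would be disconnected, contradicting that $\mathcal{D}(X',B'-E)$ is a disk (it is a collapsible PL manifold, hence a PL ball). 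Without this dichotomy and the interior-vertex contradiction, the assertion that ``one of the two pieces absorbs $\partial M$'' has no proof, so the $D^k$ alternative in the statement is not established by your argument.
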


\begin{proof}
If $\mathcal{D}(X,B,\mathbf{M})$ is a closed manifold of positive dimension, then $\mathbf{M}$ is the trivial b-nef divisor and $B$ is reduced. In the notation of the proof of Theorem~\ref{thm:dc-non-max-bc-gen}, we can write 
\[
\mathcal{D}(X',B') =
\mathcal{D}(X', B'-E) \cup 
\mathcal{D}(X',F),
\]
where $\mathcal{D}(X',F)$ is the closed star of the vertex corresponding to the divisor $E$ as $\mathcal{D}(X',B')$ is simplicial. Since $\mathcal{D}(X',B')$ is also a PL-manifold, then $\mathcal{D}(X',F)$ is PL-homeomorphic to a disk. The statement then follows from Proposition~\ref{prop:PL-sphere}.

Now, assume that the manifold has boundary. 
Following the proof of Theorem~\ref{thm:dc-non-max-bc-gen} 
there is a $\qq$-factorial gdlt modification $(X',B',\mathbf{M})$ of $(X,B,\mathbf{M})$ and a component $E$ of $B'$ such that
\[
\mathcal{D}(X',B',\mathbf{M})=
\mathcal{D}(X',B'-E) \cup \mathcal{D}(X',F),
\]
where $F$ is the reduced sum of all the components of $B'$ that intersect $E$ non-trivially.
Furthermore, we know that each 
$\mathcal{D}(X',B'-E)$ and $\mathcal{D}(X',F)$ is a collapsible PL manifold, hence they are PL isomorphic to disks.
If $v_E$ is in the interior of $\mathcal{D}(X',B',\mathbf{M})$, then by further blow-ups, we may assume that 
$\mathcal{D}(X',F) \cap \partial \mathcal{D}(X',B',\mathbf{M})=\emptyset$.
We conclude that 
\[
\partial \mathcal{D}(X',B'-E) = 
\partial \mathcal{D}(X',F) \sqcup 
\partial \mathcal{D}(X',B').
\]
This contradicts the fact that $\mathcal{D}(X',B'-E)$ is a disk.
On the other hand, if $v_E$ is in $\partial \mathcal{D}(X',B')$, then 
$\mathcal{D}(X',B')$ collapses to $\mathcal{D}(X',B'-E)$, so it is collapsible. This finishes the proof.
\end{proof}

\begin{proof}[Proof of Theorem~\ref{theorem:max-bir-comp}]
We consider a finite group $G$ acting freely on $S^{n-1}$.
We may choose a polytope structure $P$ on the sphere that is preserved by $G$.
Let $X(P)$ be the projective toric variety associated to $P$.
Let $B_P$ be the toric boundary of $X(P)$ so $(X(P),B_P)$ is a log Calabi--Yau pair.
Let $(X,B)$ be the quotient $(X(P),B_P)/G$.
Then, $(X,B)$ is a log Calabi--Yau pair of dimension $n$ such that 
$\mathcal{D}(X,B)\simeq_{\rm PL} S^{n-1}/G$; see \cite[Theorem F]{MMS2022}.
In particular, $\mathcal{D}(X,B)$ is a PL-manifold that is not the union of two disks.
Theorem~\ref{theorem:dc-nonmax-bc} implies that $c_{\rm bir}(X,B)=n$.
This finishes the proof.
\end{proof}

\section{Effective cone
in maximal birational complexity}

In this section, we prove a result regarding the effective cone
of divisors of log Calabi--Yau pairs of maximal birational complexity.

\begin{proof}[Proof of Theorem~\ref{theorem:eff-cone-max-bir}]
Let $(X,B)$ be a log Calabi--Yau pair of coregularity zero and dimension $n$.
Assume that $c_{\rm bir}(X,B)=n$.
Up to a crepant birational map, we can suppose that $X$ is of Fano type. 
By Theorem~\ref{thm:mod-with-large-bound} there exists a $\qq$-factorial gdlt crepant birational model $(X_1,B_1)$ of $(X,B)$
whose boundary $B_1$ is reduced with exactly $\rho(X_1)$ components, since $c(X_1, B_1)=\dim X_1 + \rho(X_1)-|B|\geq n$ and $|\lfloor B\rfloor| \geq \rho(X_1)$ by Theorem~\ref{thm:mod-with-large-bound}.(3).
Furthermore, the variety $X_1$ is of Fano type by Lemma~\ref{lem:contr-FT}.

We claim that the effective cone of $X_1$ is simplicial and is generated by the components of $B_1$. 
Since $X_1$ is of Fano type,
the divisor $B_1$ supports a big divisor.
Let $\sigma$ be the cone in $N^1(X_1)$
that is generated by the components of $B_1$. 
Let $\rho$ be the Picard rank of $X_1$.
Note that $\sigma$ intersects the interior of $\overline{\rm Eff}(X_1)$.
It suffices to show the two following statements:
\begin{enumerate}
\item[(i)] the cone $\sigma$ is $\rho$-dimensional, and 
\item[(ii)] the boundary $\partial \sigma$ contains no big divisors.
\end{enumerate}

First we show (i), that the cone $\sigma$ is $\rho$-dimensional.
To do so, it suffices to show that for every two effective divisors $E$ and $F$ supported on $\supp(B_1)$
the equivalence $E\sim_\qq F$ implies that $E=F$.
Assume that $E\neq F$ and $E\sim_\qq F$. We may assume that $E$ and $F$ have no common components. 
We run a $(K_{X_1}+B_1-E)$-MMP 
that terminates with a Mori fiber space $X_k\rightarrow Z_1$.
Since this MMP is $E$-positive,
and so $F$-positive, 
there is a component $E_0$ of $E$ 
and $F_0$ of $F$ that are not contracted.
Furthermore, the push-forwards $E_{0,k}$ and $F_{0,k}$ of $E_0$ and $F_0$ respectively, are horizontal over $Z$.
Let $(X_k,B_k)$ be the log Calabi--Yau pair induced on $X_k$.
Let $(Z_1,B_{Z_1},\mathbf{N})$ be the generalized log Calabi--Yau pair induced on $Z_1$.
Note that ${\rm coreg}(Z_1,B_{Z_1},\mathbf{N})=0$.
Then, by Theorem~\ref{thm:mod-with-large-bound} 
there exists a crepant birational map $\phi \colon (Z_1,B_{Z_1},\mathbf{N}) \dashrightarrow (Z_2,B_{Z_2},\mathbf{N})$, only extracting glc places,
such that the following conditions hold:
\begin{itemize}
\item $(Z_2,B_{Z_2},\mathbf{N})$ is $\qq$-factorial and gdlt, and
\item the number of components of $\lfloor B_{Z_2}\rfloor$ is at least $\rho(Z_2)$.
\end{itemize}
 By Lemma~\ref{cor:mod-base-mod-tot-space} there exists a crepant birational map $\phi \colon (Y_2, B_2) \dashrightarrow (X_{k}, B_k)$, only extracting glc places,
making the following square commutative:
\[
\xymatrix{
(X_k,B_k)\ar[d] & (Y_2,B_{Y_2})\ar[d]^-{\phi} \ar@{-->}[l]
\\
Z_1 & Z_2,\ar@{-->}[l]
}
\]
and the following conditions are satisfied:
\begin{enumerate}
\item  $Y_2$ is $\qq$-factorial,
\item $\phi$ is a Mori fiber space, and 
\item $\phi^{-1}(\lfloor B_{Z_2}\rfloor)\subset B_{Y_2}$.
\end{enumerate}
Note that $X_k\dashrightarrow Y_2$ is an isomorphism over the generic point of $Z_1$, so $\lfloor B_{Y_2}\rfloor$ contains at least two prime components that are horizontal over $Z_2$.
The condition (3) above implies that 
$\lfloor B_{Y_2}\rfloor$ contains at least $\rho(Z_2)$ vertical components over $Z_2$.
Condition (2) above implies that 
$\rho(Y_2)=\rho(Z_2)+1$.
We conclude that $\lfloor B_{Y_2}\rfloor$ contains at least $\rho(Y_2)+1$ components.
This contradicts the fact that $c_{\rm bir}(X_k,B_k)=n$.
We conclude that $\sigma$ is $\rho$-dimensional.

Now, we show (ii), i.e., the boundary $\partial \sigma$ contains no big divisors.
Assume that $\partial \sigma$ contains a big divisor $F$.
The cone $\sigma$ is $\rho$-dimensional and is generated by $\rho$ elements.
Hence, there is a prime component $E$ of $B_1$ that is not contained in the support of $F$.
We run a $(K_{X_1}+B_1-E)$-MMP
that terminates with a Mori fiber space $X_k\dashrightarrow Z$.
The push-forward $E_k$ of $E$ on $X_k$ is horizontal over $Z$.
The divisor $F$ is big and does not contain $E$ on its support.
Then, the push-forward $F_k$ of $F$ on $X_k$
has a prime component $G_k$ that is horizontal over $Z$.
The same argument as in the previous case leads to a contradiction.
We conclude that $\partial \sigma$ contains no big divisor. 
Thus, we conclude that $\sigma={\rm Eff}(X_1)$.
\end{proof}

\section{Examples and questions}

In this section, we collect some examples and questions. Our first example is a special case of Theorem~\ref{theorem:max-bir-comp}.

\begin{example}\label{ex:max-bcomp-dual}
{\em 
Consider the toric variety $T_n:=(\mathbb{P}^1)^n$, with $n \geq 3$, and its toric boundary $B_n$.
Then, the pair $(T_n,B_n)$ is a log Calabi--Yau pair of dimension $n$ with $c(T_n,B_n)=0$.
Consider the involution $i_n$ on $T_n$ given by 
\[
i_n([x_1:y_1],\dots,[x_n:y_n]) =
([y_1:x_1],\dots,[y_n:x_n]).
\]
The involution $i_n$ preserves the toric boundary $B_n$, and permutes its irreducible components.
In fact, for every prime component $S$ of $B_n$ the image $i_n(S)$ is the only other prime component of $B_n$ linearly equivalent to $S$.
Hence, $i_n$ acts trivially on 
$\Pic((\pp^1)^n)\simeq \zz^{2n}$.
We consider the log Calabi--Yau pair
$(Y_n,\Delta_n):=(T_n,B_n)/i_n$.
By the aforementioned, we have that 
\[
\rho(Y_n)=2n \text{ and } |\Delta_n|=n.
\]
We conclude that $c(Y_n,\Delta_n)=n$.
Note that $\mathcal{D}(Y_n,\Delta_n)\simeq \mathbb{P}_\rr^{n-1}$.
By Theorem~\ref{theorem:dc-smooth}, we conclude that $c_{\rm bir}(Y_n,\Delta_n)=n$.
Indeed, the dual complex $\mathcal{D}(Y_n,\Delta_n)$ is a PL manifold that is not PL isomorphic to a sphere or a disk.
}
\end{example} 

Our next example shows that there exist $n$-dimensional log Calabi--Yau pair $(X,B)$
of arbitrary large Picard rank
for which $B$ admits a decomposition into $(n+1)$ big divisors.

\begin{example}\label{ex:wpi-blow-up}
{\em 
Let $X_n:=\pp(1,1,n)$ and $B_n$ be the toric boundary.
Let $B_{1,n},B_{2,n}$ be the two components that pass through the singular point of $X_n$
and $B_{0,n}$ be the remaining component of $B_n$. 
Recall that $B_{0,n}^2=n$.
Let $Y_n\rightarrow X_n$ be the blow-up at $n-1$ points contained in $B_{0,n}$ but not contained in $B_{1,n}$ nor $B_{2,n}$.
Let $(Y_n,\Delta_n)$ be the log pull-back
of $(X_n,B_n)$. Then, $\Delta_n$ admits a decomposition into $3$ big movable divisors. These three big divisors are the strict transforms of the $B_{i,n}$'s. Such divisors are big as they are movable with positive self-intersection. On the other hand, we have that $\rho(Y_n)=n$.
}
\end{example}

In the following example, we show that 
the inequality $c_{\rm bir}(X,B)<2\dim X$ is sharp for pairs. Moreover, the equality can be achieved for generalized pairs. 

\begin{example}\label{ex:maximal-bcomp}
{\em 
Consider the pair 
\[
(X_n,B_{n,m}):=
\left(
(\pp^1)^n, \frac{2}{m}B_{n,m}
\right) 
\]
where $\Delta_{n,m}$ is a general smooth hypersurface of multi-degree $(m,\dots,m)$.
If $m\geq 3$, then the log Calabi--Yau pair $(X_n,B_{n,m})$ is terminal.
From now on, we assume that $m\geq 3$.
Furthermore, every extremal contraction of $X_n$ is a Mori fiber space.
We conclude that $(X_n,B_{n,m})$ has no
non-isomorphic
log Calabi--Yau crepant birational models.
This implies that 
\[
c_{\rm bir}(X_n,B_{n,m})=
c(X_n,B_{n,m})=
\dim X_n + 
\rho(X_n) -
|B_{n,m}| =
n+n - \frac{2}{m} < 2n.
\]
Hence, the birational complexity of $(X_n,B_{n,m})$ is arbitarily close to $2n$.

Now, consider the generalized pair
$(X_n,\mathbf{M}_n)$ where $\mathbf{M}_n$ is the anti-canonical b-divisor. 
Hence, $(X_n,\mathbf{M}_n)$ is a generalized terminal log Calabi--Yau pair.
All the extremal contractions of $X_n$ are Mori fiber spaces.
Thus, the generalized pair $(X_n,\mathbf{M}_n)$ has no non-isomorphic generalized log Calabi--Yau crepant birational models.
This implies that 
\[
c_{\rm bir}(X_n,\mathbf{M}_n)=
c(X_n,\mathbf{M}_n)= \dim X_n + \rho(X_n)  = 2n.
\]
Thus, the birational complexity of a generalized pair $(X,B, \mathbf{M})$ can attain the value $2\dim X$.
}
\end{example}

The following example shows that for smooth Calabi--Yau varieties the birational complexity may be larger than twice the dimension.

\begin{example}\label{ex:smooth-cy}
{\em 
Let $X_n$ be the product of $n$ elliptic curves. Then, $X_n$ is a smooth Calabi--Yau variety. Any crepant birational model of $X_{n}$ is isomorphic to $X_{n}$.
In particular, we have that  $c_{\rm bir}(X_n)=c(X_n)=\dim X_n +\rho(X_n) > 2n$.
}
\end{example}

The following example shows 
the statement of Theorem~\ref{theorem:proper-supp-amp} does not hold if the linear equivalence $K_X+B\sim 0$ is not satisfied. 

\begin{example}\label{ex:supp-ample}
{\em 
Consider the log Calabi--Yau pair 
$(\pp^2,L+\frac{2}{3}E)$ where 
$L$ is a line and $E$ is a general elliptic curve.
Then, the pair $(\pp^2,L+\frac{2}{3}E)$
is plt and its dual complex is a point.
In particular, if $(X,B)$ is any log Calabi--Yau pair which is crepant birational to 
$(\pp^2,L+\frac{2}{3}E)$, then $\lfloor B\rfloor$ has either one or zero components.
In particular, no divisor is properly supported on $\lfloor B \rfloor$.
In particular, $\lfloor B\rfloor$ cannot properly support a relatively ample divisor
for any morphism on any crepant birational model of the pair $(\pp^2,L+\frac{2}{3}E)$.
}
\end{example}

The following example shows that for every $c\in \{0,\dots,n-1\}$, there is a log Calabi--Yau pair of dimension $n$, coregularity $c$, and birational complexity at least $n$.

\begin{example}\label{ex:arbirary-bcomp}
{\em 
Let $n$ be a positive integer.
Let $c\in \{0,\dots,n-1\}$,
Consider the log Calabi--Yau pair 
\[
(X_n,B_n):=(Y_{n-c},\Delta_{n-c})\times \left(\pp^1,\frac{2}{3}\{0\}+\frac{2}{3}\{1\}+\frac{2}{3}\{\infty\} \right)^c, 
\]
where the pair $(Y_{n-c},\Delta_{n-c})$
is as in Example~\ref{ex:max-bcomp-dual}.
By construction, we have that 
${\rm coreg}(X_n,B_n)=c$
and
$\dim(X_n)=n$.
Furthermore, we have that 
$\mathcal{D}(X_n,B_n)\simeq_{\rm PL} \mathbb{P}^{n-c}_\rr$.
By Theorem~\ref{theorem:dc-smooth},
we conclude that $c_{\rm bir}(X_n,B_n)\geq n$.
}
\end{example}

In Theorem~\ref{theorem:dc-smooth}, we assume that $\mathcal{D}(X,B)$ is a PL manifold. In the following example we show that the assumption cannot be dropped. 

\begin{example}\label{ex:suspension}
{\em 
Fix integers $n > m \geq 3$. Consider the log Calabi--Yau pair 
\[(X_{n}, B_{n}) \coloneqq (Y_{m}, \Delta_{m}) \times (\mathbb{P}^{n-m}, \Delta_{\mathbb{P}^{n-m}}),\]
where $(Y_{m},\Delta_{m})$
is the log Calabi--Yau pair in Example~\ref{ex:max-bcomp-dual}, and $\Delta_{\mathbb{P}^{n-m}}$ is the toric boundary of $\mathbb{P}^n$.
Then \[c_{\rm bir}(X_n,B_n) \leq c(X_{n}, B_{n})=c(Y_{m}, \Delta_{m})=m < n.\] 
Further, $\mathcal{D}(X_{n}, B_{n})$ is PL-homeomorphic to the join $ \mathbb{P}_\rr^{m-1}*S^{n-m-1}$, which is not a PL sphere, not even a PL manifold. Though $\mathcal{D}(X_{n}, B_{n})$ can be written as union of two collapsible subcomplexes, PL-isomorphic to $\mathbb{P}_\rr^{m-1}*\mathbb{B}^{n-m-1}$.} 
\end{example}

We conclude this section with a couple of questions that may motivate further research.

\begin{question}\label{quest:bcomp-zero}
Assume that $(X,B)$ is a log Calabi--Yau pair with $c_{\rm bir}(X,B)=0$.
What can we say about $X$ and $B$?
\end{question}

In the setting of the previous question one can conclude that $X$ is a rational variety.
However, it is not clear what can be said about $B$.
In general, the equality $c_{\rm bir}(X,B)=0$ does not allow to control the index of $K_X+B$.
For instance, we can consider $X=\pp^n$ and $B$ the sum of several general hyperplanes with coefficients $>\frac{1}{2}$. 
Then, we can blow-up the variety $X$ along the intersection of the components of $B$ to produce log Calabi--Yau pairs of birational complexity zero with large Picard rank and large index. Most of these examples are somewhat artificial.
If $c_{\rm bir}(X,B)=0$, then it is natural to expect that there exists $\Gamma \leq B$ for which $(X,\Gamma_{\rm red})$ is a log Calabi--Yau pair of index $1$ and birational complexity zero. This happens in all the aforementioned examples.

The following question is related to Theorem~\ref{theorem:big-decomp}.

\begin{question}\label{quest:max-big-sphere}
Let $(X,B)$ be an $n$-dimensional log Calabi--Yau pair such that $B$ admits a decomposition into $n+1$ big divisors. Is the pair $(X,B)$ qdlt? 
Is $\mathcal{D}(X,B)$ isomorphic to the boundary of an $n$-dimensional standard simplex?
\end{question} 

In Theorem~\ref{theorem:big-decomp}, we show that whenever $B$ admits a decomposition into $n+1$ big divisors, the pair $(X,B)$ is crepant birational to a weighted projective space. In particular, $\mathcal{D}(X,B)$ is PL-homeomorphic to a PL-sphere.
However, we expect 
that $\mathcal{D}(X,B)$ is the boundary of an $n$-dimensional simplex, and the pair $(X,B)$ has qdlt singularities on the nose.

\bibliographystyle{habbvr}
\bibliography{references}

\begin{thebibliography}{10}
\expandafter\ifx\csname url\endcsname\relax
  \def\url#1{\texttt{#1}}\fi
\expandafter\ifx\csname doi\endcsname\relax
  \def\doi#1{\burlalt{doi:#1}{http://dx.doi.org/#1}}\fi
\expandafter\ifx\csname urlprefix\endcsname\relax\def\urlprefix{URL }\fi
\expandafter\ifx\csname href\endcsname\relax
  \def\href#1#2{#2}\fi
\expandafter\ifx\csname burlalt\endcsname\relax
  \def\burlalt#1#2{\href{#2}{#1}}\fi

\bibitem{Bir19}
C.~Birkar.
\newblock Anti-pluricanonical systems on {F}ano varieties.
\newblock {\em Ann. of Math. (2)}, 190(2):345--463, 2019.
\newblock \doi{10.4007/annals.2019.190.2.1}.

\bibitem{BZ16}
C.~Birkar and D.-Q. Zhang.
\newblock Effectivity of {I}itaka fibrations and pluricanonical systems of
  polarized pairs.
\newblock {\em Publ. Math. Inst. Hautes \'{E}tudes Sci.}, 123:283--331, 2016.
\newblock \doi{10.1007/s10240-016-0080-x}.

\bibitem{Bra20}
L.~Braun.
\newblock The local fundamental group of a {K}awamata log terminal singularity
  is finite.
\newblock {\em Invent. Math.}, 226(3):845--896, 2021.
\newblock \doi{10.1007/s00222-021-01062-0}.

\bibitem{BM21}
L.~Braun and J.~Moraga.
\newblock Iteration of {C}ox rings of klt singularities.
\newblock {\em J. Topol.}, 17(1):Paper No. e12321, 71, 2024.

\bibitem{BMSZ18}
M.~V. Brown, J.~McKernan, R.~Svaldi, and H.~R. Zong.
\newblock A geometric characterization of toric varieties.
\newblock {\em Duke Math. J.}, 167(5):923--968, 2018.
\newblock \doi{10.1215/00127094-2017-0047}.

\bibitem{dFKX17}
T.~de~Fernex, J.~Koll\'{a}r, and C.~Xu.
\newblock The dual complex of singularities.
\newblock In {\em Higher dimensional algebraic geometry---in honour of
  {P}rofessor {Y}ujiro {K}awamata's sixtieth birthday}, volume~74 of {\em Adv.
  Stud. Pure Math.}, pages 103--129. Math. Soc. Japan, Tokyo, 2017.
\newblock \doi{10.2969/aspm/07410103}.

\bibitem{BDCS20}
G.~Di~Cerbo and R.~Svaldi.
\newblock Birational boundedness of low-dimensional elliptic {C}alabi-{Y}au
  varieties with a section, 2021.
\newblock \doi{10.1112/S0010437X2100717X}.

\bibitem{FFMP22}
F.~Figueroa, S.~Filipazzi, J.~Moraga, and J.~Peng.
\newblock Complements and coregularity of {F}ano varieties, 2022,
  \burlalt{2211.09187}{http://arxiv.org/abs/2211.09187}.

\bibitem{FMP22}
F.~Figueroa, J.~Moraga, and J.~Peng.
\newblock Log canonical thresholds and coregularity, 2022,
  \burlalt{2204.05408}{http://arxiv.org/abs/2204.05408}.

\bibitem{FMM22}
S.~Filipazzi, M.~Mauri, and J.~Moraga.
\newblock The index of coregularity zero {C}alabi-{Y}au pairs, 2022,
  \burlalt{2209.02925}{http://arxiv.org/abs/2209.02925}.
\newblock To appear in Algebra \& Number Theory.

\bibitem{FS20}
S.~Filipazzi and R.~Svaldi.
\newblock On the connectedness principle and dual complexes for generalized
  pairs.
\newblock {\em Forum Math. Sigma}, 11:Paper No. e33, 2023.
\newblock \doi{10.1017/fms.2023.25}.

\bibitem{GM23}
Y.~Gongyo and J.~Moraga.
\newblock Generalized complexity of surfaces, 2023,
  \burlalt{2301.08395}{http://arxiv.org/abs/2301.08395}.

\bibitem{GHK2015}
M.~Gross, P.~Hacking, and S.~Keel.
\newblock Mirror symmetry for log {C}alabi-{Y}au surfaces {I}.
\newblock {\em Publ. Math. Inst. Hautes \'{E}tudes Sci.}, 122:65--168, 2015.
\newblock \doi{10.1007/s10240-015-0073-1}.

\bibitem{Hatcher}
A.~Hatcher.
\newblock {\em Algebraic topology}.
\newblock Cambridge University Press, Cambridge, 2002.

\bibitem{K2020}
A.-S. Kaloghiros.
\newblock Some examples of {C}alabi-{Y}au pairs with maximal intersection and
  no toric model.
\newblock In {\em Birational geometry and moduli spaces}, volume~39 of {\em
  Springer INdAM Ser.}, pages 57--75. Springer, Cham, 2020.
\newblock \doi{10.1007/978-3-030-37114-2\_5}.

\bibitem{Kol13}
J.~Koll\'{a}r.
\newblock {\em Singularities of the minimal model program}, volume 200 of {\em
  Cambridge Tracts in Mathematics}.
\newblock Cambridge University Press, Cambridge, 2013.
\newblock \doi{10.1017/CBO9781139547895}.

\bibitem{KM98}
J.~Koll\'{a}r and S.~Mori.
\newblock {\em Birational geometry of algebraic varieties}, volume 134 of {\em
  Cambridge Tracts in Mathematics}.
\newblock Cambridge University Press, Cambridge, 1998.
\newblock \doi{10.1017/CBO9780511662560}.

\bibitem{KX16}
J.~Koll\'{a}r and C.~Xu.
\newblock The dual complex of {C}alabi-{Y}au pairs.
\newblock {\em Invent. Math.}, 205(3):527--557, 2016.
\newblock \doi{10.1007/s00222-015-0640-6}.

\bibitem{KS2006}
M.~Kontsevich and Y.~Soibelman.
\newblock Affine structures and non-{A}rchimedean analytic spaces.
\newblock In {\em The unity of mathematics}, volume 244 of {\em Progr. Math.},
  pages 321--385. Birkh\"{a}user Boston, Boston, MA, 2006.
\newblock \doi{10.1007/0-8176-4467-9\_9}.

\bibitem{Lai11}
C.-J. Lai.
\newblock Varieties fibered by good minimal models.
\newblock {\em Math. Ann.}, 350(3):533--547, 2011.
\newblock \doi{10.1007/s00208-010-0574-7}.

\bibitem{Li2023}
Y.~Li.
\newblock Metric {SYZ} conjecture and non-{A}rchimedean geometry.
\newblock {\em Duke Math. J.}, 172(17):3227--3255, 2023.
\newblock \doi{10.1215/00127094-2022-0099}.

\bibitem{Mauri2020}
M.~Mauri.
\newblock The dual complex of log {C}alabi-{Y}au pairs on {M}ori fibre spaces.
\newblock {\em Adv. Math.}, 364:107009, 28, 2020.
\newblock \doi{10.1016/j.aim.2020.107009}.

\bibitem{MMS2022}
M.~Mauri, E.~Mazzon, and M.~Stevenson.
\newblock On the geometric {${\rm P}={\rm W}$} conjecture.
\newblock {\em Selecta Math. (N.S.)}, 28(3):Paper No. 65, 45, 2022.
\newblock \doi{10.1007/s00029-022-00776-0}.

\bibitem{Mor21}
J.~Moraga.
\newblock On a toroidalization for klt singularities, 2021,
  \burlalt{2106.15019}{http://arxiv.org/abs/2106.15019}.

\bibitem{Mor22}
J.~Moraga.
\newblock Coregularity of {F}ano varieties.
\newblock {\em Geom. Dedicata}, 218(2):Paper No. 40, 2024.
\newblock \doi{10.1007/s10711-023-00882-z}.

\bibitem{MS21}
J.~Moraga and R.~Svaldi.
\newblock A geometric characterization of toric singularities, 2021,
  \burlalt{arXiv:2108.01717}{http://arxiv.org/abs/arXiv:2108.01717}.

\bibitem{RS1982}
C.~P. Rourke and B.~J. Sanderson.
\newblock {\em Introduction to piecewise-linear topology}.
\newblock Springer Study Edition. Springer-Verlag, Berlin-New York, 1982.

\bibitem{Simpson2016}
C.~Simpson.
\newblock The dual boundary complex of the {$SL_2$} character variety of a
  punctured sphere.
\newblock {\em Ann. Fac. Sci. Toulouse Math. (6)}, 25(2-3):317--361, 2016.
\newblock \doi{10.5802/afst.1496}.

\bibitem{Sma61}
S.~Smale.
\newblock Generalized {P}oincar\'{e}'s conjecture in dimensions greater than
  four.
\newblock {\em Ann. of Math. (2)}, 74:391--406, 1961.
\newblock \doi{10.2307/1970239}.

\end{thebibliography}

\end{document}